\pgfplotsset{compat=newest}
\numberwithin{figure}{section}
\definecolor{mntf}{RGB}{0,101,97}
\declaretheorem[numberwithin=section, name=Theorem, refname={Theorem,Theorems}]{theorem}
\declaretheorem[sibling=theorem, name=Lemma, refname={Lemma,Lemmas}]{lemma}
\declaretheorem[sibling=theorem, name=Corollary, refname={Corollary,Corollaries}]{cor}
\declaretheorem[sibling=theorem, name=Definition, refname={Definition,Definitions}]{definition}
\theoremstyle{remark}
\declaretheorem[sibling=theorem, name=Remark, refname={Remark,Remarks}]{rem}
\numberwithin{equation}{section}
\newcommand{\eps}{\varepsilon}
\renewcommand{\phi}{\varphi}
\newcommand{\p}{\partial}
\newcommand{\Ome}{\Omega}
\newcommand{\dd}{\mathop{}\!\mathrm{d}}								
\newcommand{\per}{{\mathrm{per}}}									
\newcommand{\dist}{\mathrm{dist}}									
\newcommand{\DD}{\mathrm{D}}										
\newcommand{\Cci}{C^\infty_\mathrm{c}}										
\newcommand{\set}[2]{\left\{#1 \,\colon\, #2\right\}}				
\newcommand{\interior}[1]{\operatorname{int}\left(#1\right)}
\newcommand{\restr}{\,\raisebox{-.127ex}{\reflectbox{\rotatebox[origin=br]{-90}{$\lnot$}}}\,} 
\DeclarePairedDelimiter{\norm}{\lVert}{\rVert}
\DeclarePairedDelimiter\floor{\lfloor}{\rfloor}						
\newcommand{\scp}[3]{\left(#1,#2\right)_{#3} }						
\newcommand{\dotproduct}[3]{\left\langle #1,#2\right\rangle_{#3}}	
\newcommand{\norms}[2]{\lVert #1 \rVert_{\smash{#2}\raisebox{-0.2ex}{\mathstrut}}} 
\newcommand{\Lpnorm}[3]{\norms{#1}{L^{#2}(#3)}}					
\newcommand{\Lptnorm}[3]{\lvert\kern-0.15ex\lvert\kern-0.15ex\lvert #1
	\rvert\kern-0.15ex\rvert\kern-0.15ex\rvert_{\smash{L_t^{#2}(#3)}\raisebox{-0.2ex}{\mathstrut}}}		
\newcommand{\Cknorm}[3]{\norms{#1}{C^{#2}(#3)}}					
  \def\calF{{\mathcal F}}
 \def\calH{{\mathcal H}} \def\calI{{\mathcal I}}
  \def\calL{{\mathcal L}}
 \def\calN{{\mathcal N}} 
 \def\calQ{{\mathcal Q}} 
 \def\calT{{\mathcal T}} \def\calU{{\mathcal U}}
 \def\IN{{\mathbbm N}} 
  \def\IR{{\mathbbm R}}
 \def\IZ{{\mathbbm Z}}
\newcommand{\Omee}{\Ome_\eps}
\newcommand{\Omeh}{\widehat{\Ome}_{\eps,h}}
\newcommand{\OmeM}{\Ome^{\mathrm{M}}_\eps}
\newcommand{\OmeMS}{\Ome^{\mathrm{M}}_{*,\eps}}
\newcommand{\OmeMSh}{\widehat{\Ome}^{\mathrm{M}}_{*,\eps,h}}
\newcommand{\QTe}{\calQ_{T,\eps}}
\newcommand{\QTeM}{\calQ_{T,\eps,*}^{\mathrm{M}}}
\newcommand{\QTM}{\calQ_{T,*}^{\mathrm{M}}}
\newcommand{\tD}{\tilde{D}}
\newcommand{\tq}{\tilde{q}}
\newcommand{\tb}{\tilde{b}}
\newcommand{\tu}{\tilde{u}}
\newcommand{\tx}{\tilde{x}}
\newcommand{\delt}{\delta_{l^\prime,\eps}}
\newcommand{\sumpm}{\mathop{\smash{\sum_\pm}}\displaylimits}		
\renewcommand{\div}{\mathop{\mathrm{div}}\nolimits}
\title{Effective transmission through an interface\\with evolving microstructure}
\author[1]{Lucas M.~Fix}
\author[1]{Gianna G\"{o}tzmann}
\author[1,2]{Malte A.~Peter}
\author[1,2]{Jan-F.~Pietschmann}
\affil[1]{{\small University of Augsburg, Institute of Mathematics, Universit\"{a}tsstra\ss e 12a, 86159 Augsburg, Germany.}}
\affil[2]{{\small Centre for Advanced Analytics and Predictive Sciences (CAAPS), Universit\"{a}tsstra\ss e 12a, 86159 Augsburg, Germany.}}
\date{\today}
\renewcommand{\nomgroup}[1]{%
     \item[\bfseries
     \ifthenelse{\equal{#1}{F}}{Function spaces}{%
     \ifthenelse{\equal{#1}{G}}{Microscopic and macroscopic geometry}{%
     }}]%
 	\vspace{0.5\baselineskip}
   }
\newcommand{\definesymbol}[5][]{%
	\expandafter\def\csname sym@#2@cat\endcsname{#3}%
	\expandafter\def\csname sym@#2@sym\endcsname{#4}%
	\expandafter\def\csname sym@#2@desc\endcsname{#5}%
	\if\relax#1\relax
	\else
	\expandafter\def\csname sym@#2@sort\endcsname{#1}%
	\fi
}
\newcommand{\usesym}[1]{%
	\begingroup
	\edef\x{\endgroup
		\noexpand\nomenclature[\ifcsname sym@#1@sort\endcsname
		\csname sym@#1@sort\endcsname
		\else
		\csname sym@#1@cat\endcsname
		\fi]{%
			\csname sym@#1@sym\endcsname%
		}{%
			\csname sym@#1@desc\endcsname%
		}%
	}\x
}
\begin{document}

\maketitle

\begin{abstract}
	We study the asymptotic behaviour of a system of nonlinear reaction--diffusion--advection equations in a domain consisting of two bulk regions connected via microscopic channels distributed within a thin membrane. Both the width of the channels and the thickness of the membrane are of order $\varepsilon \ll 1$, and the geometry evolves in time in an a priori known way.
	We consider nonlinear flux boundary conditions at the lateral boundaries of the channels and critical scaling of the diffusion inside the layer. Extending the method of homogenisation in domains with evolving microstructure to thin layers, we employ two-scale convergence and unfolding techniques in thin layers to derive an effective model in the limit $\varepsilon \to 0$, in which the membrane is reduced to a lower-dimensional interface. We obtain jump conditions for the solution and the total fluxes, which involve the solutions of local, space--time-dependent cell problems in the reference channel.
\end{abstract}

\tableofcontents

\newpage
\section{Introduction} \label{Sec:Intro}
Thin porous layers with microscopic heterogeneous structures which regulate exchange between two bulk regions play a fundamental role in a multitude of applications ranging from biological transport to industrial filtration processes. Examples include ion channels distributed within cell membranes \cite{Hil01}, polymer separators in lithium-ion batteries \cite{AroZha04}, single-ion-conducting membranes in polymer--electrolyte fuel cells \cite{Web04} and thin-film composite membranes used in reverse osmosis \cite{Bak12, Gei10}.
In these systems, macroscopic transport properties, like permeability, selectivity and effective reaction rates, are determined by the microscopic geometry of the layer, which exhibits spatial heterogeneity at scales several orders of magnitude smaller than the bulk regions.

Additional complexity arises when the microscopic geometry undergoes an evolution in time. For example, this is the case in conformational changes of ion channels regulating their conductivity \cite{Hil01, LopRom19}, dendrite growth progressively occluding pores in battery separators \cite{JanElyGar15} or pore blocking or fouling phenomena altering the effective pore size distribution in membrane filtration \cite{Menetal09}. Consequently, transport processes are coupled to geometric changes, leading to time-dependent effective properties which must be accounted for in predictive models.

Direct numerical simulations of such systems face a fundamental challenge as they require to resolve both the macroscopic (observation)  scale and the microscopic scale of the heterogeneities. This motivates the combined use of techniques from periodic homogenisation and dimension reduction to derive effective models encoding the effect of the microstructure in upscaled non-trivial transmission conditions given at a sharp interface between the bulk regions, thus allowing for a macroscopic approximation of the fully resolved system.

In this work, we apply this strategy to a prototypical system of nonlinear reaction--diffusion--advection equations for an unknown vector of concentrations $u_\eps=u_\eps(t,x)$ in a microscopic domain composed of two bulk regions $\Omee^\pm$ (in which $u_\eps$ is denoted by $u_\eps^\pm$) connected through channels in a thin porous layer $\OmeMS(t)$ of thickness $\eps>0$ (in which $u_\eps$ is denoted by $u_\eps^{\mathrm{M}}$), where the time parameter $t$ describes the current configuration of the time-evolving (not necessarily periodic) heterogeneous microstructure. The microscopic system under consideration reads
\begin{align}
	\left\{\!\!\begin{array}{r@{\ }c@{\ }l@{\ }c@{\ \, }l}
		\p_t u_{j\eps}^\pm - \nabla \cdot\left(D_{j}^\pm \nabla u_{j\eps}^\pm - u_{j\eps}^\pm q_{j}^\pm\right)  & = & f_{j}(u_\eps^\pm) && \text{in } (0,T)\times \Omee^\pm, \\
		\frac 1\eps \p_t u_{j\eps}^{\mathrm{M}} - \nabla \cdot \left( \eps D_{j\eps}^{\mathrm{M}} \nabla u_{j\eps}^{\mathrm{M}} - u_{j\eps}^{\mathrm{M}} q_{j\eps}^{\mathrm{M}} \right) & = & \frac 1\eps g_{j}(u_\eps^{\mathrm{M}}) && \text{in } \set{(t,x)}{t \in (0,T),\, x \in \OmeMS(t)},\\
		(D_j^\pm \nabla u_{j\eps}^\pm - u_{j\eps}^\pm q_j^\pm )\cdot \nu_\eps^\pm & = & 0 && \text{on } (0,T)\times (\p\Omee^\pm\setminus S_{*,\eps}^\pm), \\
		-(\eps D_{j\eps}^{\mathrm{M}} \nabla u_{j\eps}^{\mathrm{M}} - u_{j\eps}^{\mathrm{M}} q_{j\eps}^{\mathrm{M}} + \frac 1\eps u_{j\eps}^{\mathrm{M}} b_\eps^{\mathrm{M}}) \cdot \nu_\eps^{\mathrm{M}}  & = & h_{j}(u_\eps^{\mathrm{M}}) && \text{on } \set{(t,x)}{t \in (0,T),\, x \in N_\eps(t)}, 
	\end{array} \right.
\end{align}
where $b_\eps^{\mathrm{M}}$ is the velocity of the evolving lateral boundary $N_\eps(t)$ of the channels. Moreover, we equip the system with appropriate transmission conditions on the top and bottom $S_{*,\eps}^\pm$ of the channels, guaranteeing the continuity of the solution and the total fluxes. 
The diffusion in the thin layer is of critical order $\eps$, i.e.~the time scale of diffusion in the channels with respect to the microscopic scale matches that of diffusion in the bulk on the macroscopic scale, see \cite{PetBoe05}, and the (surface) reaction kinetics are assumed to be nonlinear with Lipschitz growth. The goal of this work is to establish a rigorous combined homogenisation--dimension reduction analysis as $\eps\to 0$ of the problem above including the identification of the system of equations satisfied by the limit of $u_\eps$.

In a prototypical application, $u_{\eps}$ would model the concentration of a chemical substance (i.e.~in terms of its volume fraction) as it is transported through the channels by means of diffusion and drift due to the velocity field $q_{j\eps}$. In this setting, the terms $f_j$ and $g_j$ would account for reactions in the respective part of the domain, while $h_j$ describes the transfer of mass through the channel walls. One concrete example for the latter phenomenon is the motion of charged ions through ion channels \cite{SchNadEis01} or artificial nanopores in polymers \cite{Pieetal13}. There, the change of the channel geometry could model an opening and closing (ion channels) or molecules binding and unbinding to the channel walls (nanopores). Clearly, a prescribed evolution only serves as a first step towards the more realistic situation of a coupling to the actual concentrations as well as the mass flux through the channel walls, which presents an interesting venue for further investigations.

\paragraph{Mathematical Approach and Contributions}
In what follows, $\eps>0$ is a small parameter describing the typical length of the microscopic channels in relation to the observation length scale as well as the periodicity of the fixed reference layer $\OmeMS$ which is built by periodic repetition of the $\eps$-scaled unit channel cell $Z_*$. We assume the evolution of the channels to be given a priori in terms of a map $\psi_\eps \colon [0,T]\times \overline{\OmeMS} \to \IR^n$, that is $\OmeMS(t)= \psi_\eps(t,\OmeMS)$, and similarly for the moving boundary $N_\eps(t)$ of the channels. The maps $\psi_\eps$ are assumed to be $C^1$-diffeomorphisms satisfying particular $\eps$-uniform estimates with so-called limit transformation $\psi_0$, see \ref{it:T1:est_psi}--\ref{it:T4:2s_conv} below.

Since the evolution of the microstructure is known a priori, we follow the approach outlined in \cite{Pet07} and transform the problem to one on the fixed reference geometry $\OmeMS$, but with coefficients dependent on the transformation $\psi_\eps$ itself. Subsequently, we provide a rigorous derivation of the macroscopic problem making use of the theory of two-scale convergence and unfolding techniques in thin domains, see \cite{NeuJae07, MarMar00, GahNeu21, BhaGahNeu22}. Their application is based on $\eps$-uniform a priori estimates for the solutions in Sobolev--Bochner spaces with $\eps$-dependent weights adapted to the problem-structure. Moreover, $\eps$-uniform estimates only hold for the transformed time derivatives $\p_t(J_\eps \tu_\eps)$, where $J_\eps = \det \DD \psi_\eps$ denotes the Jacobian of the transformation, and not for $\p_t u_\eps$. As these estimates scale badly in $\eps$ when restricted to the layer $\OmeMS$, a standard Aubin--Lions-type argument cannot be applied to obtain strong convergence of the microsolutions. Instead, we use unfolding techniques in thin domains which require the application of a Kolmogorov--Simon-type argument. The latter one is based on an abstract duality argument for obtaining the existence and $\eps$-uniform estimates of the time derivative of the unfolded sequence, and the derivation of estimates on shifts of the microsolutions.
Finally, an abstract convergence result for the generalised time derivatives is needed in order to pass to the limit $\eps \to 0$, where we obtain two reaction--diffusion--advection-type problems posed on the bulk domains $\Ome^\pm$ separated by the sharp interface $\Sigma$, and coupled by effective transmission conditions for the solutions and the total fluxes on $\Sigma$. These transmission conditions are given in terms of local cell problems posed in the channel cell $Z_*$. 
Here, the macrovariable $x^\prime \in \Sigma$ enters as a parameter as we consider the critical case of small diffusivity of order $\eps$ in the layer, leading to rapid oscillations of the microsolutions with respect to the spatial variable, and hence a dependency on the macro- and the microvariable of the local cell solutions in the limit $\eps \to 0$. After applying the reverse transformation (which is given in terms of the two-scale limit $\psi_0$ of the microscopic transformations $\psi_\eps$) we eventually obtain local cell problems, each posed on an evolving microcell $Z_*(t,x^\prime)$, and thus capturing the limiting evolution of the microstructure.

To the best of our knowledge, this is the first time that the techniques of rigorous dimension reduction and homogenisation in a thin layer, whose microstructure evolves with time, are combined. Our main results are as follows.
\begin{itemize}
	\item Derivation of $\eps$-uniform a priori estimates for the microsolutions and their shifts on the fixed reference domain (including the thin layer $\OmeMS$), taking into account the transformation-dependent coefficients (see \autoref{lem:apriori} and \autoref{lem:shifts}). 
	\item General compactness and convergence result for the generalised time derivatives considered as functionals on the whole microscopic domain $\Omee$ (see \autoref{thm:convergence}).
	\item Rigorous dimension reduction and homogenisation limit, both on the fixed reference and the evolving domain. This includes the derivation and analysis of a macroscopic problem of reaction--diffusion--advection-type, subject to effective transmission conditions across the interface $\Sigma$, which are given in terms of local cell problems on the evolving cell $Z_*(t,x^\prime)$ (see \autoref{thm:macroscopic} and \autoref{cor:ev_macro_prob}).
\end{itemize}
The results and framework established here provide a basis for the more complex scenario in which the channel evolution is fully coupled to the reaction--diffusion--advection process.

\paragraph{Literature}
An approach to analyse the homogenisation of problems posed on domains with time-dependent microstructure by transforming them onto a fixed reference domain was proposed in \cite{Pet07, Mei08} and applied to coupled reaction--diffusion processes in \cite{Pet09} and chemical degradation mechanisms in \cite{Pet07b, PetBoe09}. By a similar technique, surface exchange processes and chemical reactions at interfaces were treated in \cite{Dob15}. The aforementioned strategy assumes the equivalence of the direct homogenisation procedure on the time-dependent domain and the one on the fixed reference geometry, which which was proven for porous bulk domains by \cite{Wie22}. 
This approach allows the application of two-scale convergence and unfolding methods (see \cite{All92,Ngu89, CioDamGri02} and the classical textbooks \cite{Hor97, CioSai99, CioDamGri18}) also in the case of a priori time-dependent geometries. Based on this method, in \cite{GahNeuPop21} rigorous homogenisation results for reaction--diffusion--advection equations in bulk domains with a priori known evolution of the microstructure were established. 
The corresponding analysis for the quasistationary Stokes flow through perforated media was performed in \cite{WiePet24}, and in \cite{WiePet25} for the instationary case, including the identification of memory effects arising from the evolution.
A linear, fully coupled thermoelasticity problem in a two-phase medium was studied in \cite{EdeMun17}, demonstrating that similar techniques extend to systems undergoing phase transformations.

If the evolution of the microstructure is an unknown of the problem itself and coupled to the transport processes, the analysis becomes even more challenging. For mineral dissolution and precipitation models, we mention \cite{GahPop23}, where the local concentration determines the (radially symmetric) evolution of the grains. The homogensiation analysis of the closely related problem local colloid evolution induced by reaction and diffusion was performed in \cite{WiePet23}. Very recently, a comprehensive treatment of Stokes flow, advection--reaction--diffusion and adsorption--desorption processes in freely evolving porous media, i.e.~fully coupled to the evolution of microstructure of the medium, has been presented in \cite{GahPetPopWie25}, which additionally provides a detailed overview of the state of the art in the field of homogenisation of evolving microstructures. We also mention \cite{EdeMun26}, in which a one-phase thermoelasticity system with phase transformations and small growing or shrinking inclusions was studied, describing the evolution by a height function and applying the Hanzawa transform.

\medskip
The derivation of effective interface and transmission conditions for transport between bulk regions across thin layers combines homogenisation with dimension reduction.
For fixed heterogeneous layers, rigorous transmission conditions for reaction--diffusion processes with critical diffusivity of order $\eps$ were derived in \cite{NeuJae07}. Effective transmission coefficients determined by local cell problems were obtained, thus incorporating information on the microstructure in the macroscopic model. This framework was extended to diffusivities of order $\eps^\gamma$, $\gamma \in [-1,1)$, in \cite{GahNeuKna17}, leading to qualitatively different interface conditions, and to nonlinear transmission conditions at the bulk--layer interface in \cite{GahNeuKna18}. Corrector and rigorous error estimates were established in \cite{GahJaeNeu21}, providing quantitative bounds on the approximation error.
For thin layers composed of periodic channel-like structures, in \cite{GahNeu21} the case of critically scaled diffusivity was analysed, including the derivation of effective jump conditions given in terms of local cell problems in the reference channel. These results were extended in \cite{BhaGahNeu22} to more general reaction kinetics at the lateral boundaries and subcritical scaling. 
By means of gradient flow theory and EDP-convergence, diffusion--advection in a structure composed of thin layers whose thickness is tending to zero at different rates was analysed in \cite{FreLie21}, showing that the limit system also has a gradient structure.

\paragraph{Structure of the paper}
This paper is organised as follows. In \autoref{sec:setting}, we introduce the microscopic model, the geometric setting and the assumptions on the evolution of the microstructure as well as on the data. Then, in \autoref{sec:existence} the problem on the time-dependent geometry is transformed to one on a fixed, periodic reference geometry. Next, existence of weak solutions in function spaces adapted to the scaling of the equations in the layer and uniform a priori estimates for the microsolutions and their shifts are established. \autoref{sec:macro_prob} deals with the derivation of the homogenised macroscopic problem, which is based on weak and strong two-scale compactness results for the microsolutions (\autoref{subsec:convergence}), the limit passage $\eps \to 0$ and identification of the macroscopic problem on the reference geometry and local cell problems (\autoref{subsec:macro_prob_ref}) and finally the reverse transformation back to the evolving domain (\autoref{subsec:macro_prob_evolving}). Necessary (known) results about two-scale convergence for thin channels are collected in \autoref{app:two-scale}, while a summary of the notation used for function spaces and the description of the geometry is provided in the \hyperref[symbols]{List of symbols}.

\paragraph{Notation}
Throughout this paper, we make use of the following notation.
We decompose coordinates $x \in \IR^n$ into $x=(x', x_n) \in \IR^{n-1} \times \IR$. By $\floor{x}$, we denote the floor function, applied in a component-wise manner to $x\in \IR^n$. Given an invertible matrix $F\in \IR^{n\times n}$, we define $F^{-\top}\coloneqq (F^{-1})^\top$.
For two $\eps$-dependent quantities $A_\eps$ and $B_\eps$, we write $A_\eps \lesssim B_\eps$ if there exists a universal constant $C>0$ independent of $\eps$ such that $A_\eps \leq C B_\eps$. Moreover, we write $A_\eps \simeq B_\eps$ if $A_\eps \lesssim B_\eps$ and $B_\eps \lesssim A_\eps$.
To shorten notation, we also set $\sumpm a_\pm = a_+ + a_-$ for $a_\pm \in \IR$.

We further use the following notation for function spaces.
For an open and bounded set $U\subset \IR^n$ with Lipschitz boundary $\p U$ and $\Gamma \subset \p U$ we denote by $H^1_{\Gamma,0}(U) \usesym{H0Gamma}$ the Sobolev space of functions in $H^1(U)$ with vanishing trace on $\Gamma$.
For a Gelfand triple $V\xhookrightarrow{} H \xhookrightarrow{} V^\prime$, the Sobolev--Bochner space $W^{1,2,2}(0,T;V,V^\prime)$ consists of all functions in $L^2(0,T;V)$ with generalised time derivative in $L^2(0,T;V^\prime)$. A complete list of the function spaces used in this work is found in the \hyperref[symbols]{List of symbols}.

\section{The microscopic model} \label{sec:setting}
We first give a precise description of the microscopic geometry evolving in time in terms of a (microscopic) reference layer. Then, we formulate the microscopic model and state the main assumptions regarding the time evolution of the microstructure and the data.

\paragraph{Description of the reference geometry}
In what follows, let $\eps=(\eps_k)_{k\in \IN}$ be a sequence tending to $0$ such that $\frac 1{\eps_k} \in \IN$, and let $n \in \IN_{\geq 2}$ and $H>0$ be fixed.
We consider the macroscopic domain $\Ome \coloneqq Y \times (-H,H) \subset \IR^n$, where $Y \coloneqq (0,1)^{n-1}$ is the $(n-1)$-dimensional unit cube, and we subdivide $\Ome$ as follows. The bulk regions $\Omee^+\usesym{Omeepm} \coloneqq Y \times (\eps,H)$ and $\Omee^- \coloneqq Y \times (-H,-\eps)$ are separated by the thin layer $\OmeM\usesym{OmeM} \coloneqq Y \times (-\eps,\eps)$, and we denote the interfaces between these subsets by $S_\eps^\pm\usesym{Sepspm} \coloneqq Y \times \{\pm \eps\}$.
In the limit $\eps \to 0$, the thin layer $\OmeM$ reduces to the $(n-1)$-dimensional hypersurface $\Sigma\usesym{Sigma} \coloneqq Y \times \{0\}$ and we write $\Ome^+\usesym{Omepm} = Y \times (0, H)$ and $\Ome^-= Y \times (-H, 0)$.

To describe the microscopic structure of the thin layer $\OmeM$, let $Z \usesym{Z} \coloneqq Y \times (-1,1)$ be the $n$-dimensional standard unit cell and denote its upper and lower boundary by $S^\pm\usesym{Spm} \coloneqq \set{z \in  \p Z}{z_n = \pm 1}$. 
A channel across $Z$ is represented by an open Lipschitz domain $Z_*\usesym{Z*} \subset Z$, see \autoref{fig:microcell}, for an illustration, such that both
\begin{align}
    S^\pm_*\usesym{Sspm} \coloneqq \set{z \in  \p Z_*}{z_n = \pm 1}
\end{align} 
are Lipschitz domains in $\IR^{n-1}$ of positive measure. The lateral boundary of the channel is denoted by
\begin{align}
    N\usesym{N} \coloneqq \p Z_* \setminus (S^+_* \cup S^-_*)
\end{align}
and we assume that $N$ has a positive distance to $\p Z \setminus (S^+ \cup S^-)$ so that the lateral channel boundary does not touch the lateral boundary of the unit cell. The periodic microstructure of the thin layer $\OmeM$ is then described by repetition of $\eps Z_*$ along $\Sigma$. More precisely, we define the channel domain $\OmeMS \usesym{OmeMS}$ by
\begin{align}
    \OmeMS \coloneqq \bigcup_{k' \in \calI_\eps} \eps (Z_* + (k',0)),
\end{align}
where $\calI_\eps \coloneqq \set{k' \in \IZ^{n-1}}{\eps Z + \eps (k',0) \subset \OmeM}$, the combined interfaces $S_{*,\eps}^\pm \usesym{Ssepspm}$ between the channels and the bulk regions accordingly by
\begin{align}
    S_{*,\eps}^\pm \coloneqq \bigcup_{k' \in \calI_\eps} \eps (S_*^\pm + (k',0)),
\end{align}
and the set of lateral boundaries of all channels $N_\eps \usesym{Neps}$ by
\begin{align}
    N_\eps \coloneqq \bigcup_{k' \in \calI_\eps} \eps (N + (k',0)).
\end{align}
Finally, we denote the microscopic domain $\Omee \usesym{Omee}$ as the union
\begin{align}
	\Omee \coloneqq \interior{\Omee^+ \cup \OmeMS \cup \Omee^- \cup S_{*,\eps}^+ \cup S_{*,\eps}^-},
\end{align}
see \autoref{fig:micro_geo} for an illustration, and assume it to be Lipschitz regular. For a function $v_\eps$ defined on $\Omee$, we use the superscripts $\pm$ and $M$ to denote its restriction to the subdomains $\Omee^\pm$ and $\OmeMS$, respectively, and we also write $v_\eps=(v_\eps^+,v_\eps^{\mathrm{M}},v_\eps^-)$.
\begin{figure}[htbp]
	\begin{subfigure}[t]{.4\textwidth}
		\centering
		\includegraphics[scale=0.7]{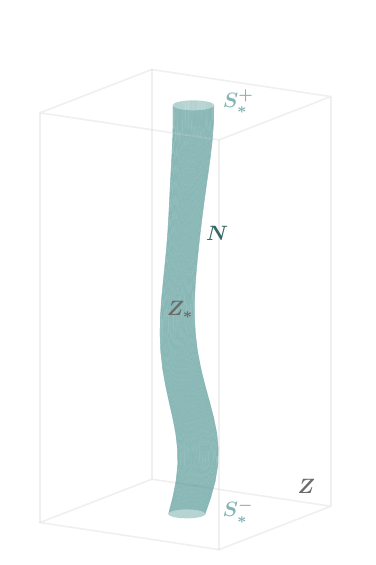}
		\caption{The standard cell $Z$ contains the reference channel $Z_*$ (dark green non-cylindric tube) with top and bottom $S_*^\pm$ and lateral boundary $N$.}
		\label{fig:microcell}
	\end{subfigure}
	\hfill
	\begin{subfigure}[t]{.55\textwidth}
		\centering
		\includegraphics[scale=0.7]{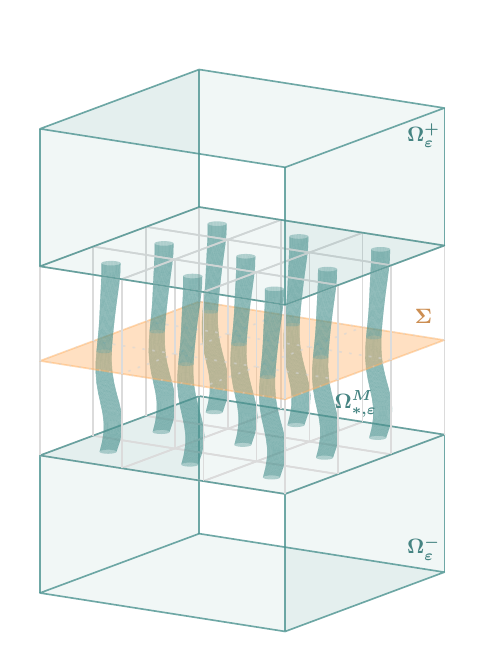}
		\caption{The microscopic reference domain $\Omee$ consists of two bulk regions $\Omee^\pm$, being separated by the thin perforated layer $\OmeM$ with microscopic structure $\OmeMS$. The microstructure of the layer is described by periodic repetition of scaled microcells along the interface $\Sigma$.}
		\label{fig:micro_geo}
	\end{subfigure}
	\caption{Illustration of the microscopic reference domain $\Omee$ in $n=3$ dimensions for $\eps=\frac 13$.}
\end{figure}

\paragraph{Description of the evolving geometry}
For $t\in [0,T]$, the evolving channel domain $\OmeMS(t)$ and the evolving lateral boundary of the channels $N_\eps(t)$ are defined using a map
\begin{align}
    \psi_\eps \colon [0,T] \times \overline{\OmeM} \to \overline{\OmeM}
\end{align}
with $\psi_\eps(t,\cdot)\rvert_{S_{*,\eps}^\pm} = \mathrm{id}_{S_{*,\eps}^\pm}$, and we write $\OmeMS(t)\coloneqq \psi_\eps(t,\OmeMS)$ and $N_\eps(t)\coloneqq \psi_\eps(t,N_\eps)$. For further assumptions on $\psi_\eps$, we refer to \ref{it:T1:est_psi}--\ref{it:T4:2s_conv} below. 
We introduce the non-cylindrical space--time domains
\begin{align}
    \QTeM \usesym{QTeM} \coloneqq \set{(t,x)}{t \in (0,T),\, x \in \OmeMS(t)} \qquad \text{and} \qquad 
    \calN_{T,\eps} \usesym{NTe} \coloneqq \set{(t,x)}{t \in (0,T),\, x \in N_\eps(t)},
\end{align}
and write
\begin{align}
	\QTe \usesym{QTe} \coloneqq \set{(t,x)}{t \in (0,T),\, x \in \interior{\Omee^+ \cup S_{*,\eps}^+ \cup \OmeMS(t) \cup S_{*,\eps}^- \cup \Omee^-}}.
\end{align}

\paragraph{The microscopic model}
On $\QTe$, we consider a system of reaction--diffusion--advection equations for the unknown function $u_\eps=(u_{1\eps}, \ldots, u_{m\eps}) \colon \calQ_{T,\eps} \to \IR^m$ given by
{\mathtoolsset{showonlyrefs=false} 
\begin{subequations}\label{eq:RDA_full}
\begin{align}\label{eq:RDA_evms}
	\left\{\!\!\begin{array}{r@{\ }c@{\ }l@{\ }c@{\ \, }l}
		\p_t u_{j\eps}^\pm - \nabla \cdot\left(D_{j}^\pm \nabla u_{j\eps}^\pm - u_{j\eps}^\pm q_{j}^\pm\right)  & = & f_{j}(u_\eps^\pm) && \text{in } (0,T)\times \Omee^\pm, \\
		\frac 1\eps \p_t u_{j\eps}^{\mathrm{M}} - \nabla \cdot \left( \eps D_{j\eps}^{\mathrm{M}} \nabla u_{j\eps}^{\mathrm{M}} - u_{j\eps}^{\mathrm{M}} q_{j\eps}^{\mathrm{M}} \right) & = & \frac 1\eps g_{j}(u_\eps^{\mathrm{M}}) && \text{in } \QTeM,
	\end{array} \right.
\end{align}
and subject to the flux boundary conditions
\begin{align}\label{eq:RDA_BC_evms}
    \left\{\!\!\begin{array}{r@{\ }c@{\ }l@{\ }c@{\ \, }l}
    -(D_j^\pm \nabla u_{j\eps}^\pm - u_{j\eps}^\pm q_j^\pm )\cdot \nu_\eps^\pm & = & 0 && \text{on } (0,T)\times (\p\Omee^\pm\setminus S_{*,\eps}^\pm), \\
    -(\eps D_{j\eps}^{\mathrm{M}} \nabla u_{j\eps}^{\mathrm{M}} - u_{j\eps}^{\mathrm{M}} q_{j\eps}^{\mathrm{M}} + \frac 1\eps u_{j\eps}^{\mathrm{M}} b_\eps^{\mathrm{M}}) \cdot \nu_\eps^{\mathrm{M}}  & = & h_{j}(u_\eps^{\mathrm{M}}) && \text{on } \calN_{T,\eps}, 
    \end{array} \right.
\end{align}
on the exterior boundary and the lateral channel walls, respectively, where $b_\eps^{\mathrm{M}}(t,\cdot) = \p_t\psi_\eps(t,\psi_\eps^{-1}(t,\cdot))$ is the velocity of the evolving boundary $N_\eps(t)$, and $\nu_\eps^\pm$ denotes the outer unit normal of $\Omee^\pm$ and $\nu_\eps^{\mathrm{M}}=\nu_\eps^{\mathrm{M}}(t)$ the outer unit normal of $\OmeMS(t)$ with respect to $N_\eps(t)$. From a physical point of view, the flux term induced by $b_\eps^{\mathrm{M}}$ describes that when the solute separates from the carrier phase to become part of the channel walls excess solute is pushed away by the movement of the lateral boundaries of the channels.
As initial conditions, we require 
\begin{align}\label{eq:RDA_IC_evms}
	u_\eps(0) = U_{\eps,0} \qquad \text{in } \Omee(0)
\end{align}
for given functions $U_{\eps,0} \colon \Omee(0) \to \IR^m$.
We further impose the natural transmission conditions
\begin{align}\label{eq:RDA_TC_evms}
	\left\{\!\!\begin{array}{r@{\ }c@{\ }l@{\ }c@{\ \, }l}
		u_\eps^\pm & = & u_\eps^{\mathrm{M}} && \text{on } (0,T)\times S_{*,\eps}^\pm,  \\
		\left(-D_{j}^\pm \nabla u_{j\eps}^\pm + u_{j\eps}^\pm q_{j}^\pm \right) \cdot \nu_\eps^\pm & = & \left(-\eps D_{j\eps}^{\mathrm{M}} \nabla u_{j\eps}^{\mathrm{M}} + u_{j\eps}^{\mathrm{M}} q_{j\eps}^{\mathrm{M}}\right) \cdot \nu_\eps^\pm && \text{on } (0,T)\times S_{*,\eps}^\pm,
	\end{array}\right.
\end{align}
\end{subequations}}%
i.e.~the continuity of the solutions and of the total normal fluxes across $S_{*,\eps}^\pm$. 
For the detailed assumptions on the diffusion coefficients $D_\eps$ and the advective velocity $q_\eps$, the reaction rates $f$ and $g$, the adsorption/desorption rate $h$, as well as the initial conditions $U_{\eps,0}$, we refer to \ref{it:propD}--\ref{it:propfg} below.

\begin{rem}[Mass conservation]
	We call $m_\eps(t) = \sumpm \int_{\Omee^\pm} u_\eps^\pm(t,x) \dd x + \frac 1\eps \int_{\OmeMS(t)} u_\eps^{\mathrm{M}}(t,x) \dd x$ the total mass of $u_\eps$, where the prefactor $\frac 1\eps$ takes into account the shrinking of the domain $\OmeMS(t)$ to the interface $\Sigma$ as $\eps \to 0$. 
    Assuming further that the reaction terms $f$ and $g$ and the adsorption/desorption term $h$ vanish for non-positive concentrations $u_\eps$, it follows by standard arguments that $u_\eps$, and thus also $m_\eps$, is non-negative. Moreover, an application of Reynold's transport theorem shows that, in the case of completely vanishing nonlinearities $f,\, g,\, h$, sufficiently regular solutions $u_\eps$ of \eqref{eq:RDA_full} are mass-conserving.
\end{rem}

We introduce the following concept of weak solutions to {\eqref{eq:RDA_full}}. To avoid the usage of generalised time derivatives for functions in Bochner spaces with values in time-dependent Banach spaces, we formally integrate by parts using Reynold's transport theorem and choose test functions $\phi \in C^1([0,T]\times \overline{\Ome})^m$ in what follows.
\begin{definition}[Weak solution of \eqref{eq:RDA_full}]
    A function $u_\eps \in L^2(0,T;H^1(\Omee(t))^m)$ with $u_\eps^\pm = u_\eps^{\mathrm{M}}$ on $(0,T)\times S_{*,\eps}^\pm$ is called \ul{weak solution of {\eqref{eq:RDA_full}}} if for all $\phi \in C^1([0,T]\times \overline{\Ome})^m$ with $\phi(T,\cdot)=0$ there holds
\begin{align}
    \begin{split}\label{eq:RDA_weak_sol}
    &-\sumpm \int_0^T \int_{\Omee^\pm} u_{j\eps}^\pm \p_t \phi_j \dd x \dd t - \frac 1\eps \int_0^T \int_{\OmeMS(t)} u_{j\eps}^{\mathrm{M}} \p_t \phi_j \dd x \dd t\\
    &\quad+ \sumpm \int_0^T \int_{\Omee^\pm} D_j^\pm \nabla u_{j\eps}^\pm \cdot \nabla \phi_j \dd x \dd t + \eps \int_0^T \int_{\OmeMS(t)} D_{j\eps}^{\mathrm{M}} \nabla u_{j\eps}^{\mathrm{M}} \cdot \nabla \phi_j \dd x \dd t\\
    &\quad - \sumpm \int_0^T \int_{\Omee^\pm} u_{j\eps}^\pm q_j^\pm  \cdot \nabla \phi_j \dd x \dd t - \int_0^T \int_{\OmeMS(t)} u_{j\eps}^{\mathrm{M}} q_{j\eps}^{\mathrm{M}} \cdot \nabla \phi_j \dd x \dd t \\
    &= \sumpm \int_0^T \int_{\Omee^\pm} f_j(u_{\eps}^\pm) \phi_j \dd x \dd t + \frac 1 \eps \int_0^T \int_{\OmeMS(t)} g_{j}(u_\eps^{\mathrm{M}}) \phi_j \dd x \dd t
    - \int_0^T \int_{N_\eps(t)} h_{j}(u_\eps^{\mathrm{M}}) \phi_j \dd \calH^{n-1}(x) \dd t \\
    &\quad + \sumpm \int_{\Omee^\pm} U_{j\eps,0}^\pm\, \phi_j(0) \dd x + \frac 1\eps \int_{\OmeMS(0)} U_{j\eps,0}^{\mathrm{M}}\, \phi_j(0) \dd x.
    \end{split}
\end{align}
\end{definition}

\paragraph{Assumptions on the transformation}
We assume that the transformation $\psi_\eps\colon [0,T] \times \overline{\OmeMS} \to \IR^n$ is such that for all $t\in [0,T]$, the map $\psi_\eps(t,\cdot)$ is a bijection onto its image $\overline{\OmeMS}(t)\coloneqq \psi_\eps(t,\overline{\OmeMS})$ and a $C^1$-diffeomorphism from $\OmeMS$ to $\OmeMS(t) \coloneqq \psi_\eps(t,\OmeMS)$. We further make the following assumptions, where we denote $\psi_\eps^{-1}(t,\cdot) \coloneqq \psi_\eps(t,\cdot)^{-1}$, $F_\eps \coloneqq \DD_x\psi_\eps$, and $J_\eps \coloneqq \det(F_\eps)$. 
\begin{enumerate}[label=(T\arabic*)]
	\item\label{it:T1:est_psi} For all $t\in [0,T]$, we have $\psi_\eps(t,\cdot) \rvert_{S_{*,\eps}^\pm} = \mathrm{id}_{S_{*,\eps}^\pm}$, $\bar\psi_\eps \coloneqq (\mathrm{id_{\overline{\Omee^+}}},\psi_\eps,\mathrm{id_{\overline{\Omee^-}}})\in C^1([0,T]\times \overline{\Omee})^n$, and there holds 
	\begin{align}\label{eq:est_psi}
		\frac 1\eps \Cknorm{\psi_\eps-\mathrm{id}_{\overline{\OmeMS}}}{0}{[0,T]\times \overline{\OmeMS}} 
		+ \frac 1 \eps \Cknorm{\p_t \psi_\eps}{0}{[0,T]\times \overline{\OmeMS}}
		+ \Cknorm{F_\eps}{0}{[0,T]\times \overline{\OmeMS}} 
		&\lesssim 1.
	\end{align}
    \item\label{it:T2:est_J} We have $J_\eps = \det(\DD_x \bar \psi_\eps) \in C^1([0,T]\times \overline{\Omee})$ and $J_\eps\simeq 1$ uniformly on $[0,T]\times \overline{\OmeMS}$ as well as 
    \begin{align}\label{eq:est_J}
    	\Lpnorm{\p_t J_\eps}{2}{0,T;\calH_\eps^\prime} 
    	+ \eps \Lpnorm{\nabla J_\eps}{\infty}{(0,T)\times \OmeMS}
    	\lesssim 1.
    \end{align}
    Here, the space $\calH_\eps$ is topologically isomorphic to $H^1(\Omee)$, but equipped with an appropriately $\eps$-scaled scalar product, see \autoref{sec:existence} below.
    \item \label{it:T3:shifts} For all $l^\prime \in \IZ^{n-1}$ and $\eps>0$ with $|l^\prime \eps| \ll h$, we have 
  	\begin{align}
  		\frac 1\eps \Lpnorm{\delt \p_t\psi_\eps}{\infty}{(0,T)\times \OmeMSh}
  		+ \Cknorm{\delt F_\eps}{0}{[0,T]\times\overline{\OmeMSh}}
  		+ \eps \Lpnorm{\delt \nabla J_\eps}{\infty}{(0,T)\times \OmeMSh} 
  		& \lesssim |l^\prime \eps|.
  	\end{align}
  	Here, $\OmeMSh$ is a suitably chosen subset of $\OmeMS$ such that, for functions $v_\eps\colon (0,T)\times \OmeMSh \to \IR$, shifts $\delt v_\eps$ of the form $\delta_{l^\prime,\eps} v_\eps(t,x) \coloneqq v_\eps(t,x+(l^\prime,0)\eps)- v_\eps(t,x)$, where $l^\prime \in \IZ^{n-1}$ and $\eps>0$, are well-defined (see \autoref{subsec:shifts}).
    \item\label{it:T4:2s_conv} There exists a function $\psi_0 \in  C^0(\overline{\Sigma}; C^1([0,T] \times \overline{Z_*})^n)$ such that, for all $(t,x^\prime) \in (0,T) \times \Sigma$, the map $\psi_0(t,x^\prime,\cdot_z)$ is a bijection onto its image $\overline{Z_*}(t,x^\prime)\coloneqq \psi_0(t,x^\prime,\overline{Z_*})$ and a $C^1$--diffeomorphism from $Z_*$ to $Z_*(t,x^\prime) \coloneqq \psi_0(t,x^\prime,Z_*)$. Moreover, in the strong two-scale sense (see \autoref{def:2s_conv} below) in any $L^p$-space, $p\in [1,\infty)$, there holds 
  	\begin{align}
  		\frac 1\eps (\psi_\eps - \mathrm{id}_{\OmeMS}) \to \psi_0 - \mathrm{id}_{Z_*}, \qquad
  		F_\eps \to \DD_z \psi_0, \qquad  		
  		F_\eps^{-1} \to \DD_z \psi_0^{-1}, \qquad
  		\frac 1\eps \p_t \psi_\eps \to \p_t \psi_0.
  	\end{align}	
\end{enumerate}
We emphasise that the evolving domain $\OmeMS(t)$, $t\in [0,T]$, does not need to be periodic, but only locally periodic, where the estimate on $\psi_\eps -\mathrm{id}_{\overline{\OmeMS}}$ in \eqref{eq:est_psi} quantitatively describes the deviation between the time-dependent layer $\OmeMS(t)$ and the (periodic) reference configuration $\OmeMS$. For a detailed discussion of the previous assumptions in the context of porous media with evolving microstructure, we refer to \cite{GahNeuPop21, Wie22, Wie24}. Using the extension property of $\psi_\eps$ stated in \ref{it:T1:est_psi}, it is convenient not to distinguish between $\psi_\eps$ as a map on $\overline{\OmeMS}$ and $\overline{\Omee}$, and similarly for $J_\eps$ and $J_\eps^{\mathrm{M}}$. 
The assumptions on the shifts given in \ref{it:T3:shifts} gain importance when establishing strong two-scale convergence, see \autoref{lem:shifts}. We emphasise that the estimates \eqref{eq:est_psi} already yield two-scale convergences as stated in \ref{it:T4:2s_conv} but only in the weak sense and only for a subsequence (see \autoref{lem:2s_cpct}); hence, the requirement of strong two-scale convergence is an additional assumption.
The following remark summarises additional two-scale convergence results obtained from the previously stated assumptions.
\begin{rem}[Two-scale convergences]\label{rem:2s_conv_psi}
	For the mathematical analysis to follow, the two-scale convergence of $\frac 1\eps (\psi_\eps - \mathrm{id}_{\OmeMS})$ is not needed (but see \autoref{rem:gen_2s_conv}). However, from a modelling point of view it guarantees that the limit deformation stays close to the identity, which can also be seen using that $\psi_\eps \to (\mathrm{id}_\Sigma,0)$ in the strong two-scale sense by \ref{it:T4:2s_conv}.
	Moreover, rewriting $F_\eps$ in terms of the cofactor matrix, we also have $F_\eps^{-1} \to F_0^{-1}$, and similarly $J_\eps \to J_0 \coloneqq \det (\DD_z \psi_0)$, in the strong two-scale sense by \autoref{lem:2s_prod}. In particular, by the characterisation of two-scale convergence in \autoref{lem:char_2s_conv} we have that, for all $t\in (0,T)$, $\psi_0(t,\cdot)\rvert_{\Sigma \times S_*^\pm} = \mathrm{id}_{\Sigma \times S_*^\pm}$ and, hence, $J_0^{\mathrm{M}} \equiv 1$ on $(0,T)\times \Sigma \times S_*^\pm$.
\end{rem}

The following remark gives a possible construction of an admissible transformation $\psi_\eps$ by describing the evolution of the channels locally in each microcell which is of particular interest in applications.
\begin{rem}[Local construction of $\psi_\eps$]
    We consider a function $\psi_{*} \colon [0,T] \times \overline{\Sigma} \times \overline{Z} \to \overline{Z}$ with the following properties:
    \begin{enumerate}[label=(\roman*)]
        \item\label{it:T*_est} 
        $\psi_* \in C^1([0,T];C^2(\overline{\Sigma}\times \overline{Z}))$ with $\Cknorm{\psi_*}{1}{[0,T]; C^2(\overline{\Sigma} \times \overline{Z})}\lesssim 1$ and $\det \DD_z \psi_* \gtrsim 1$.
        \item\label{it:T*_ext} There exists $\delta \in (0,\dist(N,\p Z))$ sufficiently small such that
        \begin{align}
        	\psi_*(t,x^\prime,z)=z \qquad \text{for all } (t,x^\prime,z)\in [0,T]\times \overline{\Sigma} \times B_\delta(\p Z).
        \end{align} 
    \end{enumerate}
    Then, after extending $\psi_*(t,x^\prime,\cdot)$ periodically to $\IR^{n-1} \times [-1,1]$ with periodicity cell $Y$, for $(t,x) \in [0,T]\times \overline{\OmeMS}$ we define 
    \begin{align}
        \psi_{\eps}(t,x)
        &\coloneqq \eps \left(\floor*{\tfrac{x^\prime}\eps},0\right) + \eps \psi_{*}\left(t,\eps\floor*{\tfrac{x^\prime}\eps},\tfrac x\eps - \left(\floor*{\tfrac{x^\prime}\eps},0\right)\right)
        = \eps \left(\floor*{\tfrac{x^\prime}\eps},0\right) + \eps \psi_{*}\left(t,\eps\floor*{\tfrac{x^\prime}\eps},\tfrac x\eps\right).
    \end{align}
    The incorporation of the variable $x^\prime \in \overline{\Sigma}$ allows the description of a non-periodic microstructure of the time-dependent layer $\OmeMS(t)$. 
    Then, a direct calculation shows that $\psi_\eps \in C^1([0,T]; C^2(\overline{\OmeMS}))$ as well as the validity of the estimates in \ref{it:T1:est_psi} and \ref{it:T2:est_J}.
    Similarly, the estimates for the shifts in \ref{it:T3:shifts} are obtained using the mean-value theorem.
    Defining the map $\psi_0 \colon [0,T] \times \overline{\Sigma} \times \overline{Z_*} \to \IR^n$ by
    \begin{align}
        \psi_0(t,x^\prime,z) = (x^\prime,0) + \psi_*(t,x^\prime,z),
    \end{align}
    the two-scale convergences in \ref{it:T4:2s_conv} are a direct consequence of the oscillation lemma on thin domains, see \cite[Lemma~4.3]{NeuJae07}, and the uniform convergence $\eps \floor*{\tfrac{x^\prime}{\eps}} \to x^\prime$.  
\end{rem}

\paragraph{Structural assumptions on the data} 
In order to obtain the existence of (weak) microsolutions and suitable a priori estimates, we make the following assumptions on the data $D_\eps$, $q_\eps$, and $U_{\eps,0}$ as well as on the terms $f$, $g$ and $h$:
\begin{enumerate}[label=(D\arabic*)]
    \item\label{it:propD} The diffusion coefficients $D_{j\eps}=(D_j^+,D_{j\eps}^{\mathrm{M}},D_j^-) \in L^\infty(\calQ_{T,\eps})^{n\times n}$ satisfy  $\Lpnorm{D_{j\eps}}{\infty}{\calQ_{T,\eps}}\lesssim 1$ and are strongly uniformly elliptic uniformly in $\eps$, i.e.~for any $\xi\in \IR^n$ we have $\xi^T D_{j\eps} \xi \gtrsim |\xi|^2$ uniformly in $(t,x) \in \QTe$.
    \item\label{it:propq} The advection velocities $q_{j\eps} \in L^\infty(\calQ_{T,\eps})^n$ satisfy $\Lpnorm{q_{j\eps}}{\infty}{\calQ_{T,\eps}}\lesssim 1$.
    \item\label{it:propU0} The initial conditions $U_{\eps,0}=(U_0^+,U_{\eps,0}^{\mathrm{M}},U_0^-)\colon \Omee(0) \to \IR^m$ satisfy
    \begin{align}
    	\sumpm\Lpnorm{U_0^\pm}{2}{\Omee^\pm(0)} + \frac 1\eps \Lpnorm{U_{\eps,0}^{\mathrm{M}}}{2}{\OmeMS(0)}\lesssim 1.
    \end{align}
    \item\label{it:propfg} The source/sink terms $f_{j},\, g_{j}, \,h_j \in L^2((0,T)\times \IR^m)$ are globally Lipschitz continuous in $z\in \IR^m$ uniformly with respect to $t\in (0,T)$. In particular, we have
    \begin{align}
    	|f_{j}(t,z)| + |g_{j}(t,z)| + |h_{j}(t,z)| \lesssim 1 + |z|\label{eq:growth_fgh}
    \end{align}
    uniformly in $t\in (0,T)$. 
\end{enumerate}

\begin{rem}[Assumptions on the reaction rates]
	To focus on the main aspects of this paper and for the sake of readability, we assume that the source/sink terms $f$, $g$ and $h$ are independent of the spatial variable. However, with only slight modifications of the proofs, the results of this work still can be obtained in this more general case if it is assumed that $f$, $g$ and $h$ are uniformly Lipschitz continuous with respect to the concentration variable. Then, after applying the transformation $\psi_\eps$, one obtains an additional $\eps$-dependency of the nonlinearities $g$ and $h$, and it is even possible to consider $\eps$-dependent nonlinearities right from the beginning if their two-scale convergence still can be guaranteed (see e.g.~\cite[Lemma~3.6]{GahNeu21}). It is further possible to relax the assumption on the Lipschitz continuity of the reaction rates by only requiring the Lipschitz continuity of the maps $z\mapsto [f_j(z) z]_+$, $z\mapsto [g_j(z) z]_+$, and $z\mapsto [h_j(z) z]_-$, where $[\cdot]_\pm$ denotes the positive/negative part, and absorbing the corresponding negative/positive part in the operator of the boundary value problem in a coercivity-preserving way.
\end{rem}

\paragraph{Estimates on the shifts of the data} 
The proof of strong two-scale convergence of the microsolutions is based on the following control of the shifts $D_\eps$, $q_\eps$, and $U_{\eps,0}$. As this property is needed only for the transformed problem on the reference domain $\Omee$, we formulate it for the functions
\begin{align}\label{eq:trafo_data}
	\bar D_{j\eps}^{\mathrm{M}} = D_{j\eps}^{\mathrm{M}}\circ_x \psi_\eps, \qquad 
	\bar q_{j\eps}^{\mathrm{M}} = q_{j\eps}^{\mathrm{M}} \circ_x \psi_\eps, \qquad 
	\tilde{U}_{\eps,0} = U_{\eps,0} \circ \psi_\eps(0,\cdot).
\end{align}

\begin{enumerate}[label=(S\arabic*)]
	\item\label{it:shifts:D} The diffusion coefficients $D_\eps$ satisfy $\Lpnorm{\delt \bar D_{j\eps}^{\mathrm{M}}}{\infty}{(0,T)\times \OmeMSh} \lesssim |l^\prime \eps|$.
	\item\label{it:shifts:q} The advection velocities $q_{\eps}$ satisfy $\Lpnorm{\delt \bar q_{j\eps}^{\mathrm{M}}}{\infty}{(0,T)\times \OmeMSh} \lesssim |l^\prime \eps|$.
	\item\label{it:shifts:U0} The initial conditions $U_{\eps,0}$ satisfy $\frac 1\eps \Lpnorm{\delt \tilde{U}_{j\eps,0}^{\mathrm{M}}}{2}{\OmeMSh} \to 0$ as $\eps l^\prime \to 0$.
\end{enumerate}

\paragraph{Asymptotic behaviour of the data} 
The limit behaviour of the functions $D_\eps$, $q_\eps$, and $U_{\eps,0}$ is assumed to be as follows: 
\begin{enumerate}[label=(A\arabic*)]
	\item\label{it:limit:D} There exists a strongly uniformly elliptic function $\bar D_0^{\mathrm{M}} \in L^\infty((0,T)\times \Sigma \times Z_*)^{n\times n}$ such that $\bar D_{j\eps}^{\mathrm{M}} \to \bar D_{j0}^{\mathrm{M}}$ in the strong two-scale sense.
	\item\label{it:limit:q} There exists a function $\bar q_0^{\mathrm{M}} \in L^\infty((0,T)\times \Sigma \times Z_*)^n$ such that $\bar q_{j\eps}^{\mathrm{M}} \to \bar q_{j0}^{\mathrm{M}}$ in the strong two-scale sense.
	\item\label{it:limit:U0} There exists a function $\tilde{U}_{0,0}^{\mathrm{M}} \in L^2(\Sigma \times Z_*)^n$ such that $J_\eps^{\mathrm{M}}(0)\tilde{U}_{j\eps,0}^{\mathrm{M}} \to J_0^{\mathrm{M}}(0)\tilde U_{j0,0}^{\mathrm{M}}$ in the strong two-scale sense. 
\end{enumerate}
Here, the two-scale convergence is assumed to hold with respect to test functions in $L^2$. In \ref{it:limit:U0}, we do not assume the (strong) two-scale convergence of the sequence $(\tilde{U}_{j\eps,0}^{\mathrm{M}})_{\eps}$ as the natural initial conditions for the microscopic problem are given in terms of the product $J_\eps^{\mathrm{M}}(0) \tilde{U}_{j\eps,0}^{\mathrm{M}}$, also see \autoref{def:weak_sol} and \autoref{cor:conv_u}~\ref{it:conv_ptu} below.

\section{Existence of microscopic solutions and a priori estimates}\label{sec:existence}
We introduce a further concept of weak solutions to \eqref{eq:RDA_full} by transforming the weak formulation \eqref{eq:RDA_weak_sol} to the static reference domain $\Omee$. This turns out to suitable for the application of standard tools from the theory of periodic homogenisation in thin layers, which enables us to pass to the limit $\eps \to 0$ in \autoref{sec:macro_prob}. The main ingredients for the limit passage are the a priori estimates and estimates for the shifts of the microsolutions derived in this section.

Given a function $u_\eps$ on $\QTeM$, we denote by $\tu_\eps$ its evaluation in the coordinates on $\OmeMS$, that is for $(t,\tx) \in (0,T) \times \OmeMS$ we have $\tu_\eps(t,\tx) = u_\eps(t,\psi_\eps(t,\tx))$. By the chain rule, we have
\begin{align}
    \nabla u_\eps(t,x)=F_\eps^{-\top} \nabla_{\tilde x} \tu_\eps\rvert_{(t,\psi_\eps^{-1}(t,x))},
\end{align}
whereas the surface measure transforms according to
\begin{align}
    \calH^{n-1}\restr{N_\eps(t)} = J_\eps \norm{F_\eps^{-\top} \tilde \nu_\eps^{\mathrm{M}}} \calH^{n-1}\restr{N_\eps},
\end{align}
see \cite[p.~117]{MarHug94} and the explicit calculation in three dimensions given in \cite[p.~39]{Cia88}, where $\tilde \nu_\eps^{\mathrm{M}}$ is the outer normal of $\OmeMS$ with respect to $N_\eps$. To keep notation simpler, we always write $\nu_\eps^{\mathrm{M}} = \tilde\nu_\eps^{\mathrm{M}}$ in what follows.

Following the approach of \cite{GahNeu21}, we further introduce Hilbert spaces $\calL_\eps$ and $\calH_\eps$ with inner products adapted to the scaling of the equation in the channel domain $\OmeMS$. We begin with
\begin{align}
	\calL_\eps \usesym{Leps} \coloneqq L^2(\Omee^+) \times L^2(\OmeMS)\times L^2(\Omee^-),
\end{align}
equipped with the inner product
\begin{align}
    \scp{v_\eps}{w_\eps}{\calL_\eps} \coloneqq \sumpm \scp{v_\eps^\pm}{w_\eps^\pm}{L^2(\Omee^\pm)} + \frac 1\eps \scp{v_\eps^{\mathrm{M}}}{w_\eps^{\mathrm{M}}}{L^2(\OmeMS)},
\end{align}
and define
\begin{align}
	\calH_\eps \usesym{Heps} \coloneqq \set{v_\eps=(v_\eps^+,v_\eps^{\mathrm{M}},v_\eps^-) \in H^1(\Omee^+) \times H^1(\OmeMS)\times H^1(\Omee^-)}{v_\eps^\pm = v_\eps^{\mathrm{M}} \text{ on } S_{*,\eps}^\pm}
\end{align}
with inner product
\begin{align}
    \scp{v_\eps}{w_\eps}{\calH_\eps} 
    \coloneqq \scp{v_\eps}{w_\eps}{\calL_\eps} + \sumpm \scp{\nabla v_\eps^\pm}{\nabla w_\eps^\pm}{L^2(\Omee^\pm)} + \eps \scp{\nabla v_\eps^{\mathrm{M}}}{\nabla w_\eps^{\mathrm{M}}}{L^2(\OmeMS)}.
\end{align}
To shorten notation, we write $\calH_\eps^{\mathrm{M}}\usesym{HepsM}$ for the space of restrictions to $\OmeMS$ of functions belonging to $\calH_\eps$. 
Clearly, we have $\calL_\eps = L^2(\Omee)$ and $\calH_\eps = H^1(\Omee)$ in a topological sense by identifying the latter spaces with the product spaces on the different subdomains, and consequently $\calH_{\eps} \xhookrightarrow{} L^2(\Omee) \xhookrightarrow{} \calH_{\eps}^\prime$ is a Gelfand triple. Assuming that a function $v_\eps \in L^2(0,T;\calH_\eps)$ with $\p_t v_\eps \in L^2(0,T;\calH_\eps^\prime)$ is more regular in the sense that $\p_t v_\eps^\pm \in L^2(0,T;(H^1(\Omee^\pm))^\prime)$ and $\p_t v_\eps^{\mathrm{M}} \in L^2(0,T;(H^1(\OmeMS))^\prime)$, from the previous Gelfand triple we obtain
\begin{align}
    \dotproduct{\p_t v_\eps}{\phi_\eps}{\calH_\eps^\prime,\,\calH_\eps}
    &= \sumpm \dotproduct{\p_t v_\eps^\pm}{\phi_\eps}{(H^1(\Omee^\pm))^\prime,\, H^1(\Omee^\pm)} 
    + \frac 1\eps \dotproduct{\p_t v_\eps^{\mathrm{M}}}{\phi_\eps}{(H^1(\OmeMS))^\prime,\, H^1(\OmeMS)},\label{eq:ptu_higher_reg}
\end{align}
which shows how the generalised time derivative of $v_\eps$ is related to the scaling of the weak equation in the thin layer $\OmeMS$.
To simplify the writing and using the notation from \eqref{eq:trafo_data}, we introduce the transformed data 
\begin{align}
	\tD_{j\eps}^{\mathrm{M}} = F_\eps^{-1} \bar D_{j\eps}^{\mathrm{M}} F_\eps^{-\top}, \qquad 
	\tq_{j\eps}^{\mathrm{M}} = F_\eps^{-1} \bar q_{j\eps}^{\mathrm{M}}, \qquad 
	\tb_\eps^{\mathrm{M}} = -\p_t \psi_\eps^{-1}\circ_x \psi_\eps = F_\eps^{-1} b_\eps^{\mathrm{M}}\circ_x \psi_\eps = F_\eps^{-1} \p_t \psi_\eps
\end{align}
and recall that $\tilde{U}_{\eps,0} = U_{\eps,0} \circ \psi_\eps(0,\cdot)$. 
We choose test functions $\Phi_\eps = \phi_\eps \circ_x \psi_\eps^{-1}$ for some $\phi_\eps \in C^1([0,T]\times \overline{\Omee})^m$ in \eqref{eq:RDA_weak_sol}, apply the inverse transformation $\psi_\eps^{-1}$ and integrate by parts in time for sufficiently regular $\tu_\eps$ by taking into account \eqref{eq:ptu_higher_reg}. Then, the weak formulation \eqref{eq:RDA_weak_sol} yields 
\begin{align}
	\begin{split}\label{eq:RDA_fd_weak}
		&\dotproduct{\p_t(J_\eps \tu_{j\eps})}{\phi_{j\eps}}{\calH_\eps^\prime,\,\calH_\eps} 
		+ \sumpm \int_{\Omee^\pm} D_{j}^\pm \nabla \tu_{j\eps}^\pm \cdot \nabla \phi_{j\eps} \dd x
		+\eps\int_{\OmeMS} J_\eps^{\mathrm{M}} \tD_{j\eps}^{\mathrm{M}} \nabla \tu_{j\eps}^{\mathrm{M}} \cdot \nabla \phi_{j\eps} \dd x\\
		&\quad - \sumpm \int_{\Omee^\pm} \tu_{j\eps}^\pm q_{j}^\pm \cdot \nabla \phi_{j\eps} \dd x
		- \int_{\OmeMS} J_\eps^{\mathrm{M}} \tu_{j\eps}^{\mathrm{M}} \tq_{j\eps}^{\mathrm{M}} \cdot \nabla \phi_{j\eps} \dd x
		+ \frac 1\eps \int_{\OmeMS} J_\eps^{\mathrm{M}} \tu_{j\eps}^{\mathrm{M}} \tb_\eps^{\mathrm{M}} \cdot \nabla \phi_{j\eps} \dd x\\
		&= \sumpm \int_{\Omee^\pm} f_{j}(\tu_{\eps}^\pm)\phi_{j\eps} \dd x
		+ \frac 1\eps \int_{\OmeMS} J_\eps^{\mathrm{M}} g_{j}(\tu_\eps^{\mathrm{M}}) \phi_{j\eps} \dd x
		- \int_{N_\eps} J_\eps^{\mathrm{M}} \norm{F_\eps^{-\top} \nu_\eps^{\mathrm{M}}} h_{j}(\tu_\eps^{\mathrm{M}}) \phi_{j\eps} \dd \calH^{n-1}(x)
	\end{split}
\end{align}
for all $\phi_\eps \in C^1(\overline{\Ome})^m$ and almost every $t\in (0,T)$. Here, the appearance of the term involving $\tilde b_\eps^{\mathrm{M}}$ is a consequence of the chain rule applied to $\phi$. To use a unified notation, we consider weak solutions $\tu_\eps$ as functions on $(0,T)\times \Omee$ taking into account the extension property of $\psi_\eps$ stated in \ref{it:T1:est_psi}.

\begin{definition}[Weak solution]\label{def:weak_sol}
    A function $\tu_\eps \in L^2(0,T;\calH_\eps^m)$ with $\p_t(J_\eps \tu_\eps) \in L^2(0,T;(\calH_\eps^m)^\prime)$ is called \underline{\em weak solution of the reference problem} if for all $\phi_\eps\in \calH_\eps^m$ and almost every $t\in (0,T)$ there holds \eqref{eq:RDA_fd_weak}
    and the initial condition $(J_\eps \tu_\eps)(0) = J_\eps(0)\tilde{U}_{\eps,0}$ is satisfied a.e.~in $\Omee$. 
\end{definition}
By standard embedding theorems for Sobolev--Bochner spaces, we have $J_\eps \tu_\eps \in C^0([0,T];\calL_\eps^m)$; hence, the initial condition is well-posed. Due to the regularity of $\p_t(J_\eps \tu_\eps)$ and $J_\eps$, we further have 
\begin{align}\label{eq:ptu}
	\p_t \tu_\eps = J_\eps^{-1} \p_t(J_\eps \tu_\eps) - J_\eps^{-1} \tu_\eps \p_t J_\eps \in L^2(0,T;(\calH_\eps^m)^\prime),
\end{align}
and, therefore, $\tu_\eps \in C^0([0,T];\calL_\eps^m)$. Consequently, the initial conditions $(J_\eps \tu_\eps)(0) = J_\eps(0)\tilde{U}_{\eps,0}$ and $\tu_\eps(0) = \tilde{U}_{\eps,0}$ are equivalent.

\begin{rem}[Strong formulation of \eqref{eq:RDA_fd_weak}]
{\mathtoolsset{showonlyrefs=false} 
For a sufficiently regular weak solution $\tu_\eps$ the strong problem associated with \eqref{eq:RDA_fd_weak} is given by 
\begin{subequations}
\label{eq:RDA_fd_full}
\begin{align}\label{eq:RDA_fd}
	\left\{\!\!\begin{array}{r@{\ }c@{\ }l@{\ }c@{\ \, }l}
		\p_t u_{j\eps}^\pm - \nabla \cdot\left(D_{j}^\pm \nabla u_{j\eps}^\pm - u_{j\eps}^\pm q_{j}^\pm\right)  & = & f_{j}(u_\eps^\pm) && \text{in } (0,T)\times \Omee^\pm,\\
		\frac 1 \eps\p_t(J_\eps^{\mathrm{M}} \tu_{j\eps}^{\mathrm{M}}) - \nabla \cdot \left(\eps J_\eps^{\mathrm{M}} \tD_{j\eps}^{\mathrm{M}} \nabla \tu_{j\eps}^{\mathrm{M}} - J_\eps^{\mathrm{M}} \tu_{j\eps}^{\mathrm{M}} \tq_{j\eps}^{\mathrm{M}} + \frac 1\eps J_\eps^{\mathrm{M}} \tu_{j\eps}^{\mathrm{M}} \tb_\eps^{\mathrm{M}}\right) & = & \frac 1\eps J_\eps^{\mathrm{M}} g_{j}(\tu_\eps^{\mathrm{M}}) && \text{in } (0,T)\times \OmeMS,
	\end{array}\right.
\end{align}
equipped with the boundary conditions
\begin{align}\label{eq:RDA_BC_fd}
	\left\{\!\!\begin{array}{r@{\ }c@{\ }l@{\ }c@{\ \, }l}
        -(D_j^\pm \nabla u_{j\eps}^\pm - u_{j\eps}^\pm q_j^\pm )\cdot \nu^\pm & = & 0 && \text{on } (0,T)\times (\p\Omee^\pm\setminus S_{*,\eps}^\pm), \\
        -(\eps \tD_{j\eps}^{\mathrm{M}} \nabla \tu_{j\eps}^{\mathrm{M}} - \tu_{j\eps}^{\mathrm{M}} \tq_{j\eps}^{\mathrm{M}} + \frac 1\eps \tu_{j\eps}^{\mathrm{M}} \tb_\eps^{\mathrm{M}}) \cdot \nu_\eps^{\mathrm{M}} & = & \norm{F_\eps^{-\top} \nu_\eps^{\mathrm{M}}} h_{j}(\tu_\eps^{\mathrm{M}}) && \text{on } (0,T)\times N_\eps,
	\end{array} \right.
\end{align}
the initial condition $\tu_\eps(0)=\tilde{U}_{\eps,0}$ and the transmission conditions 
\begin{align}\label{eq:RDA_TC_fd}
	\left\{\!\!\begin{array}{r@{\ }c@{\ }l@{\ }c@{\ \, }l}
		\tu_\eps^\pm & = & \tu_\eps^{\mathrm{M}} && \text{on } (0,T) \times S_{*,\eps}^\pm,  \\
        \left(-D_{j}^\pm \nabla \tu_{j\eps}^\pm + \tu_{j\eps}^\pm q_{j}^\pm \right) \cdot \nu_\eps^\pm & = & \left(-\eps \tD_{j\eps}^{\mathrm{M}} \nabla \tu_{j\eps}^{\mathrm{M}} + \tu_{j\eps}^{\mathrm{M}} \tq_{j\eps}^{\mathrm{M}}\right) \cdot \nu_\eps^\pm && \text{on } (0,T) \times S_{*,\eps}^\pm.
	\end{array}\right.
\end{align}
\end{subequations}}%
We emphasise that the boundary conditions and the transmission conditions are incorporated in the weak formulation \eqref{eq:RDA_fd_weak} and the definition of the test space $\calH_\eps$ implicitly.
\end{rem}

\subsection{A priori estimates}
We first recall the following trace inequality for functions defined on $\OmeMS$.
\begin{lemma}[Trace estimate]\label{lem:traces}
	For every $\theta \in (0,1]$ and $v_\eps \in H^1(\OmeMS)$, there holds
	\begin{align}
		\int_{N_\eps} |v_\eps|^2 \dd \calH^{n-1}(x)
		\lesssim \theta \eps \int_{\OmeMS} |\nabla v_\eps|^2 \dd x 
		+ \frac{1}{\theta\eps} \int_{\OmeMS} |v_\eps|^2 \dd x.
	\end{align}
\end{lemma}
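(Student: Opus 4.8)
The plan is to deduce the estimate from a scale-independent trace inequality on the fixed reference channel $Z_*$, followed by rescaling and summation over the periodicity cells indexed by $\calI_\eps$.

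First I would record the following refined trace inequality on the reference channel: since $Z_*\subset Z$ is a bounded Lipschitz domain and $N\subset\p Z_*$, there is a constant $C=C(Z_*)>0$ such that
\begin{align}
	\int_N |w|^2 \dd\calH^{n-1}(z) \le C\left(\theta \int_{Z_*} |\nabla w|^2 \dd z + \frac 1\theta \int_{Z_*} |w|^2 \dd z\right)
\end{align}
for all $\theta\in(0,1]$ and all $w\in H^1(Z_*)$. This is standard: for $w\in C^1(\overline{Z_*})$ one flattens $\p Z_*$ locally, expresses $|w|^2$ on the boundary through the fundamental theorem of calculus applied to $\p_n(|w|^2)$ over a thin interior slab, and estimates the resulting term $\int_{Z_*}|w|\,|\nabla w|\dd z$ by Young's inequality with weight $\theta$, absorbing the lower-order contributions into $\theta^{-1}$ since $\theta\le 1$; density of $C^1(\overline{Z_*})$ in $H^1(Z_*)$ then gives the general case. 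Alternatively, this may simply be cited from the thin-domain homogenisation literature (e.g.\ \cite{NeuJae07, GahNeu21}).

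Next I would rescale. Fix $k'\in\calI_\eps$ and, given $v_\eps\in H^1(\OmeMS)$, set $w(z)\coloneqq v_\eps(\eps(z+(k',0)))$, which belongs to $H^1(Z_*)$. The change of variables $x=\eps(z+(k',0))$ has Jacobian $\eps^n$, scales the $(n-1)$-dimensional surface measure by $\eps^{n-1}$, and satisfies $\nabla_x=\eps^{-1}\nabla_z$; substituting into the inequality above (valid with the same constant $C$ for every $k'$ by translation invariance) and multiplying by $\eps^{n-1}$ yields
\begin{align}
	\int_{\eps(N+(k',0))} |v_\eps|^2 \dd\calH^{n-1}(x) \le C\left(\theta\eps \int_{\eps(Z_*+(k',0))} |\nabla v_\eps|^2 \dd x + \frac{1}{\theta\eps} \int_{\eps(Z_*+(k',0))} |v_\eps|^2 \dd x\right).
\end{align}
Summing over $k'\in\calI_\eps$ and using that $N_\eps=\bigcup_{k'}\eps(N+(k',0))$ and $\OmeMS=\bigcup_{k'}\eps(Z_*+(k',0))$ are disjoint unions up to null sets then gives the claim.

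The only genuinely delicate point is obtaining the reference trace inequality in the interpolated form with the sharp $\theta$-dependence (rather than the plain bound $\|w\|_{L^2(N)}\lesssim\|w\|_{H^1(Z_*)}$), together with the careful bookkeeping of the powers of $\eps$ in the rescaling step — it is precisely this bookkeeping that produces the characteristic $\theta\eps$ and $(\theta\eps)^{-1}$ weights of thin-layer estimates. Everything else is routine.
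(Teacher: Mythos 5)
Your proof is correct and follows essentially the same route as the paper: decompose $\OmeMS$ into the $\eps$-scaled translates of $Z_*$, rescale each to the reference cell, invoke the weighted (i.e.\ $\theta$-interpolated) trace inequality there (the paper cites Grisvard, Theorem~1.5.1.10, where you sketch a derivation or cite the thin-domain literature), and sum over $k'\in\calI_\eps$. Your $\eps$-bookkeeping in the rescaling step is carried out correctly and reproduces exactly the $\theta\eps$ and $(\theta\eps)^{-1}$ weights of the statement, matching the paper's one-line proof.
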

\begin{proof}
	Decomposing $\OmeMS$ into (shifted and scaled) microcells $\eps(Z_* + (k^\prime,0))$, where $k^\prime\in \calI_\eps$, transforming the domain of integration to $Z_*$, and using the weighted trace inequality for Lipschitz domains as given e.g.~in \cite[Theorem~1.5.1.10]{Gri85} on $\p Z_*$, the result is directly obtained.
\end{proof}

For the weak solutions $\tu_\eps$ of \eqref{eq:RDA_fd_weak}, we have the following a priori estimates.
\begin{lemma}[A priori estimates]\label{lem:apriori}
	The weak solutions $\tu_\eps$ of \eqref{eq:RDA_fd_weak} satisfy the a priori estimates
	\begin{subequations}
    \begin{align}
		\Lpnorm{\tu_\eps}{\infty}{0,T;\calL_\eps^m} + \Lpnorm{\tu_\eps}{2}{0,T;\calH_\eps^m} &\lesssim 1, \label{eq:apriori1}\\
		\Lpnorm{\p_t(J_\eps \tu_\eps)}{2}{0,T;(\calH_\eps^m)^\prime} + \Lpnorm{J_\eps \tu_\eps}{2}{0,T;\calH_\eps^m} & \lesssim 1,\label{eq:apriori2}
	\end{align}
	\end{subequations}
    where the constants only depend on $T$ and the constants from \ref{it:T1:est_psi}, \ref{it:propD}--\ref{it:propfg} and \autoref{lem:traces}.
\end{lemma}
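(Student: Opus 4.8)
The standard route is to test the weak formulation \eqref{eq:RDA_fd_weak} with $\phi_{j\eps} = \tu_{j\eps}(t)$ for a.e.\ $t$, sum over $j$, and derive an energy (in)equality, then close a Grönwall argument. First I would recognise that the time-derivative term reproduces a total derivative: since $\p_t(J_\eps\tu_{j\eps})\in L^2(0,T;(\calH_\eps)')$ and $\tu_{j\eps}\in L^2(0,T;\calH_\eps)$, the chain rule for Bochner spaces together with \eqref{eq:ptu} gives
\begin{align*}
\dotproduct{\p_t(J_\eps\tu_{j\eps})}{\tu_{j\eps}}{\calH_\eps',\calH_\eps}
= \tfrac{1}{2}\tfrac{\dd}{\dd t}\scp{J_\eps\tu_{j\eps}}{\tu_{j\eps}}{\calL_\eps}
+ \tfrac{1}{2}\dotproduct{\p_t J_\eps}{\tu_{j\eps}^2}{\calH_\eps',\calH_\eps},
\end{align*}
where the last term is controlled using the bound $\Lpnorm{\p_t J_\eps}{2}{0,T;\calH_\eps'}\lesssim 1$ from \ref{it:T2:est_J}, together with $\tu_{j\eps}^2$ being bounded in $\calH_\eps$ by $\Lpnorm{\tu_{j\eps}}{\infty}{\calL_\eps}\Lpnorm{\tu_{j\eps}}{2}{\calH_\eps}$ — but this product estimate is exactly what we are trying to prove, so one must be careful to treat $\|\tu_{j\eps}(t)\|_{\calL_\eps}^2$ as the Grönwall quantity and absorb the gradient part. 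Because $J_\eps\simeq 1$, the quadratic form $\scp{J_\eps\tu_{j\eps}}{\tu_{j\eps}}{\calL_\eps}$ is comparable to $\Lpnorm{\tu_{j\eps}}{2}{\calL_\eps}^2$.

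The diffusion terms are handled by the uniform ellipticity \ref{it:propD}: $\sum_\pm\int_{\Omee^\pm} D_j^\pm\nabla\tu_{j\eps}^\pm\cdot\nabla\tu_{j\eps}^\pm + \eps\int_{\OmeMS}J_\eps^{\mathrm{M}}\tD_{j\eps}^{\mathrm{M}}\nabla\tu_{j\eps}^{\mathrm{M}}\cdot\nabla\tu_{j\eps}^{\mathrm{M}}\gtrsim \|\nabla\tu_{j\eps}\|_{\text{weighted}}^2$, the natural $\calH_\eps$-seminorm; here one uses $J_\eps\simeq 1$ and that $\tD_{j\eps}^{\mathrm{M}} = F_\eps^{-1}\bar D_{j\eps}^{\mathrm{M}}F_\eps^{-\top}$ is still uniformly elliptic because $\|F_\eps\|_{C^0}\lesssim 1$ by \ref{it:T1:est_psi} combined with $J_\eps\simeq 1$ (so $F_\eps^{-1}$ is also uniformly bounded, via the cofactor formula). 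The advection terms $\int \tu_{j\eps}q_j\cdot\nabla\tu_{j\eps}$ and the drift term $\tfrac{1}{\eps}\int_{\OmeMS}J_\eps^{\mathrm{M}}\tu_{j\eps}^{\mathrm{M}}\tb_\eps^{\mathrm{M}}\cdot\nabla\tu_{j\eps}^{\mathrm{M}}$ are estimated by Cauchy–Schwarz and Young's inequality; the crucial point for the drift term is that $\tb_\eps^{\mathrm{M}} = F_\eps^{-1}\p_t\psi_\eps$ satisfies $\tfrac{1}{\eps}\|\tb_\eps^{\mathrm{M}}\|_{C^0}\lesssim 1$ by \ref{it:T1:est_psi}, so $\tfrac{1}{\eps}\int_{\OmeMS}|\tu_{j\eps}^{\mathrm{M}}||\tb_\eps^{\mathrm{M}}||\nabla\tu_{j\eps}^{\mathrm{M}}| \lesssim \tfrac{1}{\eps}\|\tu_{j\eps}^{\mathrm{M}}\|_{L^2(\OmeMS)}\|\nabla\tu_{j\eps}^{\mathrm{M}}\|_{L^2(\OmeMS)} = \big(\tfrac{1}{\sqrt\eps}\|\tu_{j\eps}^{\mathrm{M}}\|_{L^2}\big)\big(\tfrac{1}{\sqrt\eps}\|\nabla\tu_{j\eps}^{\mathrm{M}}\|_{L^2}\big)$, and the second factor is $\eps^{-1/2}\|\nabla\tu_{j\eps}^{\mathrm{M}}\|$, not $\eps^{+1/2}\|\nabla\tu_{j\eps}^{\mathrm{M}}\|$ — so Young's inequality produces $\tfrac{1}{\eps}\|\nabla\tu_{j\eps}^{\mathrm{M}}\|_{L^2}^2$, which is the wrong scaling to absorb into the diffusion term $\eps\|\nabla\tu_{j\eps}^{\mathrm{M}}\|_{L^2}^2$. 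I expect this mismatch to be the main obstacle, and the resolution is to split the drift term by integration by parts, moving the derivative onto $\tu_{j\eps}^{\mathrm{M}}\tb_\eps^{\mathrm{M}}$: one writes $\tfrac{1}{\eps}\int_{\OmeMS}J_\eps^{\mathrm{M}}\tu_{j\eps}^{\mathrm{M}}\tb_\eps^{\mathrm{M}}\cdot\nabla\tu_{j\eps}^{\mathrm{M}} = \tfrac{1}{2\eps}\int_{\OmeMS}J_\eps^{\mathrm{M}}\tb_\eps^{\mathrm{M}}\cdot\nabla(\tu_{j\eps}^{\mathrm{M}})^2$, integrates by parts, and uses $\tfrac{1}{\eps}\|\div(J_\eps^{\mathrm{M}}\tb_\eps^{\mathrm{M}})\|_{\infty}\lesssim \tfrac{1}{\eps}(\|\nabla J_\eps^{\mathrm{M}}\|_\infty\|\tb_\eps^{\mathrm{M}}\|_\infty + \|J_\eps^{\mathrm{M}}\|_\infty\|\nabla\tb_\eps^{\mathrm{M}}\|_\infty)$, which is $O(1)$ by \eqref{eq:est_J} (the $\eps\|\nabla J_\eps\|_\infty\lesssim 1$ bound together with $\tfrac{1}{\eps}\|\p_t\psi_\eps\|_{C^1}\lesssim 1$), so the resulting term is $\tfrac{1}{\eps}\int_{\OmeMS}(\tu_{j\eps}^{\mathrm{M}})^2\lesssim \|\tu_{j\eps}\|_{\calL_\eps}^2$, of Grönwall type; the boundary terms on $N_\eps$ from this integration by parts vanish or are absorbed into the $h_j$-term since $\tb_\eps^{\mathrm{M}}\cdot\nu_\eps^{\mathrm{M}}$ relates to the normal velocity (alternatively, the $\p_t$-term and the drift term combine so that together they yield $\tfrac{1}{2\eps}\tfrac{\dd}{\dd t}\int_{\OmeMS}J_\eps^{\mathrm{M}}(\tu_{j\eps}^{\mathrm{M}})^2$ up to $O(1)$-controlled remainders, which is the cleanest way to see it).

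The reaction and surface terms are the final ingredient: by the linear growth \eqref{eq:growth_fgh}, $|\sum_\pm\int_{\Omee^\pm}f_j(\tu_\eps^\pm)\tu_{j\eps}^\pm| + \tfrac{1}{\eps}|\int_{\OmeMS}J_\eps^{\mathrm{M}}g_j(\tu_\eps^{\mathrm{M}})\tu_{j\eps}^{\mathrm{M}}|\lesssim 1 + \|\tu_\eps\|_{\calL_\eps}^2$ using $J_\eps\simeq 1$; the surface term $\int_{N_\eps}J_\eps^{\mathrm{M}}\|F_\eps^{-\top}\nu_\eps^{\mathrm{M}}\|h_j(\tu_\eps^{\mathrm{M}})\tu_{j\eps}^{\mathrm{M}}\dd\calH^{n-1}$ is bounded, again via \eqref{eq:growth_fgh} and the uniform bounds on $J_\eps, F_\eps^{-1}$, by $\int_{N_\eps}(1+|\tu_{j\eps}^{\mathrm{M}}|^2)\dd\calH^{n-1}$, and then \autoref{lem:traces} with a small parameter $\theta$ gives $\lesssim |N_\eps| + \theta\eps\|\nabla\tu_{j\eps}^{\mathrm{M}}\|_{L^2(\OmeMS)}^2 + \tfrac{1}{\theta\eps}\|\tu_{j\eps}^{\mathrm{M}}\|_{L^2(\OmeMS)}^2$; here $|N_\eps|\lesssim 1$ because $N_\eps$ consists of $O(\eps^{-(n-1)})$ copies of $\eps N$, each of surface measure $O(\eps^{n-1})$, choosing $\theta$ small absorbs the gradient term into the diffusion coercivity, and the last term is $\lesssim\tfrac{1}{\eps}\|\tu_{j\eps}^{\mathrm{M}}\|_{L^2}^2\lesssim\|\tu_\eps\|_{\calL_\eps}^2$, again Grönwall-admissible. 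Collecting everything, one obtains for a.e.\ $t$
\begin{align*}
\tfrac{\dd}{\dd t}\|\tu_\eps(t)\|_{\calL_\eps^m}^2 + c\,\|\tu_\eps(t)\|_{\calH_\eps^m}^2 \lesssim 1 + \|\tu_\eps(t)\|_{\calL_\eps^m}^2,
\end{align*}
and integrating, using the initial bound $\|\tu_\eps(0)\|_{\calL_\eps} = \|\tilde U_{\eps,0}\|_{\calL_\eps}\lesssim 1$ from \ref{it:propU0} together with $J_\eps(0)\simeq 1$, Grönwall's lemma yields \eqref{eq:apriori1}. Finally, \eqref{eq:apriori2} follows: $\Lpnorm{J_\eps\tu_\eps}{2}{0,T;\calH_\eps^m}\lesssim 1$ is immediate from $J_\eps\simeq 1$, $\eps\|\nabla J_\eps\|_\infty\lesssim 1$ and \eqref{eq:apriori1} (the gradient of the product is $J_\eps\nabla\tu_\eps + \tu_\eps\nabla J_\eps$, with the weighted $\eps$-scaling matching), and the bound on $\p_t(J_\eps\tu_\eps)$ in $L^2(0,T;(\calH_\eps^m)')$ is read off directly from the weak formulation \eqref{eq:RDA_fd_weak}: testing against arbitrary $\phi_\eps\in\calH_\eps^m$ and estimating each term on the left and right by the already-established bounds \eqref{eq:apriori1} plus the uniform coefficient bounds and \autoref{lem:traces}, every term is controlled by $\|\phi_\eps\|_{\calH_\eps^m}$ times an $L^2_t$ function, so that $\|\p_t(J_\eps\tu_\eps)(t)\|_{(\calH_\eps^m)'}$ is $L^2$ in time with the claimed uniform bound. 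The only place where one must be vigilant about $\eps$-powers throughout is the thin-layer terms, where the $\tfrac{1}{\eps}$ prefactor in the definition of $\calL_\eps$ and the $\eps$ prefactor on the layer diffusion must be tracked against the $\tfrac{1}{\eps}$ and $\eps$ appearing explicitly in \eqref{eq:RDA_fd_weak}; this bookkeeping, rather than any single hard estimate, is the real content.
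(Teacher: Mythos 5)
Your overall scheme --- test with $\tu_{j\eps}$, use the Gelfand-triple chain rule to produce $\tfrac{\dd}{\dd t}\norms{\sqrt{J_\eps}\tu_{j\eps}}{\calL_\eps}^2$, establish coercivity via \ref{it:propD} and \ref{it:T1:est_psi}, estimate advection and nonlinearities by Young and the trace inequality, and close with Gr\"onwall --- matches the paper's, and you correctly locate the critical obstacle at the drift term. But your proposed resolution does not go through. You want to integrate by parts on the drift term alone and bound $\tfrac{1}{\eps}\Lpnorm{\div(J_\eps^{\mathrm{M}}\tb_\eps^{\mathrm{M}})}{\infty}{(0,T)\times\OmeMS}$; but Jacobi's formula combined with Piola's identity gives $\div(J_\eps^{\mathrm{M}}\tb_\eps^{\mathrm{M}}) = \p_t J_\eps^{\mathrm{M}}$ \emph{exactly}, and the assumptions provide no pointwise bound on $\p_t J_\eps^{\mathrm{M}}$: \ref{it:T2:est_J} only gives $\Lpnorm{\p_t J_\eps}{2}{0,T;\calH_\eps^\prime}\lesssim 1$, and the paper's proof explicitly notes that no $C^0$-bound on $\p_t J_\eps^{\mathrm{M}}$ is available. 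The bound $\tfrac{1}{\eps}\Cknorm{\p_t\psi_\eps}{1}{[0,T]\times\overline{\OmeMS}}\lesssim 1$ you invoke is also not part of \ref{it:T1:est_psi}, which controls only the $C^0$-norm of $\p_t\psi_\eps$. Finally, the boundary terms produced on $N_\eps$ neither vanish (only the contribution on $S_{*,\eps}^\pm$ vanishes, because $\tb_\eps^{\mathrm{M}}=0$ there) nor get absorbed into the $h_j$-term.

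What actually closes the estimate is the cancellation you only gesture at in your parenthetical alternative, but which you did not work out and which is the crux of the lemma. The chain-rule correction $-\tfrac{1}{2\eps}\int_{\OmeMS}\p_t J_\eps^{\mathrm{M}}(\tu_{j\eps}^{\mathrm{M}})^2\,\dd x$ is itself rewritten as $-\tfrac{1}{2\eps}\int_{\OmeMS}\div(J_\eps^{\mathrm{M}}\tb_\eps^{\mathrm{M}})(\tu_{j\eps}^{\mathrm{M}})^2\,\dd x$; after integration by parts the resulting bulk integral cancels \emph{identically} with the drift term, and only the boundary integral $-\tfrac{1}{2\eps}\int_{N_\eps}J_\eps^{\mathrm{M}}\,\tb_\eps^{\mathrm{M}}\cdot\nu_\eps^{\mathrm{M}}\,(\tu_{j\eps}^{\mathrm{M}})^2\,\dd\calH^{n-1}$ survives. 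That surviving integral is estimated on its own: $\Cknorm{\tb_\eps^{\mathrm{M}}}{0}{[0,T]\times\overline{\OmeMS}}\lesssim\eps$ from \ref{it:T1:est_psi} cancels the $\tfrac{1}{\eps}$ prefactor, and \autoref{lem:traces} converts the surface integral into $\theta\eps\Lpnorm{\nabla\tu_{j\eps}^{\mathrm{M}}}{2}{\OmeMS}^2 + \tfrac{1}{\theta\eps}\Lpnorm{\tu_{j\eps}^{\mathrm{M}}}{2}{\OmeMS}^2$, which is Gr\"onwall-admissible. The point you missed is that no pointwise control of $\nabla\tb_\eps^{\mathrm{M}}$ or $\p_t J_\eps^{\mathrm{M}}$ is ever used: the dangerous bulk divergence disappears by exact cancellation, not by estimation.
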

\begin{proof}
    Similar estimates were proved by analogous arguments in \cite[Lemma~1]{GahNeuPop21} for periodically perforated bulk domains, and in \cite[Lemma~2.3]{GahNeu21} for non-evolving channels in the thin layer $\OmeM$, that is for $J_\eps \equiv 1$. As $\calH_\eps \xhookrightarrow{} \calL_\eps \xhookrightarrow{}\calH_\eps^\prime$ is a Gelfand triple, due to the regularity in time of $J_\eps$, we have 
	\begin{align}
		\frac 12 \frac{\dd}{\dd t} \norms{\sqrt{J_\eps}\tu_{j\eps}}{\calL_\eps}^2
		&= \dotproduct{\p_t(\sqrt{J_\eps} \tu_{j\eps})}{\sqrt{J_\eps} \tu_{j\eps}}{\calH_\eps^\prime,\,\calH_\eps}\\
		&= \dotproduct{\p_t(J_\eps \tu_{j\eps})}{\tu_{j\eps}}{\calH_\eps^\prime,\,\calH_\eps} - \frac 12 \dotproduct{(\p_t J_\eps) \tu_{j\eps}}{\tu_{j\eps}}{\calH_\eps^\prime,\,\calH_\eps}\\
		&=\dotproduct{\p_t(J_\eps \tu_{j\eps})}{\tu_{j\eps}}{\calH_\eps^\prime,\,\calH_\eps} - \frac 1{2\eps} \scp{\p_t J_\eps^{\mathrm{M}}}{(\tu_{j\eps}^{\mathrm{M}})^2}{L^2(\OmeMS)}. \label{eq:dual_pairing_u}
	\end{align}
    Rewriting $F_\eps^{-1}$ in terms of the cofactor matrix by Cramer's rule, we also have $\Cknorm{F_\eps^{-1}}{0}{[0,T]\times \overline{\OmeMS}} \lesssim1$. By \ref{it:propD} and \ref{it:T1:est_psi} the diffusion coefficients $\tD_{j\eps}$ are strongly uniformly elliptic uniformly in $\eps$.
    Testing the weak formulation \eqref{eq:RDA_fd_weak} with the function $\tu_{j\eps}$ itself, using $J_\eps \gtrsim 1$, we thus obtain
    \begin{align}
        &\frac{\dd}{\dd t} \norms{\sqrt{J_\eps}\tu_{j\eps}}{\calL_\eps}^2 
        + \sumpm \Lpnorm{\nabla \tu_{j\eps}^\pm}{2}{\Omee^\pm}^2 
        + \eps \Lpnorm{\nabla \tu_{j\eps}^{\mathrm{M}}}{2}{\OmeMS}^2\\
        &\quad \lesssim \sumpm \int_{\Omee^\pm} \tu_{j\eps}^\pm q_j^\pm \cdot \nabla \tu_{j\eps}^\pm \dd x
        + \int_{\OmeMS} J_\eps^{\mathrm{M}} \tu_{j\eps}^{\mathrm{M}} \tq_{j\eps}^{\mathrm{M}} \cdot \nabla \tu_{j\eps}^{\mathrm{M}} \dd x 
        - \frac 1\eps \int_{\OmeMS} J_\eps^{\mathrm{M}} \tu_{j\eps}^{\mathrm{M}} \tb_\eps^{\mathrm{M}} \cdot \nabla \tu_{j\eps}^{\mathrm{M}} \dd x \\
        &\qquad - \frac 1 {2\eps} \int_{\OmeMS} \p_t J_\eps^{\mathrm{M}} (\tu_{j\eps}^{\mathrm{M}})^2 \dd x
		+ \sumpm \int_{\Omee^\pm} f_{j}(\tu_{\eps}^\pm)\tu_{j\eps}^\pm \dd x 
		+ \frac 1\eps \int_{\OmeMS} J_\eps^{\mathrm{M}} g_{j}(\tu_\eps^{\mathrm{M}})\tu_{j\eps}^{\mathrm{M}} \dd x \\
		&\qquad - \int_{N_\eps} J_\eps^{\mathrm{M}} \norm{F_\eps^{-\top} \nu_\eps^{\mathrm{M}}} h_{j}(\tu_\eps^{\mathrm{M}})\tu_{j\eps}^{\mathrm{M}} \dd \calH^{n-1}(x) \eqqcolon \sum_{i=1}^7 I_\eps^{(i)}.
    \end{align}
    We estimate the terms $I_\eps^{(i)}$ separately and obtain for any $\theta \in (0,1]$ by Young's inequality
    \begin{align}
         |I_\eps^{(1)}|
         &\lesssim \sumpm \Lpnorm{q_j^\pm}{\infty}{(0,T)\times\Omee^\pm} \Lpnorm{\tu_{j\eps}^\pm}{2}{\Omee^\pm} \Lpnorm{\nabla \tu_{j\eps}^\pm}{2}{\Omee^\pm}
         \lesssim \frac 1 \theta \sumpm \Lpnorm{\tu_{j\eps}^\pm}{2}{\Omee^\pm}^2 + \theta \sumpm \Lpnorm{\nabla\tu_{j\eps}^\pm}{2}{\Omee^\pm}^2, \label{eq:est1}
    \end{align}
    and similarly
    \begin{align}
        |I_\eps^{(2)}|
        &\lesssim \frac 1{\theta\eps} \Lpnorm{\tu_{j\eps}^{\mathrm{M}}}{2}{\OmeMS}^2 + \theta \eps \Lpnorm{\nabla\tu_{j\eps}^{\mathrm{M}}}{2}{\OmeMS}^2,\label{eq:est2}
    \end{align}
    where we have used $\tq_{j\eps}^{\mathrm{M}}\in L^\infty((0,T)\times \OmeMS)$ with $\Lpnorm{\tq_{j\eps}^{\mathrm{M}}}{\infty}{(0,T)\times \OmeMS} \lesssim 1$ which follows from \ref{it:T1:est_psi} and \ref{it:propq}. 
    As we have no uniform bound $\Cknorm{\p_t J_\eps^{\mathrm{M}}}{0}{[0,T]\times \overline{\OmeMS}} \lesssim 1$ at hand, we rewrite $\p_t J_\eps^{\mathrm{M}} = \nabla \cdot(J_\eps^{\mathrm{M}} \tb_\eps^{\mathrm{M}})$ in $I_\eps^{(4)}$, which is a direct consequence of Jacobi's formula for the determinant and the divergence-freeness of the cofactor matrix by Piola's identity (see \cite[p.~117]{MarHug94}). Together with $\tb_\eps^{\mathrm{M}} \rvert_{S_{*,\eps}^\pm} \equiv 0$ we then obtain
	\begin{align}
		I_\eps^{(3)} + I_\eps^{(4)} 
		= - \frac 1 {2\eps} \int_{N_\eps} J_\eps^{\mathrm{M}} \tb_\eps^{\mathrm{M}} (\tu_{j\eps}^{\mathrm{M}})^2 \dd \calH^{n-1}(x).
	\end{align}
	As $\Cknorm{\tb_\eps^{\mathrm{M}}}{0}{[0,T]\times \overline{\OmeMS}}\lesssim \eps$ by \ref{it:T1:est_psi}, with the trace estimate in \autoref{lem:traces} we conclude
	\begin{align}
		|I_\eps^{(3)} + I_\eps^{(4)}|
		&\lesssim \frac 1{\theta\eps} \Lpnorm{\tu_{j\eps}^{\mathrm{M}}}{2}{\OmeMS}^2 + \theta \eps \Lpnorm{\nabla\tu_{j\eps}^{\mathrm{M}}}{2}{\OmeMS}^2.
	\end{align}
    Finally, we estimate the integrals involving the nonlinearities $f$, $g$, and $h$. Using \ref{it:T1:est_psi}, the growth condition \eqref{eq:growth_fgh} on $h_{j}$, $|N_\eps|\simeq 1$ and the trace estimate in \autoref{lem:traces}, we calculate
    \begin{align}
        |I_\eps^{(7)}|
        &\lesssim \int_{N_\eps} (1 + |\tu_\eps^{\mathrm{M}}|) |\tu_{j\eps}^{\mathrm{M}}|  \dd \calH^{n-1}(x)
        \lesssim |N_\eps| + \int_{N_\eps} |\tu_\eps^{\mathrm{M}}|^2 \dd \calH^{n-1}(x)\\
        &\lesssim 1 + \frac 1{\theta\eps} \Lpnorm{\tu_\eps^{\mathrm{M}}}{2}{\OmeMS}^2 + \theta \eps \Lpnorm{\nabla\tu_\eps^{\mathrm{M}}}{2}{\OmeMS}^2. \label{eq:est5}
    \end{align}
    Similarly, using the growth conditions \eqref{eq:growth_fgh} for $f_{j}$ and $g_{j}$ as well as $|\Omee^\pm| \simeq 1$ and $|\OmeMS| \simeq \eps$, we obtain
    \begin{align}
        |I_\eps^{(5)}|
        \lesssim 1 + \sumpm \Lpnorm{\tu_\eps^\pm}{2}{\Omee^\pm}^2, \qquad
        |I_\eps^{(6)}|
        \lesssim 1 + \frac 1\eps \Lpnorm{\tu_\eps^{\mathrm{M}}}{2}{\OmeMS}^2.
    \end{align}
    
    Collecting the previous estimates, choosing $\theta>0$ sufficiently small to absorb the gradient terms and summing over $j$, we finally obtain
    \begin{align}\label{eq:Gronwall_diff}
        &\frac{\dd}{\dd t} \norms{\sqrt{J_\eps}\tu_\eps}{\calL_\eps^m}^2 + \sumpm \Lpnorm{\nabla \tu_{\eps}^\pm}{2}{\Omee^\pm}^2 + \eps \Lpnorm{\nabla \tu_\eps^{\mathrm{M}}}{2}{\OmeMS}^2
        \lesssim 1 + \norms{\tu_\eps}{\calL_\eps^m}^2.
    \end{align}
    Integrating with respect to time, using $J_\eps \simeq 1$ and $\tilde{U}_{\eps,0} \in \calL_\eps^m$ with $\norms{\tilde{U}_{\eps,0}}{\calL_\eps^m}\lesssim 1$ by \ref{it:propU0}, Gronwall's inequality implies $\norms{J_\eps \tu_\eps}{\calL_\eps^m} \simeq \norms{\tu_\eps}{\calL_\eps^m}\lesssim 1$. As the latter estimate is independent of $t\in (0,T)$, we deduce \eqref{eq:apriori1} from \eqref{eq:Gronwall_diff}. Using \eqref{eq:est_J}, we further obtain the second estimate in \eqref{eq:apriori2} as
    \begin{align}
        &\sumpm\Lpnorm{\nabla \tu_\eps^\pm}{2}{\Omee^\pm}
        + \sqrt{\eps} \Lpnorm{\nabla (J_\eps^{\mathrm{M}} \tu_\eps^{\mathrm{M}})}{2}{\OmeMS}\\ 
        &\quad \leq \sumpm\Lpnorm{\nabla \tu_\eps^\pm}{2}{\Omee^\pm} 
        + \sqrt{\eps} \Lpnorm{J_\eps^{\mathrm{M}} \nabla \tu_\eps^{\mathrm{M}}}{2}{\OmeMS}
        + \sqrt{\eps} \Lpnorm{\nabla J_\eps^{\mathrm{M}} \tu_\eps^{\mathrm{M}}}{2}{\OmeMS}
        \lesssim \norms{\tu_\eps}{\calH_\eps^m}.
    \end{align}
    
    By a similar reasoning as before but using an arbitrary test function $\phi_\eps \in \calH_\eps^m$ instead of $\tu_\eps$, we obtain
    \begin{align}
        &|\dotproduct{\p_t(J_\eps \tu_{j\eps})}{\phi_{j\eps}}{\calH_\eps^\prime,\, \calH_\eps}|\\
        &\leq \sumpm\left|\scp{D_{j}^\pm \nabla \tu_{j\eps}^\pm}{\nabla \phi_{j\eps}}{L^2(\Omee^\pm)}\right|
        +\eps \left|\scp{J_\eps^{\mathrm{M}} \tD_{j\eps}^{\mathrm{M}} \nabla \tu_{j\eps}^{\mathrm{M}}}{\nabla \phi_{j\eps}}{L^2(\OmeMS)}\right|
        + \sumpm \left|\scp{\tu_{j\eps}^\pm q_{j}^\pm}{\nabla \phi_{j\eps}}{L^2(\Omee^\pm)}\right|\\
        &\quad + \left|\scp{J_\eps^{\mathrm{M}} \tu_{j\eps}^{\mathrm{M}} \tq_{j\eps}^{\mathrm{M}}}{\nabla \phi_{j\eps}}{L^2(\OmeMS)}\right|
		+ \frac 1\eps \left|\scp{J_\eps^{\mathrm{M}} b_\eps^{\mathrm{M}} \tu_{j\eps}^{\mathrm{M}}}{\nabla \phi_{j\eps}}{L^2(\OmeMS)}\right|
		+ \sumpm \left|\scp{f_{j}(\tu_{\eps}^\pm)}{\phi_{j\eps}}{L^2(\Omee^\pm)} \right|\\
		&\quad+ \frac 1\eps \left|\scp{J_\eps^{\mathrm{M}} g_{j}(\tu_\eps^{\mathrm{M}})}{\phi_{j\eps}}{L^2(\OmeMS)}\right|
		+ \left|\scp{J_\eps^{\mathrm{M}} \norm{F_\eps^{-\top} \nu_\eps^{\mathrm{M}}} h_{j}(\tu_\eps^{\mathrm{M}})}{\phi_{j\eps}}{L^2(N_\eps)} \right|\\
        &\lesssim \sumpm \left(\Lpnorm{\tu_{j\eps}^\pm}{2}{\Omee^\pm} + \Lpnorm{\nabla \tu_{j\eps}^\pm}{2}{\Omee^\pm}\right) \Lpnorm{\nabla \phi_{j\eps}}{2}{\Omee^\pm}
        + \eps \Lpnorm{\nabla \tu_{j\eps}^{\mathrm{M}}}{2}{\OmeMS} \Lpnorm{\nabla \phi_{j\eps}}{2}{\OmeMS}\\
        &\quad + \Lpnorm{\tu_{j\eps}^{\mathrm{M}}}{2}{\OmeMS} \Lpnorm{\nabla \phi_{j\eps}}{2}{\OmeMS}
        + \sumpm \left(1 + \Lpnorm{\tu_{\eps}^\pm}{2}{\Omee^\pm}\right) \Lpnorm{\phi_{j\eps}}{2}{\Omee^\pm}\\
        &\quad + \left(\frac 1 {\sqrt{\eps}} + \frac 1\eps \Lpnorm{\tu_\eps^{\mathrm{M}}}{2}{\OmeMS}\right) \Lpnorm{\phi_{j\eps}}{2}{\OmeMS}
        + \norms{\tu_{\eps}^{\mathrm{M}}}{(\calH_\eps^{\mathrm{M}})^m} \norms{\phi_{j\eps}}{\calH_\eps^{\mathrm{M}}}.
    \end{align}
    Summing over $j$, we deduce
    \begin{align}
        \norms{\p_t(J_\eps \tu_\eps)}{(\calH_\eps^m)^\prime} \lesssim 1 + \norms{\tu_\eps}{\calH_\eps^m},
    \end{align}
    and thereby \eqref{eq:apriori2} by the estimates for $\tu_\eps$ in \eqref{eq:apriori1} after integrating with respect to time.
\end{proof}
As discussed in \cite[Remark~2]{GahNeuPop21} for the case of bulk domains, under the given assumptions and by \eqref{eq:ptu} we have $\p_t \tu_\eps \in L^2(0,T;\calH_\eps^\prime)$ but cannot deduce a uniform bound for $\p_t \tu_\eps$ in $L^2(0,T;\calH_\eps^\prime)$ from this. However, further assuming $\Lpnorm{\p_t J_\eps}{\infty}{(0,T)\times \OmeMS} \lesssim 1$ we could improve this to obtain such a uniform bound.

Using the Galerkin method, estimates similar to that established in \autoref{lem:apriori} and a fixed-point argument, we obtain the existence of weak solutions.
\begin{lemma}[Existence of a weak solution]\label{lem:existence}
    There exists a unique weak solution $\tu_\eps$ of \eqref{eq:RDA_fd_full}.
\end{lemma}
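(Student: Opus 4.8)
The plan is to decouple the nonlinearities $f$, $g$, $h$ from the diffusion--advection--drift structure by a Banach fixed-point argument, solving the resulting linear problem by a Galerkin approximation adapted to the $\eps$-scaled Gelfand triple $\calH_\eps \xhookrightarrow{}\calL_\eps\xhookrightarrow{}\calH_\eps'$.

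\textbf{The linearised problem.} Given fixed data $\bar u_\eps \in L^2(0,T;\calH_\eps^m)$, I would first treat the linear problem obtained from \eqref{eq:RDA_fd_weak} by replacing $f_j(\tu_\eps^\pm)$, $g_j(\tu_\eps^{\mathrm M})$, $h_j(\tu_\eps^{\mathrm M})$ by the frozen terms $f_j(\bar u_\eps^\pm)$, $g_j(\bar u_\eps^{\mathrm M})$, $h_j(\bar u_\eps^{\mathrm M})$; since $\bar u_\eps^{\mathrm M}\in H^1(\OmeMS)^m$ has a trace on $N_\eps$, the term $h_j(\bar u_\eps^{\mathrm M})|_{N_\eps}$ is a well-defined $L^2((0,T)\times N_\eps)$-function, and by \ref{it:propfg} all three frozen source terms lie in the relevant $L^2$-spaces with norms bounded linearly in $\norms{\bar u_\eps}{L^2(0,T;\calH_\eps^m)}$. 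For this linear problem I would run a Galerkin scheme with a basis $(w_k)_{k\geq1}$ of $\calH_\eps$ (so that the transmission condition $v^\pm=v^{\mathrm M}$ on $S_{*,\eps}^\pm$ is built in automatically) and $J_\eps(0)\tilde U_{\eps,0}$-compatible initial projections. Because $J_\eps\in C^1([0,T]\times\overline{\Omee})$ with $J_\eps\simeq1$ by \ref{it:T2:est_J}, the ansatz $\tu_\eps^{(N)}=\sum_{k\leq N}c_k^{(N)}(t)w_k$ leads to a linear ODE system of the form $\tfrac{\dd}{\dd t}(M(t)c^{(N)})=A(t)c^{(N)}+b(t)$ with $M(t)$ symmetric, $C^1$ in $t$ and uniformly positive definite (the mass matrix carries the weight $J_\eps$), hence uniquely solvable on all of $[0,T]$. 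Running the computation of \autoref{lem:apriori} verbatim --- the frozen source terms are controlled exactly as the terms $I_\eps^{(5)}$--$I_\eps^{(7)}$ there, and the contribution $\tfrac1{2\eps}\scp{\p_t J_\eps^{\mathrm M}}{(\tu_{j\eps}^{(N),\mathrm M})^2}{L^2(\OmeMS)}$ is again rewritten via $\p_t J_\eps^{\mathrm M}=\nabla\cdot(J_\eps^{\mathrm M}\tb_\eps^{\mathrm M})$ and $\tb_\eps^{\mathrm M}|_{S_{*,\eps}^\pm}\equiv0$ --- produces bounds on $\tu_\eps^{(N)}$ and $\p_t(J_\eps\tu_\eps^{(N)})$ uniform in $N$, of the type \eqref{eq:apriori1}--\eqref{eq:apriori2}. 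Passing to the limit $N\to\infty$ is then routine: every term of the linearised equation is linear in $\tu_\eps$, so weak compactness in $L^2(0,T;\calH_\eps^m)$ and weak-$*$ compactness in $L^\infty(0,T;\calL_\eps^m)$, together with the standard identification of the limit of $\p_t(J_\eps\tu_\eps^{(N)})$ in $L^2(0,T;(\calH_\eps^m)')$, suffice; the initial condition $(J_\eps\tu_\eps)(0)=J_\eps(0)\tilde U_{\eps,0}$ is recovered from $J_\eps\tu_\eps^{(N)}\in C^0([0,T];\calL_\eps^m)$ and the bound on its time derivative. Uniqueness for the linear problem follows by testing the difference of two solutions with itself and applying Grönwall, using once more \eqref{eq:dual_pairing_u} and the divergence-form rewriting of $\p_t J_\eps^{\mathrm M}$. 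This defines a solution operator $\calS$ mapping $\bar u_\eps$ to the weak solution $\tu_\eps$.

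\textbf{The fixed point.} I would then compose $\calS$ with the Nemytskii maps $\bar u_\eps\mapsto\big(f(\bar u_\eps^\pm),\,g(\bar u_\eps^{\mathrm M}),\,h(\bar u_\eps^{\mathrm M})|_{N_\eps}\big)$, which are globally Lipschitz from $L^2(0,T;\calH_\eps^m)$ into the respective $L^2$-spaces by the Lipschitz assumption in \ref{it:propfg} together with the trace estimate \autoref{lem:traces}, and show the composition has a unique fixed point. For $\bar u_\eps^{(1)},\bar u_\eps^{(2)}$ with images $\tu_\eps^{(1)},\tu_\eps^{(2)}$, the difference $w_\eps=\tu_\eps^{(1)}-\tu_\eps^{(2)}$ solves the linear problem with right-hand sides $f(\bar u_\eps^{(1)})-f(\bar u_\eps^{(2)})$, etc., and zero initial datum; testing with $w_\eps$, absorbing all gradient contributions into the coercivity (in particular the boundary term generated by the $h$-difference, via \autoref{lem:traces} with a small parameter $\theta$), and invoking Grönwall yields a Lipschitz bound for $\calS$ that becomes a genuine contraction on a short interval $[0,\tau]$, with $\tau$ depending only on $\eps$ and the structural constants of \ref{it:T1:est_psi}, \ref{it:T2:est_J}, \ref{it:propD}--\ref{it:propfg} and \autoref{lem:traces}. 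Banach's fixed-point theorem on $L^2(0,\tau;\calH_\eps^m)$ gives a unique weak solution on $[0,\tau]$; as $\tau$ can be chosen independently of the starting time, iterating over $[\tau,2\tau],\dots,[T-\tau,T]$ produces the weak solution $\tu_\eps$ on all of $[0,T]$. Since any weak solution in the sense of \autoref{def:weak_sol} is a fixed point of the scheme, this simultaneously gives uniqueness among all weak solutions, proving the lemma.

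\textbf{Main obstacle.} The delicate part is the bookkeeping forced by the time-dependent weight $J_\eps$: because only $\p_t(J_\eps\tu_\eps)$, and not $\p_t\tu_\eps$, is controlled, every energy identity must be used in the form \eqref{eq:dual_pairing_u}, and the ensuing $\p_t J_\eps^{\mathrm M}$-term has to be converted to a boundary integral over $N_\eps$ through Piola's identity and then absorbed using \autoref{lem:traces}, exactly as in the proof of \autoref{lem:apriori} --- the same care is required in the contraction estimate, where the $h$-difference also produces a trace term that competes with the diffusive coercivity. All constants appearing here may depend on $\eps$ (the trace constant in \autoref{lem:traces}, the lower bound on $M(t)$, the norm $\Lpnorm{\p_t J_\eps}{\infty}{(0,T)\times\OmeMS}$, which is finite at fixed $\eps$ by \ref{it:T2:est_J}), which is immaterial for existence at fixed $\eps$; an alternative is to run the Galerkin scheme directly on the nonlinear system and pass to the limit via Aubin--Lions, legitimate precisely because $\p_t J_\eps\in L^\infty$ for fixed $\eps$. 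The fact that none of these constants is uniform in $\eps$ is exactly what makes the limit $\eps\to0$ in \autoref{sec:macro_prob} require the refined two-scale compactness machinery developed there rather than a naive Aubin--Lions argument.
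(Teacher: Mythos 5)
The overall strategy --- Galerkin for a linearised/frozen-data problem followed by a fixed-point argument --- matches the paper's, but two choices differ, one of which introduces a genuine gap.

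The minor difference: you run Galerkin directly on $\tu_\eps$ with a time-dependent mass matrix $M(t)$, whereas the paper first reformulates the problem in terms of $w_\eps = J_\eps \tu_\eps$ precisely to avoid the coupling of $J_\eps$ and the unknown in the time derivative. Your ODE-level argument (uniform positive-definiteness of $M(t)$ from $J_\eps \simeq 1$, $C^1$-regularity from \ref{it:T2:est_J}) is sound, so this is a legitimate alternative and does not affect correctness.

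The substantial gap is in the fixed-point step. You assert that testing the difference $w_\eps = \tu_\eps^{(1)}-\tu_\eps^{(2)}$ with itself, absorbing the gradient contributions via \autoref{lem:traces} with small $\theta$, and applying Gr\"onwall gives a contraction of $\calS$ on $L^2(0,\tau;\calH_\eps^m)$ for $\tau$ small. This is where the argument breaks. For the \emph{uniqueness} estimate the strategy works, because there the frozen data and the test function coincide: the $h$-difference on $N_\eps$ produces $\int_{N_\eps}|w_\eps|^2$, and \autoref{lem:traces} turns the troublesome part into $\theta\eps\|\nabla w_\eps\|^2$, which the left-hand side can absorb. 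For the \emph{contraction} estimate, however, the frozen inputs $\bar u_\eps^{(1)},\bar u_\eps^{(2)}$ and the output difference $w_\eps$ are different functions. The $h$-difference on $N_\eps$ then gives $\int_{N_\eps}|\delta\bar u_\eps|\,|w_\eps|$, and \autoref{lem:traces} applied to $\delta\bar u_\eps$ produces a term $\|\nabla\delta\bar u_\eps\|_{L^2(\OmeMS)}^2$ on the \emph{right-hand side} that cannot be absorbed. One is forced to accept a bound of the form
\begin{equation}
	\sup_{t\leq\tau}\|w_\eps\|^2_{\calL_\eps} + \int_0^\tau\|\nabla w_\eps\|^2 \lesssim e^{C\tau}\int_0^\tau\|\delta\bar u_\eps\|^2_{\calH_\eps},
\end{equation}
and therefore $\|w_\eps\|_{L^2(0,\tau;\calH_\eps)}^2 \lesssim (1+\tau)e^{C\tau}\|\delta\bar u_\eps\|^2_{L^2(0,\tau;\calH_\eps)}$: the constant stays bounded away from zero as $\tau\to0$, because the gradient part of the left-hand norm carries no factor $\tau$. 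So there is no short-time contraction on $L^2(0,\tau;\calH_\eps^m)$, and the iteration over $[0,\tau],[\tau,2\tau],\dots$ never starts.

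Two fixes are available. Either use an intermediate fractional space $\calH_\eps^\beta$ with $\beta\in(\tfrac12,1)$ --- on the one hand $H^\beta(\OmeMS)\hookrightarrow L^2(\p\OmeMS)$ is still continuous so the $h$-nonlinearity is Lipschitz from $L^2(0,\tau;\calH_\eps^\beta)$, while on the other hand the interpolation inequality $\|w\|_{\calH_\eps^\beta}\lesssim\|w\|_{\calL_\eps}^{1-\beta}\|w\|_{\calH_\eps}^\beta$ combined with H\"older's inequality in time produces a factor $\tau^{1-\beta}\to0$, recovering a genuine short-time contraction. Or, as the paper does, abandon contraction entirely in favour of Sch\"afer's fixed-point theorem: by freezing data in $L^2(0,T;(\calH_\eps^\beta)^m)$ and using compactness of $W^{1,2,2}(0,T;\calH_\eps,\calH_\eps')\hookrightarrow L^2(0,T;\calH_\eps^\beta)$, only continuity and an a priori bound on the fixed-point set are needed, with uniqueness established separately. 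The paper's route trades a contraction that would give uniqueness for free against a softer compactness argument that avoids quantitative Lipschitz tracking; your route is feasible but requires the interpolation refinement you omitted.
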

\begin{proof}
    To prove existence of a weak solution $\tu_\eps$ of \eqref{eq:RDA_fd_full}, we rewrite \eqref{eq:RDA_fd_weak} as an equivalent equation for $w_\eps = J_\eps \tu_\eps$ using the spatial regularity of $J_\eps>0$. For all $\phi \in \calH_\eps^m$ and almost every $t\in (0,T)$ it reads
    \begin{align}\label{eq:weak_w}
	\begin{split}
		&\dotproduct{\p_t w_{j\eps}}{\phi_{j\eps}}{\calH_\eps^\prime,\,\calH_\eps} \\
		&\quad+ \sumpm \int_{\Omee^\pm} D_{j}^\pm \nabla w_{j\eps}^\pm \cdot \nabla \phi_{j\eps} \dd x
		+\eps\int_{\OmeMS} \tD_{j\eps}^{\mathrm{M}} \nabla w_{j\eps}^{\mathrm{M}} \cdot \nabla \phi_{j\eps} \dd x
		- \sumpm \int_{\Omee^\pm} w_{j\eps}^\pm q_{j}^\pm \cdot \nabla \phi_{j\eps} \dd x\\
		&\quad - \int_{\OmeMS} w_{j\eps}^{\mathrm{M}} \tq_{j\eps}^{\mathrm{M}} \cdot \nabla \phi_{j\eps} \dd x
		+ \frac 1\eps \int_{\OmeMS} w_{j\eps}^{\mathrm{M}} \tb_\eps^{\mathrm{M}} \cdot \nabla \phi_{j\eps} \dd x
        - \eps\int_{\OmeMS} w_{j\eps}^{\mathrm{M}} \tfrac{\tD_{j\eps}^{\mathrm{M}}}{J_\eps^{\mathrm{M}}}\nabla J_\eps^{\mathrm{M}} \cdot \nabla \phi_{j\eps} \dd x\\
		&= \sumpm \int_{\Omee^\pm} f_{j}(w_{\eps}^\pm)\phi_{j\eps} \dd x
		+ \frac 1\eps \int_{\OmeMS} J_\eps^{\mathrm{M}} g_{j}\left(\tfrac{w_{\eps}^{\mathrm{M}}}{J_\eps^{\mathrm{M}}}\right) \phi_{j\eps} \dd x
		- \int_{N_\eps} J_\eps^{\mathrm{M}} \norm{F_\eps^{-\top} \nu_\eps^{\mathrm{M}}} h_{j}\left(\tfrac{w_{\eps}^{\mathrm{M}}}{J_\eps^{\mathrm{M}}}\right) \phi_{j\eps} \dd \calH^{n-1}(x).
	\end{split}
    \end{align}
    To solve a linearised version of \eqref{eq:weak_w}, we may use a standard Galerkin scheme. When dealing with the initial problem for $\tu_\eps$, the coupling between $\tu_\eps$ and $J_\eps$ in the time derivative would make it necessary to deal with time-dependent basis functions in the scheme, which we overcome by introducing $w_\eps$.
    The existence of a weak solution $w_\eps$ to the problem \eqref{eq:weak_w} where the nonlinearities are independent of the $z$-variable, that is $f,\, g,\, h \in L^2(0,T)^m$, is obtained using a Galerkin scheme, where the compactness of the approximating sequence is due to estimates similar to those in \autoref{lem:apriori}. 
    The existence of a solution $w_\eps$ to the nonlinear problem \eqref{eq:weak_w} then follows using Schäfer's fixed point theorem. For this purpose, let $\beta \in (\frac 12,1)$ and $\calH_\eps^\beta$ defined in analogy to $\calH_\eps$.
    We consider the operator 
    \begin{align}
    	\calF \colon L^2(0,T;(\calH_\eps^\beta)^m) \to W^{1,2,2}(0,T;\calH_\eps^m,(\calH_\eps^m)^\prime), \qquad \calF(\bar{w}_\eps)=w_\eps,
    \end{align}
     where $w_\eps$ denotes the weak solution of \eqref{eq:weak_w} with nonlinearities evaluated in $\bar{w}_\eps$. By the growth conditions \eqref{eq:growth_fgh}, $J_\eps \simeq 1$ and the fact that the trace space of $H^{\beta}(\OmeMS)$ continuously embeds into $L^2(\p\OmeMS)$ if $\beta>\frac 12$, the operator $\calF$ is well-defined. Moreover, due to the continuity of the reaction rates in the last variable and by Pratt's theorem (see \cite{Pra60}), the evaluation of the nonlinearities is continuous with respect to $L^2(0,T;\calH_\eps^\beta)$-convergence, and hence by the compactness of the embedding $W^{1,2,2}(0,T;\calH_\eps,\calH_\eps^\prime) \xhookrightarrow{} L^2(0,T;\calH_\eps^\beta)$ (see \cite[Theorem~5.1, p.~58]{Lio69}) as well as the a priori estimates for the linear problem, we have that $\calF \colon L^2(0,T;(\calH_\eps^\beta)^m) \to L^2(0,T;(\calH_\eps^\beta)^m)$ is continuous and compact. Based on the a priori estimates in \autoref{lem:apriori}, the set
    \begin{align}
        \set{w_\eps \in L^2(0,T;(\calH_\eps^\beta)^m)}{w_\eps = \lambda \calF(w_\eps) \text{ for some } \lambda \in [0,1]}
    \end{align}
    is bounded as it only consists of weak solutions of \eqref{eq:weak_w} where the nonlinearities are multiplied by $\lambda$. Therefore, Sch\"afer's fixed-point theorem ensures the existence of a weak solution $w_\eps$ of \eqref{eq:weak_w}, which yields a weak solution $\tu_\eps = \frac{w_\eps}{J_\eps}$ of \eqref{eq:RDA_fd_full} in the sense of \autoref{def:weak_sol}. 
    
    If $\tu_\eps^{(1)}$ and $\tu_\eps^{(2)}$ are weak solutions of \eqref{eq:RDA_fd_full}, by testing \eqref{eq:RDA_fd_weak} with $\tu_\eps^{(1)}-\tu_\eps^{(2)}$, an argument similar to that in the proof of \autoref{lem:apriori}, based on the Lipschitz continuity of the nonlinearities, yields
    \begin{align}
        \frac{\dd}{\dd t} \norms{\tu_\eps^{(1)}-\tu_\eps^{(2)}}{\calL_\eps^m}^2 \lesssim \norms{\tu_\eps^{(1)}-\tu_\eps^{(2)}}{\calL_\eps^m}^2.
    \end{align}
    The uniqueness of the weak solution is then a consequence of Gronwall's inequality.
\end{proof}

\subsection{Estimates for the shifts of the microsolutions}\label{subsec:shifts}
In \autoref{subsec:convergence}, we aim to establish strong two-scale convergence of the microsolutions $\tu_\eps^{\mathrm{M}}$ to pass to the limit in the nonlinear reaction terms. Using \autoref{thm:strong_2s_cpctness}, this is achieved by controlling shifts with respect to $x^\prime \in \Sigma$ of the functions $\tu_\eps^{\mathrm{M}}$ and its gradients, for which we introduce the following terminology (see \cite{GahNeu21}). 
For $h\in (0,1)$, we write 
\begin{align}
    \Sigma_h \usesym{Sigmah} \coloneqq \set{x^\prime \in \Sigma}{\dist(x^\prime,\p\Sigma)>h}
\end{align}
but cannot work with the corresponding sets
\begin{align}
    \Ome_{\eps,h} \usesym{Omeeh} \coloneqq \Omee \cap (\Sigma_h \times (-H,H)) \qquad \text{and} \qquad 
    \Ome_{\eps,h}^\pm \usesym{Omeehpm} \coloneqq \Omee^\pm \cap \Ome_{\eps,h},
\end{align}
as the channels may intersect with the boundary of $\Sigma_h\times (-H,H)$. Hence, we possibly lose the Lipschitz-regularity of $\Ome_{\eps,h}$ and further introduce the sets
\begin{align}
    \widehat{\Sigma}_{\eps,h} \usesym{Sigmaeh} \coloneqq \interior{\bigcup_{k^\prime \in \calI_{\eps,h}}\eps(\overline{Y}+k^\prime)} \qquad \text{and} \qquad
    \OmeMSh \usesym{OmeMSh} \coloneqq \OmeMS \cap (\widehat{\Sigma}_{\eps,h} \times (-\eps,\eps)),
\end{align}
where $\calI_{\eps,h} = \set{k^\prime \in \IZ^{n-1}}{\eps(Y+k^\prime) \subset \Sigma_h}$. We denote the corresponding bulk regions by
\begin{align}
    \Omeh^+ \usesym{Omehpm} \coloneqq \Omee^+ \cap (\widehat{\Sigma}_{\eps,h} \times (\eps,H)) \qquad \text{and} \qquad
    \Omeh^-\coloneqq \Omee^- \cap (\widehat{\Sigma}_{\eps,h} \times (-H,-\eps)),
\end{align}
the top and bottom (resp.~lateral boundaries) of the channels in $\OmeMSh$ by $\widehat{S}_{\eps,*,h}^\pm\usesym{Ssehpm} $ (resp.~$\widehat{N}_{\eps,h} \usesym{Neh}$), and we finally write
\begin{align}
    \Omeh \usesym{hatOmeeh}= \interior{\Omeh^+ \cup \Omeh^- \cup \OmeMSh \cup \widehat{S}_{\eps,*,h}^+ \cup \widehat{S}_{\eps,*,h}^-}.
\end{align}
Moreover, for $l^\prime \in \IZ^{n-1}$, $U\subset \IR^n$ and $v_\eps \colon (0,T)\times U \to \IR$, we denote $v_\eps^{l^\prime} (t,x) \coloneqq v_\eps(t,x+(l^\prime,0)\eps)$,
extending $v_\eps$ by zero to $\IR^n$, and introduce the shifts
\begin{align}\label{eq:shifts}
    \delta_{l^\prime,\eps} v_\eps(t,x) \coloneqq v_\eps^{l^\prime}(t,x)- v_\eps(t,x).
\end{align}
As $\p_t \tu_\eps$ is only a functional on $\calH_\eps \simeq H^1(\Omee)$, it is a priori not clear how to obtain the time derivative of $\delta_{l^\prime,\eps} \tu_\eps$. For this purpose, we further introduce the spaces
\begin{align}
    \calL_{\eps,h} \usesym{Lepsh} \coloneqq L^2(\Omeh^+) \times L^2(\OmeMSh)\times L^2(\Omeh^-),
\end{align}
equipped with the inner product of $\calL_\eps$, and
\begin{align}
    \calH_{\eps,h,0} \usesym{Hepsh} \coloneqq \set{v_\eps \in H^1(\Omeh)}{v_\eps\rvert_{\p\widehat{\Sigma}_{\eps,h}\times (-H,-\eps)} = v_\eps\rvert_{\p\widehat{\Sigma}_{\eps,h}\times (\eps,H)} = 0},
\end{align}
equipped with the inner product of $\calH_\eps$, using that $\calH_{\eps,h,0} \xhookrightarrow{} \calH_\eps$ after extending functions defined on $\Omeh$ by zero to $\Omee$. Therefore, we have the Gelfand triple $\calH_{\eps,h,0} \xhookrightarrow{} \calL_{\eps,h} \xhookrightarrow{} \calH_{\eps,h,0}^\prime$, and for every $v_\eps \in L^2(0,T;\calH_\eps)$ with $\p_t v_\eps \in L^2(0,T;\calH_\eps^\prime)$ there holds $\p_t \delt v_\eps \in L^2(0,T; \calH_{\eps,h,0}^\prime)$ with
\begin{align}\label{eq:char_ptu_Hh0}
	\dotproduct{\p_t \delt v_\eps}{\phi_\eps}{(\calH_{\eps,h,0})^\prime,\, \calH_{\eps,h,0}} 
	= \dotproduct{\p_t v_\eps}{\delta_{-l^\prime,\eps}\phi_\eps}{\calH_{\eps}^\prime,\, \calH_{\eps}} \qquad \text{for all } \phi_\eps \in \calH_{\eps,h,0}.
\end{align}
Indeed, for every $\psi \in \Cci((0,T))$ and $\phi_\eps \in \calH_{\eps,h,0}$, we have $\delta_{-l^\prime,\eps} \phi_\eps \in \calH_\eps$ and thus
\begin{align}
	\int_0^T \dotproduct{\p_t v_\eps}{\delta_{-l^\prime,\eps} \phi_\eps}{\calH_\eps^\prime,\,\calH_\eps} \psi(t) \dd t
	&= - \int_0^T \scp{v_\eps}{\delta_{-l^\prime,\eps} \phi_\eps}{\calL_\eps} \psi^\prime(t) \dd t
	= - \int_0^T \scp{\delt v_\eps}{\phi_\eps}{\calL_{\eps,h}} \psi^\prime(t) \dd t;
\end{align}
hence \eqref{eq:char_ptu_Hh0} follows due to the $\calH_{\eps,h,0}$-continuity of the map $\phi_\eps \mapsto \dotproduct{\p_t v_\eps}{\delta_{-l,\eps} \phi_\eps}{\calH_\eps^\prime,\, \calH_\eps}$ and the uniqueness of the generalised time derivative.
The shifts of the microsolutions $\tu_\eps$ are then estimated as follows.
\begin{lemma}[Shifts of $J_\eps \tu_\eps$]\label{lem:shifts}
    For every $0<h \ll 1$ and $l^\prime \in \IZ^{n-1}$ with $|l^\prime \eps|\ll h$, there holds 
    \begin{align}
        \Lpnorm{\delt (J_\eps \tu_{j\eps})}{2}{0,T;\calH_{\eps,2h}}
        &\lesssim \norms{\delt (J_\eps(0) \tilde{U}_{j\eps,0})}{\calL_{\eps,h}}
        + \Lpnorm{\delt \tu_{j\eps}^\pm}{2}{0,T;\Omeh^\pm}
        + \eps + |l^\prime \eps| + |l^\prime \eps| \eps,
    \end{align}
    where the constant only depends on $h$, $T$ and the constants from \ref{it:T3:shifts}, \ref{it:shifts:D}, \ref{it:shifts:q} and \autoref{lem:apriori}.
\end{lemma}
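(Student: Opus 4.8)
The plan is to test the weak formulation \eqref{eq:RDA_fd_weak} with a shifted test function so that the difference quotient structure of \eqref{eq:char_ptu_Hh0} produces an equation for $\delta_{l',\eps}(J_\eps\tu_\eps)$, then run essentially the same energy estimate as in \autoref{lem:apriori} but localised to $\Omeh$ and applied to the shifted quantity. Concretely, for $\phi_\eps\in\calH_{\eps,h,0}^m$ one subtracts the weak equation at $x$ from the weak equation at $x+(l',0)\eps$ (using $\delta_{-l',\eps}\phi_\eps\in\calH_\eps$ as admissible test function in the latter, which is where the slightly larger set $\Sigma_{2h}$ versus $\Sigma_h$ enters); this yields, for a.e.\ $t$,
\begin{align}
\dotproduct{\p_t\delta_{l',\eps}(J_\eps\tu_{j\eps})}{\phi_{j\eps}}{\calH_{\eps,2h,0}^\prime,\,\calH_{\eps,2h,0}}
+ (\text{diffusion, advection, boundary terms}) = 0,
\end{align}
where each of the remaining terms is a sum of a ``good'' part acting on $\delta_{l',\eps}\tu_\eps$ (or its gradient) and a ``commutator'' part in which the shift falls on a coefficient, i.e.\ on $J_\eps^{\mathrm M}$, $\tD_{j\eps}^{\mathrm M}$, $\tq_{j\eps}^{\mathrm M}$, $\tb_\eps^{\mathrm M}=F_\eps^{-1}\p_t\psi_\eps$, $\nabla J_\eps^{\mathrm M}$, or $\|F_\eps^{-\top}\nu_\eps^{\mathrm M}\|$, and is controlled by \ref{it:T3:shifts}, \ref{it:shifts:D}, \ref{it:shifts:q} together with the a priori bounds of \autoref{lem:apriori} — these are exactly the source of the $|l'\eps|$ and $|l'\eps|\eps$ terms in the claim.

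Next I would choose $\phi_\eps=\delta_{l',\eps}\tu_{j\eps}$ (restricted/cut off to $\Omeh$; strictly one needs a cut-off in $x'$ localising from $\Omeh$ to $\Ome_{\eps,2h}$, which costs an $\eps$ because $\nabla$ of the cut-off is $O(1/\text{(width)})$ but the layer region where it bites has the relevant $\eps$-scaling — this is the origin of the lone $\eps$ term), use $\tfrac12\tfrac{\dd}{\dd t}\|\sqrt{J_\eps}\,\delta_{l',\eps}\tu_{j\eps}\|^2$ as in \eqref{eq:dual_pairing_u} to handle the time derivative modulo the $\tfrac1{2\eps}\scp{\p_t J_\eps^{\mathrm M}}{(\delta_{l',\eps}\tu_{j\eps}^{\mathrm M})^2}{L^2}$ term (rewriting $\p_t J_\eps^{\mathrm M}=\nabla\cdot(J_\eps^{\mathrm M}\tb_\eps^{\mathrm M})$ and using the trace estimate \autoref{lem:traces} exactly as in \autoref{lem:apriori}), absorb all gradient terms of $\delta_{l',\eps}\tu_\eps^{\mathrm M}$ by choosing the Young's-inequality parameter $\theta$ small, and then bound the nonlinear terms: by global Lipschitz continuity of $f,g,h$ (assumption \ref{it:propfg}) one has $|f_j(\tu_\eps^{l'})-f_j(\tu_\eps)|\lesssim|\delta_{l',\eps}\tu_\eps|$ and likewise for $g$ and $h$, so the reaction/boundary contributions are controlled by $\|\delta_{l',\eps}\tu_\eps\|_{\calL_\eps}^2$ plus (via the trace estimate) a small multiple of the channel gradient. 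Integrating in time, using the initial value $\delta_{l',\eps}(J_\eps\tu_\eps)(0)=\delta_{l',\eps}(J_\eps(0)\tilde U_{\eps,0})$, and applying Gronwall gives
\begin{align}
\|\delta_{l',\eps}\tu_{j\eps}\|_{L^\infty(0,T;\calL_{\eps,2h})}
+\|\delta_{l',\eps}\tu_{j\eps}\|_{L^2(0,T;\calH_{\eps,2h})}
\lesssim \|\delta_{l',\eps}(J_\eps(0)\tilde U_{j\eps,0})\|_{\calL_{\eps,h}}
+\eps+|l'\eps|+|l'\eps|\eps,
\end{align}
and finally one converts this bound on $\delta_{l',\eps}\tu_\eps$ into the stated bound on $\delta_{l',\eps}(J_\eps\tu_\eps)$ via the product rule $\delta_{l',\eps}(J_\eps\tu_\eps)=J_\eps^{l'}\delta_{l',\eps}\tu_\eps+(\delta_{l',\eps}J_\eps)\tu_\eps$, estimating the second summand through $\|\delta_{l',\eps}J_\eps\|_{C^0}\lesssim|l'\eps|$ (from \ref{it:T3:shifts}, which bounds $\delta_{l',\eps}\nabla J_\eps$ and hence, with $J_\eps\rvert_{S_{*,\eps}^\pm}$-type normalisation or directly, $\delta_{l',\eps}J_\eps$) and $\|\tu_\eps\|\lesssim1$ from \autoref{lem:apriori}; the extra $\eps$-factors needed to match $\|\nabla\|$-weights in $\calH_{\eps}$ are absorbed into the $\eps$ and $|l'\eps|\eps$ terms. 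One must also carry along $\|\delta_{l',\eps}\tu_\eps^\pm\|_{L^2(0,T;\Omeh^\pm)}$ on the right-hand side because the bulk shifts are not directly controlled by this argument — they couple in through the transmission condition on $\widehat S_{\eps,*,h}^\pm$ and appear as a boundary term one simply keeps.

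The main obstacle I expect is the bookkeeping of the localisation/cut-off near $\p\widehat\Sigma_{\eps,h}$: one has to pass from $\Omeh$ (where $\delta_{l',\eps}$ is well-defined) to a slightly smaller set $\Ome_{\eps,2h}$ so that the shifted test function $\delta_{-l',\eps}\phi_\eps$ is legitimately supported inside $\Omeh$ and vanishes on the correct lateral portions — i.e.\ so that $\delta_{-l',\eps}\phi_\eps\in\calH_\eps$ and \eqref{eq:char_ptu_Hh0} applies — while keeping every $\eps$-scaled norm tracked correctly through the multiplication by the cut-off, whose gradient interacts with the degenerate $\eps$-weighting of the $H^1$-seminorm in the layer. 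Controlling the commutator terms is conceptually routine once \ref{it:T3:shifts}, \ref{it:shifts:D}, \ref{it:shifts:q} are in hand, but verifying that their cumulative contribution is indeed $O(|l'\eps|+|l'\eps|\eps)$ and not, say, $O(|l'\eps|/\eps)$ requires care with which terms carry the $\tfrac1\eps$ prefactors from the layer scaling — the velocity term $\tfrac1\eps\tb_\eps^{\mathrm M}$ is the delicate one, and it is saved precisely because $\|\tb_\eps^{\mathrm M}\|_{C^0}\lesssim\eps$ and $\|\delta_{l',\eps}\p_t\psi_\eps\|_{L^\infty}\lesssim|l'\eps|\eps$ by \ref{it:T3:shifts}.
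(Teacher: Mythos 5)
Your overall strategy — test the shifted equation, run a localised energy estimate with a cut-off, absorb gradients via Young's inequality, close with Gronwall — is the right one and is what the paper does. However, there is a genuine gap in your handling of the time-derivative term, and it is precisely the point where the paper introduces a structural change you do not make.

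You propose to test with $\phi_\eps=\eta^2\delta_{l',\eps}\tu_{j\eps}$ and to use the identity \eqref{eq:dual_pairing_u} to convert the resulting duality pairing into $\frac12\frac{\dd}{\dd t}\|\eta\sqrt{J_\eps}\,\delta_{l',\eps}\tu_{j\eps}\|^2$ plus a controllable remainder. But the shifted weak equation produces the pairing $\dotproduct{\p_t\delta_{l',\eps}(J_\eps\tu_{j\eps})}{\phi_\eps}{(\calH_{\eps,h,0})',\,\calH_{\eps,h,0}}$, and $\delta_{l',\eps}(J_\eps\tu_\eps)\neq J_\eps\delta_{l',\eps}\tu_\eps$: the commutator is $(\delta_{l',\eps}J_\eps)\tu_\eps^{l'}$ (or $J_\eps^{l'}\delta_{l',\eps}\tu_\eps+(\delta_{l',\eps}J_\eps)\tu_\eps$, depending on how you split). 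Trying to repair this by differentiating the commutator in time requires a bound on $\p_t\tu_\eps$ or $\p_t\delta_{l',\eps}\tu_\eps$, which is exactly what one does \emph{not} have uniformly in $\eps$ (cf.\ the remark after \autoref{lem:apriori}: only $\p_t(J_\eps\tu_\eps)$ is controlled). The paper sidesteps this entirely by rewriting the equation in terms of $w_\eps=J_\eps\tu_\eps$ from the outset (using $J_\eps\nabla\tu_\eps=\nabla w_\eps-\frac{\nabla J_\eps}{J_\eps}w_\eps$) and then testing with $\eta^2\delta_{l',\eps}w_\eps$; the time-derivative pairing is then the \emph{exact} Gelfand-triple derivative $\frac12\frac{\dd}{\dd t}\|\eta\,\delta_{l',\eps}w_\eps\|_{\calL_{\eps,h}}^2$ with no residual $\p_tJ_\eps$ term at all. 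Your use of \eqref{eq:dual_pairing_u} as stated is therefore not available, and the $\tfrac1{2\eps}\scp{\p_tJ_\eps^{\mathrm M}}{(\delta_{l',\eps}\tu_{j\eps}^{\mathrm M})^2}{}$ term you are planning to treat does not actually arise in the form you describe.

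Two smaller points. First, your account of the isolated $\eps$ term in the claim (attributed to the cut-off gradient and the width of the localisation strip) is incorrect: the cut-off satisfies $|\nabla\eta|\lesssim 1/h$ with $h$ \emph{fixed}, so it contributes no extra $\eps$-power; the isolated $\eps$ comes from crude bounds of the form $\Lpnorm{\delta_{l',\eps}w_{j\eps}^{\mathrm M}}{2}{(0,T)\times\OmeMSh}^2\lesssim\Lpnorm{w_{j\eps}^{\mathrm M}}{2}{(0,T)\times\OmeMS}^2\lesssim\eps$ inserted into the $\eps$-weighted scalar products in the layer (giving $\eps^2$ after Young, hence $\eps$ once you take square roots). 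Second, your intermediate display omits the term $\Lpnorm{\delta_{l',\eps}\tu_{j\eps}^\pm}{2}{0,T;\Omeh^\pm}$ which you yourself note must be carried on the right-hand side; with it included, the conversion to a bound on $\delta_{l',\eps}(J_\eps\tu_\eps)$ at the end (via the product rule for shifts and $\|\delta_{l',\eps}J_\eps\|_{C^0}\lesssim|l'\eps|$) would go through, but the conversion is only needed in your approach because you work with $\delta_{l',\eps}\tu_\eps$; the paper works with $\delta_{l',\eps}w_\eps$ directly and no post-processing is required.
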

\begin{proof} 
	To shorten notation, for measurable sets $E\subset \IR^n$, we write $\scp{\cdot}{\cdot}{E}$ instead of $\scp{\cdot}{\cdot}{L^2(E)}$ and, for $t\in (0,T)$, we write $\Lptnorm{\cdot}{p}{E} = \Lpnorm{\cdot}{p}{(0,t)\times E}$. Moreover, $C_{\theta,h}$ always denotes a generic constant depending only on $\theta, \, h >0$. Testing the weak formulation \eqref{eq:RDA_fd_weak} with $\delta_{-l^\prime,\eps} \phi_\eps$ for functions $\phi_\eps \in \calH_{\eps,h,0}^m$ and taking into account \eqref{eq:char_ptu_Hh0}, it is obtained that the shifts $\delt \tu_\eps$ satisfy 
    \begin{align}
		&\dotproduct{\p_t(\delt(J_\eps \tu_{j\eps}))}{\phi_{j\eps}}{(\calH_{\eps,h,0})^\prime,\,\calH_{\eps,h,0}} \\
		&\quad + \sumpm\scp{\delt(D_{j}^\pm \nabla \tu_{j\eps}^\pm)}{\nabla \phi_{j\eps}}{\Omeh^\pm}
        + \eps \scp{\delt(J_\eps^{\mathrm{M}} \tD_{j\eps}^{\mathrm{M}} \nabla \tu_{j\eps}^{\mathrm{M}})}{\nabla \phi_{j\eps}}{\OmeMSh}\\
		&\quad - \sumpm \scp{\delt(\tu_{j\eps}^\pm q_{j}^\pm)}{\nabla \phi_{j\eps}}{\Omeh^\pm}
        - \scp{\delt(J_\eps^{\mathrm{M}} \tu_{j\eps}^{\mathrm{M}} \tq_{j\eps}^{\mathrm{M}})}{\nabla \phi_{j\eps}}{\OmeMSh}
		+ \frac 1\eps \scp{\delt(J_\eps^{\mathrm{M}} \tu_{j\eps}^{\mathrm{M}} \tb_\eps^{\mathrm{M}})}{\nabla \phi_{j\eps}}{\OmeMSh} \\
		&= \sumpm \scp{\delta f_j(\tu_\eps^\pm)}{\phi_{j\eps}}{\Omeh^\pm}
		+ \frac 1\eps \scp{\delta \left(J_\eps^{\mathrm{M}} g_{j}(\tu_\eps^{\mathrm{M}}) \right)}{\phi_{j\eps}}{\OmeMSh}
		- \scp{\delta \left(J_\eps^{\mathrm{M}} \norm{F_\eps^{-\top} \nu_\eps^{\mathrm{M}}} h_{j}(\tu_\eps^{\mathrm{M}})\right)}{\phi_{j\eps}}{\widehat{N}_{\eps,h}},
    \end{align}
    where we have defined
    \begin{gather}
    	\delta f_j(\tu_\eps^\pm) 
    	\coloneqq f_j((\tu_\eps^\pm)^{l^\prime}) - f_j( \tu_\eps^\pm),\qquad
    	\delta (J_\eps^{\mathrm{M}} g_j(\tu_\eps^{\mathrm{M}})) 
    	\coloneqq J_\eps^{\mathrm{M},l^\prime} g_j((\tu_\eps^{\mathrm{M}})^{l^\prime}) - J_\eps^{\mathrm{M}} g_j(\tu_\eps^{\mathrm{M}}),\\
    	\delta \left(J_\eps^{\mathrm{M}} \norm{F_\eps^{-\top} \nu_\eps^{\mathrm{M}}} h_{j}(\tu_\eps^{\mathrm{M}})\right)
    	\coloneqq J_\eps^{\mathrm{M},l^\prime} \norm{F_\eps^{-\top} \nu_\eps^{\mathrm{M}}}^{l^\prime} h_{j}((\tu_\eps^{\mathrm{M}})^{l^\prime}) - J_\eps^{\mathrm{M}} \norm{F_\eps^{-\top} \nu_\eps^{\mathrm{M}}} h_{j}(\tu_\eps^{\mathrm{M}}).
    \end{gather}
    To derive the desired estimates, we write $w_\eps = J_\eps \tu_\eps$, use $J_\eps \nabla u_\eps = \nabla w_\eps - \frac{\nabla J_\eps}{J_\eps} w_\eps$ and $\delt a_\eps b_\eps = a_\eps^{l^\prime} (\delt b_\eps) + (\delt a_\eps) b_\eps$ which holds for arbitrary real-valued functions $a_\eps$ and $b_\eps$, so that we obtain from the previous equation after rearranging terms
    \begin{align}
		&\dotproduct{\p_t(\delt w_{j\eps})}{\phi_{j\eps}}{(\calH_{\eps,h,0})^\prime,\,\calH_{\eps,h,0}}
        + \sumpm \scp{D_{j}^{\pm,l^\prime} \delt \nabla w_{j\eps}^\pm}{\nabla \phi_{j\eps}}{\Omeh^\pm}
        + \eps \scp{\tD_{j\eps}^{\mathrm{M},l^\prime} \delt \nabla w_{j\eps}^{\mathrm{M}}}{\nabla \phi_{j\eps}}{\OmeMSh}\\
		&\quad= - \sumpm\scp{\delt D_{j}^\pm \, \nabla w_{j\eps}^\pm}{\nabla \phi_{j\eps}}{\Omeh^\pm}
		+ \sumpm \scp{w_{j\eps}^\pm \, \delt q_{j}^{\pm}}{\nabla \phi_{j\eps}}{\Omeh^\pm}
        + \sumpm \scp{\delt w_{j\eps}^\pm \, q_{j}^{\pm,l^\prime} }{\nabla \phi_{j\eps}}{\Omeh^\pm}\\
        &\qquad - \eps \scp{\delt \tD_{j\eps}^{\mathrm{M}} \, \nabla w_{j\eps}^{\mathrm{M}}}{\nabla \phi_{j\eps}}{\OmeMSh}
        + \eps \scp{ w_{j\eps}^{\mathrm{M}}\, \delt \left(\tD_{j\eps}^{\mathrm{M}}\tfrac{\nabla J_\eps^\mathrm{M}}{J_\eps^\mathrm{M}}\right)}{\nabla \phi_{j\eps}}{\OmeMSh}\\
        &\qquad + \eps \scp{\delt w_{j\eps}^{\mathrm{M}} \, \left(\tD_{j\eps}^{\mathrm{M}} \tfrac{\nabla J_\eps^{\mathrm{M}}}{J_\eps^{\mathrm{M}}}\right)^{l^\prime}}{\nabla \phi_{j\eps}}{\OmeMSh}
		+ \scp{w_{j\eps}^{\mathrm{M}} (\delt \tq_{j\eps}^{\mathrm{M}} - \tfrac 1\eps \delt \tb_\eps^{\mathrm{M}})}{\nabla \phi_{j\eps}}{\Omeh^\pm}\\
        &\qquad + \scp{\delt w_{j\eps}^{\mathrm{M}} \, (\tq_{j\eps}^{\mathrm{M},l^\prime} - \tfrac 1\eps \tb_\eps^{\mathrm{M},l^\prime})}{\nabla \phi_{j\eps}}{\OmeMSh}
		+ \sumpm \scp{\delta f_j(\tu_\eps^\pm)}{\phi_{j\eps}}{\Omeh^\pm}\\
		&\qquad+ \frac 1\eps \scp{\delta \left(J_\eps^{\mathrm{M}} g_{j}(\tu_\eps^{\mathrm{M}}) \right)}{\phi_{j\eps}}{\OmeMSh}
        - \scp{\delta \left(J_\eps^{\mathrm{M}} \norm{F_\eps^{-\top} \nu_\eps^{\mathrm{M}}} h_{j}(\tu_\eps^{\mathrm{M}})\right)}{\phi_{j\eps}}{\widehat{N}_{\eps,h}}\\
        &\quad \eqqcolon \sum_{i=1}^3 A_{i,\eps}^\pm + \sum_{i=1}^5 A_{i,\eps}^{\mathrm{M}} + A_{\eps}^{f} +A_{\eps}^{g} + A_{\eps}^{h}.
    \end{align}
    Due to the boundary values of $J_\eps \tu_\eps$, we cannot simply choose it as a test function in the previous formulation and introduce an additional cut-off function $\eta \in \Cci(\widehat{\Sigma}_{h})$ with $0\leq \eta \leq 1$, $\eta \equiv 1$ in $\widehat{\Sigma}_{2h}$ and $|\nabla \eta| \lesssim \frac 1 h$. We thus have $\eta^2 \delt w_\eps \in \calH_{\eps,h,0}$ with  
    \begin{align}
        \dotproduct{\p_t(\delt w_\eps)}{\eta^2 \delt w_\eps}{(\calH_{\eps,h,0})^\prime,\, \calH_{\eps,h,0}}
        = \frac 12 \frac{\dd}{\dd t} \norm{\eta \delt w_\eps}_{\calL_{\eps,h}}^2,
    \end{align}
    and taking $\phi_\eps = \eta^2 \delt w_\eps$ as a test function above, the terms on the left-hand side are estimated by
    \begin{align}
        &\dotproduct{\p_t(\delt w_{j\eps})}{\eta^2 \delt w_{j\eps}}{(\calH_{\eps,h,0})^\prime,\,\calH_{\eps,h,0}}
        + \sumpm \scp{D_{j}^{\pm,l^\prime} \delt \nabla w_{j\eps}^\pm}{\nabla (\eta^2 \delt w_{j\eps})}{\Omeh^\pm}\\
        &\qquad + \eps \scp{\tD_{j\eps}^{\mathrm{M},l^\prime} \delt \nabla w_{j\eps}^{\mathrm{M}}}{\nabla (\eta^2 \delt w_{j\eps})}{\OmeMSh}\\
        &\quad \gtrsim \frac{\dd}{\dd t} \norms{\eta \delt w_\eps}{\calL_{\eps,h}}^2 
        + \sumpm \Lpnorm{\eta \delt \nabla w_{j\eps}^\pm}{2}{\Omeh^\pm}^2
        + \sumpm \scp{D_{j}^{\pm,l^\prime} \delt \nabla w_{j\eps}^\pm}{\eta \nabla \eta \,  \delt w_{j\eps}^\pm}{\Omeh^\pm}\\
        &\qquad + \eps \Lpnorm{\eta \delt \nabla w_{j\eps}^{\mathrm{M}}}{2}{\OmeMSh}^2
        + \eps \scp{\tD_{j\eps}^{\mathrm{M},l^\prime} \delt \nabla w_{j\eps}^{\mathrm{M}}}{\eta \nabla \eta \,  \delt w_{j\eps}^{\mathrm{M}}}{\OmeMSh}.
    \end{align}
    In what follows, we repeatedly use the a priori estimates from \autoref{lem:apriori}, Young's inequality for $\theta \in (0,1)$ and the boundedness of the coefficients and of $F_\eps$, to estimate the latter scalar products and the terms $A_{i,\eps}^{\dagger}$, $A_{\eps}^{f}$, $A_{\eps}^{g}$, $A_{\eps}^{h}$. For the scalar products, we conclude
    \begin{align}
        &\sumpm \int_0^t\left|\scp{D_{j}^{\pm,l^\prime} \delt \nabla w_{j\eps}^\pm}{\eta \nabla \eta \, \delt w_{j\eps}^\pm}{\Omeh^\pm}\right| \dd \tau 
        + \eps  \int_0^t \left| \scp{\tD_{j\eps}^{\mathrm{M},l^\prime} \delt \nabla w_{j\eps}^{\mathrm{M}}}{\eta \nabla \eta \, \delt w_{j\eps}^{\mathrm{M}}}{\OmeMSh}\right|\dd \tau \\
        &\quad \lesssim \theta \Lptnorm{\eta \delt \nabla w_{j\eps}}{2}{\calL_{\eps,h}}^2 
        + \frac {1}{\theta h^2} \Lptnorm{\delt w_{j\eps}^\pm}{2}{\Omeh^\pm}^2 + \frac{\eps}{\theta h^2} \Lptnorm{\delt w_{j\eps}^{\mathrm{M}}}{2}{\OmeMSh}^2\\
        &\quad \lesssim \theta \Lptnorm{\eta \delt \nabla w_{j\eps}}{2}{\calL_{\eps,h}}^2 
        + C_{\theta,h}\Lptnorm{\delt w_{j\eps}^\pm}{2}{\Omeh^\pm}^2 
        + C_{\theta,h}\eps^2.
    \end{align}
    Next, we estimate the different terms $A_{i,\eps}^{\dagger}$ separately. First, we have
    \begin{align}
        \int_0^t |A_{1,\eps}^\pm| \dd \tau 
        & \lesssim \sumpm \theta \Lptnorm{\eta \delt \nabla w_{j\eps}^\pm}{2}{\Omeh^\pm}^2
        + \Lptnorm{\eta \delt w_{j\eps}^\pm}{2}{\Omeh^\pm}^2\\
        &\quad + \Big(\frac 1 \theta + \frac 1{h^2}\Big) \Lptnorm{\delt D_{j}^\pm}{\infty}{\Omeh^\pm}^2 \Lptnorm{\nabla w_{j\eps}^\pm}{2}{\Omeh^\pm}^2\\
        &\lesssim \sumpm \theta \Lptnorm{\eta \delt \nabla w_{j\eps}^\pm}{2}{\Omeh^\pm}^2
        + \Lptnorm{\eta \delt w_{j\eps}^\pm}{2}{\Omeh^\pm}^2
        + C_{\theta,h}\Lptnorm{\delt D_{j}^\pm}{\infty}{\Omeh^\pm}^2.
    \end{align}
    Similarly,
    \begin{align}
        \int_0^t |A_{2,\eps}^\pm| \dd \tau 
        & \lesssim \sumpm \theta \Lptnorm{\eta \delt \nabla w_{j\eps}^\pm}{2}{\Omeh^\pm}^2
        + \Lptnorm{\eta \delt w_{j\eps}^\pm}{2}{\Omeh^\pm}^2 \\
        &\quad + \Big(\frac 1 \theta + \frac 1{h^2}\Big) \Lptnorm{\delt q_j^\pm}{\infty}{\Omeh^\pm}^2 \Lptnorm{w_{j\eps}^\pm}{2}{\Omeh^\pm}^2\\
        &\lesssim \sumpm \theta \Lptnorm{\eta \delt \nabla w_{j\eps}^\pm}{2}{\Omeh^\pm}^2
        + \Lptnorm{\eta \delt w_{j\eps}^\pm}{2}{\Omeh^\pm}^2
        + C_{\theta,h}\Lptnorm{\delt q_j^\pm}{\infty}{\Omeh^\pm}^2 
    \end{align}
    and 
    \begin{align}
        \int_0^t |A_{3,\eps}^\pm| \dd \tau 
        & \lesssim \sumpm \theta \Lptnorm{\eta \delt \nabla w_{j\eps}^\pm}{2}{\Omeh^\pm}^2
        + \frac 1 \theta \Lptnorm{q_j^\pm}{\infty}{\Omee^\pm}^2 \Lptnorm{\eta \delt w_{j\eps}^\pm}{2}{\Omeh^\pm}^2 \\
        &\quad + \frac 1{h} \Lptnorm{q_j^\pm}{\infty}{\Omee^\pm} \Lptnorm{\delt w_{j\eps}^\pm}{2}{\Omeh^\pm}^2\\
        &\lesssim \sumpm \theta \Lptnorm{\eta \delt \nabla w_{j\eps}^\pm}{2}{\Omeh^\pm}^2
        + \frac 1 \theta \Lptnorm{\eta \delt w_{j\eps}^\pm}{2}{\Omeh^\pm}^2 
        + \frac 1{h} \Lptnorm{\delt w_{j\eps}^\pm}{2}{\Omeh^\pm}^2.
    \end{align}
    The proof of the estimates for the terms on $\OmeMSh$ proceeds similarly, but we have to carefully take into account the dependence on $\eps$ and use the assumptions \ref{it:T1:est_psi}--\ref{it:T4:2s_conv}. 
    We compute
    \begin{align}
        \int_0^t |A_{1,\eps}^{\mathrm{M}}| \dd \tau  
        & \lesssim \theta \eps \Lptnorm{\eta \delt \nabla w_{j\eps}^{\mathrm{M}}}{2}{\OmeMSh}^2
        + \frac 1\eps \Lptnorm{\eta \delt w_{j\eps}^{\mathrm{M}}}{2}{\OmeMSh}^2 \\
        &\quad + \Big(\frac \eps\theta + \frac {\eps^3}{h^2}\Big) \Lptnorm{\delt \tD_{j\eps}^{\mathrm{M}}}{\infty}{\OmeMSh}^2 \Lptnorm{\nabla w_{j\eps}^{\mathrm{M}}}{2}{\OmeMSh}^2\\
        &\lesssim \theta \eps \Lptnorm{\eta \delt \nabla w_{j\eps}^{\mathrm{M}}}{2}{\OmeMSh}^2
        +\frac 1\eps \Lptnorm{\eta \delt w_{j\eps}^{\mathrm{M}}}{2}{\OmeMSh}^2\\
        &\quad+ C_{\theta,h}(1+\eps^2)\Lptnorm{\delt \tD_{j\eps}^{\mathrm{M}}}{\infty}{\OmeMSh}^2.
    \end{align}
    Similarly,
    \begin{align}
        \int_0^t |A_{2,\eps}^{\mathrm{M}}| \dd \tau 
        & \lesssim \theta \eps \Lptnorm{\eta \delt \nabla w_{j\eps}^{\mathrm{M}}}{2}{\OmeMSh}^2
        + \frac 1\eps \Lptnorm{\eta \delt w_{j\eps}^{\mathrm{M}}}{2}{\OmeMSh}^2 \\
        &\quad + \Big(\frac \eps \theta + \frac {\eps^3}{h^2}\Big) \Lptnorm{\delt \left(\tD_{j\eps}^{\mathrm{M}} \tfrac{\nabla J_\eps^{\mathrm{M}}}{J_\eps^{\mathrm{M}}}\right)}{\infty}{\OmeMSh}^2 \Lptnorm{w_{j\eps}^{\mathrm{M}}}{2}{\OmeMSh}^2\\
        &\lesssim \theta \eps \Lptnorm{\eta \delt \nabla w_{j\eps}^{\mathrm{M}}}{2}{\OmeMSh}^2 + \frac 1\eps \Lptnorm{\eta \delt w_{j\eps}^{\mathrm{M}}}{2}{\OmeMSh}^2\\
        &\quad + C_{\theta,h}\eps^2(1+\eps^2)\Lptnorm{\delt \left(\tD_{j\eps}^{\mathrm{M}} \tfrac{\nabla J_\eps^{\mathrm{M}}}{J_\eps^{\mathrm{M}}}\right)}{\infty}{\OmeMSh}^2
    \end{align}
    and 
    \begin{align}
        \int_0^t |A_{3,\eps}^{\mathrm{M}}| \dd \tau
        & \lesssim \theta \eps \Lptnorm{\eta \delt \nabla w_{j\eps}^{\mathrm{M}}}{2}{\OmeMSh}^2
        + \frac \eps \theta \Lptnorm{\nabla J_\eps^{\mathrm{M}}}{\infty}{\OmeM}^2 \Lptnorm{\eta \delt w_{j\eps}^{\mathrm{M}}}{2}{\OmeMSh}^2 \\
        &\quad + \frac 1 \eps \Lptnorm{\eta \delt w_{j\eps}^{\mathrm{M}}}{2}{\OmeMSh}^2
        + \frac{\eps^3}{h^2} \Lptnorm{\nabla J_\eps^{\mathrm{M}}}{\infty}{\OmeM}^2 \Lptnorm{\delt w_{j\eps}^{\mathrm{M}}}{2}{\OmeMSh}^2\\
        &\lesssim \theta \eps \Lptnorm{\eta \delt \nabla w_{j\eps}^{\mathrm{M}}}{2}{\OmeMSh}^2
        + \frac 1{\theta\eps} \Lptnorm{\eta \delt w_{j\eps}^{\mathrm{M}}}{2}{\OmeMSh}^2 
        + \frac{\eps^2}{h^2}.
    \end{align}
    The estimates for $A_{4,\eps}^{\mathrm{M}}$ and $A_{5,\eps}^{\mathrm{M}}$ are obtained analogously and we have
    \begin{align}
        \int_0^t |A_{4,\eps}^{\mathrm{M}}| \dd \tau
        &\lesssim \theta \eps \Lptnorm{\eta \delt \nabla w_{j\eps}^{\mathrm{M}}}{2}{\OmeMSh}^2 
        + \frac 1\eps \Lptnorm{\eta \delt w_{j\eps}^{\mathrm{M}}}{2}{\OmeMSh}^2\\
        &\quad + \Big(\frac 1 {\theta\eps} + \frac {\eps}{h^2}\Big) \Big(\Lptnorm{\delt \tq_{j\eps}^{\mathrm{M}}}{\infty}{\OmeMSh}^2 + \frac 1{\eps^2}\Lptnorm{\delt \tb_\eps^{\mathrm{M}}}{\infty}{\OmeMSh}^2\Big) \Lptnorm{w_{j\eps}^{\mathrm{M}}}{2}{\OmeMSh}^2, \\
        &\lesssim \theta \eps \Lptnorm{\eta \delt \nabla w_{j\eps}^{\mathrm{M}}}{2}{\OmeMSh}^2 + \frac 1\eps \Lptnorm{\eta \delt w_{j\eps}^{\mathrm{M}}}{2}{\OmeMSh}^2\\
        &\quad + C_{\theta,h}(1+\eps^2)\Big(\Lptnorm{\delt \tq_{j\eps}^{\mathrm{M}}}{\infty}{\OmeMSh}^2 + \frac 1{\eps^2}\Lptnorm{\delt \tb_\eps^{\mathrm{M}}}{\infty}{\OmeMSh}^2\Big)
    \end{align}
    and 
    \begin{align}
        \int_0^t |A_{5,\eps}^{\mathrm{M}}| \dd \tau 
        &\lesssim \theta \eps \Lptnorm{\eta \delt \nabla w_{j\eps}^{\mathrm{M}}}{2}{\OmeMSh}^2 
        + \frac 1\eps \Lptnorm{\eta \delt w_{j\eps}^{\mathrm{M}}}{2}{\OmeMSh}\\
        &\quad + \frac 1{\theta \eps} \Big(\Lptnorm{\tq_{j\eps}^{\mathrm{M}}}{\infty}{\OmeMS}^2 + \frac 1 {\eps^2} \Lptnorm{\tb_\eps^{\mathrm{M}}}{\infty}{\OmeMS}^2\Big) \Lptnorm{\eta \delt w_{j\eps}^{\mathrm{M}}}{2}{\OmeMSh}^2\\
        &\quad 
        + \frac {\eps}{h^2} \Big(\Lptnorm{\tq_{j\eps}^{\mathrm{M}}}{\infty}{\OmeMS}^2 + \frac 1 {\eps^2} \Lptnorm{\tb_\eps^{\mathrm{M}}}{\infty}{\OmeMS}^2\Big) \Lptnorm{\delt w_{j\eps}^{\mathrm{M}}}{2}{\OmeMSh}^2\\
        &\lesssim \theta \eps \Lptnorm{\eta \delt \nabla w_{j\eps}^{\mathrm{M}}}{2}{\OmeMSh}^2 
        + \frac 1 {\theta \eps} \Lptnorm{\eta \delt w_{j\eps}^{\mathrm{M}}}{2}{\OmeMSh}^2
        +\frac{\eps^2}{h^2}.
    \end{align}
    It remains to give the estimates for the nonlinearities $f$, $g$, and $h$. Due to the uniform Lipschitz continuity of $f$ in the last argument and using $w_\eps = \tu_\eps$ on $\Omee^\pm$, we have
    \begin{align}
        \int_0^t |A_\eps^f| \dd \tau
        \leq \sumpm \int_0^t\int_{\Omeh^\pm} |(f_j\circ u_\eps^\pm)^{l^\prime} - f_j \circ u_\eps^\pm| \, \eta^2 |\delt w_{j\eps}^\pm| \dd x \dd \tau
        \lesssim \Lptnorm{\eta \delt w_{\eps}^\pm}{2}{\Omeh^\pm}^2.
    \end{align}
    Next, we rewrite $A_{\eps}^{g}$ according to
    \begin{align}
        A_{\eps}^{g} 
        &= \frac 1\eps \scp{J_\eps^{\mathrm{M}} \delta g_{j}(\tu_\eps^{\mathrm{M}})}{\eta^2 \delt w_{j\eps}^{\mathrm{M}}}{\OmeMSh} 
        + \frac 1 \eps \scp{\delt J_\eps^{\mathrm{M}} g_{j}(\tu_\eps^{\mathrm{M},l^\prime})}{\eta^2 \delt w_{j\eps}^{\mathrm{M}}}{\OmeMSh} 
        \eqqcolon A_{1,\eps}^{g} + A_{2,\eps}^{g}
    \end{align}
    and estimate both terms separately. Due to the uniform Lipschitz continuity of $g$ in the last argument, we have 
    \begin{align}
        \int_0^t |A_{1,\eps}^{g}| \dd \tau
        & \lesssim \frac 1 \eps \int_0^t\int_{\OmeMSh} |J_\eps^{\mathrm{M}} \tu_\eps^{\mathrm{M},l^\prime} - J_\eps^{\mathrm{M}} \tu_\eps^{\mathrm{M}}| \, \eta^2 |\delt w_{j\eps}^{\mathrm{M}}| \dd x \dd \tau \\
        &\leq \frac 1\eps \int_0^t\int_{\OmeMSh} \eta^2 |\delt w_\eps^{\mathrm{M}}|^2 + |\delt J_\eps^{\mathrm{M}}| |\tu_\eps^{\mathrm{M},l^\prime}| \, \eta^2 |\delt w_{j\eps}^{\mathrm{M}}| \dd x \dd \tau\\
        &\leq \frac 1 \eps \Lptnorm{\eta \delt w_{j\eps}^{\mathrm{M}}}{2}{\OmeMSh}^2 + \frac 1\eps \Lptnorm{\delt J_\eps^{\mathrm{M}}}{\infty}{\OmeMSh} \Lptnorm{\tu_\eps^{\mathrm{M}}}{2}{\OmeMSh} \Lptnorm{\eta \delt w_{j\eps}^{\mathrm{M}}}{2}{\OmeMSh}\\
        &\lesssim \frac 1 \eps \Lptnorm{\eta \delt w_{j\eps}^{\mathrm{M}}}{2}{\OmeMSh}^2 + \Lptnorm{\delt J_\eps^{\mathrm{M}}}{\infty}{\OmeMSh}^2  
    \end{align}
    and, using $|\OmeMSh|, \simeq \eps$, we calculate
    \begin{align}
        \int_0^t |A_{2,\eps}^{g}| \dd \tau 
        &\lesssim \frac 1\eps \int_0^t\int_{\OmeMSh} |\delt J_\eps^{\mathrm{M}}|(1+|\tu_\eps^{\mathrm{M},l^\prime}|) \eta^2 |\delt w_{j\eps}^{\mathrm{M}}| \dd x \dd \tau\\
        &\lesssim \frac 1\eps \Lptnorm{\delt J_\eps^{\mathrm{M}}}{\infty}{\OmeMSh} |\OmeMSh|^{\frac 12} \Lptnorm{\eta \delt w_{j\eps}^{\mathrm{M}}}{2}{\OmeMSh} \\
        &\quad+ \frac 1\eps \Lptnorm{\delt J_\eps^{\mathrm{M}}}{\infty}{\OmeMSh} \Lptnorm{\tu_\eps^{\mathrm{M},l^\prime}}{2}{\OmeMSh} \Lptnorm{\eta \delt w_{j\eps}^{\mathrm{M}}}{2}{\OmeMSh}\\
        &\lesssim \frac 1\eps \Lptnorm{\eta \delt w_{j\eps}^{\mathrm{M}}}{2}{\OmeMSh}^2
        + \Lptnorm{\delt J_\eps^{\mathrm{M}}}{\infty}{\OmeMSh}^2 .
    \end{align}
    The nonlinearity involving $h$ is treated in a similar way. We write
    \begin{align}
        A_{\eps}^{h} 
        &= \scp{J_\eps^{\mathrm{M}} \norm{F_\eps^{-\top} \nu_\eps^{\mathrm{M}}} \delta h_{j}(\tu_\eps^{\mathrm{M}})}{\eta^2 \delt w_{j\eps}^{\mathrm{M}}}{\widehat{N}_{\eps,h}} 
        + \scp{\delt \left(J_\eps^{\mathrm{M}} \norm{F_\eps^{-\top} \nu_\eps^{\mathrm{M}}} \right)h_{j}(\tu_\eps^{\mathrm{M},l^\prime})}{\eta^2 \delt w_{j\eps}^{\mathrm{M}}}{\widehat{N}_{\eps,h}} \\
        &\eqqcolon A_{1,\eps}^{h} + A_{2,\eps}^{h}
    \end{align}
    and use the uniform Lipschitz continuity of $h$ in the last variable together with the trace estimate in \autoref{lem:traces}, which in particular yields $\Lpnorm{\tu_\eps^{\mathrm{M}}}{2}{\widehat{N}_{\eps,h}}\lesssim 1$ by the a priori estimates from \autoref{lem:apriori}. We thus obtain 
    \begin{align}
        \int_0^t |A_{1,\eps}^{h}| \dd \tau
        &\lesssim \int_0^t \int_{\widehat{N}_{\eps,h}} \left|J_\eps^{\mathrm{M}} \norm{F_\eps^{-\top} \nu_\eps^{\mathrm{M}}}\, \tu_\eps^{\mathrm{M},l^\prime} - J_\eps^{\mathrm{M}} \norm{F_\eps^{-\top} \nu_\eps^{\mathrm{M}}}\, \tu_\eps^{\mathrm{M}}\right| \, \eta^2 |\delt w_{j\eps}^{\mathrm{M}}| \dd \calH^{n-1}(x) \dd \tau \\
        &\lesssim \int_0^t \int_{\widehat{N}_{\eps,h}} \eta^2 |\delt w_\eps^{\mathrm{M}}|^2 + |\delt J_\eps^{\mathrm{M}}| \, |\tu_\eps^{\mathrm{M},l^\prime}| \eta^2 |\delt w_{j\eps}^{\mathrm{M}}| \dd \calH^{n-1}(x) \dd \tau\\
        &\lesssim \Lptnorm{\eta \delt w_{j\eps}^{\mathrm{M}}}{2}{\widehat{N}_{\eps,h}}^2 
        + \Lptnorm{\delt J_\eps^{\mathrm{M}}}{\infty}{\widehat{N}_{\eps,h}} \Lptnorm{\tu_\eps^{\mathrm{M}}}{2}{\widehat{N}_{\eps,h}} \Lptnorm{\eta \delt w_{j\eps}^{\mathrm{M}}}{2}{\widehat{N}_{\eps,h}}\\
        &\lesssim \theta \eps \Lptnorm{\eta \delt \nabla  w_{j\eps}^{\mathrm{M}}}{2}{\OmeMSh}^2 
        + \frac{\theta \eps}{h^2} \Lptnorm{\delt w_{j\eps}^{\mathrm{M}}}{2}{\OmeMSh}^2 
        + \frac 1 {\theta \eps} \Lptnorm{\eta \delt w_{j\eps}^{\mathrm{M}}}{2}{\OmeMSh}^2\\
        &\quad + \Lptnorm{\delt J_\eps^{\mathrm{M}}}{\infty}{\widehat{N}_{\eps,h}}^2\\
        &\lesssim \theta \eps \Lptnorm{\eta \delt \nabla  w_{j\eps}^{\mathrm{M}}}{2}{\OmeMSh}^2 
        +\frac 1 {\theta\eps} \Lptnorm{\eta \delt w_{j\eps}^{\mathrm{M}}}{2}{\OmeMSh}^2 
        + \frac{\eps^2}{h^2}
        + \Lptnorm{\delt J_\eps^{\mathrm{M}}}{\infty}{\widehat{N}_{\eps,h}}^2.
    \end{align}
    Moreover, using $|\widehat{N}_{\eps,h}|\simeq 1$, the term $A_{2,\eps}^{h}$ can be estimated by
    \begin{align}
        \int_0^t |A_{2,\eps}^{h}| \dd \tau
        &\lesssim \int_0^t \int_{\widehat{N}_{\eps,h}} \left|\delt (J_\eps^{\mathrm{M}} \norm{F_\eps^{-\top} \nu_\eps^{\mathrm{M}}})\right|(1+|\tu_\eps^{\mathrm{M},l^\prime}|) \, \eta^2 |\delt w_{j\eps}^{\mathrm{M}}| \dd \calH^{n-1}(x) \dd \tau\\
        &\lesssim \Lptnorm{\delt (J_\eps^{\mathrm{M}} \norm{F_\eps^{-\top} \nu_\eps^{\mathrm{M}}})}{\infty}{\widehat{N}_{\eps,h}} |\widehat{N}_{\eps,h}|^{\frac 12} \Lptnorm{\eta \delt w_{j\eps}^{\mathrm{M}}}{2}{\widehat{N}_{\eps,h}} \\
        &\quad + \Lptnorm{\delt (J_\eps^{\mathrm{M}} \norm{F_\eps^{-\top} \nu_\eps^{\mathrm{M}}})}{\infty}{\widehat{N}_{\eps,h}} \Lptnorm{\tu_\eps^{\mathrm{M},l^\prime}}{2}{\widehat{N}_{\eps,h}} \Lptnorm{\eta \delt w_{j\eps}^{\mathrm{M}}}{2}{\widehat{N}_{\eps,h}}\\
        &\lesssim \theta \eps \Lptnorm{\eta \delt \nabla  w_{j\eps}^{\mathrm{M}}}{2}{\OmeMSh}^2 
		+ \frac{\theta \eps}{h^2} \Lptnorm{\delt w_{j\eps}^{\mathrm{M}}}{2}{\OmeMSh}^2 
		+ \frac 1 {\theta \eps} \Lptnorm{\eta \delt w_{j\eps}^{\mathrm{M}}}{2}{\OmeMSh}^2\\
		&\quad + \Lptnorm{\delt (J_\eps^{\mathrm{M}} \norm{F_\eps^{-\top} \nu_\eps^{\mathrm{M}}})}{\infty}{\widehat{N}_{\eps,h}}^2\\
        &\lesssim \theta \eps \Lptnorm{\eta \delt \nabla w_{j\eps}^{\mathrm{M}}}{2}{\OmeMSh}^2
        + \frac 1 {\theta \eps} \Lptnorm{\eta \delt w_{j\eps}^{\mathrm{M}}}{2}{\OmeMSh}^2
        + \frac{\eps^2}{h^2}\\
        &\quad + \Lptnorm{\norm{\delt F_\eps^{-\top}\nu_\eps^{\mathrm{M}}}}{\infty}{\widehat{N}_{\eps,h}}^2
        + \Lptnorm{\delt J_\eps^{\mathrm{M}}}{\infty}{\widehat{N}_{\eps,h}}^2,
    \end{align}
    where we have used 
    \begin{align}
        \Lptnorm{\delt (J_\eps^{\mathrm{M}} \norm{F_\eps^{-\top} \nu_\eps^{\mathrm{M}}})}{\infty}{\widehat{N}_{\eps,h}} \lesssim  \Lptnorm{\norm{\delt F_\eps^{-\top}\nu_\eps^{\mathrm{M}}}}{\infty}{\widehat{N}_{\eps,h}} + \Lptnorm{\delt J_\eps^{\mathrm{M}} \norm{F_\eps^{-\top}\nu_\eps^{\mathrm{M}}}}{\infty}{\widehat{N}_{\eps,h}}
    \end{align}
    in the last step.
    Altogether, writing $W_{\eps,0}=J_\eps(0) \tilde{U}_{\eps,0}$, for $\theta$ sufficiently small and almost every $t\in (0,T)$ we obtain
    \begin{align}
        &\norms{\eta \delt w_{j\eps}(t)}{\calL_{\eps,h}}^2 
        + \sumpm \Lptnorm{\eta \delt \nabla w_{j\eps}^\pm}{2}{\Omeh^\pm}^2 
        + \eps \Lptnorm{\eta \delt \nabla w_{j\eps}^{\mathrm{M}}}{2}{\OmeMSh}^2 \\
        & \lesssim \norms{\delt {W}_{j\eps,0}}{\calL_{\eps,h}}^2
        + \Lptnorm{\eta \delt w_{j\eps}}{2}{\calL_{\eps,h}}^2 
        + \eps^2\\
        &\quad + \left(\sumpm \Lptnorm{\delt w_{j\eps}^\pm}{2}{\Omeh^\pm}^2 
        + \Lptnorm{\delt D_{j}^\pm}{\infty}{\Omeh^\pm}^2
        + \Lptnorm{\delt q_j^\pm}{\infty}{\Omeh^\pm}^2\right)\\
        &\quad + (1+\eps^2)\left(\Lptnorm{\delt \tD_{j\eps}^{\mathrm{M}}}{\infty}{\Omeh}^2
        + \eps^2\Lptnorm{\delt \left(\tD_{j\eps}^{\mathrm{M}} \tfrac{\nabla J_\eps^{\mathrm{M}}}{J_\eps^{\mathrm{M}}}\right)}{\infty}{\OmeMSh}^2\right)\\
        &\quad +(1+\eps^2)\left(\Lptnorm{\delt \tq_{j\eps}^{\mathrm{M}}}{\infty}{\OmeMSh}^2 + \frac 1{\eps^2}\Lpnorm{\delt \tb_\eps^{\mathrm{M}}}{\infty}{0,T)\times \OmeMSh}^2\right)\\
        &\quad + \Lptnorm{\delt J_\eps^{\mathrm{M}}}{\infty}{\OmeMSh}^2
        + \Lptnorm{\delt J_\eps^{\mathrm{M}}}{\infty}{\widehat{N}_{\eps,h}}^2
        + \Lptnorm{\norm{\delt F_\eps^{-\top}\nu_\eps^{\mathrm{M}}}}{\infty}{\widehat{N}_{\eps,h}}^2.
    \end{align}
    For invertible matrices $A, \, B \in \IR^{n\times n}$, we have $A^{-1} - B^{-1} = - A^{-1} (A-B) B^{-1}$ and hence, by \ref{it:T1:est_psi} and \ref{it:T3:shifts}, also $\norm{\delt F_\eps^{-\top}}\lesssim |l^\prime \eps|$. Therefore, by \ref{it:shifts:D}--\ref{it:shifts:U0}, we conclude
    \begin{align}
        &\Lptnorm{\delt D_{j}^\pm}{\infty}{\Omeh^\pm} 
        + \Lptnorm{\delt \tD_{j\eps}^{\mathrm{M}}}{\infty}{\OmeMSh} 
        + \Lptnorm{\delt q_j^\pm}{\infty}{\Omeh^\pm} 
        + \Lptnorm{\delt \tq_{j\eps}^{\mathrm{M}}}{\infty}{\OmeMSh}\\
        &\quad + \Lptnorm{\delt J_\eps^{\mathrm{M}}}{\infty}{\OmeMSh}
        + \Lptnorm{\delt J_\eps^{\mathrm{M}}}{\infty}{\widehat{N}_{\eps,h}}
        + \Lptnorm{\norm{\delt F_\eps^{-\top}\nu_\eps^{\mathrm{M}}}}{\infty}{\widehat{N}_{\eps,h}}\\
        &\quad + \frac 1{\eps}\Lptnorm{\delt \tb_\eps^{\mathrm{M}}}{\infty}{\OmeMSh} 
        + \eps\Lptnorm{\delt \left(\tD_{j\eps}^{\mathrm{M}} \tfrac{\nabla J_\eps^{\mathrm{M}}}{J_\eps^{\mathrm{M}}}\right)}{\infty}{\OmeMSh}\lesssim |l^\prime\eps|,
    \end{align}
    where the estimates hold uniformly in $t\in (0,T)$.
	An application of Gronwall's inequality concludes the proof.
\end{proof}

A further ingredient for deriving strong compactness of the sequence of microsolutions $(\tu_\eps^\dagger)_\eps$, where $\dagger \in \{\pm,\, \mathrm{M}\}$, is a suitable control on the generalised time derivatives $\p_t\tu_\eps^\dagger$, see \autoref{thm:strong_2s_cpctness}. Following \cite[Section~2.2]{GahNeu21}, we next show how they are related to the time derivative $\p_t\tu_\eps \in L^2(0,T;\calH_\eps^\prime)$, as it is not obvious how to restrict a functional on $\calH_\eps$ to the subdomains $\Omee^\dagger$. For this purpose, we consider the spaces 
\begin{align}
    \calH_{\eps,0}^\pm \usesym{Heps0pm} = H^1_{S_{*,\eps}^\pm,0}(\Omee^\pm) \quad \text{and} \quad \calH_{\eps,0}^{\mathrm{M}} \usesym{Heps0M} = H^1_{S_{*,\eps}^+ \cup S_{*,\eps}^-,0}(\OmeMS),
\end{align}
each equipped with the scalar product of $\calH_\eps$ by considering $\calH_{\eps,0}^\dagger$ as a subspace of $\calH_\eps$ extending functions defined only on $\Omee^\pm$ or $\OmeMS$ to $\Omee$ by zero.
Then, $\calH_{\eps,0}^\dagger \xhookrightarrow{} L^2(\Omee^\dagger) \xhookrightarrow{} (\calH_{\eps,0}^\dagger)^\prime$ is a Gelfand triple and, for any function $v_\eps \in L^2(0,T;\calH_\eps)$ with $\p_t v_\eps \in L^2(0,T;\calH_\eps^\prime)$, we have $\p_t v_\eps^\dagger \in L^2(0,T;\calH_{\eps,0}^\dagger)$ with 
\begin{align}
	\begin{array}{rclcl}
		\dotproduct{\p_t v_\eps^\pm}{\phi_\eps^\pm}{(\calH_{\eps,0}^\pm)^\prime,\, \calH_{\eps,0}^\pm}\!
		&=&\! \dotproduct{\p_t v_\eps}{\phi_\eps^\pm}{\calH_{\eps}^\prime,\, \calH_{\eps}} && \text{for every } \phi_\eps^\pm \in \calH_{\eps,0}^\pm,\\
		\dotproduct{\p_t v_\eps^{\mathrm{M}}}{\phi_\eps^{\mathrm{M}}}{(\calH_{\eps,0}^{\mathrm{M}})^\prime,\, \calH_{\eps,0}^{\mathrm{M}}} \!
		&=&\! \eps \dotproduct{\p_t v_\eps}{\phi_\eps^{\mathrm{M}}}{\calH_{\eps}^\prime,\, \calH_{\eps}} && \text{for every } \phi_\eps^{\mathrm{M}} \in \calH_{\eps,0}^{\mathrm{M}}
	\end{array}
\end{align}
and, hence,
\begin{align}
	\norms{\p_t v_\eps^\pm}{(\calH_{\eps,0}^\pm)^\prime} \leq \norms{\p_t v _\eps}{\calH_{\eps}^\prime} \qquad \text{and} \qquad
	\norms{\p_t v_\eps^{\mathrm{M}}}{(\calH_{\eps,0}^{\mathrm{M}})^\prime} \leq \eps \norms{\p_t v _\eps}{\calH_{\eps}^\prime}.
	\label{eq:pt_u_pmM}
\end{align}
This is indeed a direct consequence of the identities
\begin{align}
    \int_0^T \dotproduct{\p_t v_\eps}{\phi_\eps^\pm}{\calH_\eps^\prime,\,\calH_\eps} \psi(t) \dd t 
    &= - \int_0^T \scp{v_\eps}{\phi_\eps^\pm}{\calL_\eps} \psi^\prime(t) \dd t 
    = - \int_0^T \scp{v_\eps^\pm}{\phi_\eps^\pm}{L^2(\Omee^\pm)} \psi^\prime(t) \dd t,\\
    \int_0^T \dotproduct{\p_t v_\eps}{\phi_\eps^{\mathrm{M}}}{\calH_\eps^\prime,\,\calH_\eps} \psi(t) \dd t 
    &= - \int_0^T \scp{v_\eps}{\phi_\eps^{\mathrm{M}}}{\calL_\eps} \psi^\prime(t) \dd t 
    = - \frac 1\eps \int_0^T \scp{v_\eps^{\mathrm{M}}}{\phi_\eps^{\mathrm{M}}}{L^2(\OmeMS)} \psi^\prime(t) \dd t,
\end{align}
where $\phi_\eps^\dagger \in \calH_{\eps,0}^\dagger$ and $\psi \in \Cci((0,T))$, such as the uniqueness of the generalised time derivative and the $\calH_{\eps,0}^\dagger$-continuity of the maps $\phi_\eps^\dagger \mapsto \dotproduct{\p_t v_\eps}{\phi_\eps^\dagger}{\calH_\eps^\prime,\, \calH_\eps}$.

\section{Limit passage and identification of the macroscopic model}\label{sec:macro_prob}
In this section, we use the estimates obtained in \autoref{sec:existence} to pass to the limit $\eps \to 0$ in the microscopic model \eqref{eq:RDA_fd_weak}. In a first step, we construct a suitable limit function $\tu_0$. Then, we discuss the effective model satisfied by the limit function, both on the reference and the time-dependent domain.

\subsection{Convergence results for the reference problem}\label{subsec:convergence}
In what follows, we gather the convergence properties of the sequences $(\tu_\eps^\pm)_\eps$ and $(\tu_\eps^{\mathrm{M}})_\eps$. Similar results were obtained in \cite{NeuJae07} and \cite{GahNeu21}, both in case of a static microstructure; hence, we pay particular attention when dealing with the microsolutions $\tu_\eps^{\mathrm{M}}$.

In order to keep track of the microscopic structure of the layer in the limit $\eps \to 0$, we use the notion of two-scale convergence introduced in \cite{Ngu89, All92}. The dimension-reduction due to the shrinking of $\OmeMS$ to the $(n-1)$-dimensional hyperplane $\Sigma$ is taken into account by including prefactors in $\eps$ in a suitable way, see \cite{GahNeu21, NeuJae07, MarMar00}. We denote by $C^0_\#(\overline{Z_*}) \usesym{C0Zsper}$ the space of continuous $(Y,0)$-periodic functions on $\IR^{n-1}\times [-1,1]$ restricted to $\overline{Z_*}$. The space $C^0_\#(N)$ is defined in an analogous way.
\begin{definition}[Two-scale convergence in thin layers]\label{def:2s_conv}
	Let $p, \, p^\prime \in [1,\infty)$ such that $\frac 1p + \frac 1{p^\prime} = 1$. 
	\begin{enumerate}[label=(\roman*)]
		\item A sequence of functions $v_\eps\in L^p((0,T)\times \OmeMS)$ \underline{\em converges in the (weak) two-scale sense (on $\OmeMS$)} to $v_0 \in L^p((0,T)\times \Sigma \times Z_*)$ if for all $\phi \in L^{p^\prime}((0,T)\times \Sigma; C^0_\#(\overline{Z_*})))$ there holds
		\begin{align}
			\lim_{\eps \to 0} \frac 1 \eps \int_0^T \int_{\OmeMS} v_\eps(t,x) \phi\left(t,x^\prime,\tfrac{x}{\eps}\right) \dd x \dd t
			= \int_0^T \int_\Sigma \int_{Z_*} v_0(t,x^\prime,z) \phi(t,x^\prime,z) \dd z \dd x^\prime \dd t.
		\end{align}
		The sequence $(v_\eps)_\eps$ \ul{\em converges in the strong two-scale sense (on $\OmeMS$)} to $v_0$ if, additionally,
		\begin{align}
			\lim_{\eps\to 0} \eps^{-\frac1p} \Lpnorm{v_\eps}{p}{(0,T)\times \OmeMS} = \Lpnorm{v_0}{p}{(0,T)\times \Sigma \times Z_*}.
		\end{align}
		\item A sequence of functions $v_\eps\in L^p((0,T)\times N_\eps)$ \ul{\em converges in the (weak) two-scale sense on $N_\eps$} to $v_0 \in L^p((0,T)\times \Sigma \times N)$ if for all $\phi \in L^{p^\prime}(0,T;C^0(\overline{\Sigma}; C^0_\#(\overline{N})))$ there holds 
		\begin{align}
			\lim_{\eps \to 0} \int_0^T \int_{N_\eps} v_\eps(t,x) \phi\left(t,x^\prime,\tfrac{x}{\eps}\right) \dd \calH^{n-1}(x) \dd t
			= \int_0^T \int_\Sigma \int_{N} v_0(t,x^\prime,z) \phi(t,x^\prime,z) \dd \calH^{n-1}(z) \dd x^\prime \dd t.
		\end{align}
		The sequence $(v_\eps)_\eps$ \ul{\em converges in the strong two-scale sense on $N_\eps$} to $v_0$ if, additionally,
		\begin{align}
			\lim_{\eps\to 0} \Lpnorm{v_\eps}{p}{(0,T)\times N_\eps} = \Lpnorm{v_0}{p}{(0,T)\times \Sigma \times N}.
		\end{align}
	\end{enumerate}
\end{definition}

A notion of two-scale convergence on time-dependent bulk domains was introduced in \cite{Wie22}, which coincides with the standard one on periodic bulk domains under suitable assumptions on the transformation $\psi_\eps$. The next remark briefly discusses how a definition of two-scale convergence on the thin (time-dependent) layer $\OmeMS(t)$, which is compatible with that of \autoref{def:2s_conv}, can be given in the same spirit.
\begin{rem}[Two-scale convergence on thin (time-dependent) layers]\label{rem:gen_2s_conv}
	Let $G_\eps(t)\in \{\OmeMS,\, \OmeMS(t)\}$ and $G(t,x^\prime) \in \{Z_*,\, Z_*(t,x^\prime)\}$, where $Z_*(t,x^\prime) \coloneqq \psi_0^{-1}(t,x^\prime, Z_*)$. 
	Given a sequence of functions ${v_\eps \in L^p(0,T;G_\eps(t))}$ and $v_0\in L^p((0,T)\times \Sigma \times Z)$, we say that $(v_\eps)_\eps$ two-scale converges to $v_0$ if for all $\phi \in L^{p^\prime}((0,T)\times \Sigma; C^0_{\#}(\overline{Z}))$ there holds
	\begin{align}\label{eq:gen_2s_conv}
		\lim_{\eps \to 0} \frac 1 \eps \int_0^T \int_{\OmeM} \bar v_\eps(t,x) \phi\left(t,x^\prime,\tfrac{x}{\eps}\right) \dd x \dd t 
		= \int_0^T \int_\Sigma \int_Z v_0(t,x^\prime, z) \phi(t,x^\prime,z) \dd z \dd x^\prime \dd t,
	\end{align} 
	where $\bar v_\eps$ denotes the extension of $v_\eps$ to $\OmeM$ by zero. By a suitable choice of test functions, we may then consider $v_0$ as a function in $L^p((0,T)\times \Sigma;G(t,x^\prime))$. Consequently, this notion of two-scale convergence for functions naturally induces the notion of two-scale convergence given in \autoref{def:2s_conv}.
	
	On the other hand, it can be shown that a sequence of functions $v_\eps \in L^p(0,T;\OmeMS(t))$ converges to $v_0\in L^p((0,T)\times \Sigma;Z_*(t,x^\prime))$ in the two-scale sense of \eqref{eq:gen_2s_conv} if and only if the sequence of transformed functions $\tilde v_\eps \coloneqq v_\eps \circ_x \psi_\eps$ converges to $\tilde v_0 \coloneqq v_0 \circ_z \psi_0$ in the two-scale sense as given by \autoref{def:2s_conv}. This is achieved by adapting the arguments given in \cite[Section~3]{Wie22} for the case of (porous) bulk domains as the proofs are mainly based on unfolding techniques, and the unfolding operator introduced in \autoref{def:unfolding} enjoys similar properties. At this point, the two-scale convergence $\frac 1\eps (\psi_\eps - \mathrm{id}_{\OmeMS}) \to \psi_0 - \mathrm{id}_{Z_*}$ is of crucial importance which is our reason to include it in the assumptions on the transformation $\psi_\eps$.
\end{rem}

The main compactness results we use in what follows are given in \autoref{thm:convergence} below, which is an extension of \autoref{lem:2s_cpct} (weak two-scale compactness) to functions defined on the whole microscopic domain $\Omee$ and \autoref{thm:strong_2s_cpctness} (strong two-scale compactness). We write $H^1_{\pm,0}(Z_*) \usesym{H1pm0} = H^1_{S_*^+ \cup S_*^-,0}(Z_*)$ for the sake of clearer notation and first introduce the space $\calH$ as
\begin{align}
	\calH \usesym{H}\coloneqq \set{v=(v^+,v^{\mathrm{M}},v^-) \in H^1(\Ome^+)\times L^2(\Sigma;H^1(Z_*)) \times H^1(\Ome^-)}{v^{\mathrm{M}} = v^\pm(\cdot_{x^\prime},0) \text{ on } \Sigma\times S_*^\pm},
\end{align}
equipped with the inner product
\begin{align}\label{eq:scp_H}
	\scp{v}{w}{\calH} \coloneqq \sumpm \scp{v^\pm}{w^\pm}{L^2(\Ome^\pm)} + \scp{v^{\mathrm{M}}}{w^{\mathrm{M}}}{L^2(\Sigma \times Z_*)} + \scp{\nabla_z v^{\mathrm{M}}}{\nabla_z w^{\mathrm{M}}}{L^2(\Sigma \times Z_*)}.
\end{align}
We note that the transmission condition on $\Sigma \times S_*^\pm$ implies that $v^{\mathrm{M}}\rvert_{S_*^\pm}(\cdot_t,\cdot_{x^\prime},z)$ is independent of $z\in S_*^\pm$.
Moreover, we have the Gelfand triple $\calH \xhookrightarrow{} \calL \xhookrightarrow{} \calH^\prime$, where $\calL \usesym{L} \coloneqq L^2(\Ome^+) \times L^2(\Sigma \times Z_*) \times L^2(\Ome^-)$ is eqipped with the usual scalar product.
In \autoref{thm:convergence} below, it turns out that the space $\calH$ is the appropriate choice to capture properties of the limit of the sequence of microsolutions $(\tu_\eps)_\eps$.
We note that for functions $v \in \calH$ we have $v^{\mathrm{M}} = v^\pm(\cdot_{x^\prime},0)$ on $\Sigma\times S_*^\pm$, which is the limit analogue of the transmission conditions $v_\eps^\pm = v_\eps^{\mathrm{M}}$ on $S_{*,\eps}^\pm$ for functions $v_\eps \in \calH_\eps$. 
As for functions $\phi \in \calH$ the evaluation $\phi(x,\tfrac{x^\prime}\eps)$ is not well-defined for $x \in \OmeMS \cup N_\eps$, we further consider the subspace of smooth functions
\begin{align}
	\calH^\infty \coloneqq \left[C^\infty(\overline{\Ome^+}) \times \Cci(\Sigma; C^\infty(\overline{Z_*})) \times C^\infty (\overline{\Ome^-})\right] \cap \calH.
\end{align}
Then, the space $\calH^\infty$ is dense in $\calH$ with respect to the norm induced by \eqref{eq:scp_H}, see \cite[Proposition~4.3]{GahNeu21}.

\begin{theorem}[Compactness]\label{thm:convergence}
	Let $v_\eps \in L^2(0,T;\calH_\eps)$ with $\p_t v_\eps \in L^2(0,T;\calH_\eps^\prime)$ be functions with
	\begin{align}\label{eq:apriori}
		\Lpnorm{\p_t v_\eps}{2}{0,T;\calH_\eps^\prime} + \Lpnorm{v_\eps}{2}{0,T;\calH_\eps} \lesssim 1.
	\end{align}
	Then, there exists $v_0=(v_0^+,v_0^{\mathrm{M}},v_0^-) \in L^2(0,T;\calH)$ with $\p_t v_0 \in L^2(0,T;\calH^\prime)$ such that the following properties hold true.
	\begin{enumerate}[label=(\roman*)]
		\item\label{it:conv_bulk} \underline{\em Convergence in the bulk.} The limits in the bulks satisfy 
        \begin{align}
            v_0^\pm \in L^2((0,T);H^1(\Ome^\pm)) \quad \text{with} \quad 
            \p_t v_0^\pm \in L^2(0,T;(H^1_{\Sigma,0}(\Ome^\pm))^\prime),
        \end{align} and, for a subsequence, we have
		\begin{align}
			\begin{array}{rclcl}
				\chi_{\Omee^\pm} v_{\eps}^\pm &\to& v_{0}^\pm && \text{strongly in } L^2((0,T)\times \Ome^\pm),
				\\
				\chi_{\Omee^\pm} \nabla v_{\eps}^\pm &\to& \nabla v_{0}^\pm && \text{weakly in } L^2((0,T)\times \Ome^\pm),
				\\
				v_{\eps}^\pm(\cdot_t,\cdot_{x^\prime},\pm \eps) &\to& v_{0}^\pm(\cdot_t,\cdot_{x^\prime},0) && \text{strongly in } L^2((0,T)\times \Sigma).
			\end{array}
		\end{align}
		\item\label{it:conv_layer} \underline{\em Convergence in the layer.}
        The limit in the layer satisfies 
        \begin{align}
            v_0^{\mathrm{M}} \in L^2(0,T;L^2(\Sigma;H^1(Z_*))) \quad \text{with} \quad 
            \p_t v_0^{\mathrm{M}} \in L^2(0,T;L^2(\Sigma;(H^1_{\pm,0}(Z_*))^\prime))
        \end{align}
        and, for a subsequence, we have
		\begin{align}
			\begin{array}{rclcl}
				v_{\eps}^{\mathrm{M}} &\to& v_{0}^{\mathrm{M}} && \text{in the two-scale sense},\\
				\eps \nabla v_{\eps}^{\mathrm{M}} &\to& \nabla_z v_{0}^{\mathrm{M}} &&\text{in the two-scale sense}.
			\end{array}
		\end{align}
		\item\label{it:conv_pt} \underline{\em Convergence of the time derivative.} For any $\phi \in \Cci([0,T);\calH^\infty)$, writing
		\begin{align}\label{eq:spec_test_fct}
			\phi_\eps(t,x) \coloneqq 
			\left\{\begin{array}{rclcl}
				\phi^\pm(t,x^\prime,x_n\mp\eps) && \text{for } x \in \Omee^\pm,\\
				\phi^{\mathrm{M}}\left(t,x^\prime,\tfrac{x}\eps\right)&& \text{for } x \in \OmeMS,\\
			\end{array}\right.
		\end{align}
		we have 
		\begin{subequations}
			\begin{align}\label{eq:conv_ptv}
				\lim_{\eps \to 0} \int_0^T \dotproduct{\p_t v_\eps}{\phi_{\eps}}{\calH_\eps^\prime,\, \calH_\eps} \dd t 
				&= \int_0^T \dotproduct{\p_t v_0}{\phi}{\calH^\prime,\, \calH} \dd t.
			\end{align}
			In particular, $v_0 \in C^0([0,T];\calL)$ and 
			\begin{align}\label{eq:conv_v0}
				\scp{v_\eps(0)}{\phi_{\eps}(0)}{\calL_\eps} 
				&\xrightarrow{\eps\to} \scp{v_0(0)}{\phi(0)}{\calL}.
			\end{align}
		\end{subequations}
	\end{enumerate}
\end{theorem}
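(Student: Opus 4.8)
The plan is to derive all three assertions from the two-scale compactness machinery collected in the appendix (\autoref{lem:2s_cpct} and \autoref{thm:strong_2s_cpctness}) together with an abstract duality argument for the generalised time derivatives. First I would establish the bulk convergences in \ref{it:conv_bulk}: on each $\Omee^\pm$ the functions $v_\eps^\pm$ are bounded in $L^2(0,T;H^1(\Omee^\pm))$ by \eqref{eq:apriori}, and by \eqref{eq:pt_u_pmM} the restricted time derivatives $\p_t v_\eps^\pm$ are bounded in $L^2(0,T;(\calH_{\eps,0}^\pm)^\prime)$, hence in particular in $L^2(0,T;(H^1_{S_{*,\eps}^\pm,0}(\Omee^\pm))^\prime)$. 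After translating the strips $\Omee^\pm = Y\times(\pm\eps,\pm H)$ onto the fixed domains $\Ome^\pm = Y\times(0,\pm H)$ by the shift $x_n\mapsto x_n\mp\eps$ (which moves $S_{*,\eps}^\pm$ to $\Sigma$), an Aubin--Lions--Simon argument on the fixed cylinder yields a subsequence with $\chi_{\Omee^\pm} v_\eps^\pm \to v_0^\pm$ strongly in $L^2((0,T)\times\Ome^\pm)$ and $\nabla v_\eps^\pm \rightharpoonup \nabla v_0^\pm$ weakly; the limit regularity $\p_t v_0^\pm \in L^2(0,T;(H^1_{\Sigma,0}(\Ome^\pm))^\prime)$ passes to the limit by weak lower semicontinuity. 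The trace convergence $v_\eps^\pm(\cdot,\cdot_{x'},\pm\eps)\to v_0^\pm(\cdot,\cdot_{x'},0)$ in $L^2((0,T)\times\Sigma)$ follows from the strong $L^2$-convergence together with the uniform $H^1$-bound, by a standard trace-continuity argument (writing the trace as an integral of $\partial_n$ over the thin strip, or invoking compactness of the trace operator on the fixed cylinder).

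For \ref{it:conv_layer} I would apply \autoref{lem:2s_cpct} directly to $v_\eps^{\mathrm{M}}$: the bound $\eps^{1/2}\Lpnorm{v_\eps^{\mathrm{M}}}{2}{\OmeMS} + \eps^{1/2}\Lpnorm{\eps\nabla v_\eps^{\mathrm{M}}}{2}{\OmeMS}\lesssim 1$ is exactly the hypothesis of the weak two-scale compactness result in thin channels, giving $v_\eps^{\mathrm{M}}\to v_0^{\mathrm{M}}$ and $\eps\nabla v_\eps^{\mathrm{M}}\to\nabla_z v_0^{\mathrm{M}}$ in the two-scale sense along a further subsequence, with $v_0^{\mathrm{M}}\in L^2(0,T;L^2(\Sigma;H^1(Z_*)))$. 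The limit time-derivative regularity $\p_t v_0^{\mathrm{M}}\in L^2(0,T;L^2(\Sigma;(H^1_{\pm,0}(Z_*))^\prime))$ is obtained from the bound on $\p_t v_\eps^{\mathrm{M}}$ in $L^2(0,T;(\calH_{\eps,0}^{\mathrm{M}})^\prime)$ coming from the second inequality in \eqref{eq:pt_u_pmM} (note the favourable $\eps$-prefactor), by identifying the two-scale limit of the functional $\eps^{-1}\p_t v_\eps^{\mathrm{M}}$ tested against $\eps^{-1}$-scaled oscillating test functions $\phi^{\mathrm{M}}(t,x',x/\eps)$ with $\phi^{\mathrm{M}}\in\Cci(\Sigma;C^\infty(\overline{Z_*}))$ vanishing on $S_*^\pm$. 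The matching condition $v_0^{\mathrm{M}} = v_0^\pm(\cdot_{x'},0)$ on $\Sigma\times S_*^\pm$, placing $v_0$ in $\calH$, follows from the characterisation of two-scale limits on thin layers (\autoref{lem:char_2s_conv}) combined with the trace convergence from \ref{it:conv_bulk}, exactly as in \cite[Proposition~4.3]{GahNeu21} or \cite{NeuJae07}.

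The substantial part is \ref{it:conv_pt}, and this is where I expect the main obstacle. For fixed $\phi\in\Cci([0,T);\calH^\infty)$ the assembled test function $\phi_\eps$ in \eqref{eq:spec_test_fct} belongs to $L^2(0,T;\calH_\eps)$ (the bulk pieces match the layer piece on $S_{*,\eps}^\pm$ after the shift, precisely because $\phi\in\calH^\infty$ satisfies the transmission condition), so the pairing $\int_0^T\langle\p_t v_\eps,\phi_\eps\rangle_{\calH_\eps',\calH_\eps}\dd t$ makes sense. I would integrate by parts in time,
\begin{align*}
	\int_0^T \dotproduct{\p_t v_\eps}{\phi_{\eps}}{\calH_\eps^\prime,\,\calH_\eps}\dd t
	= -\int_0^T \scp{v_\eps}{\p_t\phi_{\eps}}{\calL_\eps}\dd t - \scp{v_\eps(0)}{\phi_\eps(0)}{\calL_\eps},
\end{align*}
decompose $\scp{\cdot}{\cdot}{\calL_\eps}$ into the bulk contributions plus $\eps^{-1}$ times the layer contribution, and pass to the limit in each piece: in the bulks using the strong $L^2$-convergence of \ref{it:conv_bulk} and $\p_t\phi_\eps^\pm\to\p_t\phi^\pm$ in $L^2((0,T)\times\Ome^\pm)$ (again after the $x_n\mapsto x_n\mp\eps$ shift), and in the layer using that $v_\eps^{\mathrm{M}}\to v_0^{\mathrm{M}}$ two-scale while $\eps^{-1}\int_{\OmeMS}v_\eps^{\mathrm{M}}\,\p_t\phi^{\mathrm{M}}(t,x',x/\eps)\dd x \to \int_\Sigma\int_{Z_*}v_0^{\mathrm{M}}\,\p_t\phi^{\mathrm{M}}\dd z\,\dd x'$ by definition of two-scale convergence. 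The right-hand side is then exactly $-\int_0^T\scp{v_0}{\p_t\phi}{\calL}\dd t - \scp{v_0(0)}{\phi(0)}{\calL}$; since $v_0\in L^2(0,T;\calH)$, the abstract result on generalised time derivatives (using that $\calH^\infty$ is dense in $\calH$ and that such $\phi$ separate points of $L^2(0,T;\calH^\prime)$) identifies this as $\int_0^T\langle\p_t v_0,\phi\rangle_{\calH',\calH}\dd t - \scp{v_0(0)}{\phi(0)}{\calL}$, establishing both \eqref{eq:conv_ptv} and, by the embedding $W^{1,2,2}(0,T;\calH,\calH^\prime)\hookrightarrow C^0([0,T];\calL)$, the continuity $v_0\in C^0([0,T];\calL)$ and the initial-data convergence \eqref{eq:conv_v0}. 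The delicate point throughout is bookkeeping the $\eps$-weights so that the layer term survives with a finite limit rather than blowing up or vanishing — this is exactly what the $\eps$-scaled inner products on $\calL_\eps,\calH_\eps$ and the scaling in \autoref{def:2s_conv} are designed to make consistent — and checking that $\phi_\eps$ is an admissible element of $\calH_\eps$ uniformly, which hinges on the compatibility of the defining transmission conditions of $\calH^\infty$ and $\calH_\eps$.
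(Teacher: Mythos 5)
Your treatment of (i) and (ii) and the matching condition $v_0^{\mathrm{M}}=v_0^\pm(\cdot_{x'},0)$ on $\Sigma\times S_*^\pm$ is essentially the same as the paper's (Aubin--Lions after the $x_n\mapsto x_n\mp\eps$ translation, \autoref{lem:2s_cpct} for the layer, and the integration-by-parts identification of the coupling condition following \cite{NeuJae07,GahNeu21}).

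For (iii) you take a genuinely different route from the paper. The paper establishes $\p_t v_0\in L^2(0,T;\calH')$ via difference quotients $\delta_t^h v_0$: it shows $\|\delta_t^h v_0\|_{L^2(0,T-h;\calH')}\lesssim 1$ uniformly in $h$ by writing the duality pairing as an $\eps$-limit of $\int_0^{T-h}\langle\delta_t^h v_\eps,\phi_\eps\rangle_{\calH_\eps',\calH_\eps}\dd t$, bounding this by $\|\delta_t^h v_\eps\|_{L^2(0,T-h;\calH_\eps')}\|\phi_\eps\|_{L^2(0,T-h;\calH_\eps)}$, and invoking the oscillation lemma for $\|\phi_\eps\|_{L^2(0,T-h;\calH_\eps)}\to\|\phi\|_{L^2(0,T-h;\calH)}$; reflexivity of $L^2(0,T;\calH')$ then yields the time-derivative regularity. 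You instead integrate by parts directly in $\int_0^T\langle\p_t v_\eps,\phi_\eps\rangle\dd t$ and pass to the limit term by term. This is a legitimate and arguably more direct route, but it rests on the same oscillation-lemma bound, which you do not mention.

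There is, however, a genuine gap in how you conclude. After the limit passage you assert that ``since $v_0\in L^2(0,T;\calH)$, the abstract result on generalised time derivatives \ldots identifies this as $\int_0^T\langle\p_t v_0,\phi\rangle_{\calH',\calH}\dd t - \scp{v_0(0)}{\phi(0)}{\calL}$.'' No abstract result hands you this identification for free: writing $\langle\p_t v_0,\phi\rangle$ and evaluating $v_0(0)$ both presuppose $\p_t v_0\in L^2(0,T;\calH')$, which is exactly what you still have to establish. The order of steps has to be reversed. First restrict to $\phi\in \Cci((0,T);\calH^\infty)$, so that the boundary term vanishes identically and the integrated-by-parts limit reads
\begin{align}
	\left|\int_0^T\scp{v_0}{\p_t\phi}{\calL}\dd t\right| = \left|\lim_{\eps\to 0}\int_0^T\dotproduct{\p_t v_\eps}{\phi_\eps}{\calH_\eps',\calH_\eps}\dd t\right|
	\leq \limsup_{\eps\to 0}\Lpnorm{\p_t v_\eps}{2}{0,T;\calH_\eps'}\Lpnorm{\phi_\eps}{2}{0,T;\calH_\eps},
\end{align}
and then invoke the oscillation lemma (\cite[Lemma~4.3]{NeuJae07}) to get $\Lpnorm{\phi_\eps}{2}{0,T;\calH_\eps}\to\Lpnorm{\phi}{2}{0,T;\calH}$, so the right-hand side is $\lesssim\Lpnorm{\phi}{2}{0,T;\calH}$. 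Density of $\Cci((0,T);\calH^\infty)$ in $L^2(0,T;\calH)$ then gives $\p_t v_0\in L^2(0,T;\calH')$, hence $v_0\in C^0([0,T];\calL)$. Only now are test functions $\phi\in\Cci([0,T);\calH^\infty)$ with $\phi(0)\neq 0$ admissible in the integration-by-parts identity for $v_0$, and matching boundary terms on both sides yields \eqref{eq:conv_v0}. Without this intermediate step your argument is circular: the very continuity you invoke to make sense of $v_0(0)$ is a consequence of the regularity you have yet to prove.
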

\begin{proof}
	We split the proof in several steps and first show \ref{it:conv_bulk} and \ref{it:conv_layer}. From this, we conclude the coupling condition between $v_0^\pm$ and $v_0^{\mathrm{M}}$ incorporated in the definition of the space $\calH$ and, thus, $v_0 \in L^2(0,T;\calH)$. Finally, the proof of \ref{it:conv_pt} is based on the convergence properties obtained in \ref{it:conv_bulk} and \ref{it:conv_layer}.
	
	To show \ref{it:conv_bulk}, the arguments given in \cite[Section~5.1]{NeuJae07} can be followed in general;  however, there it is assumed that $\p_t v_\eps^\pm \in L^2((0,T)\times \Omee^\pm)$. In our case, by \eqref{eq:pt_u_pmM}, we only have $\p_t v_\eps^\pm \in L^2(0,T;(\calH_{\eps,0}^\pm)^\prime)$, but the used change of variables and application of the Aubin--Lions lemma can easily be adapted to this setting (also see \cite[Proposition~4.1]{GahNeu21}).
	
	Next, \ref{it:conv_layer} is a direct consequence of \eqref{eq:pt_u_pmM} and the a priori estimates for $v_\eps^{\mathrm{M}}$ obtained from \eqref{eq:apriori} together with the compactness result in \autoref{lem:2s_cpct}~\ref{it:2s_cpct_H1}, where the regularity of $\p_tv_0^{\mathrm{M}}$ is due to \autoref{lem:2s_cpct}~\ref{it:2s_cpct_pt}. 
	
	To show $v_0 \in L^2(0,T;\calH)$, it remains to establish the coupling condition $v_0^{\mathrm{M}}(t,x^\prime,z) = v_0^\pm(t,x^\prime,0)$ for almost every $(t,x^\prime,z)\in (0,T)\times \Sigma \times S_*^\pm$.
	We argue similarly as in \cite[Theorem~4.2]{GahNeu21} (also see \cite[Section~5.3]{NeuJae07}). By the strong convergence of the traces from \ref{it:conv_bulk}, we also have $v_{\eps}^\pm(\cdot_t,\cdot_{x^\prime},\pm\eps) \to v_{0}^\pm(\cdot_t,\cdot_{x^\prime},0)$ in the two-scale sense with respect to test functions in $L^2((0,T)\times \Sigma; C^\infty_\per(\overline{\Sigma_*}))$, where $\Sigma_*\coloneqq \Sigma \cap Z_*$, see \cite[p.~1487]{All92}.
	Then, for all $\phi \in C^\infty((0,T)\times \Sigma; C^\infty_\#(\overline{Z_*}))$ with compact support with respect to $z\in Z_* \cup S_*^+ \cup S_+^-$, by \ref{it:conv_layer}, using the transmission condition $v_\eps^\pm = v_\eps^{\mathrm{M}}$ on $(0,T)\times S_{*,\eps}^\pm$ and the previous observation, we have
	\begin{align}
		&\int_0^T \int_\Sigma \int_{Z_*} \nabla_z v_{0}^{\mathrm{M}}(t,x^\prime,z) \phi(t,x^\prime,z) \dd z \dd x^\prime \dd t\\
		&\quad =\lim_{\eps \to 0} \frac 1 \eps \int_0^T \int_{\OmeMS} \eps \nabla v_{\eps}^{\mathrm{M}}(t,x) \phi\left(t,x^\prime, \tfrac{x}{\eps}\right) \dd x \dd t\\
		&\quad =\lim_{\eps \to 0} \frac 1 \eps \left[- \int_0^T \int_{\OmeMS} \left(\eps v_{\eps}^{\mathrm{M}}(t,x) \nabla_{x^\prime}\phi\left(t,x^\prime, \tfrac{x}{\eps}\right) 
		+ v_{\eps}^{\mathrm{M}}(t,x) \nabla_{z}\phi\left(t,x^\prime, \tfrac{x}{\eps}\right)\right) \dd x \dd t \right.\\
		&\qquad \phantom{\lim_{\eps \to 0} \frac 1 \eps} + \sumpm\left. \int_0^T \int_{S_{*,\eps}^\pm} \eps v_{\eps}^{\mathrm{M}}(t,x) \phi\left(t,x^\prime, \tfrac{x}{\eps}\right) \nu_\eps^{\mathrm{M}} \dd \calH^{n-1}(x) \dd t \right]\\
		&\quad = -\int_0^T \int_\Sigma\int_{Z_*} v_{0}^{\mathrm{M}}(t,x^\prime, z) \nabla_z \phi(t,x^\prime,z) \dd z \dd x^\prime \dd t \\
		&\qquad + \sumpm \int_0^T \int_\Sigma \int_{S_*^\pm} v_{0}^\pm(t,x^\prime,0) \phi(t,x^\prime,z) n_\pm \dd \calH^{n-1}(z) \dd x^\prime \dd t,
	\end{align}
	where $n_\pm=(0^\prime, \pm1)\in \IR^n$. By the divergence theorem together with the choice of the support of $\phi$, we thus have
    \begin{align}
        \sumpm \int_0^T \int_\Sigma \int_{S_*^\pm} \left(v_{0}^{\mathrm{M}}(t,x^\prime,z)  - v_{0}^\pm(t,x^\prime,0)\right)\phi(t,x^\prime,z) n_\pm \dd \calH^{n-1}(z) \dd x^\prime \dd t 
        = 0,
    \end{align}
    which after applying the fundamental lemma of the calculus of variations yields the desired result.
	
	Finally, to obtain \ref{it:conv_pt}, we argue similarly as in \cite[Lemma~B.5]{GahNeu25} (also see \cite[Proposition~4]{GahNeuPop21} for the case of bulk regions) but have to consider functions defined on the whole domain $\Omee$ instead. 
	Denoting by $\delta_t^h$ difference quotients in time, that is
	\begin{align}
		\delta_t^h v_0^\pm &\coloneqq \frac{v_0^\pm(\cdot_t+h,\cdot_x)-v_0^\pm}{h} \qquad \text{in } (0,T-h)\times \Ome^\pm, \\
		\delta_t^h v_0^{\mathrm{M}} &\coloneqq \frac{v_0^{\mathrm{M}}(\cdot_t+h,\cdot_{x^\prime},\cdot_z)-v_0^{\mathrm{M}}}{h} \qquad \text{in } (0,T-h)\times \Sigma\times Z_*, 
	\end{align}
	by the reflexivity of $L^2(0,T;(\calH^m)^\prime)$ it suffices to bound $\Lpnorm{\delta_t^h v_0}{2}{0,T-h; \calH^\prime}$ uniformly in $h$ to conclude $\p_t v_0 \in L^2(0,T;\calH^\prime)$ (see \cite[Proposition~2.2.26]{GasPap05}, or \cite[Chapter~5.8, Theorem~3]{Eva10} for the case of real-valued functions). 
	For any $\phi \in \Cci((0,T); \calH^\infty)$, by the convergence properties of $(v_\eps)_\eps$ obtained in \ref{it:conv_bulk} and \ref{it:conv_layer}, we deduce
	\begin{align}
		&\dotproduct{\delta_t^h v_0}{\phi}{L^2(0,T-h;\calH^\prime),\, L^2(0,T-h;\calH)}\\
		&\quad= \sum_\pm \int_0^{T-h} \int_{\Ome^\pm} \delta_t^h v_0^\pm \phi^\pm \dd x \dd t
		+ \int_0^{T-h}\int_\Sigma \int_{Z_*} \delta_t^h v_0^{\mathrm{M}} \phi^{\mathrm{M}}\dd z \dd x^\prime \dd t\\
		&\quad= \lim_{\eps \to 0} \int_0^{T-h} \int_{\Omee^\pm} \delta_t^h v_\eps^\pm \phi_{\eps}^\pm \dd x \dd t
		+ \lim_{\eps \to 0} \frac 1\eps \int_0^{T-h} \int_{\OmeMS} \delta_t^h v_\eps^{\mathrm{M}} \phi_{\eps}^{\mathrm{M}} \dd x \dd t\\
		&\quad= \lim_{\eps \to 0} \int_0^{T-h} \dotproduct{\delta_t^h v_\eps}{\phi_{\eps}}{\calH_\eps^\prime,\, \calH_\eps} \dd t
		\leq \limsup_{\eps \to 0} \Lpnorm{\delta_t^h v_\eps}{2}{0,T-h;\calH_\eps^\prime} \Lpnorm{\phi_{\eps}}{2}{0,T-h;\calH_\eps}.
	\end{align}
	As $\Lpnorm{\p_t v_\eps }{2}{0,T;(\calH_\eps^m)^\prime}\lesssim 1$, we also have $\Lpnorm{\delta_t^h v_\eps}{2}{0,T-h;(\calH_\eps^m)^\prime}\lesssim 1$ uniformly in $h$ and it suffices to bound $\limsup_{\eps \to 0}\Lpnorm{\phi_{\eps}}{2}{0,T-h;\calH_\eps}$. By an explicit calculation using the oscillation lemma for thin domains (see \cite[Lemma~4.3]{NeuJae07}) we have $\Lpnorm{\phi_{\eps}}{2}{0,T-h;\calH_\eps} \to \Lpnorm{\phi}{2}{0,T-h;\calH}$ as $\eps \to 0$, and thus obtain 
	\begin{align}
		\left|\dotproduct{\delta_t^h v_0}{\phi}{L^2(0,T;\calH^\prime),\, L^2(0,T;\calH)}\right|
		\lesssim \Lpnorm{\phi}{2}{0,T-h;\calH},
	\end{align}
	which by the density of $\calH^\infty$ in the space $\calH$ extends to functions $\phi \in L^2(0,T;\calH)$ and therefore yields $\p_t v_0 \in L^2(0,T;\calH^\prime)$. The convergence \eqref{eq:conv_ptv} for test functions $\phi \in \Cci((0,T); \calH^\infty)$ then follows similarly as in \cite[Proposition~4]{GahNeuPop21} and \cite[Lemma~B.5]{GahNeu25} by the definition of the generalised time derivative.
	Using a cut-off function in time near $0$, the previous convergence also holds true for functions $\phi \in \Cci([0,T);\calH^\infty)$.
	We thus further have $v_0 \in C^0([0,T];L^2(\Ome^+)\times L^2(\Sigma\times Z_*)\times L^2(\Ome^-))$ by the regularity of $\p_t v_0$; hence, the evaluation of $v_0$ in $t=0$ is well-defined. Finally, \eqref{eq:conv_v0} is obtained by an integration by parts in time, using \eqref{eq:conv_ptv} and the convergence properties of $(v_\eps)_\eps$, since 
    \begin{align}
        -\scp{v_\eps(0)}{\phi_\eps(0)}{\calL_\eps} 
        &= \int_0^T \dotproduct{\p_t v_\eps}{\phi_\eps}{\calH_\eps^\prime, \, \calH_\eps} \dd t + \int_0^T \scp{v_\eps}{\p_t \phi_\eps}{\calL_\eps} \dd t\\
        &\xrightarrow{\eps \to 0} \dotproduct{\p_t v_0}{\phi}{\calH^\prime, \, \calH} \dd t + \int_0^T \scp{v_\eps}{\p_t \phi_\eps}{\calL} \dd t
        = -\scp{v_0(0)}{\phi(0)}{\calL},
    \end{align}
    which concludes the proof.
\end{proof}

It should be noted that the regularity and uniform bounds on the time derivatives $\p_t v_\eps$ are needed to obtain the time regularity of the limit function $v_0$, the convergence of the time derivatives in \ref{it:conv_pt} and the strong convergences in \ref{it:conv_bulk} --- and consequently for the derivation of the coupling condition between $v_0^\pm$ and $v_0^{\mathrm{M}}$ on $\Sigma \times S_*^\pm$ --- but not for the derivation of the weak (two-scale) convergence of the function and its gradient in \ref{it:conv_bulk} and \ref{it:conv_layer}. 

Moreover, the time regularity of the limit functions $v_0^\dagger$, $\dagger \in \{\pm,\, \mathrm{M}\}$, may be obtained directly by an argument similar to that used for the microsolutions $\tu_\eps^\dagger$ at the end of \autoref{subsec:shifts}.

\begin{rem}[Regularity of $J_0$]\label{rem:reg_J0}
	With \autoref{thm:convergence} at hand, we can gather significantly more information about the two-scale limit of $(J_\eps)_\eps$. First of all, as the functions $J_\eps$ satisfy a priori estimates similar to \eqref{eq:apriori} by \ref{it:T2:est_J}, we have $J_0\coloneqq (1,J_0^{\mathrm{M}},1) \in L^2(0,T;\calH)$ with $\p_t J_0 \in L^2(0,T;\calH^\prime)$ as well as $J_0^{\mathrm{M}} \in C^0([0,T];L^2(\Sigma\times Z_*))$. Moreover, we obtain $\p_t J_0^{\mathrm{M}} \in L^2(0,T;L^2(\Sigma;(H^1_{\pm,0}(Z_*))^\prime))$ and, due to the strong two-scale convergence of $(J_\eps^{\mathrm{M}})_\eps$ to $J_0^{\mathrm{M}}$ and the uniform boundedness of $(\calT_\eps J_\eps^{\mathrm{M}}) \subset L^\infty((0,T)\times \Sigma;W^{1,p}(Z_*))$, we further deduce $J_0^{\mathrm{M}} \in L^\infty ((0,T)\times \Sigma; W^{1,p}(Z_*))$ for every $p\in [1,\infty)$ and $J_0^{\mathrm{M}} \simeq 1$.
\end{rem}
We also obtain the following convergence of the Jacobian at time $t=0$, and hence of the initial conditions.
\begin{rem}[Convergence of $(J_\eps(0))_{\eps}$]
    From \autoref{thm:convergence} we also have $J_\eps^{\mathrm{M}}(0) \to J_0^{\mathrm{M}}(0)$ weakly in the two-scale sense. The strong two-scale convergence of $(J_\eps)_\eps$ combined with the continuity of both $J_\eps$ and $J_0$ in $t=0$ therefore allows to conclude $J_\eps^{\mathrm{M}}(0) \to J_0^{\mathrm{M}}(0)$ strongly in the two-scale sense in every $L^p$--space, $p\in [1,\infty)$. By an application of \autoref{lem:2s_prod} together with \ref{it:limit:U0}, we therefore also obtain $\tilde{U}_{j\eps,0}^{\mathrm{M}} \to \tilde{U}_{j0,0}^{\mathrm{M}}$ in the strong two-scale sense.  
\end{rem}

It is convenient to formulate the result of the application of \autoref{thm:convergence} to the sequence of microsolutions $(\tu_\eps)_\eps$ of \eqref{eq:RDA_fd_full} as a corollary.
\begin{cor}[Convergence of the microsolutions]\label{cor:conv_u}
    Let $(\tu_\eps)_\eps$ be the sequence of microsolutions of \eqref{eq:RDA_fd_full}.
	There exists a function $\tu_0 \in L^2(0,T;\calH^m)$ with $\p_t(J_0 \tu_0) \in L^2(0,T;(\calH^m)^\prime)$, and therefore $J_0 \tu_0 \in C^0([0,T];\calL^m)$, such that, for a subsequence, the following convergence properties hold true.
	\begin{enumerate}[label=(\roman*)]
		\item\label{it:conv_bulk_u} \underline{\em Convergence in the bulk.} The limits in the bulks satisfy 
		\begin{align}\label{eq:conv_upm}
			\begin{array}{rclcl}
				\chi_{\Omee^\pm} \tu_{j\eps}^\pm &\to& \tu_{j0}^\pm && \text{strongly in } L^2((0,T)\times \Ome^\pm),
				\\
				\chi_{\Omee^\pm} \nabla \tu_{j\eps}^\pm &\to& \nabla \tu_{j0}^\pm && \text{weakly in } L^2((0,T)\times \Ome^\pm).
			\end{array}
		\end{align}
		\item\label{it:conv_layer_u} \underline{\em Convergence in the layer.} The limit in the layer satisfies 
		\begin{align}\label{eq:conv_uM}
			\begin{array}{rclcl}
				\tu_{j\eps}^{\mathrm{M}} &\to& \tu_{j0}^{\mathrm{M}} && \text{in the two-scale sense},\\
				\eps \nabla \tu_{j\eps}^{\mathrm{M}} &\to& \nabla_z \tu_{j0}^{\mathrm{M}} &&\text{in the two-scale sense}.
			\end{array}
		\end{align}
		\item\label{it:conv_ptu} \underline{\em Convergence of the time derivative.} For any $\phi \in \Cci([0,T);(\calH^\infty)^m)$ and $\phi_\eps$ defined as in \eqref{eq:spec_test_fct} we have 
			\begin{align}
				\lim_{\eps \to 0} \int_0^T \dotproduct{\p_t (J_\eps \tu_{j\eps})}{\phi_{j\eps}}{\calH_\eps^\prime,\, \calH_\eps} \dd t 
				&= \int_0^T \dotproduct{\p_t (J_0 \tu_{j0})}{\phi_j}{\calH^\prime,\, \calH} \dd t\label{eq:2s_conv_ptu},\\
				\scp{(J_\eps \tu_{j\eps})(0)}{\phi_{j\eps}(0)}{\calL_\eps} 
				&\to \scp{(J_0\tu_{j0})(0)}{\phi_j(0)}{L^2(\Ome^+)\times L^2(\Sigma \times Z_*) \times L^2(\Ome^-)}\label{eq:conv_IC}.
			\end{align}
	\end{enumerate}
\end{cor}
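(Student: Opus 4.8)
The plan is to deduce the corollary directly from \autoref{thm:convergence}, applied componentwise to the sequence $w_\eps \coloneqq J_\eps\tu_\eps$, and then to transfer the resulting convergences back to $\tu_\eps$ by dividing out the Jacobian. First I would note that \autoref{lem:apriori}, specifically \eqref{eq:apriori2}, shows that each component $w_{j\eps} = J_\eps\tu_{j\eps}$ satisfies the hypothesis \eqref{eq:apriori} of \autoref{thm:convergence}. Hence there is $w_0 = (w_0^+, w_0^{\mathrm{M}}, w_0^-) \in L^2(0,T;\calH^m)$ with $\p_t w_0 \in L^2(0,T;(\calH^m)^\prime)$, and therefore $w_0 \in C^0([0,T];\calL^m)$, realising, along a subsequence, the bulk convergences of \autoref{thm:convergence}\,\ref{it:conv_bulk}, the two-scale convergences $w_{j\eps}^{\mathrm{M}} \to w_{j0}^{\mathrm{M}}$ and $\eps\nabla w_{j\eps}^{\mathrm{M}} \to \nabla_z w_{j0}^{\mathrm{M}}$, and the time-derivative convergences \eqref{eq:conv_ptv}, \eqref{eq:conv_v0}.

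Next I would set $\tu_0^\pm \coloneqq w_0^\pm$ and $\tu_0^{\mathrm{M}} \coloneqq w_0^{\mathrm{M}}/J_0^{\mathrm{M}}$. The regularity $\tu_0^{\mathrm{M}} \in L^2(0,T;L^2(\Sigma;H^1(Z_*))^m)$ follows from the quotient rule together with $J_0^{\mathrm{M}} \simeq 1$ and $J_0^{\mathrm{M}} \in L^\infty((0,T)\times\Sigma;W^{1,p}(Z_*))$ from \autoref{rem:reg_J0}, while the coupling $\tu_0^{\mathrm{M}} = \tu_0^\pm(\cdot_{x^\prime},0)$ on $\Sigma\times S_*^\pm$ follows from the corresponding property of $w_0 \in \calH^m$ and the fact that $J_0^{\mathrm{M}} \equiv 1$ on $(0,T)\times\Sigma\times S_*^\pm$, recorded in \autoref{rem:2s_conv_psi}; thus $\tu_0 \in L^2(0,T;\calH^m)$ and $J_0\tu_0 = w_0$. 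Since $J_\eps \equiv 1$ on $\Omee^\pm$ one has $\tu_\eps^\pm = w_\eps^\pm$, so the bulk statement \ref{it:conv_bulk_u} is immediate from \autoref{thm:convergence}\,\ref{it:conv_bulk}, and \ref{it:conv_ptu} is exactly \eqref{eq:conv_ptv}, \eqref{eq:conv_v0} read off for $v_\eps = J_\eps\tu_\eps$ combined with $J_0\tu_0 = w_0$; this identity also yields $\p_t(J_0\tu_0) = \p_t w_0 \in L^2(0,T;(\calH^m)^\prime)$ and hence $J_0\tu_0 \in C^0([0,T];\calL^m)$.

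For the layer statement \ref{it:conv_layer_u} I would argue directly rather than dividing the two-scale limits: by \eqref{eq:apriori1} the sequence $(\tu_\eps^{\mathrm{M}})_\eps$ is bounded in $L^2(0,T;(\calH_\eps^{\mathrm{M}})^m)$, so \autoref{lem:2s_cpct} provides a further subsequence and $\widehat u^{\mathrm{M}} \in L^2(0,T;L^2(\Sigma;H^1(Z_*))^m)$ with $\tu_{j\eps}^{\mathrm{M}} \to \widehat u_j^{\mathrm{M}}$ and $\eps\nabla\tu_{j\eps}^{\mathrm{M}} \to \nabla_z\widehat u_j^{\mathrm{M}}$ in the two-scale sense. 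To identify the limit I would write $w_{j\eps}^{\mathrm{M}} = J_\eps^{\mathrm{M}}\tu_{j\eps}^{\mathrm{M}}$ and combine the strong two-scale convergence $J_\eps^{\mathrm{M}} \to J_0^{\mathrm{M}}$ (\autoref{rem:2s_conv_psi}) with the product property \autoref{lem:2s_prod} to obtain $w_{j\eps}^{\mathrm{M}} \to J_0^{\mathrm{M}}\widehat u_j^{\mathrm{M}}$ two-scale; comparison with $w_{j0}^{\mathrm{M}}$ yields $\widehat u_j^{\mathrm{M}} = w_{j0}^{\mathrm{M}}/J_0^{\mathrm{M}} = \tu_{j0}^{\mathrm{M}}$, which is \eqref{eq:conv_uM}. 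A final diagonal extraction reconciles the two subsequence selections.

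The main obstacle is precisely this layer identification: one cannot pass to the limit naively in $\tu_\eps^{\mathrm{M}} = w_\eps^{\mathrm{M}}/J_\eps^{\mathrm{M}}$ or in its gradient, since numerator and denominator are only weakly two-scale convergent; the argument genuinely exploits the \emph{strong} two-scale convergence of the Jacobian, the uniform lower bound $J_0^{\mathrm{M}} \simeq 1$, and the bilinear two-scale product lemma, and the verification that $\tu_0 \in \calH^m$ (in particular the trace coupling on $\Sigma\times S_*^\pm$) again relies on $J_0^{\mathrm{M}} \equiv 1$ there. The remaining steps are routine bookkeeping of the a priori bounds from \autoref{lem:apriori} and a transcription of \autoref{thm:convergence}.
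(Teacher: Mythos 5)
Your proof is correct and follows essentially the same route as the paper: apply \autoref{thm:convergence} to $v_\eps=J_\eps\tu_\eps$ via \eqref{eq:apriori2} to produce $v_0=J_0\tu_0$ (giving \ref{it:conv_bulk_u}, \ref{it:conv_ptu}, and the time regularity), then extract the weak two-scale limits of $\tu_\eps^{\mathrm{M}}$ and $\eps\nabla\tu_\eps^{\mathrm{M}}$ separately via \eqref{eq:apriori1}, and identify them with $\tu_0^{\mathrm{M}}$ through the strong two-scale convergence of $J_\eps^{\mathrm{M}}$, the lower bound $J_0^{\mathrm{M}}\simeq 1$, and \autoref{lem:2s_prod}. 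The only (cosmetic) deviation is that you invoke \autoref{lem:2s_cpct}~\ref{it:2s_cpct_H1} directly for the layer and define $\tu_0^{\mathrm{M}}=w_0^{\mathrm{M}}/J_0^{\mathrm{M}}$ explicitly; the paper instead formally cites \autoref{thm:convergence} a second time for $\tu_\eps$, relying on the concluding remark after its proof that the weak parts do not need the time-derivative hypothesis — both variants land on the same identification $J_0\tu_0=v_0$.
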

\begin{proof}
    By the a priori estimates in \eqref{eq:apriori2}, choosing $v_\eps = J_\eps \tu_{j\eps}$ in \autoref{thm:convergence}, we directly obtain the existence of a limit function $v_0 \in L^2(0,T; \calH^m)$ with $\p_t v_0 \in L^2(0,T;(\calH^m)^\prime)$ such that, for a subsequence, the convergence results in \ref{it:conv_bulk_u} and \ref{eq:2s_conv_ptu} are valid and $J_\eps \tu_{j\eps}^{\mathrm{M}} \to v_{j0}^{\mathrm{M}}$ holds in the two-scale sense. 
    Choosing $v_\eps = \tu_{j\eps}$ in \autoref{thm:convergence} and using the a priori estimates \eqref{eq:apriori1}, we further deduce the existence of a limit function $\tu_0 \in L^2(0,T;\calH^m)$ such that the convergences in \ref{it:conv_layer_u} hold, and are satisfied in \ref{it:conv_bulk_u} in the weak sense. It hence remains to identify $J_0 \tu_0 = v_0$, which then also yields the claimed time regularity of $\tu_0^{\mathrm{M}}$.
    This identification is due to the fact that we have $J_\eps^{\mathrm{M}} \to J_0^{\mathrm{M}}$ strongly in the two-scale sense in any $L^p$, $p\in [1,\infty)$, and thus also for the inverse of the Jacobians. Indeed, from \autoref{lem:2s_prod} we may now conclude $J_\eps^{\mathrm{M}} \tu_{j\eps}^{\mathrm{M}} \to J_0^{\mathrm{M}} \tu_{j0}^{\mathrm{M}}$ in the two-scale sense, and therefore $J_0^{\mathrm{M}} \tu_0^{\mathrm{M}} = v_0^{\mathrm{M}}$.
\end{proof}
We postpone a precise formulation of the time regularity of $\tu_0^\dagger$, $\dagger \in \{\pm,\, \mathrm{M}\}$ to \autoref{cor:macro_prob_weak} below as it is not needed for the derivation of the macroscopic problem in \autoref{thm:macroscopic}.

In order to pass to the limit in the nonlinearities, we need to establish strong two-scale convergence of the solutions $\tu_\eps^{\mathrm{M}}$ in the channel domain $\OmeMS$. Making use of the unfolding operator $\calT_\eps$ (see \autoref{def:unfolding}) strong two-scale convergence of the sequence $(\tu_\eps^{\mathrm{M}})_\eps$, can be established by showing strong $L^2$-convergence of the unfolded sequence $(\calT_\eps \tu_\eps^{\mathrm{M}})$, see \autoref{lem:char_2s_conv}. The main compactness result we use to show this is given in \autoref{thm:strong_2s_cpctness}.
\begin{lemma}[Strong convergence in the layer]\label{lem:strong_2s_conv}
    For every $p\in [1,2)$ and $\beta \in (\tfrac 12,1)$, the following converges hold true, possibly after passing to a subsequence,
    \begin{align}\label{eq:s_conv_uM}
        \begin{array}{rclcl}
        	\calT_\eps \tu_{j\eps}^{\mathrm{M}} &\to& \tu_{j0}^{\mathrm{M}} && \text{ strongly in } L^p(\Sigma;L^2((0,T)\times Z_*)), \\
        	\calT_\eps (\tu_{j\eps}^{\mathrm{M}}\rvert_{N_\eps}) &\to& \tu_{j0}^{\mathrm{M}}\rvert_N && \text{ strongly in } L^p(\Sigma;L^2((0,T)\times N)),\\
        	\calT_\eps(J_\eps \tu_{j\eps}^{\mathrm{M}}) &\to& J_0^{\mathrm{M}} \tu_{j0}^{\mathrm{M}} && \text{ strongly in } L^p(\Sigma;L^2(0,T;H^\beta(Z_*))).
        \end{array}
    \end{align}
\end{lemma}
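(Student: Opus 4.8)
\emph{Overall strategy.} The proof rests on applying the strong two-scale compactness result \autoref{thm:strong_2s_cpctness} to the sequence $w_\eps \coloneqq J_\eps \tu_{j\eps}$ restricted to the channel domain, then dividing out the Jacobian, and finally transferring the convergence to the lateral boundary $N_\eps$ by the trace theorem on $Z_*$. Three inputs for \autoref{thm:strong_2s_cpctness} have to be checked. First, by \eqref{eq:apriori2} the restriction $w_\eps^{\mathrm{M}}$ is bounded in $L^2(0,T;\calH_\eps^{\mathrm{M}})$, and by the Gelfand-triple identity \eqref{eq:pt_u_pmM} applied to $v_\eps = J_\eps\tu_{j\eps}$ its generalised time derivative obeys $\Lpnorm{\p_t w_{j\eps}^{\mathrm{M}}}{2}{0,T;(\calH_{\eps,0}^{\mathrm{M}})^\prime} \le \eps\,\Lpnorm{\p_t(J_\eps\tu_{j\eps})}{2}{0,T;\calH_\eps^\prime}\lesssim\eps$; after unfolding and the natural rescaling this yields the uniform control of $\p_t(\calT_\eps w_{j\eps}^{\mathrm{M}})$ in $L^2(0,T;L^2(\Sigma;(H^1_{\pm,0}(Z_*))^\prime))$ required by \autoref{thm:strong_2s_cpctness} (this is the abstract duality step). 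Second, \autoref{lem:shifts} provides the needed bound on the spatial shifts $\delt(J_\eps\tu_{j\eps})$ on $\OmeMSh$, once one checks that its right-hand side vanishes as $\eps l^\prime\to 0$ uniformly in $\eps$: the bulk contribution $\Lpnorm{\delt \tu_{j\eps}^\pm}{2}{0,T;\Omeh^\pm}$ tends to $0$ because, by the strong convergence $\chi_{\Omee^\pm}\tu_{j\eps}^\pm\to\tu_{j0}^\pm$ in $L^2$ from \autoref{cor:conv_u}~\ref{it:conv_bulk_u} together with $\tu_{j0}^\pm\in L^2(0,T;H^1(\Ome^\pm))$ one has $\limsup_\eps\Lpnorm{\delt \tu_{j\eps}^\pm}{2}{0,T;\Omeh^\pm}\lesssim |l^\prime\eps|\,\Lpnorm{\nabla\tu_{j0}^\pm}{2}{(0,T)\times\Ome^\pm}$, while the terms $\eps+|l^\prime\eps|+|l^\prime\eps|\eps$ and $\norms{\delt(J_\eps(0)\tilde U_{j\eps,0})}{\calL_{\eps,h}}$ vanish in the same limit by \ref{it:shifts:U0} and \ref{it:T3:shifts}. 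Third, the uniform bound on the unfolded gradients $\nabla_z(\calT_\eps w_{j\eps}^{\mathrm{M}})$ in $L^2((0,T)\times\Sigma\times Z_*)$ follows from $\eps\Lpnorm{\nabla\tu_{j\eps}^{\mathrm{M}}}{2}{\OmeMS}\lesssim 1$ and $\eps\Lpnorm{\nabla J_\eps^{\mathrm{M}}}{\infty}{(0,T)\times\OmeMS}\lesssim 1$, supplying $H^1$-compactness in the $z$-variable.

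\emph{Conclusion in $L^2$ and $H^\beta$.} With these inputs, \autoref{thm:strong_2s_cpctness} gives, along a subsequence, $\calT_\eps(J_\eps^{\mathrm{M}}\tu_{j\eps}^{\mathrm{M}})\to g_j$ strongly in $L^p(\Sigma;L^2((0,T)\times Z_*))$ for every $p\in[1,2)$. On the other hand, since $J_\eps^{\mathrm{M}}\to J_0^{\mathrm{M}}$ strongly two-scale in every $L^p$ (see \autoref{rem:2s_conv_psi} and \ref{it:T4:2s_conv}) and $\tu_{j\eps}^{\mathrm{M}}\to\tu_{j0}^{\mathrm{M}}$ weakly two-scale, \autoref{lem:2s_prod} yields $J_\eps^{\mathrm{M}}\tu_{j\eps}^{\mathrm{M}}\to J_0^{\mathrm{M}}\tu_{j0}^{\mathrm{M}}$ weakly two-scale; by uniqueness of the limit, $g_j = J_0^{\mathrm{M}}\tu_{j0}^{\mathrm{M}}$, which also pins down the full-sequence limit. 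Using $\calT_\eps J_\eps^{\mathrm{M}}\to J_0^{\mathrm{M}}$ strongly two-scale and $J_\eps^{\mathrm{M}}\simeq 1$ bounded away from $0$, dividing gives $\calT_\eps\tu_{j\eps}^{\mathrm{M}} = (\calT_\eps J_\eps^{\mathrm{M}})^{-1}\,\calT_\eps(J_\eps^{\mathrm{M}}\tu_{j\eps}^{\mathrm{M}})\to\tu_{j0}^{\mathrm{M}}$ strongly in $L^p(\Sigma;L^2((0,T)\times Z_*))$, which is the first line of \eqref{eq:s_conv_uM}. For the third line, combine this strong $L^2$-convergence with the uniform bound of $\calT_\eps(J_\eps^{\mathrm{M}}\tu_{j\eps}^{\mathrm{M}})$ in $L^2((0,T)\times\Sigma;H^1(Z_*))$: by the interpolation inequality $\norms{v}{H^\beta(Z_*)}\lesssim\norms{v}{L^2(Z_*)}^{1-\beta}\norms{v}{H^1(Z_*)}^{\beta}$ on the bounded Lipschitz domain $Z_*$, followed by H\"older's inequality in $t$ and in $x^\prime$ (admissible since $p<2$), one obtains $\norms{v}{L^p(\Sigma;L^2(0,T;H^\beta(Z_*)))}\lesssim\norms{v}{L^p(\Sigma;L^2((0,T)\times Z_*))}^{1-\beta}\,\norms{v}{L^p(\Sigma;L^2(0,T;H^1(Z_*)))}^{\beta}$, and applying this with $v = \calT_\eps(J_\eps^{\mathrm{M}}\tu_{j\eps}^{\mathrm{M}})-J_0^{\mathrm{M}}\tu_{j0}^{\mathrm{M}}$ (whose second factor stays bounded) forces strong convergence in $L^p(\Sigma;L^2(0,T;H^\beta(Z_*)))$ for $p<2$.

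\emph{Convergence on the lateral boundary.} Fix $\beta\in(\tfrac12,1)$. The trace operator $H^\beta(Z_*)\to L^2(N)$ is bounded, and the unfolding operator commutes with restriction to the lateral boundary, i.e.\ the boundary-unfolding of $(J_\eps^{\mathrm{M}}\tu_{j\eps}^{\mathrm{M}})|_{N_\eps}$ equals $\big(\calT_\eps(J_\eps^{\mathrm{M}}\tu_{j\eps}^{\mathrm{M}})\big)|_{\Sigma\times N}$ (see \autoref{def:unfolding}); hence the third convergence in \eqref{eq:s_conv_uM} already established yields $\calT_\eps\big((J_\eps^{\mathrm{M}}\tu_{j\eps}^{\mathrm{M}})|_{N_\eps}\big)\to (J_0^{\mathrm{M}}\tu_{j0}^{\mathrm{M}})|_N$ strongly in $L^p(\Sigma;L^2((0,T)\times N))$. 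Since $\calT_\eps(J_\eps^{\mathrm{M}}|_{N_\eps})\to J_0^{\mathrm{M}}|_N$ strongly two-scale on $N_\eps$ (again by the construction of $J_\eps$ from $\psi_\eps$ and \ref{it:T4:2s_conv}) and is uniformly bounded below, dividing gives $\calT_\eps(\tu_{j\eps}^{\mathrm{M}}|_{N_\eps})\to\tu_{j0}^{\mathrm{M}}|_N$ strongly in $L^p(\Sigma;L^2((0,T)\times N))$, the second line of \eqref{eq:s_conv_uM}.

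\emph{Main obstacle.} The delicate point is the verification of the hypotheses of \autoref{thm:strong_2s_cpctness}: only $\p_t(J_\eps\tu_\eps)$, and not $\p_t\tu_\eps$, is uniformly controlled, and restricted to the thin layer this control degrades like a power of $\eps$ relative to the $\eps$-scaled norms, so the duality argument producing the time derivative of the unfolded sequence must be carried out with precisely the right weights; in parallel one must ensure that the bulk shift term in \autoref{lem:shifts} genuinely vanishes in the correct joint order in $\eps$ and $l^\prime$, which is exactly where the strong bulk convergence of \autoref{cor:conv_u} and the data-shift assumptions \ref{it:shifts:D}--\ref{it:shifts:U0} come into play.
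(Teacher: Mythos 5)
Your proposal is correct and follows essentially the same road as the paper: apply \autoref{thm:strong_2s_cpctness} to $J_\eps^{\mathrm{M}}\tu_{j\eps}^{\mathrm{M}}$, verifying hypothesis~\ref{it:strong_2s_cpct_apriori} via \eqref{eq:apriori2} and \eqref{eq:pt_u_pmM} and hypothesis~\ref{it:strong_2s_cpct_shifts} via \autoref{lem:shifts}, \eqref{eq:conv_upm} and \ref{it:shifts:U0}; identify the limit as $J_0^{\mathrm{M}}\tu_{j0}^{\mathrm{M}}$ via \autoref{lem:2s_prod}; then divide out the Jacobian and pass to the lateral boundary.

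Two small remarks. First, you read \autoref{thm:strong_2s_cpctness} as delivering only strong convergence in $L^p(\Sigma;L^2((0,T)\times Z_*))$, so you re-derive the $H^\beta$-convergence by interpolating between the $L^2$- and $H^1$-bounds. This is sound, but the theorem already states the conclusion in $L^p(\Sigma;L^2(0,T;H^\beta(Z_*)))$, so the interpolation step is superfluous; the paper instead starts from the $H^\beta$-convergence, obtains the third line immediately, and then gets the $L^2(Z_*)$- and $L^2(N)$-convergences of the products from the continuous embeddings $H^\beta(Z_*)\hookrightarrow L^2(Z_*)$ and $H^\beta(Z_*)\hookrightarrow L^2(N)$. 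Second, for the lateral boundary you divide $\calT_\eps((J_\eps^{\mathrm{M}}\tu_{j\eps}^{\mathrm{M}})|_{N_\eps})$ by $\calT_\eps(J_\eps^{\mathrm{M}}|_{N_\eps})$ and invoke boundedness from below; the paper phrases this step through Pratt's theorem together with the strong $H^\beta$-convergence and trace continuity of $(J_\eps^{\mathrm{M}})_\eps$. Both arguments carry the same content — one still needs pointwise a.e.\ convergence of a subsequence plus a uniform-integrability ingredient (boundedness in $L^2$ together with $p<2$) to pass to the strong $L^p$-limit of the quotient, which is exactly what Pratt/Vitali supplies. Neither difference constitutes a gap.
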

\begin{proof}
    We establish the conditions \ref{it:strong_2s_cpct_apriori} and \ref{it:strong_2s_cpct_shifts} of \autoref{thm:strong_2s_cpctness} for the sequence $(J_\eps^{\mathrm{M}} \tu_\eps^{\mathrm{M}})$. The estimate \ref{it:strong_2s_cpct_apriori} is given by the a priori estimates in \autoref{lem:apriori} and uses \eqref{eq:pt_u_pmM}. 
    The shifts of $J_\eps^{\mathrm{M}} \tu_\eps^{\mathrm{M}}$ are controlled by \autoref{lem:shifts}, which implies \ref{it:strong_2s_cpct_shifts}, where we use that $\Lpnorm{\delt \tu_{j\eps}^\pm}{2}{0,T;\Omeh^\pm} \to 0$ as $|\eps l^\prime| \to 0$ due to \eqref{eq:conv_upm}, and \ref{it:shifts:U0}. 
    \autoref{thm:strong_2s_cpctness} thus allows us to find a function $v_0 \in L^2(0,T;L^2(\Sigma;H^1(Z_*)^m))$ such that, for a subsequence, $\calT_\eps (J_\eps^{\mathrm{M}} \tu_{j\eps}^{\mathrm{M}}) \to v_{j0}$ strongly in $L^p(\Sigma;L^2(0,T;H^\beta(Z_*)^m))$, and using \autoref{cor:conv_u}~\ref{it:conv_layer_u}, \autoref{lem:2s_prod} and the strong two-scale convergence $J_\eps^{\mathrm{M}} \to J_0^{\mathrm{M}}$, we conclude $v_0=J_0^{\mathrm{M}} \tu_0^{\mathrm{M}}$. Therefore, the last convergence in \eqref{eq:s_conv_uM} is obtained.
    The continuity of the embeddings $H^\beta(Z_*) \xhookrightarrow{} L^2(Z_*)$ and $H^\beta(Z_*)\xhookrightarrow{} L^2(N)$ further gives $\calT_\eps (J_\eps^{\mathrm{M}} \tu_{j\eps}^{\mathrm{M}}) \to J_0^{\mathrm{M}} \tu_{j0}^{\mathrm{M}}$ strongly in $L^p(\Sigma;L^2((0,T)\times Z_*))$ and $\calT_\eps (J_\eps^{\mathrm{M}} \tu_{j\eps}^{\mathrm{M}}\rvert_{N_\eps}) \to J_0^{\mathrm{M}} \tu_{j0}^{\mathrm{M}}\rvert_N$ strongly in $L^p(\Sigma;L^2((0,T)\times N))$. 
    A direct calculation using $J_\eps^{\mathrm{M}}, J_0^{\mathrm{M}} \sim 1$, the strong two-scale convergence of $(J_\eps^{\mathrm{M}})_\eps$ in every $L^q$, $q\in (1,\infty)$ and Hölder's inequality thus yields $\calT_\eps \tu_{j\eps}^{\mathrm{M}} \to \tu_{j0}^{\mathrm{M}}$ in $L^p(\Sigma;L^2((0,T)\times Z_*))$.
    The second convergence in \eqref{eq:s_conv_uM} is a consequence of Pratt's theorem (see \cite{Pra60}) and uses the strong convergence of $(J_\eps^{\mathrm{M}})_\eps$ in $L^p(\Sigma;L^2(0,T;H^\beta(Z_*)))$, which holds due to \autoref{thm:strong_2s_cpctness} and the assumptions \ref{it:T1:est_psi} and \ref{it:T2:est_J} together with the continuity of the trace embedding $H^\beta(Z_*) \xhookrightarrow{} L^2(\p Z_*)$.
\end{proof}

\subsection{Identification of the macroscopic reference model}\label{subsec:macro_prob_ref}
We next derive the limit problem which the function $\tu_0 = (\tu_0^+,\tu_0^{\mathrm{M}},\tu_0^-)$ satisfies and discuss some of its properties. 
We write $F_0 \coloneqq \DD_z \psi_0$ and define, using the same notation as in \ref{it:limit:D}--\ref{it:limit:U0},
\begin{align}
	\tD_{j0}^{\mathrm{M}} = F_0^{-1} \bar D_{j0}^{\mathrm{M}} F_0^{-\top}, \qquad 
	\tq_{j0}^{\mathrm{M}} = F_0^{-1} \bar q_{j0}^{\mathrm{M}}, \qquad 
	\tilde{U}_{0,0} = U_{0,0} \circ \psi_0(0,\cdot), \qquad
	\tb_0^{\mathrm{M}} = F_0^{-1} \p_t \psi_0.
\end{align}
First, we pass to the limit in the microscopic formulation \eqref{eq:RDA_fd_weak}, in which we denote the outer unit normal of $Z_*$ by $\nu^{\mathrm{M}}$.

\begin{theorem}[Macroscopic transformed problem]\label{thm:macroscopic}
    The limit function $\tu_0 \in L^2(0,T;\calH^m)$ satisfies $\p_t(J_0 \tu_0) \in L^2(0,T;(\calH^m)^\prime)$ and for every $\phi \in \calH^m$ and almost every $t\in (0,T)$ we have 
    \begin{align}
    	\begin{split}\label{eq:macroscopic_fd_weak}
    		&\dotproduct{\p_t(J_0 \tu_{j0})}{\phi_j}{\calH^\prime,\,\calH} 
    		 + \sumpm \int_{\Ome^\pm} D_{j}^\pm \nabla \tu_{j0}^\pm \cdot \nabla \phi_j^\pm \dd x
    		+ \int_{\Sigma} \int_{Z_*} J_0^{\mathrm{M}} \tD_{j0}^{\mathrm{M}} \nabla_z \tu_{j0}^{\mathrm{M}} \cdot \nabla_z \phi_j^{\mathrm{M}} \dd z \dd x^\prime\\
    		&\quad -\sumpm \int_{\Ome^\pm} \tu_{j0}^\pm q_{j}^\pm\cdot \nabla \phi_j^\pm \dd x 
    		- \int_{\Sigma} \int_{Z_*} J_0^{\mathrm{M}} \tu_{j0}^{\mathrm{M}} \tq_{j0}^{\mathrm{M}} \cdot \nabla_z \phi_j^{\mathrm{M}} \dd z \dd x^\prime
    		+ \int_{\Sigma} \int_{Z_*} J_0^{\mathrm{M}} \tu_{j0}^{\mathrm{M}} \tb_0^{\mathrm{M}}\cdot \nabla_z \phi_j^{\mathrm{M}}  \dd z\dd x^\prime\\
    		&= \sumpm \int_{\Ome^\pm} f_{j}(\tu_0^\pm)\phi_j^\pm \dd x
    		+ \int_{\Sigma} \int_{Z_*} J_0^{\mathrm{M}} g_{j}(\tu_0^{\mathrm{M}}) \phi_j^{\mathrm{M}} \dd z \dd x^\prime - \int_\Sigma \int_{N} J_0^{\mathrm{M}} \norm{F_0^{-\top} \nu^{\mathrm{M}}} {h}_{j}(\tu_0^{\mathrm{M}}) \phi_j^{\mathrm{M}} \dd \calH^{n-1}(z) \dd x^\prime
    	\end{split}
  \end{align}
   together with the initial conditions $\tu_0^\pm = \tilde{U}_0^\pm$ a.e.~in $\Ome^\pm$ and $(J_0^{\mathrm{M}} \tu_0^{\mathrm{M}})(0)=J_0^{\mathrm{M}}(0) \tilde{U}_{0,0}^{\mathrm{M}}$ a.e.~in $\Sigma \times Z_*$. 
\end{theorem}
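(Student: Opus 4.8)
The plan is to pass to the limit $\eps\to0$ directly in the transformed weak formulation \eqref{eq:RDA_fd_weak}, tested against the oscillating functions $\phi_\eps$ of the form \eqref{eq:spec_test_fct} associated with an arbitrary $\phi\in\Cci([0,T);(\calH^\infty)^m)$ and integrated over $(0,T)$. As in the proof of \autoref{thm:convergence}, for such $\phi$ one has $\phi_\eps\in L^2(0,T;\calH_\eps^m)$ (the transmission condition on $S_{*,\eps}^\pm$ is inherited from the one defining $\calH$, using that $\phi^{\mathrm{M}}\rvert_{S_*^\pm}$ is independent of $z$) and $\Lpnorm{\phi_\eps}{2}{0,T;\calH_\eps^m}\to\Lpnorm{\phi}{2}{0,T;\calH^m}$ by the oscillation lemma on thin domains. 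The limit $\tu_0\in L^2(0,T;\calH^m)$ with $\p_t(J_0\tu_0)\in L^2(0,T;(\calH^m)^\prime)$ and all convergences used below are supplied by \autoref{cor:conv_u} and \autoref{lem:strong_2s_conv}; it then remains to pass to the limit term by term and, afterwards, to recover the pointwise-in-time identity \eqref{eq:macroscopic_fd_weak} for $\phi\in\calH^m$ by density of $\calH^\infty$ in $\calH$ (see \cite[Proposition~4.3]{GahNeu21}) together with a localisation argument in $t$.

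For the time-derivative term I would simply invoke the convergence \eqref{eq:2s_conv_ptu}. In the bulk regions the shifted test functions $\phi_\eps^\pm(t,x)=\phi^\pm(t,x^\prime,x_n\mp\eps)$ converge strongly in $H^1$ to $\phi^\pm$, so the diffusion terms pass to the limit by the weak convergence of $\chi_{\Omee^\pm}\nabla\tu_\eps^\pm$ against a strongly convergent factor, the advection terms by the strong $L^2$-convergence of $\chi_{\Omee^\pm}\tu_\eps^\pm$ (both from \autoref{cor:conv_u}~\ref{it:conv_bulk_u}), and the reaction terms $f_j(\tu_\eps^\pm)$ by the uniform Lipschitz continuity and linear growth of $f_j$ together with dominated convergence; the bulk initial contributions are identified by \eqref{eq:conv_IC}. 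The genuine bookkeeping is in the layer: writing $\phi_\eps^{\mathrm{M}}(t,x)=\phi^{\mathrm{M}}(t,x^\prime,x/\eps)$ one has $\nabla\phi_\eps^{\mathrm{M}}=(\nabla_{x^\prime}\phi^{\mathrm{M}})(t,x^\prime,x/\eps)+\tfrac1\eps(\nabla_z\phi^{\mathrm{M}})(t,x^\prime,x/\eps)$, and in each layer integral the $\nabla_{x^\prime}$-contribution carries a surplus power of $\eps$ (relative to the two-scale scaling $\tfrac1\eps\int_{\OmeMS}$, recall $\abs{\OmeMS}\simeq\eps$) and vanishes. For the diffusion term $\eps\int_{\OmeMS}J_\eps^{\mathrm{M}}\tD_{j\eps}^{\mathrm{M}}\nabla\tu_{j\eps}^{\mathrm{M}}\cdot\nabla\phi_\eps^{\mathrm{M}}$ the surviving part equals $\tfrac1\eps\int_{\OmeMS}J_\eps^{\mathrm{M}}\tD_{j\eps}^{\mathrm{M}}(\eps\nabla\tu_{j\eps}^{\mathrm{M}})\cdot(\nabla_z\phi^{\mathrm{M}})(t,x^\prime,x/\eps)$, which converges by the weak two-scale convergence $\eps\nabla\tu_{j\eps}^{\mathrm{M}}\to\nabla_z\tu_{j0}^{\mathrm{M}}$ (\autoref{cor:conv_u}~\ref{it:conv_layer_u}) and the strong two-scale convergence of $J_\eps^{\mathrm{M}}\tD_{j\eps}^{\mathrm{M}}$ (from \ref{it:limit:D}, \ref{it:T4:2s_conv}, \autoref{rem:2s_conv_psi} and \autoref{lem:2s_prod}). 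The advective term is handled in the same way using the strong two-scale convergence of $J_\eps^{\mathrm{M}}\tu_{j\eps}^{\mathrm{M}}\tq_{j\eps}^{\mathrm{M}}$ (combine \autoref{lem:strong_2s_conv}, \ref{it:limit:q} and \autoref{lem:2s_prod}), and the $\tb_\eps^{\mathrm{M}}$-term $\tfrac1\eps\int_{\OmeMS}J_\eps^{\mathrm{M}}\tu_{j\eps}^{\mathrm{M}}\tb_\eps^{\mathrm{M}}\cdot\nabla\phi_\eps^{\mathrm{M}}$ by noting that $\tfrac1\eps\tb_\eps^{\mathrm{M}}=F_\eps^{-1}\tfrac1\eps\p_t\psi_\eps$ converges strongly two-scale to $F_0^{-1}\p_t\psi_0=\tb_0^{\mathrm{M}}$ by \ref{it:T4:2s_conv}, so that after reorganising the $\eps$-powers only $\tfrac1{\eps^2}\int_{\OmeMS}J_\eps^{\mathrm{M}}\tu_{j\eps}^{\mathrm{M}}\tb_\eps^{\mathrm{M}}\cdot(\nabla_z\phi^{\mathrm{M}})(t,x^\prime,x/\eps)$ survives and yields $\int_\Sigma\int_{Z_*}J_0^{\mathrm{M}}\tu_{j0}^{\mathrm{M}}\tb_0^{\mathrm{M}}\cdot\nabla_z\phi^{\mathrm{M}}$.

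For the nonlinearities in the layer I would use that unfolding commutes with the spatially homogeneous superposition operators, so $\calT_\eps g_j(\tu_\eps^{\mathrm{M}})=g_j(\calT_\eps\tu_\eps^{\mathrm{M}})\to g_j(\tu_0^{\mathrm{M}})$ strongly by the Lipschitz bound on $g_j$ and \autoref{lem:strong_2s_conv}, whence $J_\eps^{\mathrm{M}}g_j(\tu_\eps^{\mathrm{M}})$ converges strongly two-scale to $J_0^{\mathrm{M}}g_j(\tu_0^{\mathrm{M}})$; the boundary term with $h_j$ is passed to the limit in the same spirit, using the strong two-scale convergence of $\tu_\eps^{\mathrm{M}}\rvert_{N_\eps}$ on $N_\eps$, the two-scale convergence of $J_\eps^{\mathrm{M}}\norm{F_\eps^{-\top}\nu_\eps^{\mathrm{M}}}$ (from \ref{it:T1:est_psi} and \ref{it:T4:2s_conv}) and the continuity of $h_j$ (Pratt's theorem, as already exploited in \autoref{lem:strong_2s_conv}). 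Assembling these limits yields \eqref{eq:macroscopic_fd_weak} for $\phi\in\Cci([0,T);(\calH^\infty)^m)$; a density argument in $\calH^\infty$ and a localisation in $t$ extend it to $\phi\in\calH^m$ and almost every $t\in(0,T)$, and \eqref{eq:conv_IC} together with $J_0\equiv1$ in the bulk and $J_0^{\mathrm{M}}\equiv1$ on $\Sigma\times S_*^\pm$ (\autoref{rem:2s_conv_psi}) gives the initial conditions $\tu_0^\pm=\tilde U_0^\pm$ in $\Ome^\pm$ and $(J_0^{\mathrm{M}}\tu_0^{\mathrm{M}})(0)=J_0^{\mathrm{M}}(0)\tilde U_{0,0}^{\mathrm{M}}$ in $\Sigma\times Z_*$.

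I do not expect a serious obstacle in the present theorem: the hard analytic inputs — strong two-scale compactness of $\tu_\eps^{\mathrm{M}}$ and of its trace on $N_\eps$ (\autoref{lem:strong_2s_conv}, resting on the shift estimates \autoref{lem:shifts}) and the convergence of the generalised time derivatives (\autoref{thm:convergence}) — are already established, so the proof is essentially an assembly step. The only point needing care is the correct tracking of the $\eps$-scalings of the three layer transport terms (which come with prefactors $\eps$, $1$ and $\tfrac1\eps$ in \eqref{eq:RDA_fd_weak}) and the systematic use of \autoref{lem:2s_prod} to take limits of products of Jacobians, coefficients and microsolutions; the density and localisation step at the end is routine.
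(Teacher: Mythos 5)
Your proposal is correct and follows essentially the same route as the paper's proof: insert the oscillating test functions \eqref{eq:spec_test_fct} into the transformed weak formulation \eqref{eq:RDA_fd_weak}, split $\nabla\phi_\eps^{\mathrm{M}}$ into its $\nabla_{x'}$- and $\tfrac1\eps\nabla_z$-parts, track the $\eps$-powers so that only the $\nabla_z$-contributions survive in the layer, invoke \autoref{cor:conv_u} and \autoref{lem:strong_2s_conv} together with \autoref{lem:2s_prod} for products of $J_\eps$, coefficients and microsolutions, use \autoref{lem:s_2s_conv_NL} (equivalently, that unfolding commutes with the Lipschitz nonlinearities) for $g_j$, $h_j$, conclude by density and $t$-localisation, and read off the initial conditions from \eqref{eq:conv_IC} and \ref{it:limit:U0}. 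The only cosmetic difference is that the paper first derives the time-integrated identity for $\phi\in\Cci((0,T);(\calH^\infty)^m)$ and recovers the initial data separately via \eqref{eq:conv_IC}, whereas you work with $\phi\in\Cci([0,T);(\calH^\infty)^m)$ throughout, which is equally valid; also note that $J_0^{\mathrm{M}}\equiv1$ on $\Sigma\times S_*^\pm$ is not needed for the initial conditions (only $J_0\equiv1$ in the bulk enters there) but rather belongs to the discussion of the flux transmission condition in \autoref{rmk:jump}.
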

\begin{proof}
    The regularity of $\tu_0$ is proven in \autoref{cor:conv_u}. Given $\phi \in \Cci((0,T);(\calH^\infty)^m)$, we consider test functions	$\phi_\eps \in \Cci((0,T);(\calH_\eps)^m)$ of the form \eqref{eq:spec_test_fct} in the weak formulation \eqref{eq:RDA_fd_weak} to obtain
    \begin{align}
    	&\int_0^T \dotproduct{\p_t(J_\eps \tu_{j\eps})}{\phi_{j\eps}}{\calH_\eps^\prime,\, \calH_\eps} \dd t + \sumpm \int_0^T\int_{\Omee^\pm} \left(D_{j}^\pm \nabla \tu_{j\eps}^\pm 
    	- \tu_{j\eps}^\pm q_{j}^\pm\right)\cdot (\nabla \phi_j^\pm)_\eps \dd x  \dd t \\
    	&\quad + \int_0^T \int_{\OmeMS} \left(\eps J_\eps^{\mathrm{M}} \tD_{j\eps}^{\mathrm{M}} \nabla \tu_{j\eps}^{\mathrm{M}} 
    	- J_\eps^{\mathrm{M}} \tu_{j\eps}^{\mathrm{M}} \tq_{j\eps}^{\mathrm{M}}
    	+ \frac 1\eps J_\eps^{\mathrm{M}} \tu_{j\eps}^{\mathrm{M}} \tb_\eps^{\mathrm{M}}\right)\cdot \left((\nabla_{x^\prime} \phi_j^{\mathrm{M}})_\eps + \frac 1\eps (\nabla_z \phi_j^{\mathrm{M}})_\eps \right) \dd x  \dd t \\
    	&= \sumpm \int_0^T \int_{\Omee^\pm} f_{j}(\tu_{\eps}^\pm)\phi_{j\eps}^\pm \dd x  \dd t
    	+ \frac 1\eps \int_0^T \int_{\OmeMS} J_\eps^{\mathrm{M}} g_{j}(\tu_\eps^{\mathrm{M}}) \phi_{j\eps}^{\mathrm{M}} \dd x  \dd t\\
    	&\qquad- \int_0^T \int_{N_\eps} J_\eps^{\mathrm{M}} \norm{F_\eps^{-\top} \nu_\eps^{\mathrm{M}}} h_{j}(\tu_\eps^{\mathrm{M}}) \phi_{j\eps}^{\mathrm{M}} \dd \calH^{n-1}(x) \dd t,
    \end{align} 
    where $(\nabla \phi^\pm)_\eps(t,x) = \nabla \phi^\pm(t,x^\prime,x_n \mp \eps)$, $(\nabla_{x^\prime} \phi^{\mathrm{M}})_\eps(t,x) = \nabla_{x^\prime}\phi^{\mathrm{M}}(t,x^\prime,\tfrac{x_n}{\eps})$ and similarly for $(\nabla_z \phi^{\mathrm{M}})_\eps$.
    The limit of the integrals over $\Omee^\pm$ is obtained using the convergence results from \autoref{cor:conv_u}~\ref{it:conv_bulk_u} together with $(\nabla \phi_j^\pm)_\eps \to \nabla \phi_{j}^\pm$ in $L^2((0,T)\times \Ome^\pm)^n$.
    By \autoref{lem:2s_prod} as well as the assumptions \ref{it:T4:2s_conv} and \ref{it:limit:D}--\ref{it:limit:U0}, we have $\tD_{j\eps}^{\mathrm{M}} \to \tD_{j0}^{\mathrm{M}}$, $\tq_{j\eps}^{\mathrm{M}} \to \tq_{j0}^{\mathrm{M}}$, $\tb_{\eps}^{\mathrm{M}} \to \tb_0^{\mathrm{M}}$, and $\tilde{U}_{j\eps,0}^{\mathrm{M}} \to \tilde{U}_{j0,0}^{\mathrm{M}}$ in the strong two-scale sense. Therefore, due to the convergence results in \autoref{cor:conv_u}~\ref{it:conv_layer_u}, products of the data and $J_\eps^{\mathrm{M}}$ with the functions $\tu_{j\eps}^{\mathrm{M}}$ and $\eps \nabla \tu_{j\eps}^{\mathrm{M}}$ converge in the two-scale sense, and we have that $\phi_j^{\mathrm{M}}$ and $\nabla_z \phi_j^{\mathrm{M}}$ are admissible test functions to pass to the limit $\eps \to 0$ in the integrals over $\OmeMS$ in the previous equation.
    The convergence of the generalised time derivative is obtained by \autoref{cor:conv_u}~\ref{it:conv_ptu}.
    Moreover, due to the Lipschitz continuity of the reaction rates in the last argument, the strong (two-scale) convergence of $(\tu_\eps^\dagger)_\eps$ and \autoref{lem:s_2s_conv_NL}, for $p\in [1,2)$ we have
    \begin{align}
    	\begin{array}{rclcl}
    		\chi_{\Omee^\pm} f_j(\tu_\eps^\pm) &\to& f_j(\tu_0^\pm) &&  \text{strongly in }L^2((0,T)\times \Ome^\pm),\\
    		g_j(\tu_\eps^{\mathrm{M}}) &\to& g_j(\tu_0^{\mathrm{M}}) &&  \text{strongly in the two-scale sense in $L^p$,} \\
    		\norm{F_\eps^{-\top} \nu_\eps^{\mathrm{M}}} h_j(\tu_\eps^{\mathrm{M}}) &\to& \norm{F_0^{-\top} \nu^{\mathrm{M}}} h_j(\tu_0^{\mathrm{M}}) &&  \text{strongly in the two-scale sense on $N_\eps$ in $L^p$.}
    	\end{array}
    \end{align}
    In total, along a subsequence we obtain
	\begin{align}
		\begin{split}\label{eq:macro_IC}
			&\int_0^T \dotproduct{\p_t(J_0 \tu_{j0})}{\phi_j}{\calH^\prime,\, \calH} \dd t + \sumpm \int_0^T\int_{\Ome^\pm} \left(D_{j}^\pm \nabla \tu_{j0}^\pm 
			- \tu_{j0}^\pm q_{j}^\pm\right)\cdot \nabla \phi_j^\pm \dd x \dd t \\ 
			&\quad + \int_0^T \int_{\Sigma} \int_{Z_*} \left(J_0^{\mathrm{M}} \tD_{j0}^{\mathrm{M}} \nabla_z \tu_{j0}^{\mathrm{M}} 
			- J_0^{\mathrm{M}} \tu_{j0}^{\mathrm{M}} \tq_{j0}^{\mathrm{M}}
			+ J_0^{\mathrm{M}} \tu_{j0}^{\mathrm{M}} \tb_0^{\mathrm{M}}\right)\cdot \nabla_z \phi_j^{\mathrm{M}}  \dd z\dd x^\prime  \dd t \\
			&= \sumpm \int_0^T \int_{\Ome^\pm} f_{j}(\tu_0^\pm)\phi_{j}^\pm \dd x \dd t 
			+ \int_0^T \int_{\Sigma} \int_{Z_*} J_0^{\mathrm{M}} g_{j}(\tu_0^{\mathrm{M}}) \phi_{j}^{\mathrm{M}} \dd z \dd x^\prime  \dd t \\
			&\qquad- \int_0^T \int_\Sigma \int_{N} J_0^{\mathrm{M}} \norm{F_0^{-\top} \nu^{\mathrm{M}}} h_{j}(\tu_0^{\mathrm{M}}) \phi_{j}^{\mathrm{M}} \dd \calH^{n-1}(z) \dd x^\prime \dd t.
		\end{split}
	\end{align}
    By density, \eqref{eq:macro_IC} holds true for all $\phi \in \Cci((0,T);\calH^m)$. In particular, choosing test functions with separated $t$--dependency, we deduce \eqref{eq:macroscopic_fd_weak}. The initial conditions are a direct consequence of \eqref{eq:conv_IC} together with \ref{it:limit:U0}.
\end{proof}

\begin{rem}[Limit passage in \eqref{eq:RDA_fd_weak}]
	A more standard way to pass to the limit $\eps \to 0$ in \eqref{eq:RDA_fd_weak} and to conclude the regularity of $\p_t(J_0 \tu_0)$ is by integrating by parts in time in the weak microscopic formulation and taking the limit in the resulting terms separately. However, when dealing with test functions $\phi \in \Cci([0,T);\calH^\infty)^m$, and without having a result like \eqref{eq:conv_IC} at hand, it is a priori not clear why $\scp{J_\eps(0) \tilde{U}_{j\eps,0}}{\phi_{j\eps}(0)}{\calL_\eps}  \to \scp{J_0(0)\tilde{U}_{j0,0}}{\phi_j(0)}{\calL}$ holds in the limit $\eps \to 0$.
\end{rem}

\begin{cor}[Convergence of the entire sequence]
	The function $\tu_0$ is the unique weak solution of the macroscopic problem \eqref{eq:macroscopic_fd_weak}. In particular, the convergence result from \autoref{cor:conv_u} holds true for the entire sequence $(\tu_\eps)_\eps$.
\end{cor}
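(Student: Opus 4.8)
The plan is to deduce the statement from two facts: (i) uniqueness of weak solutions of the macroscopic problem \eqref{eq:macroscopic_fd_weak}, and (ii) the standard subsequence principle. For (i), suppose $\tu_0^{(1)},\tu_0^{(2)}\in L^2(0,T;\calH^m)$ are two weak solutions of \eqref{eq:macroscopic_fd_weak} with the same initial data, set $v\coloneqq \tu_0^{(1)}-\tu_0^{(2)}\in L^2(0,T;\calH^m)$, and recall $\p_t(J_0 v)\in L^2(0,T;(\calH^m)^\prime)$. Subtracting the two weak formulations, the equation for $v_j$ is essentially linear (the reaction terms enter only through the globally Lipschitz increments $f_j(\tu_0^{(1),\pm})-f_j(\tu_0^{(2),\pm})$, and likewise for $g_j$ and $h_j$), so I would test the $j$-th component with $v_j\in\calH$ and run a Gronwall argument, mirroring the uniqueness part of \autoref{lem:existence} and the energy estimate of \autoref{lem:apriori}.

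\textbf{The energy estimate.} Using the Gelfand triple $\calH\hookrightarrow\calL\hookrightarrow\calH^\prime$ and the regularity of $J_0$ recorded in \autoref{rem:reg_J0} ($J_0\simeq 1$, $J_0^{\mathrm{M}}\in C^0([0,T];L^2(\Sigma\times Z_*))$, $\p_t J_0\in L^2(0,T;\calH^\prime)$), one obtains, in analogy with \eqref{eq:dual_pairing_u}, the chain-rule identity
\[
	\tfrac12\tfrac{\dd}{\dd t}\norms{\sqrt{J_0}\,v_j}{\calL}^2 = \dotproduct{\p_t(J_0 v_j)}{v_j}{\calH^\prime,\calH} - \tfrac12\dotproduct{(\p_t J_0)v_j}{v_j}{\calH^\prime,\calH}.
\]
The delicate point is this last term together with the boundary-velocity contribution $\int_\Sigma\int_{Z_*}J_0^{\mathrm{M}}v_j^{\mathrm{M}}\tb_0^{\mathrm{M}}\cdot\nabla_z v_j^{\mathrm{M}}\dd z\dd x^\prime$ arising on testing \eqref{eq:macroscopic_fd_weak}: since the trace of $v_j^{\mathrm{M}}$ on $\Sigma\times S_*^\pm$ need not vanish, $(v_j^{\mathrm{M}})^2$ cannot be inserted naively into the duality pairing with $\p_t J_0^{\mathrm{M}}$. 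I would resolve this as in \autoref{lem:apriori}, rewriting $\p_t J_0^{\mathrm{M}}=\div_z(J_0^{\mathrm{M}}\tb_0^{\mathrm{M}})$ (Jacobi's formula for the determinant together with Piola's identity, valid distributionally since $\psi_0$ is $C^1$ in $z$ by \ref{it:T4:2s_conv}) and using that $\tb_0^{\mathrm{M}}=F_0^{-1}\p_t\psi_0$ vanishes on $\Sigma\times S_*^\pm$ because $\psi_0(t,\cdot)\rvert_{\Sigma\times S_*^\pm}=\mathrm{id}$ by \autoref{rem:2s_conv_psi}; the $\p_t J_0^{\mathrm{M}}$ contribution then cancels against the boundary-velocity term up to a harmless surface integral over $\Sigma\times N$, which, with $J_0^{\mathrm{M}},\tb_0^{\mathrm{M}}$ bounded, is absorbed via a trace inequality on $Z_*$. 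Strong ellipticity of $\tD_{j0}^{\mathrm{M}}=F_0^{-1}\bar D_{j0}^{\mathrm{M}}F_0^{-\top}$ (from \ref{it:limit:D} and the bounds on $F_0^{\pm1}$) and of $D_j^\pm$ from \ref{it:propD}, boundedness of $\tq_{j0}^{\mathrm{M}}$ and $q_j^\pm$, the global Lipschitz bounds on $f_j,g_j,h_j$ (the latter again via the trace inequality on $Z_*$), Young's inequality with small parameter, and $J_0\simeq 1$ then give, after summation over $j$,
\[
	\tfrac{\dd}{\dd t}\norms{\sqrt{J_0}\,v}{\calL^m}^2 + \sumpm\Lpnorm{\nabla v^\pm}{2}{\Ome^\pm}^2 + \Lpnorm{\nabla_z v^{\mathrm{M}}}{2}{\Sigma\times Z_*}^2 \lesssim \norms{v}{\calL^m}^2,
\]
and since $v(0)=0$, Gronwall's inequality forces $v\equiv 0$; hence \eqref{eq:macroscopic_fd_weak} has at most one weak solution.

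\textbf{Convergence of the full sequence.} For (ii), let $(\tu_{\eps_k})_k$ be an arbitrary subsequence. Applying \autoref{cor:conv_u} to it produces a further subsequence along which the convergences \ref{it:conv_bulk_u}--\ref{it:conv_ptu} hold with some limit $\tu_0^\prime\in L^2(0,T;\calH^m)$, and \autoref{thm:macroscopic} (whose proof uses only these convergences) shows that $\tu_0^\prime$ is a weak solution of \eqref{eq:macroscopic_fd_weak} with the prescribed initial data. By the uniqueness just established, $\tu_0^\prime=\tu_0$, the limit extracted in \autoref{cor:conv_u}. Thus every subsequence of $(\tu_\eps)_\eps$ has a further subsequence converging to $\tu_0$ in each of the (weak, strong, two-scale) modes of \autoref{cor:conv_u}; since in any topological space a sequence converges to a point as soon as every subsequence admits a further subsequence converging to it, the entire sequence $(\tu_\eps)_\eps$ converges to $\tu_0$. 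I expect the main obstacle to be the energy estimate of the middle step --- specifically, making the cancellation of the $\p_t J_0^{\mathrm{M}}$ term against the boundary-velocity term rigorous despite the reduced regularity of $J_0$ and the nonvanishing trace of $v^{\mathrm{M}}$ on $\Sigma\times S_*^\pm$ --- whereas the subsequence argument is routine.
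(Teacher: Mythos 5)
Your proposal is correct and follows essentially the same route as the paper: the paper's proof is a single sentence asserting uniqueness via Gronwall ``similarly as for the microscopic problem in \autoref{lem:existence}, using that the limit data $\tD_0$, $\tq_0$ and $\tb_0$ satisfy similar estimates as the microscopic ones,'' after which convergence of the whole sequence is immediate; you supply exactly those details --- mirroring the energy estimate of \autoref{lem:apriori} at the limit level, correctly identifying the cancellation of the $\p_t J_0^{\mathrm{M}}$ contribution against the $\tb_0^{\mathrm{M}}$ drift term (via Jacobi/Piola and $\tb_0^{\mathrm{M}}\rvert_{\Sigma\times S_*^\pm}=0$) as the delicate point, and making the subsequence principle explicit. (One small caveat: the subsequence principle is not valid ``in any topological space''; here it suffices that the modes of convergence in \autoref{cor:conv_u} are all metrizable on the bounded sets involved, e.g.\ weak convergence of unfolded sequences in $L^2$.)
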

\begin{proof}
	The uniqueness of weak solutions is obtained similarly as for the microscopic problem in \autoref{lem:existence} by an application of Gronwall's inequality, using that the limit data $\tD_0$, $\tq_0$ and $\tb_0$ satisfy similar estimates as the microscopic ones.
	In particular, the entire sequence $(\tu_\eps)_\eps$ converges to the solution $\tu_0$ of the macroscopic problem \eqref{eq:macroscopic_fd_weak}. 
\end{proof}

Next, we aim to provide more insight into the individual problems solved by the functions $\tu_0^\pm$ and $\tu_0^{\mathrm{M}}$, which, in a final step, yields macroscopic problems on the bulk domains $\Ome^\pm$ coupled by effective transmission conditions given in terms of local cell problems stated on the standard cell $Z_*$.

\begin{cor}[Macroscopic transformed problem and local cell problems]\label{cor:macro_prob_weak}
	For the limit function $\tu_0=(\tu_0^+,\tu_0^{\mathrm{M}},\tu_0^-)$, there holds 
    \begin{align}
        \tu_0^\pm \in L^2(0,T;H^1(\Ome^\pm)^m) \quad \text{with} \quad 
        \p_t \tu_0^\pm \in L^2(0,T;(H^1_{\Sigma,0}(\Ome^\pm)^m)^\prime)
    \end{align}
    as well as $\tu_0^\pm(0) = \tilde{U}_0^\pm$ a.e.~in $\Ome^\pm$ and $\tu_0^\pm = \tu_0^{\mathrm{M}}$ on $(0,T)\times \Sigma \times S_*^\pm$. 
    Further, for every $\phi^\pm \in H^1_{\Sigma,0}(\Ome^\pm)^m$ and almost every $t\in (0,T)$, we have
	\begin{align}
		&\dotproduct{\p_t \tu_{j0}^\pm}{\phi_j^\pm}{(H^1_{\Sigma,0}(\Ome^\pm))^\prime,\, H^1_{\Sigma,0}(\Ome^\pm)}
		+ \int_{\Ome^\pm} \left(D_{j}^\pm \nabla \tu_{j0}^\pm  - \tu_{j0}^\pm q_{j}^\pm\right)\cdot \nabla \phi_j^\pm \dd x
		= \int_{\Ome^\pm} f_{j}(\tu_0^\pm)\phi_{j}^\pm \dd x.
	\end{align}
	Here, the function 
    \begin{align}
        \tu_0^{\mathrm{M}} \in L^2(0,T;L^2(\Sigma;H^1(Z_*)^m)) \quad \text{with} \quad
        \p_t (J_0^{\mathrm{M}}\tu_0^{\mathrm{M}}) \in L^2(0,T;L^2(\Sigma;(H^1_{\pm,0}(Z_*)^m)^\prime))
    \end{align}
    is such that $(J_0^{\mathrm{M}} \tu_0^{\mathrm{M}})(0) = J_0^{\mathrm{M}}(0) \tilde{U}_{0,0}^{\mathrm{M}}$ a.e.~in $\Sigma \times Z_*$; moreover, the local cell problem is given by: For every $\phi^{\mathrm{M}} \in L^2(\Sigma;H^1_{\pm,0}(Z_*)^m)$ and almost every $t\in (0,T)$ 
	\begin{align}
		&\dotproduct{\p_t (J_0^{\mathrm{M}}\tu_{j0}^{\mathrm{M}})}{\phi_j^{\mathrm{M}}}{L^2(\Sigma;(H^1_{\pm,0}(Z_*))^\prime),\, L^2(\Sigma;H^1_{\pm,0}(Z_*))}
		+ \int_{\Sigma} \int_{Z_*} J_0^{\mathrm{M}} \tD_{j0}^{\mathrm{M}} \nabla_z \tu_{j0}^{\mathrm{M}} \cdot \nabla_z \phi_j^{\mathrm{M}} \dd z \dd x^\prime\\
		&\quad - \int_{\Sigma} \int_{Z_*} J_0^{\mathrm{M}} \tu_{j0}^{\mathrm{M}} \tq_{j0}^{\mathrm{M}} \cdot \nabla_z \phi_j^{\mathrm{M}} \dd z \dd x^\prime
		+ \int_{\Sigma} \int_{Z_*} J_0^{\mathrm{M}} \tu_{j0}^{\mathrm{M}} \tb_0^{\mathrm{M}}\cdot \nabla_z \phi_j^{\mathrm{M}}  \dd z\dd x^\prime\\
		&= \int_{\Sigma} \int_{Z_*} J_0^{\mathrm{M}} g_{j}(\tu_0^{\mathrm{M}}) \phi_j^{\mathrm{M}} \dd z \dd x^\prime 
		- \int_\Sigma \int_{N} J_0^{\mathrm{M}} \norm{F_0^{-\top} \nu^{\mathrm{M}}} {h}_{j}(\tu_0^{\mathrm{M}}) \phi_j^{\mathrm{M}} \dd \calH^{n-1}(z) \dd x^\prime.
	\end{align}
\end{cor}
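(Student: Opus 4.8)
The plan is to derive both the pair of bulk equations and the local cell problem by specialising the combined weak formulation \eqref{eq:macroscopic_fd_weak}, which by \autoref{thm:macroscopic} holds for \emph{all} $\phi\in\calH^m$ and almost every $t\in(0,T)$, to two complementary classes of test functions, and then to identify the generalised time derivatives by a Gelfand-triple restriction argument of exactly the type already carried out at the end of \autoref{subsec:shifts}. The regularity $\tu_0\in L^2(0,T;\calH^m)$, the relation $\p_t(J_0\tu_0)\in L^2(0,T;(\calH^m)^\prime)$, the coupling $\tu_0^\pm=\tu_0^{\mathrm{M}}$ on $(0,T)\times\Sigma\times S_*^\pm$ (which is precisely the statement $\tu_0(t)\in\calH$), and the initial conditions are already available from \autoref{cor:conv_u} and \autoref{thm:macroscopic}, so only the splitting into subproblems and the time-derivative regularity remain.

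\emph{Bulk equations.} First I would observe that for $\phi^\pm\in H^1_{\Sigma,0}(\Ome^\pm)^m$ the triple $(\phi^+,0,\phi^-)$ belongs to $\calH^m$: the trace of $\phi^\pm$ on $\Sigma$ vanishes, so the coupling condition in the definition of $\calH$ is met with $\phi^{\mathrm{M}}=0$, and this yields a continuous embedding $H^1_{\Sigma,0}(\Ome^\pm)^m\hookrightarrow\calH^m$ via zero-extension. Inserting such $\phi$ into \eqref{eq:macroscopic_fd_weak} makes every integral over $\Sigma\times Z_*$ and $\Sigma\times N$ disappear. Since $J_0\equiv 1$ on the bulk, $J_0\tu_0=\tu_0$ there, and combining the Gelfand triple $H^1_{\Sigma,0}(\Ome^\pm)\hookrightarrow L^2(\Ome^\pm)\hookrightarrow(H^1_{\Sigma,0}(\Ome^\pm))^\prime$ with the above embedding, the same density/uniqueness argument as the one following \eqref{eq:char_ptu_Hh0} shows $\p_t\tu_0^\pm\in L^2(0,T;(H^1_{\Sigma,0}(\Ome^\pm)^m)^\prime)$ and gives
\begin{align}
\dotproduct{\p_t(J_0\tu_{j0})}{(\phi_j^+,0,\phi_j^-)}{\calH^\prime,\calH}
= \sumpm \dotproduct{\p_t\tu_{j0}^\pm}{\phi_j^\pm}{(H^1_{\Sigma,0}(\Ome^\pm))^\prime,\,H^1_{\Sigma,0}(\Ome^\pm)}.
\end{align}
Choosing $\phi^\mp=0$ then isolates the two bulk equations, and the standard embedding $\tu_0^\pm\in C^0([0,T];L^2(\Ome^\pm))$ makes $\tu_0^\pm(0)=\tilde{U}_0^\pm$ meaningful.

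\emph{Local cell problem.} Dually, for $\phi^{\mathrm{M}}\in L^2(\Sigma;H^1_{\pm,0}(Z_*)^m)$ the triple $(0,\phi^{\mathrm{M}},0)$ belongs to $\calH^m$, since $\phi^{\mathrm{M}}(x^\prime,\cdot)$ has vanishing trace on $S_*^+\cup S_*^-$; again this is a continuous embedding into $\calH^m$, Poincaré on $Z_*$ being available because $S_*^\pm$ have positive measure. Inserting it into \eqref{eq:macroscopic_fd_weak} kills the bulk integrals, and writing $w_0^{\mathrm{M}}:=J_0^{\mathrm{M}}\tu_0^{\mathrm{M}}\in L^2(0,T;L^2(\Sigma\times Z_*))$ — which uses $J_0^{\mathrm{M}}\in L^\infty$ from \autoref{rem:reg_J0} — the Gelfand triple $L^2(\Sigma;H^1_{\pm,0}(Z_*))\hookrightarrow L^2(\Sigma\times Z_*)\hookrightarrow L^2(\Sigma;(H^1_{\pm,0}(Z_*))^\prime)$ and the same restriction argument yield $\p_t(J_0^{\mathrm{M}}\tu_0^{\mathrm{M}})\in L^2(0,T;L^2(\Sigma;(H^1_{\pm,0}(Z_*)^m)^\prime))$ together with
\begin{align}
\dotproduct{\p_t(J_0\tu_{j0})}{(0,\phi_j^{\mathrm{M}},0)}{\calH^\prime,\calH}
= \dotproduct{\p_t(J_0^{\mathrm{M}}\tu_{j0}^{\mathrm{M}})}{\phi_j^{\mathrm{M}}}{L^2(\Sigma;(H^1_{\pm,0}(Z_*))^\prime),\,L^2(\Sigma;H^1_{\pm,0}(Z_*))},
\end{align}
hence the stated cell problem; the continuity $J_0^{\mathrm{M}}\tu_0^{\mathrm{M}}\in C^0([0,T];L^2(\Sigma\times Z_*))$ then makes $(J_0^{\mathrm{M}}\tu_0^{\mathrm{M}})(0)=J_0^{\mathrm{M}}(0)\tilde{U}_{0,0}^{\mathrm{M}}$ meaningful, this identity itself coming from \autoref{thm:macroscopic}.

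The step I expect to require the most care is the identification of the generalised time derivatives: one must check that restricting the functional $\p_t(J_0\tu_0)\in L^2(0,T;(\calH^m)^\prime)$ to the closed subspaces $H^1_{\Sigma,0}(\Ome^\pm)^m$ and $L^2(\Sigma;H^1_{\pm,0}(Z_*)^m)$ of $\calH^m$ indeed coincides with the intrinsic generalised derivative of $\tu_0^\pm$, respectively of $J_0^{\mathrm{M}}\tu_0^{\mathrm{M}}$, in the corresponding Gelfand triple. This rests on testing with $\psi(t)\phi^\dagger$ for $\psi\in\Cci((0,T))$, on the uniqueness of the distributional time derivative, and on the continuity of $\phi^\dagger\mapsto\dotproduct{\p_t(J_0\tu_0)}{\phi^\dagger}{\calH^\prime,\calH}$ on the subspace — precisely the pattern used for $\tu_\eps^\dagger$ at the end of \autoref{subsec:shifts}. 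No density argument beyond the one already invoked in \autoref{thm:macroscopic} is needed, since \eqref{eq:macroscopic_fd_weak} holds for every test function in $\calH^m$ from the outset.
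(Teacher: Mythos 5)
Your proposal is correct and matches the paper's proof essentially step for step: both choose test functions of the form $(\phi^+,0,0)$, $(0,\phi^{\mathrm{M}},0)$, $(0,0,\phi^-)$ (you first combine the bulk halves as $(\phi^+,0,\phi^-)$ and then set one to zero, which is an immaterial presentational variant), both invoke the Gelfand triples $H^1_{\Sigma,0}(\Ome^\pm)\hookrightarrow L^2(\Ome^\pm)\hookrightarrow(H^1_{\Sigma,0}(\Ome^\pm))'$ and $L^2(\Sigma;H^1_{\pm,0}(Z_*))\hookrightarrow L^2(\Sigma\times Z_*)\hookrightarrow L^2(\Sigma;(H^1_{\pm,0}(Z_*))')$ to identify the restricted time derivatives via the restriction/uniqueness argument already used at the end of \autoref{subsec:shifts}, and both then specialise \eqref{eq:macroscopic_fd_weak}. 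Your additional remarks (zero-extension gives a continuous embedding into $\calH^m$, $J_0\equiv1$ on the bulk, the initial data and coupling condition are inherited from \autoref{thm:macroscopic} and \autoref{cor:conv_u}) are correct and only spell out details the paper leaves implicit.
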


\begin{proof}
    Using the Gelfand triples $H^1_{\Sigma,0}(\Ome^\pm) \xhookrightarrow{} L^2(\Ome^\pm) \xhookrightarrow{} (H^1_{\Sigma,0}(\Ome^\pm))^\prime$ and $L^2(\Sigma;H^1_{\pm,0}(Z_*)) \xhookrightarrow{} L^2(\Sigma \times Z_*) \xhookrightarrow{} L^2(\Sigma; (H^1_{\pm,0}(Z_*))^\prime)$, for all $\Phi^\pm, \, \Phi^{\mathrm{M}}  \in \calH^m$ of the form $\Phi^+ = (\phi^+,0,0)$, $\Phi^{\mathrm{M}} = (0,\phi^{\mathrm{M}},0)$ and $\Phi^- = (0,0, \phi^-)$, where $\phi^\pm \in H^1_{\Sigma,0}(\Ome^\pm)^m$ and $\phi^{\mathrm{M}} \in L^2(\Sigma; H^1_{\pm,0}(Z_*)^m)$, we have
	\begin{align}
		\dotproduct{\p_t \tu_{j0}^\pm}{\phi_j^\pm}{(H^1_{\Sigma,0}(\Ome^\pm))^\prime,\, H^1_{\Sigma,0}(\Ome^\pm)}
		&= \dotproduct{\p_t (J_0\tu_{j0})}{\Phi_j^\pm}{\calH^\prime,\,\calH},\\
		\dotproduct{\p_t (J_0^{\mathrm{M}}\tu_{j0}^{\mathrm{M}})}{\phi_j^{\mathrm{M}}}{L^2(\Sigma;(H^1_{\pm,0}(Z_*))^\prime),\, L^2(\Sigma;H^1_{\pm,0}(Z_*))}
		&= \dotproduct{\p_t (J_0\tu_{j0})}{\phi_j^{\mathrm{M}}}{\calH^\prime,\,\calH}.
	\end{align}
    This immediately yields the claimed time regularity of $\tu_0^\dagger$; choosing $\Phi^\pm,\, \Phi^{\mathrm{M}} \in \calH^m$ as test functions in \eqref{eq:macroscopic_fd_weak} the proof is complete.
\end{proof}

In contrast to the microsolutions $\tu_\eps$, the limit function $\tu_0^{\mathrm{M}}$ is not continuous in time, that is $\tu_0^{\mathrm{M}} \notin C^0([0,T];L^2(\Sigma \times Z_*))$ in general, as the following remark shows. In particular, the initial condition $\tu_0^{\mathrm{M}} = \tilde{U}_{0,0}^{\mathrm{M}}$ a.e.~in $\Sigma \times Z_*$ is ill-posed.

\begin{rem}[Regularity of $\p_t \tu_0^{\mathrm{M}}$ and initial condition of $\tu_0^{\mathrm{M}}$]\label{rem:reg_ptu0M}
	Due to the low regularity of $\p_t J_0^{\mathrm{M}}$, see \autoref{rem:reg_J0}, we only have 
	\begin{align}\label{eq:ptu0}
		\p_t \tu_0^{\mathrm{M}} = (J_0^{\mathrm{M}})^{-1} \p_t(J_0^{\mathrm{M}} \tu_0^{\mathrm{M}}) - (J_0^{\mathrm{M}})^{-1} \tu_0^{\mathrm{M}} \p_t J_0^{\mathrm{M}} \in L^1((0,T)\times \Sigma;(W^{1,q}_{\pm,0}(Z_*)^m)^\prime)
	\end{align}
	for every $q>n$ (see \cite[p.~122]{GahNeuPop21} for a similar argument on bulk domains). Indeed, for every $\phi\in H^1_{\pm,0}(Z_*)$ and almost every $(t,x^\prime) \in (0,T)\times \Sigma$, there holds $(J_0^{\mathrm{M}})^{-1}(t,x^\prime,\cdot) \phi \in H^1_{\pm,0}(Z_*)$ using that $J_0^{\mathrm{M}} \in L^\infty((0,T)\times \Sigma; W^{1,p}(Z_*))$ for every $p \geq 1$ together with the embeddings $W^{1,p}(Z_*) \xhookrightarrow{} L^\infty(Z_*)$ for $p>n$ and $H^1_{\pm,0}(Z_*)\xhookrightarrow{} L^{2^*}(Z_*)$ as well as the generalised Hölder inequality. Therefore, the first term in \eqref{eq:ptu0} belongs to $L^2((0,T)\times \Sigma;(H^1_{\pm,0}(Z_*)^m)^\prime)$. By a similar argument, for every $\phi \in W^{1,q}_{\pm,0}(Z_*)$ with $q>n$, we have $(J_0^{\mathrm{M}})^{-1}(t,x^\prime,\cdot) \tu_0^{\mathrm{M}}(t,x^\prime,\cdot) \phi \in H^1_{\pm,0}(Z_*)^m$; hence, the second term in \eqref{eq:ptu0} belongs to $L^1((0,T)\times \Sigma;(W^{1,q}_{\pm,0}(Z_*)^m)^\prime)$. We stress that the additional requirement $\p_t J_0^{\mathrm{M}} \in L^\infty((0,T)\times \Sigma\times Z_*)$ would allow to obtain $\p_t \tu_0^{\mathrm{M}} \in L^2((0,T)\times \Sigma;(W^{1,q}_{\pm,0}(Z_*)^m)^\prime)$ and, therefore, also $\tu_0^{\mathrm{M}} \in C^{0}([0,T] \times \overline{\Sigma}; L^2(Z_*)^m)$.

    Nevertheless, we emphasise that by rephrasing the argument given in \cite[p.~123]{GahNeuPop21}, there exists a set $N\subset (0,T)$ of measure zero such that 
	\begin{align}
		\lim_{t\to 0,\, t \notin N}\Lpnorm{\tu_0^{\mathrm{M}}(t) - \tilde{U}_{0,0}^{\mathrm{M}}}{1}{\Sigma \times Z} = 0,
	\end{align}
	which gives a weak formulation of the initial condition for $\tu_0^{\mathrm{M}}$.
\end{rem}

\begin{rem}[Strong formulation of the macroscopic transformed problem]
    The strong problem associated with the weak formulation in \autoref{cor:macro_prob_weak} reads as follows. The limit functions $\tu_0^\pm$ solve
    \begin{align}
		\left\{\!\!\begin{array}{r@{\ }c@{\ }l@{\ }c@{\ \, }l}
			\p_t \tu_{j0}^\pm - \nabla \cdot(D_j^\pm \nabla \tu_{j0}^\pm - \tu_{j0}^\pm q_j^\pm) & = & f_j(\tu_0^\pm) && \text{in } (0,T)\times \Ome^\pm,\\
			(D_j^\pm \nabla \tu_{j0}^\pm - \tu_{j0}^\pm q_j^\pm)\cdot \nu^\pm & = & 0 && \text{on } (0,T)\times \p\Ome^\pm \setminus \Sigma,\\
			\tu_{j0}^\pm & = & \tu_{j0}^{\mathrm{M}} && \text{on } (0,T)\times \Sigma \times S_*^\pm,
		\end{array}\right.
	\end{align}
    with initial condition $\tu_0^\pm(0) = \tilde{U}_0^\pm$ in $\Ome^\pm$, whereas $\tu_0^{\mathrm{M}}$ with initial condition $(J_0^{\mathrm{M}}\tu_0^{\mathrm{M}})(0) = J_0^{\mathrm{M}}(0)\tilde{U}_{0,0}^{\mathrm{M}}$ in $\Sigma \times Z_*$ is a solution of the local cell problems
    \begin{align}
		\left\{\!\!\begin{array}{r@{\ }c@{\ }l@{\ }c@{\ \, }l}
			\p_t (J_0^{\mathrm{M}} \tu_{j0}^{\mathrm{M}}) - \nabla_z \cdot(J_0^{\mathrm{M}} \tD_{j0}^{\mathrm{M}} \nabla_z \tu_{j0}^{\mathrm{M}} - J_0^{\mathrm{M}} \tu_{j0}^{\mathrm{M}} \tq_{j0}^{\mathrm{M}} + J_0^{\mathrm{M}} \tu_{j0}^{\mathrm{M}} \tb_0^{\mathrm{M}}) &=& J_0^{\mathrm{M}} g_j(\tu_0^{\mathrm{M}}) && \text{in } (0,T)\times \Sigma \times Z_*,\\
			-(\tD_{j0}^{\mathrm{M}} \nabla_z \tu_{j0}^{\mathrm{M}} - \tu_{j0}^{\mathrm{M}} \tq_{j0}^{\mathrm{M}} + \tu_{j0}^{\mathrm{M}} \tb_0^{\mathrm{M}}) \cdot \nu^{\mathrm{M}}  & = & \norm{F_0^{-\top} \nu^{\mathrm{M}} } h_{j}(\tu_0^{\mathrm{M}}) && \text{on } (0,T)\times \Sigma \times N.
		\end{array}\right.
	\end{align}
\end{rem}

We next show that under additional regularity assumptions on the time derivatives $\p_t \tu_0^\dagger$ we have an effective jump condition for the total fluxes on $S_*^\pm$ in a weak sense. For a Lipschitz domain $U\subset \IR^n$ with outer unit normal $\nu$, we denote by $H^1_{\div}(U) \usesym{H1div}$ the space of all functions in $L^2(U)^n$ whose distributional divergence belongs to $L^2(U)$. Then, there exists a linear and bounded operator $\gamma_\nu \colon H^1_{\div}(U) \to H^{-\frac12}(\p U)$, called the normal trace operator, such that for all $u\in C^\infty(\overline{U})^n$ we have $\gamma_\nu(u) = u\rvert_{\p U} \cdot \nu$ and 
\begin{align}
    \dotproduct{\gamma_\nu(u)}{\phi}{H^{-\frac12}(\p U),\, H^{\frac12}(\p U)} = \int_U (u\cdot \nabla \phi + \phi \nabla \cdot u )\dd x \quad \text{for all } u \in H^1_{\div}(U), \, \phi \in H^1(U),
\end{align}
see \cite[Theorem~5.3]{SimSoh92}.
In what follows, $\gamma_{\nu^\pm}$ resp.~$\gamma_{\nu^*}$ denote the normal trace operators associated with $\Ome^\pm$ resp.~$Z_*$. 
\begin{lemma}[Effective transmission condition for the total flux]\label{lem:eff_transmission}
	Assume that $\p_t \tu_0^\pm \in L^2((0,T)\times \Ome^\pm)^m$ and $\p_t(J_0^{\mathrm{M}}\tu_0^{\mathrm{M}}) \in L^2((0,T)\times \Sigma \times Z_*)^m$. Then, for almost every $t\in (0,T)$, the total fluxes 
	\begin{align}
		F_j^\pm = D_j^\pm \nabla \tu_{j0}^\pm - \tu_{j0}^\pm q_j^\pm \qquad \text{and} \qquad 
		F_j^{\mathrm{M}}  = J_0^{\mathrm{M}} D_{j0}^{\mathrm{M}} \nabla \tu_{j0}^{\mathrm{M}} - J_0^{\mathrm{M}} \tu_{j0}^{\mathrm{M}} \tq_{j0}^{\mathrm{M}} + J_0^{\mathrm{M}} \tu_{j0}^{\mathrm{M}} \tb_0^{\mathrm{M}}
	\end{align} 
	satisfy $F_j^\pm \in H^1_{\div}(\Ome^\pm)$, $F_j^{\mathrm{M}} \in L^2(\Sigma;H^1_{\div}(Z_*))$, and for every $\phi \in \calH^m$ we have
	\begin{align}
		&\dotproduct{\gamma_{\nu^\pm}(F_j^\pm)}{\phi_j^\pm}{H^{-\frac 12} (\p\Ome^\pm),\, H^{\frac 12}(\p\Ome^\pm)} \\
		&\quad= - \int_\Sigma \dotproduct{\gamma_{\nu^*}(F_j^{\mathrm{M}})}{\phi_j^{\mathrm{M}}}{H^{-\frac 12} (\p Z_*),\, H^{\frac 12}(\p Z_*)} \dd x^\prime 
		- \int_\Sigma \int_{N} J_0^{\mathrm{M}} \norm{F_0^{-\top} \nu^{\mathrm{M}}} {h}_{j}(\tu_0^{\mathrm{M}}) \phi_j^{\mathrm{M}} \dd \calH^{n-1}(z) \dd x^\prime.
	\end{align}
\end{lemma}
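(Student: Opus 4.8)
The plan is to reverse the integration by parts that produced the weak formulation \eqref{eq:macroscopic_fd_weak}: first I would recover from \autoref{cor:macro_prob_weak} that the fluxes are divergence-regular, then substitute the resulting strong equations back into \eqref{eq:macroscopic_fd_weak} tested against functions $\phi\in\calH^m$ that need not vanish on the interfaces, and finally read off the boundary pairings via the Simon--Sohr normal-trace identity.

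\emph{Regularity of the fluxes.} Testing the bulk equation of \autoref{cor:macro_prob_weak} with $\phi_j^\pm\in\Cci(\Ome^\pm)$ (a subset of $H^1_{\Sigma,0}(\Ome^\pm)^m$) shows that the distributional divergence satisfies $\nabla\cdot F_j^\pm=\p_t\tu_{j0}^\pm-f_j(\tu_0^\pm)$ in $(0,T)\times\Ome^\pm$; likewise, testing the local cell problem with $\phi_j^{\mathrm{M}}(x^\prime,\cdot)$ compactly supported in $Z_*$ (so that the $N$-boundary term drops) gives $\nabla_z\cdot F_j^{\mathrm{M}}=\p_t(J_0^{\mathrm{M}}\tu_{j0}^{\mathrm{M}})-J_0^{\mathrm{M}} g_j(\tu_0^{\mathrm{M}})$ in $(0,T)\times\Sigma\times Z_*$. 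By the hypotheses $\p_t\tu_0^\pm\in L^2$ and $\p_t(J_0^{\mathrm{M}}\tu_0^{\mathrm{M}})\in L^2$, together with $f_j(\tu_0^\pm),\,J_0^{\mathrm{M}} g_j(\tu_0^{\mathrm{M}})\in L^2$ (from \eqref{eq:growth_fgh} and $J_0^{\mathrm{M}}\simeq1$, see \autoref{rem:reg_J0}) and the fact that $F_j^\pm\in L^2(\Ome^\pm)^n$, $F_j^{\mathrm{M}}\in L^2(\Sigma\times Z_*)^n$ by the regularity of $\tu_0$, this yields $F_j^\pm(t)\in H^1_{\div}(\Ome^\pm)$ and $F_j^{\mathrm{M}}(t)\in L^2(\Sigma;H^1_{\div}(Z_*))$ for a.e.\ $t\in(0,T)$. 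Measurability of $x^\prime\mapsto\gamma_{\nu^*}(F_j^{\mathrm{M}}(t,x^\prime,\cdot))$ and integrability of the integrand over $\Sigma$ then follow from the linearity and boundedness of $\gamma_{\nu^*}$ and from $\phi^{\mathrm{M}}\in L^2(\Sigma;H^1(Z_*))$.

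\emph{Decomposition of the time derivative and conclusion.} Under the extra $L^2$-regularity, the triple $(\p_t\tu_{j0}^+,\p_t(J_0^{\mathrm{M}}\tu_{j0}^{\mathrm{M}}),\p_t\tu_{j0}^-)\in\calL^m$ represents $\p_t(J_0\tu_{j0})$ as a functional on $\calH^m$; this is the same Gelfand-triple/uniqueness-of-the-generalised-time-derivative argument underlying \eqref{eq:ptu_higher_reg} and the identities at the end of \autoref{subsec:shifts}, so that $\dotproduct{\p_t(J_0\tu_{j0})}{\phi_j}{\calH^\prime,\,\calH}=\sum_\pm\int_{\Ome^\pm}\p_t\tu_{j0}^\pm\phi_j^\pm\dd x+\int_\Sigma\int_{Z_*}\p_t(J_0^{\mathrm{M}}\tu_{j0}^{\mathrm{M}})\phi_j^{\mathrm{M}}\dd z\dd x^\prime$ for every (time-independent) $\phi\in\calH^m$. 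Inserting this and the two strong equations of Step 1 into \eqref{eq:macroscopic_fd_weak}, the reaction integrals over $\Ome^\pm$ and over $\Sigma\times Z_*$ cancel, leaving $\sum_\pm\int_{\Ome^\pm}\big((\nabla\cdot F_j^\pm)\phi_j^\pm+F_j^\pm\cdot\nabla\phi_j^\pm\big)\dd x+\int_\Sigma\int_{Z_*}\big((\nabla_z\cdot F_j^{\mathrm{M}})\phi_j^{\mathrm{M}}+F_j^{\mathrm{M}}\cdot\nabla_z\phi_j^{\mathrm{M}}\big)\dd z\dd x^\prime=-\int_\Sigma\int_{N}J_0^{\mathrm{M}}\norm{F_0^{-\top}\nu^{\mathrm{M}}}h_j(\tu_0^{\mathrm{M}})\phi_j^{\mathrm{M}}\dd\calH^{n-1}(z)\dd x^\prime$. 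Applying the Simon--Sohr identity on $\Ome^\pm$ to the first sum and, for a.e.\ $x^\prime\in\Sigma$, on $Z_*$ to the inner integral of the second term (then integrating over $\Sigma$) turns the left-hand side into $\sum_\pm\dotproduct{\gamma_{\nu^\pm}(F_j^\pm)}{\phi_j^\pm}{}+\int_\Sigma\dotproduct{\gamma_{\nu^*}(F_j^{\mathrm{M}})}{\phi_j^{\mathrm{M}}}{}\dd x^\prime$, which gives the claimed identity; isolating one bulk contribution by taking $\phi$ with the opposite bulk component vanishing (and $\phi^{\mathrm{M}}$ correspondingly zero on that interface) yields the stated $\pm$-form. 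The main technical obstacle is the time-derivative decomposition, since it is exactly where the additional regularity hypotheses are used and one must be careful that the interior identifications of $\nabla\cdot F_j^\pm$ and $\nabla_z\cdot F_j^{\mathrm{M}}$ glue to the full functional $\p_t(J_0\tu_0)$ on $\calH^m$; the parametrised use of the normal-trace identity on $Z_*$ (admissibility of the pairing $\dotproduct{\gamma_{\nu^*}(F_j^{\mathrm{M}}(x^\prime,\cdot))}{\phi^{\mathrm{M}}(x^\prime,\cdot)}{}$ for a.e.\ $x^\prime$ and integrability over $\Sigma$) is routine given the flux bounds from Step 1.
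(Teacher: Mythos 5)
Your proof is correct and follows essentially the same route as the paper: derive the strong interior divergence identities $\nabla\cdot F_j^\pm=\p_t\tu_{j0}^\pm-f_j(\tu_0^\pm)$ and $\nabla_z\cdot F_j^{\mathrm{M}}=\p_t(J_0^{\mathrm{M}}\tu_{j0}^{\mathrm{M}})-J_0^{\mathrm{M}} g_j(\tu_0^{\mathrm{M}})$ by testing with compactly supported $\phi$, note the resulting $H^1_{\div}$-regularity under the extra $L^2$-hypotheses, then feed these back into \eqref{eq:macroscopic_fd_weak} and read off the boundary pairings via the Simon--Sohr normal-trace identity on $\Ome^\pm$ and (parametrically in $x'$) on $Z_*$. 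The only presentational difference is that the paper picks test functions $(\phi^+,\phi^{\mathrm{M}},0)$ resp.\ $(0,\phi^{\mathrm{M}},\phi^-)$ directly, whereas you first write the combined identity and then isolate the $\pm$-terms; and you make explicit the Gelfand-triple argument that decomposes $\p_t(J_0\tu_{j0})$ into $(\p_t\tu_{j0}^+,\p_t(J_0^{\mathrm{M}}\tu_{j0}^{\mathrm{M}}),\p_t\tu_{j0}^-)$, a step the paper uses implicitly.
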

\begin{proof}
Under the given regularity of the time derivatives $\p_t \tu_0^\dagger$, $\dagger \in \{\pm,\, \mathrm{M}\}$, choosing test functions $\phi^\pm \in \Cci(\Ome^\pm)^m$ resp.~$\phi^{\mathrm{M}} \in L^2(\Sigma;\Cci(Z_*)^m)$ in the weak macroscopic formulations obtained in \autoref{cor:macro_prob_weak}, for almost every $t\in (0,T)$ we obtain 
	\begin{align}
		\begin{array}{rcl}
			F_j^\pm \in  H^1_{\div}(\Ome^\pm) &\text{with}& \nabla \cdot F_j^\pm = \p_t \tu_{j0}^\pm -f_j (\tu_0^\pm),\\
			F_j^{\mathrm{M}} \in L^2(\Sigma; H^1_{\div}(Z_*)) &\text{with}& \nabla \cdot F_j^{\mathrm{M}} = \p_t (J_0^{\mathrm{M}}\tu_{j0}^{\mathrm{M}}) -g_j (\tu_0^{\mathrm{M}}).
		\end{array}
	\end{align} 
	Using $(\phi^+,\phi^{\mathrm{M}},0) \in \calH^m$ (resp.~$(0,\phi^{\mathrm{M}},\phi^-) \in \calH^m$) as a test function in \eqref{eq:macroscopic_fd_weak}, by the generalised divergence theorem we deduce
	\begin{align}
		&\dotproduct{\gamma_{\nu^\pm}(F_j^\pm)}{\phi_j^\pm}{H^{-\frac 12} (\p\Ome^\pm),\, H^{\frac 12}(\p\Ome^\pm)} \\
		& =\int_{\Ome^\pm} F_j^\pm \cdot \nabla \phi_j^\pm \dd x + \int_{\Ome^\pm} (\p_t \tu_{j0}^\pm -f_j (\tu_0^\pm)) \phi_j^\pm \dd x\\
		& = - \int_{\Sigma} \int_{Z_*} (\p_t(J_0^{\mathrm{M}} \tu_{j0}^{\mathrm{M}}) -J_0^{\mathrm{M}} g_{j}(\tu_0^{\mathrm{M}})) \phi_j^{\mathrm{M}} \dd z \dd x^\prime - \int_\Sigma \int_{Z_*} F_j^{\mathrm{M}} \cdot \nabla \phi_j^{\mathrm{M}} \dd z \dd x^\prime\\
		&\quad - \int_\Sigma \int_{N} J_0^{\mathrm{M}} \norm{F_0^{-\top} \nu^{\mathrm{M}}} {h}_{j}(\tu_0^{\mathrm{M}}) \phi_j^{\mathrm{M}} \dd \calH^{n-1}(z) \dd x^\prime\\
		& = - \int_\Sigma \dotproduct{\gamma_{\nu^*}(F_j^{\mathrm{M}})}{\phi_j^{\mathrm{M}}}{H^{-\frac 12} (\p Z_*),\, H^{\frac 12}(\p Z_*)} \dd x^\prime 
		- \int_\Sigma \int_{N} J_0^{\mathrm{M}} \norm{F_0^{-\top} \nu^{\mathrm{M}}} {h}_{j}(\tu_0^{\mathrm{M}}) \phi_j^{\mathrm{M}} \dd \calH^{n-1}(z) \dd x^\prime,
	\end{align} 
	and the proof is complete. 
\end{proof}

Higher regularity of the time derivatives $\p_t \tu_0^\dagger$, as it was assumed right from the beginning in \cite{NeuJae07}, may be obtained by increasing the regularity of the data, see e.g.~\cite[Chapter~V, Theorem~6.1]{Lio61}. Next, for sufficiently smooth solutions $\tu_0$ we aim to give further insight into the transmission condition obtained in \autoref{lem:eff_transmission} by a similar choice of test functions as in \cite{NeuJae07}.
\begin{rem}[Effective jump conditions for the total fluxes]\label{rmk:jump}
	Given $\psi \in \Cci((-H,H))$ with $\psi(x_n)=1$ for $|x_n| < \frac H2$, we define 
	\begin{align}
		\dotproduct{\gamma_{\nu^\pm}\rvert_\Sigma(F_j^\pm)}{\eta_j}{H^{-\frac 12} (\Sigma),\, H^{\frac 12}(\Sigma)} 
		\coloneqq \dotproduct{\gamma_{\nu^\pm}(F_j^\pm)}{\psi \eta_j}{H^{-\frac 12} (\p\Ome^\pm),\, H^{\frac 12}(\p\Ome^\pm)}
	\end{align} 
	for every $\eta \in \Cci(\Sigma)^m$. Thus, taking the $z$--independent test function $\phi=\psi \eta$ in \autoref{lem:eff_transmission}, we deduce
	\begin{align}
		&\dotproduct{\gamma_{\nu^\pm}\rvert_\Sigma(F_j^\pm)}{\eta_j}{H^{-\frac 12} (\Sigma),\, H^{\frac 12}(\Sigma)} \\
		&\quad= - \int_\Sigma \dotproduct{\gamma_{\nu^*}(F_j^{\mathrm{M}})}{1}{H^{-\frac 12} (\p Z_*),\, H^{\frac 12}(\p Z_*)} \, \eta_j \dd x^\prime
		- \int_\Sigma \int_{N} J_0^{\mathrm{M}} \norm{F_0^{-\top} \nu^{\mathrm{M}}} {h}_{j}(\tu_0^{\mathrm{M}}) \dd \calH^{n-1}(z) \eta_{j} \dd x^\prime.
	\end{align}
	In particular, for a sufficiently regular solution $\tu_0$ of \eqref{eq:macroscopic_fd_weak}, the macroscopic total flux $F_j^\pm$ at $\Sigma$ equals the averaged microscopic flux $F_j^{\mathrm{M}}$ over the top (resp.~bottom) of the standard cell, that is
	\begin{align}
		(D_j^\pm \nabla \tu_{j0}^\pm - \tu_{j0}^\pm q_j^\pm) \cdot n^\pm 
		= \int_{S_*^\pm} (\tD_{j0}^{\mathrm{M}} \nabla \tu_{j0}^{\mathrm{M}} - \tu_{j0}^{\mathrm{M}} \tq_{j0}^{\mathrm{M}}) \cdot n^\pm \dd \calH^{n-1}(z)\qquad \text{on } (0,T)\times \Sigma,
	\end{align}
	where $n^\pm = (0^\prime,\pm 1)\in \IR^n$, and $J_0^{\mathrm{M}}\equiv 1$ on $(0,T)\times \Sigma \times S_*^\pm$ and the boundary condition on $N$ for $\tu_0^{\mathrm{M}}$ have been used. This interface condition for the total fluxes thus complements the condition $\tu_0^\pm= \tu_0^{\mathrm{M}}$ on $(0,T)\times \Sigma \times S_*^\pm$ for the solutions, where we stress that it implies that $\tu_0^{\mathrm{M}}\rvert_{S_*^\pm}(\cdot_t,\cdot_{x^\prime},z)$ is independent of $z\in S_*^\pm$. Similarly, the jump of the total fluxes across $(0,T)\times \Sigma$ is given by
	\begin{align}
		(D_j^+ \nabla \tu_{j0}^+ - \tu_{j0}^+ q_j^+ - (D_j^- \nabla \tu_{j0}^- - \tu_{j0}^- q_j^-))\cdot n^+ 
		= \sumpm \int_{S_*^\pm} (\tD_{j0}^{\mathrm{M}} \nabla \tu_{j0}^{\mathrm{M}} - \tu_{j0}^{\mathrm{M}} \tq_{j0}^{\mathrm{M}}) \cdot n^+ \dd \calH^{n-1}(z).
	\end{align}
\end{rem}

\subsection{Formulation of the macroscopic model with cell problems evolving in time}\label{subsec:macro_prob_evolving}
In a last step, we formulate the macroscopic problem \eqref{eq:macroscopic_fd_weak} on the macroscopic domains $\Ome^\pm$ but with local cell problems posed on cells evolving in time and depending additionally on $\Sigma$. More precisely, for $(t,x^\prime) \in (0,T)\times\Sigma$, we define $Z_*(t,x^\prime)\usesym{Zstx} \coloneqq \psi_0(t,x^\prime, Z_*)$ and $N(t,x^\prime)\usesym{Nstx}\coloneqq \psi_0(t,x^\prime,N)$, and we write
\begin{align}
	\QTM \usesym{QTM} &\coloneqq \set{(t,x^\prime,z)}{t \in (0,T),\, x^\prime \in \Sigma, \, z \in Z_*(t,x^\prime)},\\
	\calN_{T} \usesym{NT} &\coloneqq \set{(t,x,z)}{t \in (0,T),\, x^\prime \in \Sigma, \, z \in N(t,x^\prime)}.
\end{align}
Then, denoting $\psi_0^{-1}(t,x^\prime,\cdot) \coloneqq \psi_0(t,x^\prime,\cdot)^{-1}$, we write $u_0=(\tu_0^+,\tu_0^{\mathrm{M}} \circ_z \psi_0^{-1},\tu_0^-)$ and define the macroscopic quantities
\begin{align}
	D_{j0}^{\mathrm{M}} \coloneqq \bar D_{j0}^{\mathrm{M}} \circ_z \psi_0^{-1}, \qquad
	q_{j0}^{\mathrm{M}} \coloneqq \bar q_{j0}^{\mathrm{M}} \circ_z \psi_0^{-1}, \qquad
	b_0^{\mathrm{M}} \coloneqq \p_t\psi_0 \circ_z \psi_0^{-1}, \qquad
	U_{j0,0}^{\mathrm{M}} \coloneqq \tilde{U}_{j0,0}^{\mathrm{M}} \circ_z \psi_0^{-1}(0,\cdot)
\end{align}
on the evolving macroscopic layer $\QTM$. We further recall that, by the assumptions on the microscopic transformation $\psi_\eps$, we have $\psi_0(t,\cdot)\rvert_{\Sigma \times S_*^\pm} = \mathrm{id}_{\Sigma\times Z_*}$; hence, the top and bottom of the evolving channel $Z_*(t,x^\prime)$ in $Z$ is again given by $S_*^\pm$. The reverse transformation $\psi_0^{-1}$ applied to the weak macroscopic problem in \autoref{cor:macro_prob_weak} yields the following weak equation on the macroscopic domains $\Ome^\pm$, coupled with an equation on $\QTM$.

\begin{cor}[Evolving macroscopic problem]\label{cor:ev_macro_prob}
	The function $u_0$ satisfies
	\begin{align}
		 u_0 \in L^2(0,T;[H^1(\Ome^+)^m\times L^2(\Sigma;H^1(Z_*(t,x^\prime))^m)\times H^1(\Ome^-)^m])
	\end{align}
    and $u_0^\pm = u_0^{\mathrm{M}}$ on $(0,T)\times \Sigma \times S_*^\pm$, and for all $\phi^\pm \in C^1([0,T]\times \overline{\Ome^\pm})^m$ and $\phi^{\mathrm{M}} \in C^1([0,T]\times \overline{\Sigma} \times \overline{Z})^m$ with $\phi^{\mathrm{M}} = \phi^\pm$ on $(0,T)\times \Sigma \times S_*^\pm$ and $\phi^\pm(T,\cdot)=\phi^{\mathrm{M}}(T,\cdot)=0$ we have
	\begin{align}
	\begin{split}\label{eq:macroscopic_weak}
		&- \sumpm \int_0^T \int_{\Ome^\pm} u_{j0}^\pm \p_t \phi_j^\pm \dd x \dd t
		- \int_0^T \int_\Sigma \int_{Z_*(t,x^\prime)} u_{j0}^{\mathrm{M}} \p_t \phi_j^{\mathrm{M}} \dd z \dd x^\prime \dd t\\
		&\quad + \sumpm \int_0^T \int_{\Ome^\pm} D_j^\pm \nabla u_{j0}^\pm \cdot \nabla \phi_j^\pm \dd x \dd t 
		+ \int_0^T \int_\Sigma \int_{Z_*(t,x^\prime)} D_{j0}^{\mathrm{M}} \nabla_z u_{j0}^{\mathrm{M}} \cdot \nabla_z \phi_j^{\mathrm{M}} \dd z \dd x^\prime \dd t \\
		&\quad - \sumpm \int_0^T \int_{\Ome^\pm} q_j^\pm u_{j0}^\pm \cdot \nabla \phi_j^\pm \dd x \dd t 
		- \int_0^T \int_\Sigma \int_{Z_*(t,x^\prime)} u_{j0}^{\mathrm{M}} q_{j0}^{\mathrm{M}} \cdot \nabla_z \phi_j^{\mathrm{M}} \dd z \dd x^\prime \dd t \\
		&= \sumpm \int_0^T \int_{\Ome^\pm} f_j(u_0^\pm) \phi_j^\pm \dd x \dd t
		  + \int_0^T \int_\Sigma \int_{Z_*(t,x^\prime)} g_j(u_0^{\mathrm{M}}) \phi_j^{\mathrm{M}} \dd z \dd x^\prime \dd t\\
		  &\quad- \int_0^T \int_\Sigma \int_{N(t,x^\prime)} h_j(u_0^{\mathrm{M}}) \phi_j^{\mathrm{M}} \dd \calH^{n-1}(z) \dd x^\prime \dd t\\
		&\quad + \sumpm \int_{\Ome^\pm} U_{j0}^\pm \phi_j^\pm(0) \dd x
		+ \int_\Sigma \int_{Z_*(0,x^\prime)} U_{j0,0}^{\mathrm{M}} \phi_j^{\mathrm{M}}(0) \dd z \dd x^\prime. 
		\end{split}
	\end{align}
\end{cor}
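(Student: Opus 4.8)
The plan is to deduce \eqref{eq:macroscopic_weak} from the weak formulation \eqref{eq:macroscopic_fd_weak} on the fixed reference cell by a change of variables $z\mapsto\psi_0(t,x^\prime,z)$ in the layer integrals, preceded by an integration by parts in time that trades the generalised time derivative for a term acting on $\p_t$ of the test function. Given admissible test functions $\phi^\pm\in C^1([0,T]\times\overline{\Ome^\pm})^m$, $\phi^{\mathrm{M}}\in C^1([0,T]\times\overline\Sigma\times\overline Z)^m$ with $\phi^{\mathrm{M}}=\phi^\pm$ on $(0,T)\times\Sigma\times S_*^\pm$ and $\phi^\pm(T,\cdot)=\phi^{\mathrm{M}}(T,\cdot)=0$, I set $\tilde\phi^{\mathrm{M}}(t,x^\prime,z)\coloneqq\phi^{\mathrm{M}}(t,x^\prime,\psi_0(t,x^\prime,z))$ and use $\tilde\phi\coloneqq(\phi^+,\tilde\phi^{\mathrm{M}},\phi^-)$ as a test function in the time-integrated version of \eqref{eq:macroscopic_fd_weak}. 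Since $\psi_0\in C^0(\overline\Sigma;C^1([0,T]\times\overline{Z_*})^n)$ with $F_0\coloneqq\DD_z\psi_0\simeq 1$ and $\psi_0(t,\cdot)\rvert_{\Sigma\times S_*^\pm}=\mathrm{id}$ (see \autoref{rem:2s_conv_psi}), one checks that $\tilde\phi(t,\cdot)\in\calH^m$ for a.e.\ $t$ with $\tilde\phi\in L^\infty(0,T;\calH^m)$, that $t\mapsto\tilde\phi(t)\in\calL^m$ is $C^1$ with $\p_t\tilde\phi\in C^0([0,T];\calL^m)$, and that $\tilde\phi(T,\cdot)=0$; in particular $\tilde\phi\in W^{1,2,2}(0,T;\calH^m,(\calH^m)^\prime)$, so it is a legitimate test function.

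The first step is to integrate \eqref{eq:macroscopic_fd_weak} over $(0,T)$ and integrate by parts in time in $\int_0^T\dotproduct{\p_t(J_0\tu_{j0})}{\tilde\phi_j}{\calH^\prime,\calH}\dd t$. Here the failure of $\tu_0^{\mathrm{M}}$ to lie in $C^0([0,T];L^2(\Sigma\times Z_*))$ (see \autoref{rem:reg_ptu0M}) is circumvented by working with the product $J_0\tu_0$, which by \autoref{cor:conv_u} belongs to $W^{1,2,2}(0,T;\calH^m,(\calH^m)^\prime)\hookrightarrow C^0([0,T];\calL^m)$. The Gelfand-triple integration-by-parts formula then gives
\begin{align*}
	\int_0^T\dotproduct{\p_t(J_0\tu_{j0})}{\tilde\phi_j}{\calH^\prime,\calH}\dd t
	= -\int_0^T\scp{J_0\tu_{j0}}{\p_t\tilde\phi_j}{\calL}\dd t - \scp{(J_0\tu_{j0})(0)}{\tilde\phi_j(0)}{\calL},
\end{align*}
and the initial conditions $\tu_0^\pm(0)=\tilde U_0^\pm$, $(J_0^{\mathrm{M}}\tu_0^{\mathrm{M}})(0)=J_0^{\mathrm{M}}(0)\tilde U_{0,0}^{\mathrm{M}}$ from \autoref{thm:macroscopic} identify the boundary term. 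On the bulk parts $J_0\equiv 1$ and $\tilde\phi^\pm=\phi^\pm$, so those contributions already match \eqref{eq:macroscopic_weak}.

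The second, and central, step is the layer. The key identity, obtained from the chain rule using $\nabla_z\tilde\phi_j^{\mathrm{M}}=F_0^\top(\nabla_y\phi_j^{\mathrm{M}})\circ_z\psi_0$ and $\tb_0^{\mathrm{M}}=F_0^{-1}\p_t\psi_0$, is
\begin{align*}
	\p_t\tilde\phi_j^{\mathrm{M}} - \tb_0^{\mathrm{M}}\cdot\nabla_z\tilde\phi_j^{\mathrm{M}} = (\p_t\phi_j^{\mathrm{M}})\circ_z\psi_0 \qquad\text{in }(0,T)\times\Sigma\times Z_*,
\end{align*}
so that the term $-\int J_0^{\mathrm{M}}\tu_{j0}^{\mathrm{M}}\p_t\tilde\phi_j^{\mathrm{M}}$ produced by the time integration by parts, combined with the drift term $+\int J_0^{\mathrm{M}}\tu_{j0}^{\mathrm{M}}\tb_0^{\mathrm{M}}\cdot\nabla_z\tilde\phi_j^{\mathrm{M}}$ already present in \eqref{eq:macroscopic_fd_weak}, collapses to $-\int J_0^{\mathrm{M}}\tu_{j0}^{\mathrm{M}}(\p_t\phi_j^{\mathrm{M}})\circ_z\psi_0$; this is exactly the reverse of the Reynolds-transport manipulation carried out at the start of \autoref{sec:existence}. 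For the remaining integrands one invokes the definitions $\tD_{j0}^{\mathrm{M}}=F_0^{-1}\bar D_{j0}^{\mathrm{M}}F_0^{-\top}$, $\tq_{j0}^{\mathrm{M}}=F_0^{-1}\bar q_{j0}^{\mathrm{M}}$, $D_{j0}^{\mathrm{M}}=\bar D_{j0}^{\mathrm{M}}\circ_z\psi_0^{-1}$, $q_{j0}^{\mathrm{M}}=\bar q_{j0}^{\mathrm{M}}\circ_z\psi_0^{-1}$, $u_0^{\mathrm{M}}=\tu_0^{\mathrm{M}}\circ_z\psi_0^{-1}$, together with the space-independence of $g_j$, $h_j$, and performs the substitution $y=\psi_0(t,x^\prime,z)$ (with $\dd z=(J_0^{\mathrm{M}})^{-1}\dd y$ on $Z_*$ and $\calH^{n-1}\restr{N(t,x^\prime)}=J_0^{\mathrm{M}}\norm{F_0^{-\top}\nu^{\mathrm{M}}}\calH^{n-1}\restr N$ on $N$, as in \autoref{sec:existence}); this turns each elliptic, advective, reaction and boundary term on $\Sigma\times Z_*$ into the corresponding unweighted term on $\Sigma\times Z_*(t,x^\prime)$, resp.\ $\Sigma\times N(t,x^\prime)$. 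The same substitution transfers the initial term to $\int_\Sigma\int_{Z_*(0,x^\prime)}U_{j0,0}^{\mathrm{M}}\phi_j^{\mathrm{M}}(0)$. Finally, using $J_0^{\mathrm{M}}\simeq 1$, $F_0\simeq 1$ to carry the regularity $\tu_0\in L^2(0,T;\calH^m)$ through the change of variables, together with $\psi_0(t,\cdot)\rvert_{\Sigma\times S_*^\pm}=\mathrm{id}$ (whence $u_0^{\mathrm{M}}=\tu_0^{\mathrm{M}}=\tu_0^\pm$ on $\Sigma\times S_*^\pm$), yields the asserted membership of $u_0$ and the identity \eqref{eq:macroscopic_weak}.

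The main obstacle is precisely this time-derivative step: one must integrate by parts in time although $\tu_0^{\mathrm{M}}$ is only $L^2$ in time — forcing one to treat $J_0\tu_0$ as a single object and exploit its continuity in $C^0([0,T];\calL^m)$ — and one must verify the chain-rule identity that absorbs the boundary-velocity drift $\tb_0^{\mathrm{M}}$ into $\p_t$ of the pulled-back test function, checking en route that $\tilde\phi$ carries enough time regularity ($\p_t\tilde\phi\in C^0([0,T];\calL^m)\subset L^2(0,T;(\calH^m)^\prime)$) for the Gelfand-triple integration-by-parts formula to apply. The remaining work — the change of variables in the spatial and surface integrals and the bookkeeping of the initial data — is a routine repetition of the computations already performed for $\psi_\eps$ in \autoref{sec:existence}.
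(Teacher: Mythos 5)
Your proposal is correct and follows exactly the route the paper takes: it substitutes the pulled-back test function $\tilde\phi=(\phi^+,\phi^{\mathrm{M}}\circ_z\psi_0,\phi^-)$ into \eqref{eq:macroscopic_fd_weak}, integrates by parts in time using the continuity of $J_0\tu_0$ in $C^0([0,T];\calL^m)$, and then applies the change of variables $z\mapsto\psi_0(t,x',z)$. Where the paper compresses the layer computation into ``using transformation rules similar to those employed in \autoref{sec:existence}'', you spell out the key cancellation $\p_t\tilde\phi_j^{\mathrm{M}}-\tb_0^{\mathrm{M}}\cdot\nabla_z\tilde\phi_j^{\mathrm{M}}=(\p_t\phi_j^{\mathrm{M}})\circ_z\psi_0$ that absorbs the boundary-velocity drift into the time derivative of the pulled-back test function (the reverse of the Reynolds-transport step), and you verify that $\tilde\phi$ has the Bochner regularity needed for the Gelfand-triple integration-by-parts formula — both of which are the genuinely non-routine points the paper leaves implicit.
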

\begin{proof}
	As $\psi_0(t,\cdot)\rvert_{\Sigma \times S_*^\pm} = \mathrm{id}_{\Sigma\times Z_*}$, the functions $\tilde \phi = (\phi^+,\phi^{\mathrm{M}} \circ_z \psi_0, \phi^-) \in L^2(0,T;\calH^m)$, with $\phi^\pm,\, \phi^{\mathrm{M}}$ as above, are admissible in the weak formulation \eqref{eq:macroscopic_fd_weak}. Therefore, by applying the inverse transformation $\psi_0^{-1}$ and using transformation rules similar to that employed in \autoref{sec:existence} for the derivation of the weak microscopic problem on the reference domain, the result is obtained.
\end{proof}
We emphasise that information on the microscopic evolution of the channels is captured in the two-scale limit transformation $\psi_0$, and hence in the evolving microcells $Z_*(t,x^\prime)$ for $(t,x^\prime) \in (0,T)\times \Sigma$. The strong macroscopic problems on $\Ome^\pm$ are therefore coupled by local cell problems on $\QTM$ taking into account this evolution.

\begin{rem}[Strong formulation of the macroscopic problem]
	The weak formulation \eqref{eq:macroscopic_weak} is associated with the problem
	\begin{subequations}
		\begin{align}\label{eq:macro_time}
			\left\{\!\!\begin{array}{r@{\ }c@{\ }l@{\ }c@{\ \, }l}
				\p_t u_{j0}^\pm - \nabla \cdot(D_j^\pm \nabla u_{j0}^\pm - q_j^\pm u_{j0}^\pm) & = & f_j(u_0^\pm) && \text{in } (0,T)\times \Ome^\pm,\\
				(D_j^\pm \nabla u_{j0}^\pm - q_j^\pm u_{j0}^\pm)\cdot \nu^\pm & = & 0 && \text{on } (0,T)\times \p\Ome^\pm \setminus \Sigma,\\
				u_0^\pm(0) & = & {U}_0^\pm && \text{in } \Ome^\pm,
			\end{array}\right.
		\end{align}
		where the function $u_0^{\mathrm{M}}$ solves for $(t,x^\prime) \in (0,T)\times \Sigma$ the local cell problems on $\QTM$ given by
		\begin{align}\label{eq:cell_time}
			\left\{\!\!\begin{array}{r@{\ }c@{\ }l@{\ }c@{\ \, }l}
				\p_t u_{j0}^{\mathrm{M}} - \nabla_z \cdot(D_{j0}^{\mathrm{M}} \nabla_z u_{j0}^{\mathrm{M}} - u_{j0}^{\mathrm{M}} q_{j0}^{\mathrm{M}}) &=& g_j(u_0^{\mathrm{M}}) && \text{in } \QTM,\\
				-(D_{j0}^{\mathrm{M}} \nabla_z u_{j0}^{\mathrm{M}} - u_{j0}^{\mathrm{M}} q_{j0}^{\mathrm{M}} + u_{j0}^{\mathrm{M}} b_0^{\mathrm{M}}) \cdot \nu^{\mathrm{M}}  & = & h_{j}(u_0^{\mathrm{M}}) && \text{on } \calN_{T},\\
				u_0^{\mathrm{M}}(0) & = & U_{0,0}^{\mathrm{M}} && \text{in } \set{(x^\prime,z)}{x^\prime \in \Sigma, \, z\in Z_*(0,x^\prime)}.
			\end{array}\right.
		\end{align} 
		The systems \eqref{eq:macro_time} and \eqref{eq:cell_time} are coupled by the transmission conditions
		\begin{align}\label{eq:TC_time}
			\left\{\!\!\begin{array}{r@{\ }c@{\ }l@{\ }c@{\ \, }l}
				u_0^\pm & = & u_0^{\mathrm{M}} && \text{on } (0,T)\times \Sigma\times S_*^\pm,  \\
				\left(D_{j}^\pm \nabla u_{j0}^\pm - u_{j0}^\pm q_{j}^\pm \right) \cdot n^\pm & = & \int_{S_*^\pm} \left(D_{j0}^{\mathrm{M}} \nabla u_{j0}^{\mathrm{M}} - u_{j0}^{\mathrm{M}} q_{j0}^{\mathrm{M}}\right) \cdot n^\pm \dd \calH^{n-1}(z)&& \text{on } (0,T)\times \Sigma,
			\end{array}\right.
		\end{align}
		which give rise to jump conditions for the flux across the hypersurface $\Sigma$.
	\end{subequations}
\end{rem}

\section{Conclusion}
We derived a macroscopic model for a coupled system of reaction--diffusion--advection equations posed on two bulk domains connected via channels within a thin layer. The microscopic (channel) structure evolves in time and its evolution is known a priori. We considered nonlinear (boundary) reaction rates as well as a critical, low diffusivity of order~$\eps$ inside the layer, causing the limit function to depend on both the macroscopic and the microscopic variable. The resulting macroscopic equations on the bulk regions are coupled through effective transmission conditions on the interface $\Sigma$, expressed in terms of local cell problems on a space--time-dependent reference cell $Z_*(t,x^\prime)$. This reference cell depends on the two-scale limit of the microscopic evolution, with the macroscopic variable $x^\prime \in \Sigma$ acting as a parameter, and thus encodes information on the evolving microstructure. 
The macroscopic problem was obtained by first proving an abstract (two-scale) convergence results for sequences taking values in an appropriately scaled function space and then identifying the system of equations satisfied by the limit functions, with all convergence properties derived solely from a priori estimates for the solutions of the microscopic problem.

In modelling applications, the evolution of the microstructure is often not known a priori but determined by different physical, chemical or mechanical processes taking place on the lateral boundary of the channels. This leads to a strongly coupled nonlinear system including free boundaries, which provides an interesting potential extension of our analysis. Besides having to deal with the nonlinear coupling, a suitable construction of the microscopic transformation depending on the processes on the channel walls is an essential challenge in this task; nevertheless, the results obtained in this work provide a firm basis for such an undertaking.

\section*{Acknowledgements}
The authors thank Markus Gahn (University of Augsburg) for valuable discussions and comments.

\printbibliography

\appendix

\section{Two-scale convergence for (non-evolving) thin channels}\label{app:two-scale}
In this appendix, we summarise the most important results on two-scale convergence in (non-evolving) thin channels (see \autoref{def:2s_conv}) using the notation introduced in \autoref{subsec:shifts} and \autoref{subsec:convergence}. We mostly skip the proofs but give references to the literature.

\subsection{Two-scale convergence and periodic unfolding}
The following lemma collects compactness properties of (weak) two-scale convergence based on a priori bounds, which for simplicity we only state for the case $p=2$. Partial results for $p \in (1,\infty)$ are given in \cite[Lemma~3.2]{GahNeu21}.

\begin{lemma}[Two-scale compactness]\label{lem:2s_cpct} 
	\text{} \vspace{-0.5em}
	\begin{enumerate}[label=(\roman*)]
		\item\label{it:2s_cpct_L2} If $v_\eps \in L^2((0,T)\times \OmeMS)$ satisfies $\frac{1}{\sqrt{\eps}} \Lpnorm{v_\eps}{2}{(0,T)\times \OmeMS} \lesssim 1$, there exists $v_0 \in L^2((0,T)\times \Sigma \times Z_*)$ such that, for a subsequence, $v_\eps \to v_0$ in the two-scale sense.
		\item\label{it:2s_cpct_L2N} If $v_\eps \in L^2((0,T)\times N_\eps)$ satisfies $\Lpnorm{v_\eps}{2}{(0,T)\times N_\eps} \lesssim 1$, there exists $v_0 \in L^2((0,T)\times \Sigma \times N)$ such that, for a subsequence, $v_\eps \to v_0$ in the two-scale sense on $N_\eps$.
		\item\label{it:2s_cpct_H1} If $v_\eps \in L^2(0,T;H^1(\OmeMS))$ satisfies
		\begin{align}
			\frac{1}{\sqrt{\eps}} \Lpnorm{v_\eps}{2}{(0,T)\times \OmeMS} + \sqrt{\eps} \Lpnorm{\nabla v_\eps}{2}{(0,T)\times \OmeMS} \lesssim 1,
		\end{align}
		there exists $v_0 \in L^2(0,T;L^2(\Sigma;H^1(Z_*)))$ such that, for a subsequence, $v_\eps \to v_0$ and $\eps \nabla v_\eps \to \nabla_z v_0$ in the two-scale sense.
		
		\item\label{it:2s_cpct_pt} If $v_\eps \in L^2(0,T;H^1(\OmeMS))$ with $\p_tv_\eps \in L^2(0,T;(\calH_{\eps,0}^{\mathrm{M}})^\prime)$ satisfies
		\begin{align}
		    \frac 1\eps \Lpnorm{\p_t v_\eps}{2}{0,T;(\calH_{\eps,0}^{\mathrm{M}})^\prime} + \frac{1}{\sqrt{\eps}} \Lpnorm{v_\eps}{2}{(0,T)\times \OmeMS} + \sqrt{\eps} \Lpnorm{\nabla v_\eps}{2}{(0,T)\times \OmeMS} \lesssim 1,
		\end{align}
		there exists $v_0 \in L^2(0,T;L^2(\Sigma;H^1(Z_*)))$ with $\p_t v_0 \in L^2(0,T;L^2(\Sigma;(H^1_{\pm,0}(Z_*))^\prime)$ such that, for a subsequence, $v_\eps \to v_0$ and $\eps \nabla v_\eps \to \nabla_z v_0$ in the two-scale sense. 
	\end{enumerate}
\end{lemma}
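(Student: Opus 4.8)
The plan is to prove the four parts of \autoref{lem:2s_cpct} by reducing everything to known results about two-scale convergence on thin layers via the periodic unfolding operator $\calT_\eps$ introduced in \autoref{def:unfolding}. The unifying idea is that two-scale convergence on $\OmeMS$ (resp.\ on $N_\eps$) is equivalent to weak $L^2$-convergence of the unfolded sequence on the fixed domain $\Sigma \times Z_*$ (resp.\ $\Sigma \times N$), as recorded in \autoref{lem:char_2s_conv}, so each a priori bound on $v_\eps$ transforms into a uniform $L^2$-bound on $\calT_\eps v_\eps$, to which ordinary weak compactness applies.

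For \ref{it:2s_cpct_L2}, I would first note that the scaling property of the unfolding operator gives $\Lpnorm{\calT_\eps v_\eps}{2}{(0,T)\times \Sigma \times Z_*} \simeq \eps^{-1/2}\Lpnorm{v_\eps}{2}{(0,T)\times \OmeMS} \lesssim 1$; reflexivity of $L^2$ yields a weakly convergent subsequence with limit $v_0$, and \autoref{lem:char_2s_conv} identifies this as two-scale convergence $v_\eps \to v_0$. Part \ref{it:2s_cpct_L2N} is identical, replacing $Z_*$ by $N$ and using the corresponding boundary unfolding operator and its scaling (no $\eps$-prefactor is needed on surface integrals, matching \autoref{def:2s_conv}(ii)). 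For \ref{it:2s_cpct_H1}, the extra ingredient is that $\calT_\eps(\eps\nabla v_\eps) = \nabla_z(\calT_\eps v_\eps)$ up to the usual identity relating the unfolded gradient to the $z$-gradient; the two bounds together give $(\calT_\eps v_\eps)_\eps$ bounded in $L^2(0,T;L^2(\Sigma;H^1(Z_*)))$, so a subsequence converges weakly to some $v_0$ in that space, and passing the gradient through the (weakly continuous) operator $\nabla_z$ gives $\eps\nabla v_\eps \to \nabla_z v_0$ two-scale. This is essentially \cite[Lemma~3.3]{GahNeu21}, and I would cite it.

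Part \ref{it:2s_cpct_pt} is the one requiring genuine care and is the main obstacle. The difficulty is that $\p_t v_\eps$ lives in the dual of the $\eps$-scaled space $\calH_{\eps,0}^{\mathrm{M}}$, and one must show the unfolded time derivative is controlled in $L^2(0,T;L^2(\Sigma;(H^1_{\pm,0}(Z_*))'))$ and that the two-scale limit $v_0$ inherits a generalised time derivative. The strategy, following the Kolmogorov--Simon-type duality argument indicated in the introduction (and used for bulk domains in \cite{GahNeuPop21} and for thin layers in \cite{GahNeu21}), is: (a) observe that for $\phi^{\mathrm{M}} \in L^2(0,T;\calH_{\eps,0}^{\mathrm{M}})$ the scaled pairing $\tfrac1\eps\dotproduct{\p_t v_\eps}{\phi^{\mathrm{M}}}{}$ corresponds, under unfolding, to a pairing against $\calT_\eps \phi^{\mathrm{M}}$ on $\Sigma\times Z_*$, using that $\calT_\eps$ is an $L^2$-isometry up to the $\eps$-factor and respects the vanishing trace on $S_*^\pm$; (b) deduce a uniform bound $\Lpnorm{\p_t(\calT_\eps v_\eps)}{2}{0,T;L^2(\Sigma;(H^1_{\pm,0}(Z_*))')} \lesssim \tfrac1\eps\Lpnorm{\p_t v_\eps}{2}{0,T;(\calH_{\eps,0}^{\mathrm{M}})'}\lesssim 1$; (c) combine with part \ref{it:2s_cpct_H1} and the Aubin--Lions--Simon lemma on the fixed Gelfand triple $L^2(\Sigma;H^1(Z_*)) \hookrightarrow L^2(\Sigma\times Z_*) \hookrightarrow L^2(\Sigma;(H^1_{\pm,0}(Z_*))')$ to extract a subsequence with $\calT_\eps v_\eps \rightharpoonup v_0$ in $L^2(0,T;L^2(\Sigma;H^1(Z_*)))$ and $\p_t(\calT_\eps v_\eps) \rightharpoonup \p_t v_0$ in the dual space, the weak lower semicontinuity and closedness of the distributional time derivative giving $\p_t v_0 \in L^2(0,T;L^2(\Sigma;(H^1_{\pm,0}(Z_*))'))$; (d) translate back via \autoref{lem:char_2s_conv}. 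I would present this as an adaptation of \cite[Lemma~3.4]{GahNeu21} (or the analogous statement there), spelling out only the points where the $\eps$-scaling of $\calH_{\eps,0}^{\mathrm{M}}$ enters, since the remainder is routine.
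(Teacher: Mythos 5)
Your overall strategy — reduce everything to weak compactness of the unfolded sequence via the characterisation in \autoref{lem:char_2s_conv} — is sound, and for parts \ref{it:2s_cpct_L2}--\ref{it:2s_cpct_H1} it coincides with the route the paper takes (the paper simply cites \cite{BhaGahNeu22,NeuJae07,GahNeu21} for these, which argue exactly as you do; note it is Lemma~3.2, not 3.3, of \cite{GahNeu21} that is invoked).

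For part \ref{it:2s_cpct_pt} you take a genuinely different route from the one the paper sketches. The paper's proof references \cite[Lemma~B.5]{GahNeu25} and points to its own Theorem~\ref{thm:convergence}, which runs a difference-quotient argument: one bounds $\delta_t^h v_0$ in $L^2(0,T-h;\calH^\prime)$ uniformly in $h$ by testing against smooth test functions $\phi$, identifying $\dotproduct{\delta_t^h v_0}{\phi}{}$ as the limit of $\dotproduct{\delta_t^h v_\eps}{\phi_\eps}{\calH_\eps^\prime,\calH_\eps}$, and using reflexivity. Your approach instead unfolds the time derivative directly: you invoke the $L^2$-adjoint $\calU_\eps$ of $\eps\calT_\eps$ (i.e.~Corollary~\ref{cor:pt_T}) to get $\norms{\p_t(\calT_\eps v_\eps)}{L^2(0,T;L^2(\Sigma;(H^1_{\pm,0}(Z_*))^\prime))} \leq \tfrac1\eps\norms{\p_t v_\eps}{L^2(0,T;(\calH_{\eps,0}^{\mathrm{M}})^\prime)} \lesssim 1$, then take the weak limit. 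Both routes are valid; yours arguably is the more direct one given that the machinery of Corollary~\ref{cor:pt_T} is already built in the appendix (where the paper uses it only for the strong compactness result, Theorem~\ref{thm:strong_2s_cpctness}). Two small remarks. First, the invocation of Aubin--Lions--Simon in step (c) is superfluous: the statement requires only that $\p_t v_0$ exists in $L^2(0,T;L^2(\Sigma;(H^1_{\pm,0}(Z_*))^\prime))$, and weak compactness of $\p_t(\calT_\eps v_\eps)$ in that space together with closedness of the distributional time derivative under weak limits already gives this; no strong convergence of $\calT_\eps v_\eps$ is needed at this stage. Second, the description in your step (a) is slightly garbled: the duality relation is $\dotproduct{\p_t(\calT_\eps v_\eps)}{\phi}{} = \tfrac1\eps\dotproduct{\p_t v_\eps}{\calU_\eps\phi}{}$ for a \emph{macroscopic} test function $\phi\in L^2(\Sigma;H^1_{\pm,0}(Z_*))$, not a pairing of $\p_t v_\eps$ against $\calT_\eps\phi^{\mathrm{M}}$; the operator that appears is $\calU_\eps$, and its spatial regularity into $\calH_{\eps,0}^{\mathrm{M}}$ (Lemma~\ref{lem:reg_U}) is precisely what makes the estimate go through with the correct $\eps$-scaling.
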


\begin{proof}
	The compactness results \ref{it:2s_cpct_L2} and \ref{it:2s_cpct_L2N} are stated in \cite[Theorem~4.4]{BhaGahNeu22} without proof as it is similar to \cite[Proposition~4.2]{NeuJae07}. Note that the latter one is based on an oscillation lemma on the lateral boundary, see \cite[Lemma~4.3]{BhaGahNeu22}. The proof of \ref{it:2s_cpct_H1} can be found in \cite[Lemma~3.2]{GahNeu21}. 
	The proof of \ref{it:2s_cpct_pt} is similar as in \cite[Lemma~B.5]{GahNeu25} (also see \cite[Proposition~4]{GahNeuPop21} for the case of bulk regions and \autoref{thm:convergence}) by adjusting the space of test functions accordingly.
\end{proof}

Next, we introduce the unfolding operator $\calT_\eps$ for thin channels which allows to establish (strong) two-scale convergence of a sequence $(v_\eps)_\eps$ by showing (strong) $L^p$--convergence of the unfolded sequence $(\calT_\eps v_\eps)_\eps$, see \autoref{lem:char_2s_conv} below.
\begin{definition}[Unfolding operator, {\cite[Definition~3.3]{GahNeu21}}]\label{def:unfolding}
	Let $(G_\eps,G)\in \{(\OmeMS,Z_*),(N_\eps,N)\}$ and $p\in [1,\infty]$. The \underline{\em unfolding operator} $\calT_\eps \colon L^p((0,T)\times G_\eps) \to L^p((0,T)\times \Sigma \times G)$ is given by
	\begin{align}
		\calT_\eps v_\eps(t,x^\prime,z) \coloneqq v_\eps\left(t,\eps\left(\floor*{\tfrac{x^\prime}{\eps}},0\right)+\eps z\right) \qquad \text{for } (t,x^\prime,z) \in (0,T)\times \Sigma \times G,
	\end{align}
	where $\floor*{\cdot}$ denotes the integer part.
\end{definition}
We emphasise that time only acts as a parameter in the preceding definition of the unfolding operator, hence the following results are also valid in the stationary case. As mentioned in \cite[p.~1588]{GahNeu21}, for functions in $L^p(0,T;W^{1,p}(\OmeMS))$ it makes sense not to distinguish the notation of the unfolding operator on $\OmeMS$ and on $N_\eps$ as it commutes with the trace operator.
We have the following important properties of $\calT_\eps$, see \cite[Lemma~4.6]{NeuJae07} and \cite[Lemma~3.4]{GahNeu21} as well as \cite[Chapter~1]{CioDamGri18} for similar results in the case of bulk domains.
\begin{lemma}[Properties of $\calT_\eps$]\label{lem:propT}Let $p\in [1,\infty)$.
	\begin{enumerate}[label=(\roman*)]
		\item\label{it:lem_prop_T1} For any $v_\eps \in L^p((0,T)\times \OmeMS)$, we have $\Lpnorm{\calT_\eps v_\eps}{p}{(0,T)\times\Sigma \times Z_*} = \eps^{-\frac 1p} \Lpnorm{v_\eps}{p}{(0,T)\times \OmeMS}$.
		\item For any $v_\eps \in L^p((0,T)\times N_\eps)$, we have $\Lpnorm{\calT_\eps v_\eps}{p}{(0,T)\times\Sigma \times N} = \Lpnorm{v_\eps}{p}{(0,T)\times N_\eps}$.
		\item\label{it:lem_prop_T2} For any $v_\eps \in L^p(0,T;W^{1,p}(\OmeMS))$, we have $\nabla_z (\calT_\eps v_\eps) = \eps \calT_\eps(\nabla_{x} v_\eps)$.
	\end{enumerate}
\end{lemma}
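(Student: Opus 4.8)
The statement to prove is \autoref{lem:propT} — the three properties of the unfolding operator $\calT_\eps$: the $L^p$-norm identities on $\OmeMS$ (with the $\eps^{-1/p}$ scaling) and on $N_\eps$ (isometry), and the commutation $\nabla_z(\calT_\eps v_\eps) = \eps\calT_\eps(\nabla_x v_\eps)$ for Sobolev functions.

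\textbf{Plan of proof.}
The whole argument rests on decomposing $\OmeMS$ (respectively $N_\eps$) into the disjoint union of the scaled-and-shifted copies $\eps(Z_* + (k',0))$, $k'\in\calI_\eps$, and performing a change of variables cell-by-cell. First I would fix $p\in[1,\infty)$ and $v_\eps\in L^p((0,T)\times\OmeMS)$. For each $k'\in\calI_\eps$ and $(t,x',z)\in(0,T)\times\eps(Y+k')\times Z_*$, by \autoref{def:unfolding} we have $\floor{x'/\eps}=k'$, so $\calT_\eps v_\eps(t,x',z) = v_\eps(t,\eps(k',0)+\eps z)$, which is independent of $x'$ on that cell. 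Integrating the $p$-th power over $z\in Z_*$ and substituting $y=\eps(k',0)+\eps z$ (so $\dd y = \eps^n\dd z$, and $y$ ranges over $\eps(Z_*+(k',0))$) gives $\int_{Z_*}|\calT_\eps v_\eps(t,x',z)|^p\dd z = \eps^{-n}\int_{\eps(Z_*+(k',0))}|v_\eps(t,y)|^p\dd y$. Then I integrate over $x'\in\eps(Y+k')$, which contributes a factor $|\eps(Y+k')| = \eps^{n-1}$, and sum over $k'\in\calI_\eps$; since $\OmeMS = \bigcup_{k'\in\calI_\eps}\eps(Z_*+(k',0))$ up to a null set, the sum reassembles $\int_{\OmeMS}|v_\eps|^p\dd y$. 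The net factor is $\eps^{n-1}\cdot\eps^{-n} = \eps^{-1}$. Finally integrating in $t$ and taking $p$-th roots yields $\Lpnorm{\calT_\eps v_\eps}{p}{(0,T)\times\Sigma\times Z_*} = \eps^{-1/p}\Lpnorm{v_\eps}{p}{(0,T)\times\OmeMS}$, which is \ref{it:lem_prop_T1}.

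For \ref{it:lem_prop_T2}, the key point is again the exact form of the cell decomposition. On the interior of each cell $\eps(Y+k')\times Z_*$ the map $z\mapsto \calT_\eps v_\eps(t,x',z) = v_\eps(t,\eps(k',0)+\eps z)$ is, up to translation, an $\eps$-rescaling of $v_\eps$; hence for $v_\eps\in L^p(0,T;W^{1,p}(\OmeMS))$ the chain rule gives $\nabla_z(\calT_\eps v_\eps)(t,x',z) = \eps\,(\nabla_x v_\eps)(t,\eps(k',0)+\eps z) = \eps\,\calT_\eps(\nabla_x v_\eps)(t,x',z)$ a.e.\ on that cell. Taking the union over $k'\in\calI_\eps$, this identity holds a.e.\ on $(0,T)\times\Sigma\times Z_*$; one should note that $\calT_\eps v_\eps(t,x',\cdot)$ is genuinely in $W^{1,p}(Z_*)$ cell-by-cell (the gradient estimate just derived shows it lies in $L^p$), so the statement makes sense. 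I would also remark, following \cite[Lemma~3.4]{GahNeu21}, that since $N\Subset Z\setminus(\partial Z\setminus(S^+\cup S^-))$ has positive distance to the lateral cell boundary, the cells $\eps(Z_*+(k',0))$ meeting $\OmeM$ remain fully inside $\OmeM$ for the admissible $k'$, which is why no boundary cells spoil the decomposition and why $\calT_\eps$ commutes with the trace onto $N_\eps$.

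For \ref{it:lem_prop_T1}-on-$N_\eps$ (the middle item), the argument is identical in structure but with the surface measure. On the cell $\eps(N+(k',0))$ the change of variables $y = \eps(k',0)+\eps z$ with $z\in N$ transforms the $(n-1)$-dimensional Hausdorff measure by $\dd\calH^{n-1}(y) = \eps^{n-1}\dd\calH^{n-1}(z)$, and integrating the resulting constant (in $x'$) over $x'\in\eps(Y+k')$ contributes $\eps^{n-1}$, while the earlier surface change of variables contributed a compensating $\eps^{-(n-1)}$ — so the two powers of $\eps$ cancel and $\calT_\eps$ is an isometry on $N_\eps$, with no $\eps^{-1/p}$ prefactor. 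Summing over $k'$ and integrating in $t$ completes it.

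\textbf{Main obstacle.} There is essentially no deep obstacle here; this is a bookkeeping lemma and the references \cite[Lemma~4.6]{NeuJae07}, \cite[Lemma~3.4]{GahNeu21} already contain it. The one point requiring care — and the only thing worth spelling out — is verifying that $\OmeMS = \bigcup_{k'\in\calI_\eps}\eps(Z_*+(k',0))$ is genuinely a disjoint (up to measure zero) cover whose image under $\calT_\eps$ is exactly $(0,T)\times\Sigma\times Z_*$, i.e.\ that no partial cells are lost near $\partial\Sigma$; this is guaranteed by the definitions of $\calI_\eps$ and the positive-distance assumption on $N$, and by the hypothesis $1/\eps\in\IN$, which makes $\Sigma = \bigcup_{k'\in\calI_\eps}\eps(\overline{Y}+k')$ an exact tiling. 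Given that, everything reduces to Fubini and the scaling of Lebesgue/Hausdorff measure under dilations, so I would present it compactly rather than in full detail.
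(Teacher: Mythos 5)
Your proposal is correct, and it is exactly the cell-by-cell change-of-variables argument that the paper leans on without reproducing: the appendix states \autoref{lem:propT} without proof and defers to \cite[Lemma~4.6]{NeuJae07} and \cite[Lemma~3.4]{GahNeu21}, and your account fills in precisely that standard computation. The $\eps^{n-1}$-versus-$\eps^{-n}$ bookkeeping for the volume case in (i), the exact cancellation making (ii) an isometry because the surface measure scales by $\eps^{n-1}$, the chain-rule identity in (iii), and the observation that $\tfrac1\eps\in\IN$ makes the tiling of $\Sigma$ exact are all correct and are the relevant points.
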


\begin{lemma}[Characterisation of two-scale convergence]\label{lem:char_2s_conv}
	Let $(G_\eps,G)\in \{(\OmeMS,Z_*),(N_\eps,N)\}$ and $p\in (1,\infty)$. 
	Then, for functions $v_\eps \in L^p((0,T)\times G_\eps)$ and $v_0 \in L^p((0,T)\times \Sigma \times G)$ the following assertions are equivalent:
	\begin{enumerate}[label=(\roman*)]
		\item\label{it:prop_conv_TTe1} The convergence $\calT_\eps v_\eps \to v_0$ holds weakly in $L^p((0,T)\times \Sigma \times G)$.
		\item\label{it:prop_conv_TTe2} The convergence $v_\eps \to v_0$ holds in the two-scale sense in $L^p$.
	\end{enumerate}
	The equivalence above also holds true if weak (two-scale) convergence is replaced by strong (two-scale) convergence.
\end{lemma}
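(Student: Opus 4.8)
The plan is to prove \autoref{lem:char_2s_conv} by unwinding the definitions of two-scale convergence (\autoref{def:2s_conv}) and the unfolding operator (\autoref{def:unfolding}), and translating the defining integral identity for two-scale convergence into a duality pairing involving $\calT_\eps$. The key observation, valid for both choices $(G_\eps,G)\in\{(\OmeMS,Z_*),(N_\eps,N)\}$, is the \emph{unfolding identity}: for $v_\eps \in L^p((0,T)\times G_\eps)$ and any admissible test function $\phi$,
\begin{align}
	\frac{1}{\eps}\int_0^T\!\!\int_{\OmeMS} v_\eps(t,x)\,\phi\!\left(t,x',\tfrac{x}{\eps}\right)\dd x\dd t
	= \int_0^T\!\!\int_\Sigma\!\!\int_{Z_*} (\calT_\eps v_\eps)(t,x',z)\,(\calT_\eps\phi^{x'/\eps})(t,x',z)\dd z\dd x'\dd t + R_\eps,
\end{align}
where $\calT_\eps\phi^{x'/\eps}$ denotes the unfolded test function and $R_\eps$ is a boundary remainder supported near $\p\Sigma$ (coming from the cells $\eps(Y+k')$ that are not fully contained in $\Sigma$), and analogously on $N_\eps$ without the $1/\eps$ prefactor by \autoref{lem:propT}~(ii). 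First I would establish this identity together with the convergence $\calT_\eps\phi^{x'/\eps}\to\phi$ strongly in $L^{p'}((0,T)\times\Sigma\times G)$ for $\phi\in L^{p'}((0,T)\times\Sigma;C^0_\#(\overline{G}))$, which follows from uniform continuity of $\phi$ in the periodic variable and the elementary bound $\eps\lfloor x'/\eps\rfloor\to x'$ uniformly, while the remainder $R_\eps\to 0$ because $|\{x'\in\Sigma:\eps(Y+\lfloor x'/\eps\rfloor)\not\subset\Sigma\}|\to 0$ and $(\calT_\eps v_\eps)$ is bounded in $L^p$ by the a priori hypothesis implicit in both directions (or, for the implication \ref{it:prop_conv_TTe2}$\Rightarrow$\ref{it:prop_conv_TTe1}, from \autoref{lem:propT}~\ref{it:lem_prop_T1}).

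Given the unfolding identity, the equivalence is almost immediate. For \ref{it:prop_conv_TTe1}$\Rightarrow$\ref{it:prop_conv_TTe2}: if $\calT_\eps v_\eps\wto v_0$ weakly in $L^p$, then in particular $(\calT_\eps v_\eps)$ is bounded in $L^p$, hence $\eps^{-1/p}\norms{v_\eps}{L^p}$ is bounded by \autoref{lem:propT}~\ref{it:lem_prop_T1}; plugging any $\phi\in L^{p'}((0,T)\times\Sigma;C^0_\#(\overline{G}))$ into the identity, the right-hand side converges to $\int\!\!\int\!\!\int v_0\,\phi$ because weak$\times$strong convergence passes to the limit in the product, while $R_\eps\to 0$; this is precisely the defining relation for $v_\eps\to v_0$ in the two-scale sense. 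For the converse, observe that the space $\{\calT_\eps\phi^{x'/\eps}:\phi\in L^{p'}((0,T)\times\Sigma;C^0_\#(\overline{G}))\}$ is asymptotically dense in $L^{p'}((0,T)\times\Sigma\times G)$ (again via $\calT_\eps\phi^{x'/\eps}\to\phi$); since two-scale convergence forces $\eps^{-1/p}\norms{v_\eps}{L^p}$ to stay bounded (test with a suitable nonnegative $\phi$, or argue directly), $(\calT_\eps v_\eps)$ is bounded in the reflexive space $L^p$, and any weak limit point of a subsequence must coincide with $v_0$ by the identity; hence the whole sequence converges weakly to $v_0$.

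For the strong statement, one adds the equality of norms. By \autoref{lem:propT}~\ref{it:lem_prop_T1}--(ii), $\norms{\calT_\eps v_\eps}{L^p((0,T)\times\Sigma\times G)} = \eps^{-1/p}\norms{v_\eps}{L^p((0,T)\times G_\eps)}$ on $\OmeMS$ (and with no prefactor on $N_\eps$), so $\norms{\calT_\eps v_\eps}{L^p}\to\norms{v_0}{L^p}$ is \emph{by definition} the extra condition distinguishing strong from weak two-scale convergence in \autoref{def:2s_conv}, and likewise for $\calT_\eps v_\eps\to v_0$ strongly in $L^p$ (weak convergence plus norm convergence equals strong convergence in the uniformly convex space $L^p$, $p\in(1,\infty)$). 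Thus the strong equivalence reduces to the weak one plus this exact identity of norms.

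The main obstacle I anticipate is bookkeeping the boundary remainder $R_\eps$ rigorously and ensuring the a priori $L^p$-bound on $(\calT_\eps v_\eps)$ is genuinely available in \emph{both} directions of the equivalence without circularity --- in particular, arguing that two-scale convergence of $v_\eps$ to $v_0$ by itself already implies $\limsup_\eps \eps^{-1/p}\norms{v_\eps}{L^p}<\infty$ (this is standard: the uniform boundedness principle applied to the functionals $\phi\mapsto \eps^{-1}\int\!\!\int v_\eps\,\phi(x,x/\eps)$ on the Banach space of admissible test functions, whose pointwise limits exist and are hence uniformly bounded). Beyond that, everything is a routine unfolding computation, and since analogous statements are proved in \cite[Chapter~1]{CioDamGri18} for bulk domains and the thin-layer case is treated in \cite[Lemma~3.4]{GahNeu21}, I would keep the write-up short, citing those references for the unfolding identity and the density of unfolded test functions, and only spell out the adaptation of the prefactors $\eps^{-1/p}$ specific to the dimension-reduction setting.
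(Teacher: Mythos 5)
Your proof is correct and follows the same unfolding approach the paper sketches by citing \cite[Proposition~1.19]{CioDamGri18} and \cite[Proposition~4.7]{NeuJae07} for the weak equivalence and invoking uniform convexity of $L^p$ (via \cite[Proposition~3.32]{Bre2011}) for the strong version. The one thing worth noting is that the boundary remainder $R_\eps$ you flag as your main anticipated obstacle is identically zero in this paper's setting, since $1/\eps\in\IN$ and $\Sigma=Y=(0,1)^{n-1}$, so the cells $\eps(Y+k')$, $k'\in\calI_\eps$, tile $\Sigma$ exactly and both the unfolding identity and the norm identity in \autoref{lem:propT}~\ref{it:lem_prop_T1} hold exactly with no boundary term.
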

\begin{proof}
	A proof of the result for the weak convergences on bulk domains is given in \cite[Proposition~1.19]{CioDamGri18} and can easily be adapted to thin domains and the lateral boundaries of the channels. We also refer to \cite[Proposition~4.7]{NeuJae07}) for the case $p=2$ and thin channels. For the result regarding the strong convergences, it is used that weak convergence together with convergence of the norms is equivalent to strong convergence in uniformly convex Banach spaces, see \cite[Proposition~3.32]{Bre2011}.
\end{proof}

Based on the characterisation of two-scale convergence by convergence of the unfolded sequence, the next lemma gives a useful criterion for establishing two-scale convergence of products.
\begin{lemma}[Two-scale convergence of products]\label{lem:2s_prod}
	Let $p,\, q \in [1,\infty)$ and $v_\eps \in L^p((0,T)\times \OmeMS)$ and $w_\eps \in L^\infty((0,T)\times \OmeMS)$. Assuming that
	\begin{enumerate}[label=(\roman*)]
		\item there exists $v_0 \in L^p((0,T)\times \Sigma \times Z_*)$ with $v_\eps \to v_0$ in the (strong) two-scale sense in $L^p$,
		\item $\Lpnorm{w_\eps}{\infty}{\OmeMS} \lesssim 1$ and there exists $w_0 \in L^{\infty}((0,T)\times \Sigma \times Z_*)$ with $w_\eps \to w_0$ in the strong two-scale sense in $L^{q}$,
	\end{enumerate}
	it also holds $v_\eps w_\eps \to v_0 w_0$ in the (strong) two-scale sense in $L^p$.
\end{lemma}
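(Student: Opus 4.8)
The statement asserts that two-scale convergence (weak or strong, in $L^p$) is preserved under multiplication by a bounded sequence that converges strongly in the two-scale sense. The natural strategy is to transfer everything to the unfolded picture via \autoref{lem:char_2s_conv} and \autoref{lem:propT}, where two-scale convergence becomes ordinary weak/strong $L^p$-convergence on the fixed domain $(0,T)\times \Sigma \times Z_*$, and where the unfolding operator is multiplicative: $\calT_\eps(v_\eps w_\eps) = (\calT_\eps v_\eps)(\calT_\eps w_\eps)$ pointwise (this is immediate from \autoref{def:unfolding}). So the claim reduces to the elementary functional-analytic fact that if $\calT_\eps v_\eps \to v_0$ weakly (resp.\ strongly) in $L^p$ and $\calT_\eps w_\eps \to w_0$ strongly in $L^q$ with $\|\calT_\eps w_\eps\|_{L^\infty} = \|w_\eps\|_{L^\infty((0,T)\times\OmeMS)} \lesssim 1$ (using \autoref{lem:propT} to transfer the $L^\infty$-bound), then $(\calT_\eps v_\eps)(\calT_\eps w_\eps) \to v_0 w_0$ weakly (resp.\ strongly) in $L^p$.

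\textbf{Key steps.} First I would note $\|w_0\|_{L^\infty} \lesssim 1$ (lower semicontinuity of the $L^\infty$-norm under, say, a.e.\ convergence along a subsequence of the strongly convergent $\calT_\eps w_\eps$), so the products $v_0 w_0$ make sense in $L^p$. For the \emph{strong} case: write
\begin{align}
	(\calT_\eps v_\eps)(\calT_\eps w_\eps) - v_0 w_0 = (\calT_\eps v_\eps - v_0)(\calT_\eps w_\eps) + v_0(\calT_\eps w_\eps - w_0),
\end{align}
estimate the first term in $L^p$ by $\|\calT_\eps w_\eps\|_{L^\infty}\|\calT_\eps v_\eps - v_0\|_{L^p} \lesssim \|\calT_\eps v_\eps - v_0\|_{L^p} \to 0$, and handle the second term $v_0(\calT_\eps w_\eps - w_0) \to 0$ in $L^p$ by a dominated-convergence / density argument: $\calT_\eps w_\eps \to w_0$ strongly in $L^q$ gives a.e.\ convergence along a subsequence, $|v_0(\calT_\eps w_\eps - w_0)| \lesssim |v_0| \in L^p$ provides the dominating function, so the product tends to $0$ in $L^p$ along that subsequence, and a subsequence-of-every-subsequence argument upgrades to full convergence. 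For the \emph{weak} case: for a test function $\Phi \in L^{p'}((0,T)\times \Sigma \times Z_*)$ write
\begin{align}
	\int (\calT_\eps v_\eps)(\calT_\eps w_\eps)\Phi - \int v_0 w_0 \Phi = \int (\calT_\eps v_\eps)\bigl((\calT_\eps w_\eps - w_0)\Phi\bigr) + \int (\calT_\eps v_\eps - v_0) (w_0 \Phi);
\end{align}
the first term vanishes since $\calT_\eps v_\eps$ is bounded in $L^p$ while $(\calT_\eps w_\eps - w_0)\Phi \to 0$ strongly in $L^{p'}$ (because $\calT_\eps w_\eps - w_0 \to 0$ strongly in $L^q$ and is bounded in $L^\infty$, so $(\calT_\eps w_\eps - w_0)\Phi \to 0$ in $L^{p'}$ by the same dominated-convergence argument with dominating function $\lesssim |\Phi| \in L^{p'}$), and the second vanishes since $w_0\Phi \in L^{p'}$ is a fixed test function and $\calT_\eps v_\eps \rightharpoonup v_0$ weakly in $L^p$. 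Finally I would translate back using \autoref{lem:char_2s_conv}.

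\textbf{Main obstacle.} There is no deep obstacle here — this is a soft lemma. The only point requiring mild care is the mixed Hölder bookkeeping: reconciling the three exponents $p$, $q$, $\infty$, in particular that multiplication by the $L^q$-strongly-but-$L^\infty$-boundedly convergent factor $w_\eps$ sends $L^p$-strong convergence to $L^p$-strong convergence and maps test functions $\Phi \in L^{p'}$ to strongly convergent sequences in $L^{p'}$ (not $L^{q'}$). This is resolved cleanly by interpolation-type reasoning, i.e.\ splitting off the uniform $L^\infty$-bound and using strong $L^q$-convergence only to extract a.e.-convergent subsequences for the dominated-convergence argument, so that the integrability index of the product is always inherited from $v_\eps$ (or $\Phi$). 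One should also remark that the subsequence extractions are harmless because the limit $v_0 w_0$ is identified independently of the subsequence.
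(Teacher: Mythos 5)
Your proof is correct and follows essentially the same route as the paper: unfold via \autoref{lem:char_2s_conv}, use the multiplicativity of $\calT_\eps$ together with the uniform $L^\infty$-bound on $\calT_\eps w_\eps$, and settle the remaining terms by a dominated-convergence/subsequence argument. The only cosmetic difference is that you use a direct telescoping decomposition of $\calT_\eps(v_\eps w_\eps) - v_0 w_0$ (and of the dual pairing in the weak case), whereas the paper first obtains $L^r$-convergence with $1/r = 1/p + 1/q$ via Hölder and then upgrades to $L^p$ by Pratt's theorem and Scheffé's lemma; both are equivalent instances of the same mechanism.
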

\begin{proof}
	We argue similarly as in \cite[Lemma~1.16]{Wie24}, where a similar statement was proven for porous bulk domains, and begin with the case of strong two-scale convergence of $(v_\eps)_\eps$. A result for products of multiple sequences is given in \cite[Proposition 8]{ReiPet22}. According to \autoref{lem:char_2s_conv}, it suffices to show $\calT_\eps(v_\eps w_\eps) \to v_0 w_0$ in $L^p((0,T)\times \Sigma \times Z_*)$. As     
	we have $\calT_\eps v_\eps \to v_0$ in $L^p((0,T)\times \Sigma \times Z_*)$ and $\calT_\eps w_\eps \to w_0$ in $L^{q}((0,T)\times \Sigma \times Z_*)$, it follows $\calT_\eps (v_\eps w_\eps) \to v_0 w_0$ in $L^r((0,T)\times \Sigma \times Z_*)$ with $\frac 1r = \frac 1p + \frac 1q$ and, hence, pointwise almost everywhere for a subsequence. From the boundedness of $(\calT_\eps w_\eps)_\eps$ in $L^\infty((0,T)\times \Sigma \times Z_*)$ and a variant of the dominated convergence theorem (see Pratt's theorem \cite{Pra60} in combination with Scheffé's lemma \cite[Section~3]{Rie29}), we thus obtain $\calT_\eps(v_\eps w_\eps) \to v_0 w_0$ in $L^p((0,T)\times \Sigma \times Z_*)$ for a subsequence. As this is valid for any subsequence, the convergence holds true for the whole sequence.
	
	To prove the statement regarding the weak two-scale convergence, we only need to show $\calT_\eps(v_\eps w_\eps) \to v_0 w_0$ weakly in $L^p((0,T)\times \Sigma \times Z_*)$. For every $\phi\in L^{p^\prime}((0,T)\times \Sigma \times Z_*)$, by the dominated convergence theorem and the boundedness of $(\calT_\eps w_\eps)_\eps$ in $L^\infty((0,T)\times \Sigma \times Z_*)$, we have $(\calT_\eps w_\eps) \phi \to w_0 \phi$ in $L^{p^\prime}((0,T)\times \Sigma \times Z_*)$ along a subsequence. By the continuity of the dual pairing, we thus deduce
	\begin{align}
		\int_0^T \int_{\Sigma} \int_{Z_*} \calT_\eps(v_\eps w_\eps) \phi \dd z \dd x^\prime \dd t 
		= \int_0^T \int_{\Sigma} \int_{Z_*} \calT_\eps v_\eps \, ((\calT_\eps w_\eps) \phi) \dd z \dd x^\prime \dd t
		\to \int_0^T \int_{\Sigma} \int_{Z_*} v_0 w_0 \phi \dd z \dd x^\prime \dd t
	\end{align}
	along a subsequence and conclude as before.
\end{proof}

\subsection{Strong two-scale convergence}
In this section, we present an argument to obtain strong two-scale convergence based on a priori estimates, see \autoref{thm:strong_2s_cpctness} below. The following lemma shows why this is sufficient in order to pass to the limit in suitably well-behaved nonlinearities.
\begin{lemma}[Strong two-scale convergence of nonlinearities]\label{lem:s_2s_conv_NL}
	Let $p\in [1,2]$ and suppose that $g$ and $h$ satisfy \ref{it:propfg}. Then, for every sequence of functions $v_\eps \in L^2((0,T)\times \OmeMS)$ which converges to $v_0 \in L^2((0,T)\times \Sigma \times Z_*)$ strongly in the two-scale sense in $L^p$, we have
	\begin{align}
		g(v_\eps) \to g(v_0) \qquad \text{strongly in the two-scale sense in $L^p$.}
	\end{align}
	Similarly, if $(v_\eps)_\eps$ converges to $v_0 \in L^2((0,T)\times \Sigma \times N)$ strongly in the two-scale sense on $N_\eps$ in $L^p$, we have
	\begin{align}
		h(v_\eps) \to h(v_0) \qquad \text{strongly in the two-scale sense on $N_\eps$ in $L^p$.}
	\end{align}
\end{lemma}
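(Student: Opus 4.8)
The plan is to reduce the statement to the characterisation of strong two-scale convergence via the unfolding operator (\autoref{lem:char_2s_conv}) and then apply a Vitali/Pratt-type argument, exactly in the spirit of the proof of \autoref{lem:2s_prod}. Concretely, for the first claim, since $v_\eps \to v_0$ strongly in the two-scale sense in $L^p$, \autoref{lem:char_2s_conv} gives $\calT_\eps v_\eps \to v_0$ strongly in $L^p((0,T)\times\Sigma\times Z_*)$; after passing to a subsequence, $\calT_\eps v_\eps \to v_0$ pointwise a.e.\ and is dominated by an $L^p$-function $\zeta$. Using that time acts only as a parameter in \autoref{def:unfolding} and that $g$ is uniformly Lipschitz in the concentration variable (assumption \ref{it:propfg}), one has $\calT_\eps(g(v_\eps)) = g(\calT_\eps v_\eps)$ pointwise, because the unfolding operator is just a (measurable, measure-preserving up to the $\eps^{-1/p}$-scaling) change of variables and commutes with composition by the fixed function $g(t,\cdot)$. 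Hence $\calT_\eps(g(v_\eps)) \to g(v_0)$ pointwise a.e.; moreover by the growth bound \eqref{eq:growth_fgh} we have $|\calT_\eps(g(v_\eps))| \lesssim 1 + |\calT_\eps v_\eps| \lesssim 1 + \zeta \in L^p$, so the dominated convergence theorem yields $\calT_\eps(g(v_\eps)) \to g(v_0)$ strongly in $L^p((0,T)\times\Sigma\times Z_*)$. Since the limit is independent of the subsequence, the full sequence converges, and \autoref{lem:char_2s_conv} translates this back to strong two-scale convergence of $g(v_\eps)$ in $L^p$.

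The argument for $h$ on the lateral boundaries $N_\eps$ is completely parallel: by \autoref{lem:char_2s_conv} (applied with $(G_\eps,G)=(N_\eps,N)$) strong two-scale convergence on $N_\eps$ in $L^p$ is equivalent to $\calT_\eps v_\eps \to v_0$ strongly in $L^p((0,T)\times\Sigma\times N)$; one extracts a pointwise-a.e.\ convergent, $L^p$-dominated subsequence, uses $\calT_\eps(h(v_\eps)) = h(\calT_\eps v_\eps)$ together with the growth bound $|h(t,z)| \lesssim 1 + |z|$ from \eqref{eq:growth_fgh} to apply dominated convergence on $(0,T)\times\Sigma\times N$, obtaining $\calT_\eps(h(v_\eps)) \to h(v_0)$ strongly in $L^p$, and then invokes \autoref{lem:char_2s_conv} once more. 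In both cases the uniform continuity of $g(t,\cdot)$, $h(t,\cdot)$ (rather than mere continuity) ensures that composition with $g$ or $h$ is continuous as a Nemytskii-type operation on $L^p$, but even plain continuity of $z\mapsto g(t,z)$ suffices once pointwise convergence and domination are in hand.

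The one point that needs a little care — and which I expect to be the main (minor) obstacle — is justifying the identity $\calT_\eps(g(v_\eps)) = g(\calT_\eps v_\eps)$ together with the measurability of $g(\calT_\eps v_\eps)$ in $(t,x',z)$. This is where the assumption that $g$ (and $h$) is a Carathéodory function, globally Lipschitz in $z$ uniformly in $t$, enters: by \ref{it:propfg} the map $(t,z)\mapsto g(t,z)$ is jointly measurable, hence $g(\cdot,\calT_\eps v_\eps(\cdot)) $ is measurable, and since $\calT_\eps$ acts only on the spatial variables the composition identity is immediate from \autoref{def:unfolding}. A secondary subtlety is that we should make sure the dominating function $\zeta$ obtained from the $L^p$-convergence of $\calT_\eps v_\eps$ can indeed be taken in $L^p$ — this is the standard consequence of the (sub)sequence extraction in the proof of the dominated convergence / Riesz–Fischer theorem — and that the growth condition controls $|g(t,\calT_\eps v_\eps)|$ by $C(1+\zeta)\in L^p$ since $p\ge 1$; these are routine. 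Once these bookkeeping points are settled, the proof is a short application of \autoref{lem:char_2s_conv} plus dominated convergence, with no genuinely new estimate required.
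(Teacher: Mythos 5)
Your argument is correct and matches the paper's own proof in its essential structure: both translate strong two-scale convergence into strong $L^p$-convergence of the unfolded sequence via \autoref{lem:char_2s_conv}, both observe that $\calT_\eps$ commutes with composition by $g$ (resp.\ $h$) since $\calT_\eps$ is a change of variables acting only on $x$, and both then reduce the claim to continuity of the Nemytskii operator on $L^p$. The paper cites this last step as a black box (Zeidler), while you reprove it via subsequence extraction, pointwise a.e.\ convergence, and domination; that is the standard proof and it is sound, but it is more than is needed here. Since \ref{it:propfg} gives that $z\mapsto g(t,z)$ is globally Lipschitz uniformly in $t$, the one-line estimate
\[
\Lpnorm{g(\calT_\eps v_\eps)-g(v_0)}{p}{(0,T)\times\Sigma\times Z_*}
\le L\,\Lpnorm{\calT_\eps v_\eps - v_0}{p}{(0,T)\times\Sigma\times Z_*}\to 0
\]
settles the continuity of the Nemytskii map directly for the full sequence, without any passage to subsequences, dominating function, or Pratt/Vitali; the same estimate works verbatim on $(0,T)\times\Sigma\times N$ for $h$. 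Your closing remarks on the Carath\'eodory structure and measurability are the right things to flag, and your argument would indeed extend to merely continuous (non-Lipschitz) $g$ with linear growth, which is a small bonus in generality that the paper's one-line citation does not advertise.
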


\begin{proof}
	The result is an immediate consequence of \autoref{lem:char_2s_conv} and the theory of Nemytskii operators (see e.g.~\cite[Proposition~26.6]{ZeiIIB_90}), noting that the application of $\calT_\eps$ commutes with the nonlinear functions $g$ and $h$. 
\end{proof}
Aiming to establish strong two-scale convergence of a sequence of functions $v_\eps \in L^2(0,T;H^1(\OmeMS))$, by \autoref{lem:char_2s_conv} it is sufficient to prove strong convergence of the unfolded sequence $(\calT_\eps v_\eps)_\eps$. Due to the low regularity of $\calT_\eps v_\eps$ with respect to the variable $x^\prime\in \Sigma$, we cannot apply a standard Aubin--Lions argument and use a Kolmogorov-type argument for Banach space-valued functions instead, see \cite{GahNeu16}. For this, the main ingredients are $\eps$-uniform estimates on the time derivatives $\p_t(\calT_\eps v_\eps)$ and control of shifts with respect to the $x^\prime$--variable, where we follow the reasoning of \cite{GahNeu21}.
Estimates for the time derivatives are obtained by considering the $L^2$-adjoint of $\calT_\eps$, see \cite{GahNeu21}. 

\begin{definition}[Averaging operator]
	The $L^2$-adjoint $\calU_\eps$ of $\eps \calT_\eps$ is called \underline{\em averaging operator}, that is $\calU_\eps \colon L^2((0,T)\times \Sigma \times Z_*) \to L^2((0,T)\times \OmeMS)$ is a linear and bounded operator such that for all $v_\eps \in L^2((0,T)\times \OmeMS)$ and $\phi \in L^2((0,T)\times \Sigma \times Z_*)$ we have
	\begin{align}
		\scp{\calU_\eps \phi}{v_\eps}{L^2((0,T)\times \OmeMS)} 
		= \eps \scp{\phi}{\calT_\eps v_\eps}{L^2((0,T)\times \Sigma \times Z_*)}.
	\end{align}
\end{definition}
By a computation similar to that in \cite[Chapter~1.3]{CioDamGri18}, one obtains (also see \cite[p.~1590]{GahNeu21})
\begin{align}
	\calU_\eps \phi (t,x) 
	= \int_Y \phi\left(t,\eps\left(y+ \floor*{\tfrac{x^\prime}{\eps}}\right),\left(\left\{\tfrac{x^\prime}{\eps}\right\},\tfrac{x_n}{\eps}\right)\right) \dd y \qquad \text{ for } (t,x)\in (0,T)\times \OmeMS,
\end{align}
but we will not make use of this explicit representation.
From the definition of $\calU_\eps$, we directly obtain the following quantitative norm estimate.
\begin{cor}[Norm of $\calU_\eps$, {\cite[Corollary~3.8]{GahNeu21}}]\label{cor:est_U}
	For all $\phi \in L^2((0,T)\times \Sigma\times Z_*)$, the following estimate holds:
	\begin{align}
		\Lpnorm{\calU_\eps \phi}{2}{(0,T)\times \OmeMS} \leq \sqrt{\eps} \Lpnorm{\phi}{2}{(0,T)\times \Sigma \times Z_*}.
	\end{align}
\end{cor}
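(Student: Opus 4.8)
The statement is a direct consequence of the defining duality relation for $\calU_\eps$ together with the norm identity for the unfolding operator from \autoref{lem:propT}~\ref{it:lem_prop_T1}. First I would note that $\calU_\eps$ is a well-defined bounded linear operator: since $\eps\calT_\eps \colon L^2((0,T)\times\OmeMS) \to L^2((0,T)\times \Sigma\times Z_*)$ is bounded (indeed, by \autoref{lem:propT}~\ref{it:lem_prop_T1} with $p=2$ one has $\Lpnorm{\eps\calT_\eps v_\eps}{2}{(0,T)\times\Sigma\times Z_*} = \sqrt{\eps}\,\Lpnorm{v_\eps}{2}{(0,T)\times\OmeMS}$), its Hilbert-space adjoint exists and is bounded, and this adjoint is by definition $\calU_\eps$.

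The core estimate is then a one-line duality argument. For an arbitrary $v_\eps \in L^2((0,T)\times\OmeMS)$ I would use the defining relation of $\calU_\eps$, the Cauchy--Schwarz inequality on $L^2((0,T)\times\Sigma\times Z_*)$, and \autoref{lem:propT}~\ref{it:lem_prop_T1} with $p=2$:
\begin{align}
	\left|\scp{\calU_\eps \phi}{v_\eps}{L^2((0,T)\times \OmeMS)}\right|
	= \eps \left|\scp{\phi}{\calT_\eps v_\eps}{L^2((0,T)\times \Sigma \times Z_*)}\right|
	&\leq \eps\, \Lpnorm{\phi}{2}{(0,T)\times \Sigma \times Z_*}\, \Lpnorm{\calT_\eps v_\eps}{2}{(0,T)\times \Sigma \times Z_*}\\
	&= \sqrt{\eps}\, \Lpnorm{\phi}{2}{(0,T)\times \Sigma \times Z_*}\, \Lpnorm{v_\eps}{2}{(0,T)\times \OmeMS}.
\end{align}
Finally I would conclude either by taking the supremum over all $v_\eps$ in the unit ball of $L^2((0,T)\times\OmeMS)$, or more concretely by choosing $v_\eps = \calU_\eps \phi$ in the displayed inequality and dividing by $\Lpnorm{\calU_\eps\phi}{2}{(0,T)\times\OmeMS}$ (the case $\calU_\eps\phi = 0$ being trivial), which yields exactly $\Lpnorm{\calU_\eps \phi}{2}{(0,T)\times \OmeMS} \leq \sqrt{\eps}\, \Lpnorm{\phi}{2}{(0,T)\times \Sigma \times Z_*}$.

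There is no real obstacle here: the only point requiring any care is that the scaling factor $\eps$ in the adjoint pairing combines with the $\eps^{-1/2}$ in the unfolding norm identity to produce the claimed $\sqrt{\eps}$, so one must be attentive to the placement of the $\eps$-weights (equivalently, to the fact that $\calU_\eps$ is the adjoint of $\eps\calT_\eps$ and not of $\calT_\eps$). The explicit integral representation of $\calU_\eps$ recalled before the statement is not needed for this argument.
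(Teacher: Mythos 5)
Your proof is correct and coincides with the (implicit) approach of the paper: the authors state that the estimate follows ``directly from the definition of $\calU_\eps$'' and omit the proof, and your one-line duality argument combining the defining adjointness relation, Cauchy--Schwarz, and the norm identity $\Lpnorm{\calT_\eps v_\eps}{2}{(0,T)\times\Sigma\times Z_*} = \eps^{-1/2}\Lpnorm{v_\eps}{2}{(0,T)\times\OmeMS}$ from \autoref{lem:propT}~\ref{it:lem_prop_T1} is precisely that direct argument. The observation about tracking the $\eps$-weights is appropriate but not a genuine obstacle, as you note.
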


The estimates for $\p_t (\calT_\eps \tu_\eps^{\mathrm{M}})$ are then based on the following functional analytic result, which for given functions $u$ and an operator $A$ acting on $u$ allows to compute the time derivative of $Au$ in terms of $\p_t u$ and the adjoint operator $A^*$ of $A$.
\begin{lemma}[Time derivative of $Au$, {\cite[Lemma~3.7]{GahNeu21}}]\label{lem:pt_A}
	Let $V,\, W$ be reflexive, separable Banach spaces and $Y,\, X$ Hilbert spaces such that we have the Gelfand triples $V\xhookrightarrow{} Y \xhookrightarrow{} V^\prime$ and $W \xhookrightarrow{} X \xhookrightarrow{} W^\prime$.
	Moreover, assume that for a linear and bounded operator $A\in \calL(Y,X)$ for its adjoint $A^*$ we have $A^*\rvert_W \in \calL(W;V)$.
	Then, for all $u\in L^2(0,T;Y)$ with $\p_t u \in L^2(0,T;V^\prime)$ there holds $\p_t(Au)\in L^2(0,T;W^\prime)$ with
	\begin{align}
		\dotproduct{\p_t(Au)}{w}{W^\prime,W}=\dotproduct{\p_t u}{A^*w}{V^\prime,\, V} \qquad \text{ for all } w\in W.
	\end{align}
	Consequently, we have the estimate $\Lpnorm{\p_t(Au)}{2}{0,T;W^\prime} \leq \norms{A^*\rvert_W}{\calL(W;V)} \Lpnorm{\p_t u}{2}{0,T;V^\prime}$. 
\end{lemma}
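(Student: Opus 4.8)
The plan is to exhibit the element $g\in L^2(0,T;W^\prime)$ defined for almost every $t$ by $\dotproduct{g(t)}{w}{W^\prime,W}\coloneqq \dotproduct{\p_t u(t)}{A^*w}{V^\prime,V}$ for $w\in W$, and to check directly that it is the generalised time derivative of $Au$. First I would fix the identifications underlying the two Gelfand triples: each pivot space is identified with its own dual, so $A^*\in \calL(X,Y)$ is the Hilbert adjoint characterised by $\scp{Au}{\xi}{X}=\scp{u}{A^*\xi}{Y}$, and for $v\in V\hookrightarrow Y$, $y\in Y\hookrightarrow V^\prime$ one has $\scp{y}{v}{Y}=\dotproduct{y}{v}{V^\prime,V}$, with the analogous statements on the $W$--$X$ side. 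In particular $Au\in L^2(0,T;X)\hookrightarrow L^2(0,T;W^\prime)$ because $\norms{Au(t)}{X}\le \norms{A}{\calL(Y,X)}\norms{u(t)}{Y}$.

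Next I would verify that $g$ is well defined and belongs to $L^2(0,T;W^\prime)$. For almost every $t$, the map $w\mapsto \dotproduct{\p_t u(t)}{A^*w}{V^\prime,V}$ is linear on $W$ with $\abs{\dotproduct{\p_t u(t)}{A^*w}{V^\prime,V}}\le \norms{\p_t u(t)}{V^\prime}\,\norms{A^*|_W}{\calL(W;V)}\,\norms{w}{W}$ by the standing hypothesis $A^*|_W\in\calL(W;V)$; hence $g(t)\in W^\prime$ with $\norms{g(t)}{W^\prime}\le \norms{A^*|_W}{\calL(W;V)}\norms{\p_t u(t)}{V^\prime}$. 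Since $W$ is reflexive and separable, $W^\prime$ is separable, and $t\mapsto \dotproduct{g(t)}{w}{W^\prime,W}$ is measurable for each fixed $w$; by Pettis' theorem $g\colon(0,T)\to W^\prime$ is strongly measurable, and the pointwise bound together with $\p_t u\in L^2(0,T;V^\prime)$ yields $g\in L^2(0,T;W^\prime)$ with $\Lpnorm{g}{2}{0,T;W^\prime}\le \norms{A^*|_W}{\calL(W;V)}\Lpnorm{\p_t u}{2}{0,T;V^\prime}$.

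It then remains to identify $g$ with $\p_t(Au)$ in the sense of $W^\prime$-valued distributions. Fixing $w\in W$ (so $A^*w\in V$) and $\psi\in \Cci((0,T))$, I would chain the pivot identifications above with the defining property of $\p_t u\in L^2(0,T;V^\prime)$ applied to the fixed element $A^*w\in V$:
\begin{align}
\int_0^T \scp{Au(t)}{w}{X}\psi^\prime(t)\dd t
&= \int_0^T \scp{u(t)}{A^*w}{Y}\psi^\prime(t)\dd t
= \int_0^T \dotproduct{u(t)}{A^*w}{V^\prime,V}\psi^\prime(t)\dd t\\
&= -\int_0^T \dotproduct{\p_t u(t)}{A^*w}{V^\prime,V}\psi(t)\dd t
= -\int_0^T \dotproduct{g(t)}{w}{W^\prime,W}\psi(t)\dd t .
\end{align}
Since $\scp{Au(t)}{w}{X}=\dotproduct{Au(t)}{w}{W^\prime,W}$ and $w\in W$, $\psi\in\Cci((0,T))$ are arbitrary, this is precisely the statement that the (unique) generalised time derivative of $Au$ exists and equals $g$, i.e.\ $\p_t(Au)=g\in L^2(0,T;W^\prime)$ with $\dotproduct{\p_t(Au)}{w}{W^\prime,W}=\dotproduct{\p_t u}{A^*w}{V^\prime,V}$; the asserted norm estimate is exactly the bound on $\Lpnorm{g}{2}{0,T;W^\prime}$ obtained above. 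The argument is entirely elementary: the only points demanding care are keeping the two pivot identifications consistent and confirming the strong measurability of the $W^\prime$-valued map $g$, which constitute the sole (and minor) obstacle.
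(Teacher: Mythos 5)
Your proof is correct and is essentially the same argument the paper has in mind: the paper's (omitted) proof invokes the uniqueness of the generalised time derivative, and you simply carry this out by exhibiting the candidate $g(t)\coloneqq (A^*|_W)'\,\p_t u(t)$, verifying its membership in $L^2(0,T;W^\prime)$ (via the stated bound and Pettis measurability), and confirming the defining integration-by-parts identity against $w\in W$, $\psi\in\Cci((0,T))$, which pins it down as $\p_t(Au)$. The chain of pivot identifications $\scp{Au(t)}{w}{X}=\scp{u(t)}{A^*w}{Y}=\dotproduct{u(t)}{A^*w}{V^\prime,V}$ and the resulting norm estimate are exactly as required, so nothing is missing.
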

\begin{proof}
    The proof is based on the uniqueness of the generalised time derivative; we omit its details.
\end{proof}

As mentioned in \cite[p.1591]{GahNeu21}, concerning the spatial regularity of $\calU_\eps$ we crucially need that the lateral channel boundary $N$ of the channel $Z_*$ does not touch the lateral boundary of the standard cell $Z$ as otherwise one has to consider functions with vanishing trace on $N \cap \p Z$.
Since we aim to obtain an estimate for $\p_t (\calT_\eps v_\eps)$, where $\p_t v_\eps \in L^2(0,T;(\calH_{\eps,0}^{\mathrm{M}})^\prime)$, we restrict the domain of $\calU_\eps$ to functions vanishing on the top and bottom $S_*^\pm$ of the channel in $Z$ in what follows, although this assumption is not needed for the spatial regularity result stated below. 
\begin{lemma}[Spatial regularity of $\calU_\eps$]\label{lem:reg_U}
	The averaging operator $\calU_\eps$ is a linear and bounded operator 
	\begin{align}
		\calU_\eps \colon L^2((0,T)\times \Sigma; H^1_{\pm,0}(Z_*)) \to L^2(0,T;\calH_{\eps,0}^{\mathrm{M}}).
	\end{align}
	Moreover, for all $\phi \in L^2((0,T)\times \Sigma; H^1_{\pm,0}(Z_*))$ we have $\nabla_{x}(\calU_\eps \phi) = \frac{1}{\eps} \calU_\eps(\nabla_{z} \phi)$ and the following estimate holds:
	\begin{align}
		\Lpnorm{\calU_\eps \phi}{2}{0,T;\calH_{\eps,0}^{\mathrm{M}}}
		\leq \Lpnorm{\phi}{2}{(0,T)\times \Sigma;H^1_{\pm,0}(Z_*)}.
	\end{align}
\end{lemma}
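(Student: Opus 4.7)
The plan is to exploit the piecewise structure of $\OmeMS$, which is a finite union of disjoint scaled channels $\eps(Z_* + (k',0))$ for $k' \in \calI_\eps$. The key observation is that, since $Z_* \subset Z = Y \times (-1,1)$, every point $x$ in the channel associated with $k'$ satisfies $x'/\eps \in Y + k'$, so $\lfloor x'/\eps \rfloor = k'$ is \emph{constant} on that channel and $\{x'/\eps\} = x'/\eps - k'$. This is what bypasses any potential non-smoothness in the floor function: the discontinuity hypersurfaces of $\lfloor \cdot/\eps \rfloor$ do not pass through the interior of $\OmeMS$, because the lateral boundary $N$ of $Z_*$ has positive distance to $\p Z \setminus (S^+ \cup S^-)$.

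First I would use the explicit representation of $\calU_\eps \phi$ recorded above the statement. Restricted to the channel $\eps(Z_* + (k',0))$, it reduces to
\begin{align}
\calU_\eps \phi(t,x) = \int_Y \phi\bigl(t,\eps(y+k'),(x'/\eps - k',\, x_n/\eps)\bigr) \dd y,
\end{align}
which depends on $x$ only through the $z$-argument via the smooth linear map $x \mapsto (x'/\eps - k', x_n/\eps)$. Hence $\calU_\eps \phi$ inherits $H^1$-regularity on each channel from $\phi(t,x^\prime,\cdot) \in H^1(Z_*)$ by a scaling argument, with classical gradient
\begin{align}
\nabla_x (\calU_\eps \phi)(t,x) = \frac{1}{\eps}\int_Y \nabla_z \phi\bigl(t,\eps(y+k'),(x'/\eps-k',\,x_n/\eps)\bigr) \dd y = \frac{1}{\eps} \calU_\eps(\nabla_z \phi)(t,x).
\end{align}
Because the channels are pairwise disjoint within $\OmeMS$, the channel-wise $H^1$-regularity assembles into global $H^1(\OmeMS)$-regularity without any compatibility condition across interior hypersurfaces. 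The vanishing trace of $\calU_\eps \phi$ on $S_{*,\eps}^\pm$ then comes for free: the points $x$ with $x_n = \pm\eps$ correspond to $z = (\{x'/\eps\},\pm 1) \in S_*^\pm$, where $\phi(t,x^\prime,\cdot)$ has vanishing trace by definition of $H^1_{\pm,0}(Z_*)$. This places $\calU_\eps \phi$ in $\calH_{\eps,0}^{\mathrm{M}}$.

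With the gradient identity in hand, the estimate reduces almost tautologically to \autoref{cor:est_U}. Unpacking the definition of the $\calH_{\eps,0}^{\mathrm{M}}$-norm (which carries the $\eps$-weights $\tfrac 1\eps$ on $L^2$ and $\eps$ on the gradient),
\begin{align}
\Lpnorm{\calU_\eps \phi}{2}{0,T;\calH_{\eps,0}^{\mathrm{M}}}^2
= \tfrac{1}{\eps}\Lpnorm{\calU_\eps \phi}{2}{(0,T)\times\OmeMS}^2 + \tfrac{1}{\eps}\Lpnorm{\calU_\eps(\nabla_z \phi)}{2}{(0,T)\times\OmeMS}^2,
\end{align}
and applying \autoref{cor:est_U} to both $\phi$ and $\nabla_z \phi$ yields the claimed bound by $\Lpnorm{\phi}{2}{(0,T)\times\Sigma;H^1_{\pm,0}(Z_*)}^2$.

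The only genuine subtlety, and hence the main obstacle to articulate carefully, is the justification that the channel-wise chain rule really produces a weak gradient on all of $\OmeMS$ rather than a piecewise one with interface contributions. This is purely a geometric matter: one must note that the ``seams'' where $\lfloor x'/\eps \rfloor$ would jump lie strictly outside $\OmeMS$ (since $N$ is separated from $\p Z \setminus (S^+ \cup S^-)$), so $\OmeMS$ decomposes into components on each of which the formula is smooth and well-defined, with no distributional jump contributions to account for. Everything else is bookkeeping with the $\eps$-scaling of the norms.
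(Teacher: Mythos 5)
Your proof is correct, but it takes a genuinely different route from the paper's. The paper establishes weak differentiability and the identity $\nabla_x(\calU_\eps\phi) = \tfrac1\eps\calU_\eps(\nabla_z\phi)$ by duality: it tests $\calU_\eps\phi$ against $v_\eps \in \Cci((0,T)\times\OmeMS)$, passes through the adjoint relation $\calU_\eps = (\eps\calT_\eps)^*$ together with $\calT_\eps(\p_{x_i}v_\eps) = \tfrac1\eps\p_{z_i}(\calT_\eps v_\eps)$, and then integrates by parts in $z$; the boundary term over $\p Z_*$ vanishes because $\calT_\eps v_\eps$ has zero trace there. You instead work directly from the explicit kernel representation of $\calU_\eps$, observe that $\lfloor x'/\eps\rfloor$ is constant on each channel $\eps(Z_*+(k',0))$ (using the standing assumption that $N$ is separated from $\p Z\setminus(S^+\cup S^-)$), note that $\OmeMS$ is a disjoint union of these channels, and then differentiate under the integral by the chain rule. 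Both arguments are sound and lead to the same norm estimate via \autoref{cor:est_U}. The paper's duality argument is more in keeping with how $\calU_\eps$ was introduced (purely as the $L^2$-adjoint) and avoids invoking the explicit formula, which the paper deliberately declines to use elsewhere; your argument is more concrete, and has the minor advantage of making explicit the verification that $\calU_\eps\phi$ indeed has vanishing trace on $S_{*,\eps}^\pm$ — a point the paper's proof leaves implicit when asserting that $\calU_\eps\phi$ lands in $\calH_{\eps,0}^{\mathrm{M}}$ rather than merely $H^1(\OmeMS)$.
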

\begin{proof}
	Following the arguments given in \cite[Proposition~6]{GahNeuKna18}, for $\phi \in L^2((0,T)\times \Sigma; H^1_{\pm,0}(Z_*))$ and $v_\eps \in \Cci((0,T)\times \OmeMS)$, we calculate
	\begin{align}
		\int_0^T \int_{\OmeMS} (\calU_\eps\phi)(t,x) \p_{x_i} v_\eps(t,x) \dd x \dd t 
		&= \eps \int_0^T \int_\Sigma \int_{Z_*} \phi(t,x^\prime,z) \calT_\eps(\p_{x_i} v_\eps)(t,x^\prime,z) \dd z \dd x \dd t\\
		&= - \int_0^T \int_\Sigma \int_{Z_*} \p_{z_i} \phi(t,x^\prime,z) (\calT_\eps v_\eps)(t,x^\prime, z) \dd z \dd x \dd t\\
		& \quad + \int_0^T \int_\Sigma \int_{\p Z_*} \phi(t,x^\prime,z) (\calT_\eps v_\eps)(t,x^\prime, z) \nu_i(z) \dd \calH^{n-1}(z) \dd x \dd t,
	\end{align}
	where we have used \autoref{lem:propT}~\ref{it:lem_prop_T2} and integrated by parts in the last step. By the choice of $v_\eps$, we have $\calT_\eps v_\eps \rvert_{\p Z_*} = 0$; thus, the boundary integral vanishes and the weak differentiability of $\calU_\eps \phi$ is proven. Consequently, for every $\phi\in L^2((0,T)\times\Sigma;H^1_{\pm,0}(Z_*))$ we calculate 
	\begin{align}
		\Lpnorm{\calU_\eps \phi}{2}{0,T;\calH_{\eps,0}^{\mathrm{M}}}^2
		&= \frac 1 {\eps} \Lpnorm{\calU_\eps \phi}{2}{(0,T)\times\OmeMS}^2 + \frac 1\eps \Lpnorm{\calU_\eps(\nabla_z \phi)}{2}{(0,T)\times \OmeMS}^2
		\leq \Lpnorm{\phi}{2}{(0,T)\times\Sigma;H^1_{\pm,0}(Z_*)},
	\end{align}
	where we have used \autoref{cor:est_U} for the last estimate.
\end{proof}

To apply \autoref{lem:pt_A} to the unfolded sequence $(\calT_\eps v_\eps)_\eps$, we consider $\calT_\eps$ and $\calU_\eps$ as stationary operators, which is possible due to their definition in which time only acts as an additional parameter. 
\begin{cor}[Estimate for $\p_t(\calT_\eps v_\eps)$, {\cite[Proposition~3.10]{GahNeu21}}]\label{cor:pt_T}
	Let $v_\eps \in L^2((0,T)\times \OmeMS)$ with $\p_t v_\eps \in L^2(0,T;(\calH_{\eps,0}^{\mathrm{M}})^\prime)$. 
	Then, $\p_t(\calT_\eps v_\eps) \in L^2(0,T;L^2(\Sigma,H^1_{\pm,0}(Z_*))^\prime)$ and, for all $\phi \in L^2(\Sigma; H^1_{\pm,0}(Z_*))$ and almost every $t\in (0,T)$, there holds 
	\begin{align}\label{eq:char_pt_T}
		\dotproduct{\p_t(\calT_\eps v_\eps)}{\phi}{L^2(\Sigma;(H^1_{\pm,0}(Z_*))^\prime),\, L^2(\Sigma; H^1_{\pm,0}(Z_*))}
		= \frac 1 \eps \dotproduct{\p_t v_\eps}{\calU_\eps \phi}{(\calH_{\eps,0}^{\mathrm{M}})^\prime,\, \calH_{\eps,0}^{\mathrm{M}}}.
	\end{align}
	In particular, we have
	\begin{align}\label{eq:norm_pt_T}
		\Lpnorm{\p_t(\calT_\eps v_\eps)}{2}{0,T;L^2(\Sigma;(H^1_{\pm,0}(Z_*))^\prime)} \leq \frac 1 \eps \Lpnorm{\p_t v_\eps}{2}{0,T;(\calH_{\eps,0}^{\mathrm{M}})^\prime}.
	\end{align}
\end{cor}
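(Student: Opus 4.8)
The plan is to obtain the statement as a direct application of the abstract result \autoref{lem:pt_A} to the (stationary) unfolding operator $A=\calT_\eps$, once its $L^2$-adjoint has been identified with a rescaling of $\calU_\eps$ and the mapping property required by the hypothesis has been checked. Since time enters the definitions of $\calT_\eps$ and $\calU_\eps$ only as a parameter, throughout I would treat $\calT_\eps$ and $\calU_\eps$ as stationary operators and let \autoref{lem:pt_A} restore the time dependence.

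First I would fix the two Gelfand triples. On the domain side I take $V=\calH_{\eps,0}^{\mathrm{M}}$, carrying the $\eps$-scaled inner product inherited from $\calH_\eps$, together with $Y=L^2(\OmeMS)$ equipped with the \emph{unscaled} $L^2$-inner product and $V'=(\calH_{\eps,0}^{\mathrm{M}})'$ the dual relative to the pivot $Y$; this is exactly the triple already fixed in \autoref{sec:existence} when making sense of $\p_t v_\eps^{\mathrm{M}}$, so the hypothesis $\p_t v_\eps\in L^2(0,T;(\calH_{\eps,0}^{\mathrm{M}})')$ is interpreted consistently. On the target side I take $W=L^2(\Sigma;H^1_{\pm,0}(Z_*))$, $X=L^2(\Sigma\times Z_*)$ with the standard inner product, and $W'$ the dual relative to $X$. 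By \autoref{lem:propT}~\ref{it:lem_prop_T1} (read without the time variable, or via Fubini), $\calT_\eps$ is linear with $\norms{\calT_\eps v}{X}=\eps^{-1/2}\norms{v}{L^2(\OmeMS)}$, so $\calT_\eps\in\calL(Y,X)$.

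Next I would compute the adjoint $A^*=\calT_\eps^{*}\colon X\to Y$. Pairing the defining relation of the averaging operator, $\scp{\calU_\eps\phi}{v}{L^2(\OmeMS)}=\eps\,\scp{\phi}{\calT_\eps v}{X}$, against arbitrary $v\in Y$ gives $\scp{\calT_\eps^{*}\phi}{v}{L^2(\OmeMS)}=\scp{\phi}{\calT_\eps v}{X}=\tfrac1\eps\scp{\calU_\eps\phi}{v}{L^2(\OmeMS)}$, hence $\calT_\eps^{*}=\tfrac1\eps\calU_\eps$ on $X$. The remaining hypothesis of \autoref{lem:pt_A} is that $\calT_\eps^{*}\rvert_W\in\calL(W;V)$, i.e.\ that $\tfrac1\eps\calU_\eps$ maps $L^2(\Sigma;H^1_{\pm,0}(Z_*))$ boundedly into $\calH_{\eps,0}^{\mathrm{M}}$: this is precisely \autoref{lem:reg_U}, which moreover yields $\norms{\calU_\eps\phi}{\calH_{\eps,0}^{\mathrm{M}}}\le\norms{\phi}{W}$, so that $\norms{\calT_\eps^{*}\rvert_W}{\calL(W;V)}\le\tfrac1\eps$.

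Finally I would invoke \autoref{lem:pt_A} with $u=v_\eps$: from $v_\eps\in L^2(0,T;Y)=L^2((0,T)\times\OmeMS)$ and $\p_t v_\eps\in L^2(0,T;V')$ it follows that $\p_t(\calT_\eps v_\eps)\in L^2(0,T;W')=L^2(0,T;L^2(\Sigma;(H^1_{\pm,0}(Z_*))'))$, with $\dotproduct{\p_t(\calT_\eps v_\eps)}{\phi}{W',W}=\dotproduct{\p_t v_\eps}{\calT_\eps^{*}\phi}{V',V}=\tfrac1\eps\dotproduct{\p_t v_\eps}{\calU_\eps\phi}{(\calH_{\eps,0}^{\mathrm{M}})',\,\calH_{\eps,0}^{\mathrm{M}}}$ for all $\phi\in W$ and a.e.\ $t$, which is \eqref{eq:char_pt_T}, while the operator-norm estimate in \autoref{lem:pt_A} gives $\Lpnorm{\p_t(\calT_\eps v_\eps)}{2}{0,T;L^2(\Sigma;(H^1_{\pm,0}(Z_*))')}\le\tfrac1\eps\Lpnorm{\p_t v_\eps}{2}{0,T;(\calH_{\eps,0}^{\mathrm{M}})'}$, which is \eqref{eq:norm_pt_T}. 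I do not expect a genuine obstacle here; the only point needing care is the bookkeeping of the $\eps$-weights — keeping the pivot of $(\calH_{\eps,0}^{\mathrm{M}})'$ the unscaled $L^2(\OmeMS)$ while $\calH_{\eps,0}^{\mathrm{M}}$ itself carries the $\eps$-scaled norm is exactly what produces the factor $\tfrac1\eps$ in front of $\calU_\eps$ and in the final bound, and one should verify this is compatible with the conventions of \autoref{sec:existence}.
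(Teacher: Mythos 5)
Your proof is correct and coincides with the paper's argument: both set $V=\calH_{\eps,0}^{\mathrm{M}}$, $Y=L^2(\OmeMS)$, $W=L^2(\Sigma;H^1_{\pm,0}(Z_*))$, $X=L^2(\Sigma\times Z_*)$, identify $\calT_\eps^*=\tfrac1\eps\calU_\eps$, invoke \autoref{lem:reg_U} for the mapping property $\calT_\eps^*\rvert_W\in\calL(W;V)$ with norm $\le\tfrac1\eps$, and then apply \autoref{lem:pt_A}. Your extra care about which inner product sits on the pivot space $L^2(\OmeMS)$ (unscaled) versus on $\calH_{\eps,0}^{\mathrm{M}}$ ($\eps$-weighted) is exactly the bookkeeping that makes the factor $\tfrac1\eps$ come out right, and matches the conventions fixed via \eqref{eq:pt_u_pmM}.
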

\begin{proof}
	In view of \autoref{lem:pt_A}, we set
	\begin{align}
		V= \calH_{\eps,0}^{\mathrm{M}}, \qquad Y = L^2(\OmeMS), \qquad W = L^2(\Sigma; H^1_{\pm,0}(Z_*)), \qquad X = L^2(\Sigma \times Z_*)
	\end{align}
	and $A=\calT_\eps \in \calL(Y;X)$ with $A^* = \frac 1 \eps \calU_\eps$. Then, by \autoref{lem:reg_U}, we have that the restriction $A^*\rvert_W=\tfrac 1\eps \calU_\eps\rvert_W \in \calL(W;V)$ is well-defined. Consequently, $\p_t(\calT_\eps v_\eps) \in L^2(0,T;L^2(\Sigma;(H^1_{\pm,0}(Z_*))^\prime))$ and \eqref{eq:char_pt_T} holds. The bound \eqref{eq:norm_pt_T} is then a direct consequence of the estimates in \autoref{lem:pt_A} and \autoref{lem:reg_U}.
\end{proof}

The second tool for the application of the Kolmogorov-type compactness result from \cite{GahNeu16} is a good control of shifts with respect to the $x^\prime$-variable.
First, we show that shifts of $\calT_\eps v_\eps$ with respect to $x^\prime \in \Sigma$ are estimated by shifts of $v_\eps$ into direction of $(N^\prime,0)\eps$ with $N^\prime \in \IZ^{n-1}$.

\begin{lemma}[Shifts of the unfolded function, {\cite[Lemma~3.11]{GahNeu21}}]\label{lem:shifts_Tv}
	For $h\in (0,1)$ sufficiently small, there exist $\eps_0, \xi_0 >0$ such that for all $\eps\ll \eps_0$, $\xi^\prime \in \IR^{n-1}$ with $|\xi^\prime|\ll \xi_0$ and $v_\eps \in L^2((0,T)\times \OmeMS)$ there holds
	\begin{align}
		\Lpnorm{\calT_\eps v_\eps(\cdot, \cdot + \xi^\prime,\cdot)- \calT_\eps v_\eps}{2}{(0,T)\times \Sigma_{2h}\times Z_*}^2 \leq \frac 1 \eps \sum_{m^\prime \in \{0,1\}^{n-1}} \Lpnorm{\delta_{l^\prime,\eps} v_\eps}{2}{(0,T)\times \OmeMSh}^2,
	\end{align}
	where $l^\prime=l^\prime(\eps,\xi^\prime,m^\prime)= m^\prime  + \floor*{\frac{\xi^\prime}{\eps}}\in \IZ^{n-1}$.
\end{lemma}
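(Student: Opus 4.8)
The plan is to compute both sides of the inequality explicitly using the definition of the unfolding operator $\calT_\eps$ (\autoref{def:unfolding}) and the fact that $\lfloor \cdot \rfloor$ is constant on each cell $\eps(Y + k')$. First I would fix $h \in (0,1)$ and choose $\eps_0, \xi_0$ small enough (to be determined along the way) so that, for $\eps \ll \eps_0$ and $|\xi'| \ll \xi_0$, every point $x' \in \Sigma_{2h}$ together with the shifted point $x' + \xi'$ lies in a cell whose closure is contained in $\widehat{\Sigma}_{\eps,h}$; this is exactly the geometric reason the ``buffer zone'' between $\Sigma_{2h}$ and $\widehat{\Sigma}_{\eps,h}$ is needed, and it guarantees that all the shifted quantities below are well-defined (extending $v_\eps$ by zero only affects points outside $\OmeMSh$).

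Next, for $x' \in \Sigma_{2h}$ and $z \in Z_*$, write $x' = \eps(\lfloor x'/\eps\rfloor + \{x'/\eps\})$ so that $\calT_\eps v_\eps(t,x',z) = v_\eps(t, \eps\lfloor x'/\eps\rfloor + \eps z)$, and similarly
\[
\calT_\eps v_\eps(t, x' + \xi', z) = v_\eps\bigl(t, \eps\lfloor (x'+\xi')/\eps\rfloor + \eps z\bigr).
\]
The key elementary identity is $\lfloor (x'+\xi')/\eps\rfloor = \lfloor x'/\eps\rfloor + \lfloor \xi'/\eps\rfloor + m'$ where $m' = m'(x',\xi',\eps) \in \{0,1\}^{n-1}$ accounts for carries in each coordinate; hence with $l' = l'(\eps,\xi',m') = m' + \lfloor \xi'/\eps\rfloor$ we get
\[
\calT_\eps v_\eps(t, x'+\xi',z) - \calT_\eps v_\eps(t,x',z) = \delta_{l',\eps} v_\eps\bigl(t, \eps\lfloor x'/\eps\rfloor + \eps z\bigr).
\]
Then I would estimate the $L^2$-norm by decomposing $\Sigma_{2h}$ into cells $\eps(Y + k')$: on each such cell $m'$ is (piecewise) constant, and by the change of variables $x' \mapsto \eps k' + \eps y$ (so $\dd x' = \eps^{n-1}\,\dd y$), the integral over the cell of $|\delta_{l',\eps} v_\eps(t,\eps k' + \eps z)|^2$ — which is independent of $y$ — contributes a factor $\eps^{n-1}$. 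Summing over the relevant $k'$ and recognizing the resulting Riemann-type sum as (a piece of) $\tfrac1\eps \int_{\OmeMSh} |\delta_{l',\eps} v_\eps|^2$ (the factor $\eps^{n-1}$ from the $x'$-cell versus the $\eps^{n-1}\cdot 2\eps$-scaling of a microcell in $\OmeMS$, together with the $z$-integration over $Z_*$, produces the $\tfrac1\eps$ prefactor, exactly as in \autoref{lem:propT}~\ref{it:lem_prop_T1}), and finally summing over the at most $2^{n-1}$ possible values of $m' \in \{0,1\}^{n-1}$ gives the claimed bound.

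The main obstacle I anticipate is purely bookkeeping: carefully tracking the cell-decomposition so that the partition of $\Sigma_{2h}$ according to the value of $m'(x',\xi',\eps)$ is handled correctly (the sets where $m'$ takes a given value are unions of sub-cells, not whole cells, when $\xi'/\eps$ is not an integer), and verifying that the union of shifted microcells one lands in stays inside $\OmeMSh$ so that no mass is lost when $v_\eps$ is extended by zero — this is where the choice of $\eps_0$ and $\xi_0$ in terms of $h$ enters, using $|l'\eps| \lesssim |\xi'| + \eps \ll h$. The powers of $\eps$ should be double-checked against \autoref{lem:propT}, but no genuinely hard analysis is involved; the estimate is an identity followed by a change of variables.
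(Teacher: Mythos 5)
Your proposal is correct and matches the paper's argument, which (following \cite[p.~709]{NeuJae07}) rests on exactly the decomposition of each microcell $\eps(Y+k')$ according to the carry vector $m'\in\{0,1\}^{n-1}$ in the floor identity for $\floor*{(x'+\xi')/\eps}$, the inclusion $\Sigma_{2h}\subset\widehat{\Sigma}_{\eps,h}$ for $\eps$ small, and the change of variables from \autoref{lem:propT}~\ref{it:lem_prop_T1} that produces the $\tfrac1\eps$ prefactor. The only loose phrasing is near the end: the shifted microcells need to remain inside $\OmeMS$ (not $\OmeMSh$) so that $v_\eps^{l'}$ evaluates genuine values of $v_\eps$ rather than the zero extension, which is what $|l'\eps|\ll h$ guarantees; this is a cosmetic slip and does not affect the validity of the argument.
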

\begin{proof}
	The proof follows the ideas in \cite[p.~709]{NeuJae07} by introducing a special decomposition of each microcell $\eps(Y+k^\prime)$, which allows to estimate $\floor*{\frac{x^\prime + \xi^\prime}{\eps}}$ for $x^\prime \in \eps(Y+k^\prime)$, and uses $\Sigma_{2h} \subset \widehat{\Sigma}_{\eps,h}$ for $\eps>0$ sufficiently small.
\end{proof}

The main result for strong two-scale compactness is the following. Its proof is similar to that of \cite[Theorem~3.12]{GahNeu21}, but we give some details for the main steps for the convenience of the reader. A similar result is established in \cite[Lemma~4.2]{AmaAntPanPia10} but based on $L^\infty$-estimates for the sequence $(v_\eps)_\eps$, which however we cannot guarantee in our setting.
\begin{theorem}[Strong two-scale compactness]\label{thm:strong_2s_cpctness}
	Let $p\in [1,2)$, $\beta \in \left(\frac 12 ,1\right)$ and $v_\eps \in L^2(0,T;H^1(\OmeMS))$ with $\p_t v_\eps \in L^2(0,T;(\calH_{\eps,0}^{\mathrm{M}})^\prime)$ such that
	\begin{enumerate}[label=\roman*)]
		\item\label{it:strong_2s_cpct_apriori} we have the estimate
		\begin{align}
			\frac 1\eps \Lpnorm{\p_t v_\eps}{2}{0,T;(\calH_{\eps,0}^{\mathrm{M}})^\prime} + \frac{1}{\sqrt{\eps}} \Lpnorm{v_\eps}{2}{(0,T)\times \OmeMS} + \sqrt{\eps} \Lpnorm{\nabla v_\eps}{2}{(0,T)\times \OmeMS} \lesssim 1,
		\end{align}
		\item\label{it:strong_2s_cpct_shifts} for all $h\in (0,1)$ sufficiently small and $l^\prime \in \IZ^{n-1}$ with $|l^\prime \eps|\ll h$ the following convergence holds:
		\begin{align}
			\frac 1{\sqrt{\eps}} \Lpnorm{\delta_{l^\prime,\eps} v_\eps}{2}{(0,T)\times \OmeMSh} +\sqrt{\eps} \Lpnorm{\delta_{l^\prime,\eps} \nabla v_\eps}{2}{(0,T)\times \OmeMSh} \xrightarrow{\eps l^\prime \to 0} 0.
		\end{align}
	\end{enumerate}
	Then, there exists $v_0 \in L^2(0,T;L^2(\Sigma;H^1(Z_*)))$ such that, at least for a subsequence, the following convergences hold:
	\begin{align}
		\begin{array}{rclcl}
			v_{\eps} &\to& v_0 && \text{in the two-scale sense},\\
			\eps \nabla v_\eps &\to& \nabla_z v_0 && \text{in the two-scale sense},\\
			\calT_\eps v_\eps &\to& v_0 && \text{strongly in } L^p(\Sigma;L^2(0,T;H^\beta(Z_*))).
		\end{array}
	\end{align}
\end{theorem}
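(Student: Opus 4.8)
The strategy is to combine the weak two-scale compactness already available from \autoref{lem:2s_cpct} with a Kolmogorov--Simon-type compactness criterion (in the Banach-space-valued version of \cite{GahNeu16}) applied to the unfolded sequence $(\calT_\eps v_\eps)_\eps$, viewed as a sequence in $L^2(0,T;H^1(Z_*))$ parametrised by $x^\prime \in \Sigma$. First I would invoke \autoref{lem:2s_cpct}~\ref{it:2s_cpct_H1}: assumption \ref{it:strong_2s_cpct_apriori} supplies exactly the a priori bounds required there, so, passing to a subsequence, there exists $v_0 \in L^2(0,T;L^2(\Sigma;H^1(Z_*)))$ with $v_\eps \to v_0$ and $\eps\nabla v_\eps \to \nabla_z v_0$ in the weak two-scale sense; by \autoref{lem:char_2s_conv} this is equivalent to $\calT_\eps v_\eps \rightharpoonup v_0$ weakly in $L^2((0,T)\times\Sigma\times Z_*)$ and $\calT_\eps(\eps\nabla v_\eps) = \nabla_z(\calT_\eps v_\eps) \rightharpoonup \nabla_z v_0$ weakly in $L^2$ as well, using \autoref{lem:propT}~\ref{it:lem_prop_T2}. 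It then remains to upgrade this to \emph{strong} convergence of $\calT_\eps v_\eps$ in $L^p(\Sigma;L^2(0,T;H^\beta(Z_*)))$.

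For the strong convergence I would apply the compactness result on a fixed interior piece $\Sigma_{2h}$ and then exhaust $\Sigma$ by letting $h\to 0$ through a diagonal argument. On $\Sigma_{2h}$ the three hypotheses of the Kolmogorov--Simon criterion are: (a) uniform boundedness of $(\calT_\eps v_\eps)_\eps$ in $L^2(\Sigma_{2h};L^2(0,T;H^1(Z_*)))$, which follows from \autoref{lem:propT}~\ref{it:lem_prop_T1} together with \ref{it:strong_2s_cpct_apriori}; (b) equicontinuity of the $x^\prime$-translates, i.e. $\|\calT_\eps v_\eps(\cdot,\cdot+\xi^\prime,\cdot) - \calT_\eps v_\eps\|_{L^2((0,T)\times\Sigma_{2h}\times Z_*)}$ together with the analogous estimate for $\nabla_z$-unfolded gradients tends to $0$ uniformly in $\eps$ as $\xi^\prime \to 0$ — this is precisely what \autoref{lem:shifts_Tv} (applied both to $v_\eps$ and to $\nabla v_\eps$) converts into the shift smallness assumed in \ref{it:strong_2s_cpct_shifts}, since $|l^\prime\eps| \to 0$ as $|\xi^\prime|\to 0$ uniformly in $\eps$ along the relevant index set $\{0,1\}^{n-1} + \lfloor \xi^\prime/\eps\rfloor$; and (c) uniform control of the time derivative, namely $(\p_t(\calT_\eps v_\eps))_\eps$ bounded in $L^2(0,T;L^2(\Sigma;(H^1_{\pm,0}(Z_*))^\prime))$, which is supplied by \autoref{cor:pt_T} combined with the bound on $\tfrac1\eps\Lpnorm{\p_t v_\eps}{2}{0,T;(\calH_{\eps,0}^{\mathrm{M}})^\prime}$ in \ref{it:strong_2s_cpct_apriori}. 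The compactness theorem then yields strong convergence of $\calT_\eps v_\eps$ in, say, $L^2(\Sigma_{2h};L^2(0,T;H^\beta(Z_*)))$ using the compact embedding $H^1(Z_*) \xhookrightarrow{} H^\beta(Z_*)$ for $\beta < 1$; the limit must coincide with $v_0$ by uniqueness of weak limits. Interpolating the $L^2$-in-$x^\prime$ strong convergence against the uniform $L^2(\Sigma)$-bound and using $p < 2$ upgrades this to strong convergence in $L^p(\Sigma_{2h};L^2(0,T;H^\beta(Z_*)))$; finally, a standard diagonal extraction over a sequence $h_k \to 0$, together with the uniform bound ensuring the contribution from the boundary collar $\Sigma \setminus \Sigma_{2h_k}$ is $o(1)$ in the $L^p(\Sigma)$-norm, produces a single subsequence converging strongly on all of $\Sigma$.

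\textbf{Main obstacle.} The delicate point is the interplay between the two limits $\eps \to 0$ and $h \to 0$ in the shift estimate. The control of shifts in \ref{it:strong_2s_cpct_shifts} is only available on the truncated channel domain $\OmeMSh$, and \autoref{lem:shifts_Tv} only bounds translates of $\calT_\eps v_\eps$ on $\Sigma_{2h}$ in terms of those shifts, with the extra factor $\tfrac1\eps$ that is precisely compensated by the $\eps$-scaling in the definition of the unfolding norm. One must be careful that the index $l^\prime = l^\prime(\eps,\xi^\prime,m^\prime) = m^\prime + \lfloor \xi^\prime/\eps\rfloor$ appearing there still satisfies $|l^\prime\eps| \ll h$ for $|\xi^\prime|$ small uniformly in $\eps$, so that \ref{it:strong_2s_cpct_shifts} is applicable, and that the smallness in the shift estimate is genuinely uniform in $\eps$ (not merely for each fixed $\eps$). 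Once this uniformity is secured, the abstract compactness machinery applies on each $\Sigma_{2h}$ and the exhaustion argument is routine; the identification of the strong limit with the weak two-scale limit $v_0$, and the improvement from $L^2$ to $L^p$ integrability in $x^\prime$, are then immediate.
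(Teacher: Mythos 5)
Your proposal follows essentially the same route as the paper: weak two-scale compactness from \autoref{lem:2s_cpct}~\ref{it:2s_cpct_H1}, then a Kolmogorov--Simon compactness argument applied to the unfolded sequence $(\calT_\eps v_\eps)_\eps$, with \autoref{lem:shifts_Tv} converting $x'$-translates of $\calT_\eps v_\eps$ into the shift estimates of hypothesis~\ref{it:strong_2s_cpct_shifts} and \autoref{cor:pt_T} supplying the time-derivative control. Two remarks. First, a structural one: the criterion from \cite{GahNeu16} used in the paper has exactly two hypotheses, (K1) relative compactness of the rectangle-averages $\int_C \calT_\eps v_\eps\,\dd x'$ in $L^2(0,T;H^\beta(Z_*))$, and (K2) uniform smallness of the $L^p(\Sigma_{\xi'})$-translates. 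Your conditions (a) and (c) are not themselves hypotheses of that theorem; they are the ingredients that verify (K1) via Aubin--Lions (compact embedding $H^1(Z_*)\hookrightarrow H^\beta(Z_*)$, continuous into $H^1_{\pm,0}(Z_*)'$). This is presentational and not a problem.

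Second, and more importantly, there is a genuine gap in your treatment of the $\eps$-uniformity of the translate estimate, which you correctly flag as the delicate point but do not actually resolve. Assumption~\ref{it:strong_2s_cpct_shifts} gives smallness only as the \emph{product} $\eps l'\to 0$, where $l' = m' + \lfloor \xi'/\eps\rfloor$. As $|\xi'|\to 0$ one only gets $|\eps l'|\lesssim |\xi'| + \eps$, which does \emph{not} tend to zero for $\eps$ bounded away from $0$, so for such $\eps$ the hypothesis of~\ref{it:strong_2s_cpct_shifts} is not met and the shift assumption gives no information. Saying ``once this uniformity is secured, the abstract compactness machinery applies'' therefore leaves the key step open. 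The paper closes it cleanly by observing that, since $\tfrac1\eps\in\IN$, there are only \emph{finitely many} admissible $\eps>\eps_0(h)$; for that finite family the translate smallness follows (for $|\xi'|$ small) from the classical continuity of translations in $L^2$ applied to each fixed $\calT_\eps v_\eps$ separately, while~\ref{it:strong_2s_cpct_shifts} together with \autoref{lem:shifts_Tv} handles $\eps\leq\eps_0(h)$ uniformly. Your final diagonal extraction over $h_k\to 0$ is a minor restructuring of the paper's direct split of (K2) into contributions on $\Sigma_h$ and the collar $\Sigma\setminus\Sigma_h$ (the latter handled by H\"older and $p<2$); either works once the finitely-many-large-$\eps$ observation is in place.
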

\begin{proof}
	The two-scale convergence of $(v_\eps)_\eps$ and $(\eps \nabla v_\eps)_\eps$ follows directly from \autoref{lem:2s_cpct}~\ref{it:2s_cpct_H1}.	
	In order to establish the strong convergence of the sequence
	\begin{align}
		(\calT_\eps v_\eps)_\eps \subset L^2(0,T;L^2(\Sigma; H^1(Z_*))) \xhookrightarrow{} L^p(\Sigma; L^2(0,T;H^\beta(Z_*))),
	\end{align}
	we use the Kolmogorov-type compactness result given in \cite[Theorem~2.2]{GahNeu16} and have to check the following two conditions:
	\begin{enumerate}[label=(K\arabic*)]
		\item\label{it:K1} For every rectangle $C\subset \Sigma$, the sequence $(\int_C \calT_\eps v_\eps \dd x^\prime)_\eps$ is relatively compact in $L^2(0,T;H^\beta(Z_*))$.
		\item\label{it:K2} For every $\xi^\prime \in \IR^{n-1}$ with $0\leq \xi_i^\prime \leq 1$, $i=1,\ldots,n$, we have
		\begin{align}\label{eq:char_pt}
			\sup_{\eps>0} \Lpnorm{\calT_\eps v_\eps(\cdot,\cdot+\xi^\prime,\cdot) - \calT_\eps v_\eps}{p}{\Sigma_{\xi^\prime}; L^2(0,T;H^\beta(Z_*))} \xrightarrow{\xi^\prime \to 0} 0,
		\end{align}
	\end{enumerate}
	where $\Sigma_{\xi^\prime} = \Sigma \cap (\Sigma - \xi^\prime)$. 
	By the compactness of the embedding $H^1(Z_*)\xhookrightarrow{} H^\beta(Z_*)$ and the continuity of $H^\beta(Z_*)\xhookrightarrow{} H^1_{\pm,0}(Z_*)^\prime$, due to the Aubin-Lions lemma it suffices to show the boundedness of $\left(\int_C \calT_\eps v_\eps \dd x^\prime\right)_\eps$ in $W^{1,2,2}(0,T;H^1(Z_*),H^1_{\pm,0}(Z_*)^\prime)$ in order to prove \ref{it:K1}. This follows the same arguments as in \cite[Theorem~3.12]{GahNeu21}, using \ref{it:strong_2s_cpct_apriori}, \autoref{lem:propT} and the norm estimate on $\p_t(\calT_\eps v_\eps)$ from \autoref{cor:pt_T}.
	
	For the proof of \ref{it:K2}, by the triangle inequality, using $\Sigma_{\xi^\prime}\setminus (\Sigma_{\xi^\prime})_h \subset \Sigma \setminus \Sigma_h$, it suffices to show that for $h>0$ and $\xi\leq \xi_0(h)$ sufficiently small we have
	\begin{subequations}
		\begin{align}
			\sup_{\eps>0} \Lpnorm{\calT_\eps v_\eps(\cdot,\cdot+\xi^\prime,\cdot) - \calT_\eps v_\eps}{p}{\Sigma_{h}; L^2(0,T;H^\beta(Z_*))} &\xrightarrow{\xi^\prime \to 0}0,\label{eq:K2a}\\
			\sup_{\eps>0} \Lpnorm{\calT_\eps v_\eps}{p}{\Sigma\setminus \Sigma_h; L^2(0,T;H^\beta(Z_*))} &\xrightarrow{h \to 0}0\label{eq:K2b}.
		\end{align}
	\end{subequations}
	The convergence \eqref{eq:K2b} is immediate by H\"older's inequality using \ref{it:strong_2s_cpct_apriori} and $|\Sigma\setminus \Sigma_h|\lesssim |h|$. To establish \eqref{eq:K2a}, one may use \autoref{lem:shifts_Tv} together with \ref{it:strong_2s_cpct_shifts} to obtain for given $\rho>0$ and $|\xi^\prime|\leq \xi_0(h)$ and $\eps \leq \eps_0(h)$ sufficiently small 
	\begin{align}
		\sup_{\eps\in (0,\eps_0(h)]} \Lpnorm{\calT_\eps v_\eps(\cdot,\cdot+\xi^\prime,\cdot) - \calT_\eps v_\eps}{p}{\Sigma_{h}; L^2(0,T;H^\beta(Z_*))} \leq \frac \rho 2.
	\end{align}
	Finally, as $\frac 1\eps \in \IN$, there are only finitely many values of $\eps>\eps_0(h)$, and applying the standard Kolomogorov-argument to the finite family $(\calT_\eps v_\eps)_{\eps <\eps_0(h)}$, for $|\xi^\prime|$ possibly being even smaller, we have 
	\begin{align}
		\sup_{\eps>\eps_0(h)} \Lpnorm{\calT_\eps v_\eps(\cdot,\cdot+\xi^\prime,\cdot) - \calT_\eps v_\eps}{p}{\Sigma_{h}; L^2(0,T;H^\beta(Z_*))} \leq \frac \rho 2,
	\end{align}
    which concludes the proof of \eqref{eq:K2a}.
\end{proof}

{\onehalfspacing \printnomenclature\label{symbols}}

\end{document}